\definecolor{mypink1}{rgb}{0.858, 0.188, 0.478}
\definecolor{mypink2}{RGB}{219, 48, 122}
\definecolor{mypink3}{cmyk}{0, 0.7808, 0.4429, 0.1412}
\definecolor{mygray}{gray}{0.6}
\newcommand*\xbar[1]{%
  \hbox{%
    \vbox{%
      \hrule height 0.5pt 
      \kern0.4ex
      \hbox{%
        \kern-0.2em
        \ensuremath{#1}%
        \kern-0.1em
      }%
    }%
  }%
}
\newcommand{\F}{\mathscr{F}}
\newcommand{\f}{\rightarrow}
\def\1{\mathbbm{1}}
\newcommand{\hathat}[1]{%
\begingroup%
  \let\macc@kerna\z@%
  \let\macc@kernb\z@%
  \let\macc@nucleus\@empty%
  \hat{\mathchoice%
    {\raisebox{.35ex}{\vphantom{\ensuremath{\displaystyle #1}}}}%
    {\raisebox{.35ex}{\vphantom{\ensuremath{\textstyle #1}}}}%
    {\raisebox{.20ex}{\vphantom{\ensuremath{\scriptstyle #1}}}}%
    {\raisebox{.14ex}{\vphantom{\ensuremath{\scriptscriptstyle #1}}}}%
    \smash{\hat{#1}}}%
\endgroup%
}
\newlength{\dhatheight}
\newcommand{\norm}[1]{\left\lVert#1\right\rVert}
\newcommand{\Data}{Y^{(n)}} 
\newcommand{\vir}[1]{``#1''}
\newcommand{\pg}[1]{\left\{#1\right\}}
\newcommand{\pq}[1]{\left[#1\right]}
\newcommand{\pt}[1]{\left(#1\right)}
\newcommand{\abs}[1]{\left\vert#1\right\vert}
\newcommand{\di}{\mathrm{d}}
\newcommand\reallywidehat[1]{%
\savestack{\tmpbox}{\stretchto{%
  \scaleto{%
    \scalerel*[\widthof{\ensuremath{#1}}]{\kern-.6pt\bigwedge\kern-.6pt}%
    {\rule[-\textheight/2]{1ex}{\textheight}}
  }{\textheight}%
}{0.5ex}}%
\stackon[1pt]{#1}{\tmpbox}%
}
\newcommand{\Conv}{%
  \mathop{\scalebox{1.1}{\raisebox{-0.2ex}{$\circledast$}}
  }
}
\numberwithin{equation}{section}
\theoremstyle{plain}
\newtheorem{thm}{Theorem}[section]
\newtheorem{prop}{Proposition}[section]
\newtheorem{rmk}{Remark}[section]
\newtheorem{cor}{Corollary}[section]
\newtheorem{lem}{Lemma}[section]
\newtheorem{ass}{Assumption}[section]
\newcommand{\mysetminus}{\setminus} 
\numberwithin{equation}{section}
\begin{document}

\begin{frontmatter}
\title{Wasserstein convergence in Bayesian and frequentist deconvolution models}
\runtitle{Wasserstein convergence in  deconvolution models}

\begin{aug}
\author[A]{\fnms{Judith} \snm{Rousseau}\ead[label=e1]{judith.rousseau@stats.ox.ac.uk}},
\author[B]{\fnms{Catia} \snm{Scricciolo}\ead[label=e2]{catia.scricciolo@univr.it}}
\address[A]{University of Oxford, Department of Statistics, Oxford, UK and Universit\'e Paris Dauphine PSL University, France,
\printead{e1}}

\address[B]{Dipartimento di Scienze Economiche, Università di Verona, Verona, Italy
\printead{e2}}
\end{aug}

\begin{abstract}
We study the multivariate deconvolution problem of recovering the distribution of a signal 
from independent and identically distributed observations additively contaminated with random errors (noise) 
from a known distribution.
We investigate whether a Bayesian nonparametric approach for modelling the latent distribution of the signal 
can yield inferences with frequentist asymptotic validity under the $L^1$-Wasserstein metric. 
For errors with independent coordinates having ordinary smooth densities, 
we recast the multidimensional problem as a one-dimensional problem leveraging the strong equivalence between the $1$-Wasserstein 
and the max-sliced $1$-Wasserstein metrics and derive an inversion inequality relating the $L^1$-Wasserstein distance between two distributions of the signal 
to the $L^1$-distance between the corresponding mixture densities of the observations.
This smoothing inequality outperforms existing inversion inequalities and, at least in dimension one,
leads to minimax-optimal rates of contraction for the posterior measure on the distribution of the signal,
lower bounds for $1$-Wasserstein deconvolution in any dimension $d\geq1$, possibly with Sobolev regular mixing densities, being derived here.
As an application of the inversion inequality to the Bayesian framework, we consider $1$-Wasserstein deconvolution with Laplace noise 
in dimension one using a Dirichlet process mixture of normal densities as a prior measure on the mixing distribution (or distribution of the signal). We construct an adaptive approximation of the sampling density by convolving the Laplace
density with a well-chosen mixture of normal densities and show that the posterior measure concentrates around the sampling density at a nearly minimax rate, up to a log-factor, in the $L^1$-distance. The same posterior law is also shown to automatically adapt to the unknown Sobolev regularity of the mixing density, thus leading to a new Bayesian adaptive estimation procedure for mixing distributions with regular densities under the $L^1$-Wasserstein metric. We illustrate utility of the inversion inequality also in a frequentist setting by showing that an appropriate isotone approximation of the classical kernel deconvolution estimator attains the minimax rate of convergence for $1$-Wasserstein deconvolution in any dimension $d\geq 1$, when only a tail condition is required on the latent mixing density.
\end{abstract}

\begin{keyword}
\kwd{Adaptation}
\kwd{multivariate deconvolution}
\kwd{density estimation}
\kwd{Dirichlet process mixtures}
\kwd{inversion inequalities}
\kwd{max-sliced Wasserstein metrics}
\kwd{minimax rates}
\kwd{mixtures of Laplace densities}
\kwd{rates of convergence}
\kwd{Sobolev classes}
\kwd{Wasserstein metrics}
\end{keyword}

\end{frontmatter}

\section{Introduction} \label{sec:intro}
Multivariate deconvolution problems occur when we observe random vectors $\mathsf Y_i=(Y_{i,1},\,\ldots,\,Y_{i,d})^t$ in $\mathbb R^d$, for $d\geq 1$, 
that are contaminated signals $\mathsf X_i$ with additive random errors $\boldsymbol\varepsilon_i$ as in the model
\begin{equation}\label{model}
\mathsf Y_i = \mathsf X_i + \boldsymbol\varepsilon_i,
\end{equation}
where the sequences $(\mathsf X_i)_{i\in\mathbb N}$ and $(\boldsymbol\varepsilon_i)_{i\in\mathbb N}$ are independent,
the random vectors $\mathsf X_i=(X_{i,1},\,\ldots,\,X_{i,d})^t$ are independent and identically distributed (i.i.d.) according to an unknown probability measure 
$\mu_{0X}$ and the random vectors $\boldsymbol\varepsilon_i=(\varepsilon_{i,1},\,\ldots,\,\varepsilon_{i,d})^t$ are i.i.d.
according to a product probability measure $\otimes_{j=1}^d\mu_{\varepsilon,j}$, 
with $\mu_{\varepsilon,j}$ the distribution of the $j$th coordinate $\varepsilon_{i,j}$, for errors with independent components.
The distribution of the observations $\mathsf Y_i$ in $\mathbb R^d$ is then the convolution $(\otimes_{j=1}^d\mu_{\varepsilon,j})\ast\mu_{0X}$.
The interest is in recovering the distribution $\mu_{0X}$ of $\mathsf X_i$, when the error distribution is supposed to be known.
This situation is very common in many real-life problems in econometrics, biometrics, medical statistics, image reconstruction and signal deblurring, operations management, online matching markets, queueing, networks, data privacy protection under local differential privacy as popularized by Dwork, see, \emph{e.g.}, \cite{dwork2008}, etc.

%

In this paper, we consider  nonparametric estimation of $\mu_{0X}$ with respect to the $L^1$-Wasserstein metric. 
Estimation of $\mu_{0X}$ is an extensively studied problem.
There exists a vast literature on frequentist estimation of the density $f_{0X}$ of $\mu_{0X}$, 
with ground-breaking papers of the early 90's using density estimators based on Fourier inversion techniques, see \cite{caroll:hall:88, fan1993, diggle:hall:93}, penalized contrast estimators as in \cite{comte:etal:06} 
or kernel \cite{Delaigle2004BootstrapBS} and projection \cite{penskyvida99} estimators for adaptive density estimation.
Minimax rates have been studied in \cite{butucea:tsybakov08,bututsy,butucomte}. 
All these papers, however, consider the one-dimensional case and pointwise or $L^2,\,L^1$-metrics as loss functions for $f_{0X}$. 
Multivariate adaptive kernel density deconvolution taking into account possible anisotropy for both the signal and noise densities 
has been studied in \cite{comtelacourihp}, where minimax rates under the $L^2$-loss for $f_{0X}$ are derived that are a natural extension of those in the univariate case. 

Some results have been recently obtained on convergence rates for
estimating $\mu_{0X}$ under $L^p$-Wasserstein metrics, for $p\geq1$,  see \cite{caillerie2011, dedecker:michel:2013, dedecker2015} and \cite{gao2016}.
For probability measures $\mu,\,\nu$ on $\mathbb R^d$ having finite $p$th moments, the $L^p$-Wasserstein distance $W_p(\mu,\,\nu)$ is defined as
$$W_p(\mu,\,\nu):=\inf_{\gamma\in \Gamma(\mu,\,\nu)}\pt{\int_{\mathbb R^d\times \mathbb R^d}|\mathsf x-\mathsf y|^p\,\gamma(\di\mathsf x,\,\di\mathsf y)}^{1/p},$$
where $|\mathsf x-\mathsf y|$ is the Euclidean distance between $\mathsf x,\,\mathsf y\in\mathbb R^d$ and $\Gamma(\mu,\,\nu)$ denotes the set of all couplings or transport plans having marginal distributions $\mu$ and $\nu$.  

Wasserstein metrics have lately become popular in statistics and machine learning because of their suitability to problems with unusual geometry, as in manifold learning, see, for instance, \cite{divol} and the references therein, or in deconvolution models \cite{caillerie2011}. In particular, an important aspect of Wasserstein metrics is that they are much more sensitive to differences in the supports of $\mu$ and $\nu$ compared to metrics like the Hellinger or the total variation. As an extreme case, for instance, when $d=1$, while the total variation distance between $\delta_{(0)}$ and $\delta_{(\epsilon)}$, where $\delta_{(x)}$ is the Dirac mass at $x$, is equal to 1 even when $\epsilon $ is small, $L^p$-Wasserstein distances converge to 0 when $|\mathsf \epsilon|$ goes to $0$. More discussion on the use of Wasserstein metrics in the analysis of convergence of latent mixing measures in mixture models can be found in
\cite{nguyen2013}.
 
In this paper, we consider the $L^1$-Wasserstein metric, the weakest of all $L^p$-Wasserstein metrics since $W_1\leq W_p$ for every $p\geq 1$. Another important feature of the $1$-Wasserstein metric is the Kantorovich-Rubenstein dual formulation,  see, \emph{e.g.}, \cite{villani2009},
\begin{equation}\label{dual}
W_1(\mu,\,\nu)=\sup_{f\in \mathrm{Lip}_1(\mathbb R^d)}\int_{\mathbb R^d} f(\mathsf x) (\mu-\nu)(\di\mathsf x),
\end{equation}
where $\mathrm{Lip}_1(\mathbb R^d)$ is the set of all $1$-Lipschitz functions from $\mathbb R^d$ to $\mathbb R$, 
which allows to control smooth linear functionals of $\mu_{0X}$. Furthermore, being equal to the $L^1$-distance between cumulative distribution functions, the $L^1$-Wasserstein metric is useful to study quantile estimation, see, for instance, \cite{MR3449779}. 

State-of-the-art results on Wasserstein convergence rates for univariate deconvolution models
are given in \cite{dedecker2015}, where a minimum distance estimator of $\mu_{0X}$ is constructed that attains optimal convergence rates 
under $W_1$, when the error distribution is known and ordinary smooth of order $\beta\geq\frac{1}{2}$.
In the multivariate case, minimax estimation under Wasserstein metrics has only been studied in the case where the distribution of the errors is  
supersmooth, see  \cite{dedecker:michel:2013}. Convergence rates in the multivariate deconvolution model have been obtained by \cite{caillerie2011} under the $L^2$-Wasserstein loss, but they lead to rather slow convergence rates. Until now, the question of minimax rates under $L^p$-Wasserstein metrics in the multivariate deconvolution problem with ordinary smooth noise remains open. In this paper, we partially fill this gap by providing lower bounds (see Theorem \ref{thm:lower bound}) and proposing a kernel type deconvolution estimator which achieves the optimal rates, up to a log-factor, for any $d\geq 1$ under the $1$-Wasserstein distance, see Section \ref{sec:comteLatour}. 


While frequentist deconvolution estimators have been extensively studied, 
little is known about the theoretical properties of Bayesian nonparametric procedures, whose
analysis is quite involved because, differently from kernel methods where the estimators are explicit,
the posterior distribution of $\mu_X$ is not explicit.
A way to assess how accurately the posterior distribution recovers $\mu_{0X} $ is to study posterior contraction rates, 
\emph{i.e.}, to determine a sequence $\epsilon_n=o(1)$ such that, given a sample $\mathsf Y^{(n)}:= (\mathsf Y_1,\,\dots,\, \mathsf Y_n)$ of $n$ i.i.d. random vectors $\mathsf Y_i\in\mathbb R^d$ from the convolution model in \eqref{model},
\begin{equation*}
\Pi(\mu_X:\,d(\mu_X,\,\mu_{0X}) > \epsilon_n \mid \mathsf Y^{(n)})=o_{\mathsf P}(1),
\end{equation*}
where $\mu_{0X}$ is the true mixing distribution, $\Pi(\cdot \mid \mathsf Y^{(n)})$ is the posterior distribution 
and $d(\cdot,\,\cdot)$ is some semi-metric on probability measures. 
In their seminal papers \cite{ghosal2000,ghosal:vdv:07}, the authors propose an elegant strategy to study posterior concentration rates which has 
been successful for a wide range of models and prior distributions under certain metrics or loss functions and, although more adapted to losses for direct problems in the form $d(f_Y,\, f_{0Y})$, it 
has been applied also to inverse problems by \cite{nickl:sohl,ray2013}. 
This approach, however, does not seem to easily lead to sharp upper bounds on posterior convergence rates for $\mu_X$ in deconvolution. 
An alternative approach is to obtain posterior convergence rates for the direct problem, \emph{i.e.}, for $\|f_Y - f_{0Y}\|_1$,  and then combine them with an inversion inequality that 
translates an upper bound on  $\|f_Y - f_{0Y}\|_1$ into an upper bound on $W_1(\mu_X,\,\mu_{0X})$. 
Using such an inversion inequality, posterior contraction rates in $L^p$-Wasserstein metrics, for $p\geq 1$, have been derived by \cite{gao2016} in the univariate case with the Laplace noise, when $\mu_{0X}$ has bounded support. This result has been extended to the case of unbounded support by \cite{scricciolo:18}, but the rates obtained in both  papers are sub-optimal.
Similarly, an inversion inequality is proposed by \cite{nguyen2013} in general mixture models, which is used to obtain $L^2$-Wasserstein posterior convergence rates  for the mixing distribution. However, in the deconvolution model with ordinary smooth error densities, the obtained rates are suboptimal. Therefore, the construction of Bayesian minimax-optimal procedures for estimating $\mu_{0X}$ under Wasserstein metrics in a multivariate setting remains an open issue, with the sharpest results obtained in \cite{gao2016,scricciolo:18}.  Recently, \cite{su2020nonparametric} studied density deconvolution under $W_2$, subject to heteroscedastic errors as well as symmetry about zero and shape constraints, in particular, unimodality. They proved posterior consistency for Dirichlet location-mixture of gamma densities, but did not study convergence rates. 

In this paper, we propose a novel inversion inequality between $W_1(\mu_X,\, \mu_{X}')$ and the corresponding $L^1$-distance $\|f_Y - f_{Y}'\|_1$, which holds for any ordinary smooth noise distribution and any dimension $d\geq 1$. This inversion inequality is sharper than any of the existing ones, \emph{i.e.}, \cite{gao2016, scricciolo:18, nguyen2013}, and is more general then those of \cite{gao2016, scricciolo:18}, since the latter only exist when $d=1$. We then use this inversion inequality in two approaches to the deconvolution problem: the nonparametric Bayesian framework and the frequentist setting with a kernel type estimator. 
In the Bayesian setting, we first derive a simple, but general theorem on posterior concentration rates with respect to the $L^1$-Wasserstein distance on $\mu_X$. We then apply it to the special case of univariate deconvolution models with a Laplace noise in Section \ref{sec:geneBayes}, for which we obtain the minimax rate $n^{-1/5}$, up to a $(\log n)$-term, thus improving the rate  $n^{-1/8}$ obtained by \cite{gao2016, scricciolo:18}. Furthermore, we prove that the same prior leads to posterior rate adaptation to Sobolev or H\"older regularity of the mixing density $f_{0X}$.
We also use the inversion inequality of Theorem \ref{theo:1} to study a kernel type deconvolution estimator, similar to the estimator of \cite{comtelacourihp}, for multivariate deconvolution. We show that this estimator achieves the minimax rate, up to a log-factor, for all $d\geq 1$, since the bound that we obtain match with the lower bound of Theorem \ref{thm:lower bound}. For the sake of simplicity, we consider the Laplace noise example, but the proof extends to other ordinary smooth distributions.

Another nontrivial  contribution of the paper is the study of posterior rates of convergence for mixture densities $f_{0Y}$ in the Laplace deconvolution model.
Posterior rates of convergence for $f_{0Y}$ have been widely studied in the literature on Bayesian nonparametric mixture models
mostly for Gaussian mixtures, see, \emph{e.g.}, \cite{ghosal2001,scricciolo:12}. When the noise follows a Laplace distribution, 
\cite{gao2016,scricciolo:18} have obtained the rate $n^{-3/8}$, up to a $(\log n )$-factor, in the Hellinger or $L^1$-distance 
using a Dirichlet process mixture of normals prior on $\mu_X$. As noted by \cite{gao2016},
this corresponds to the minimax estimation rate for densities belonging to Sobolev balls of order $\frac{3}{2}$, 
where, in this case, $\mu_{0Y}$ belongs to any Sobolev class of order smaller than $\frac{3}{2}$.
Under the assumption that $\mu_{0X}$ has Lebesgue density $f_{0X}$, in Theorem \ref{thm:31} we prove that this rate can be improved to $n^{-2/5}$.
We also study the case where $f_{0X}$ is Sobolev $\alpha$-regular and obtain an adaptive rate of convergence for $f_{0Y}$ of the order $O(n^{-(\alpha+2)/(2\alpha+5)})$, up to a logarithmic factor, see Theorem \ref{thm:4}. We believe that the theory developed in Section \ref{subsec:adapt} to approximate $f_{0Y} =f_\varepsilon\ast  f_{0X}$ by $f_\varepsilon\ast f_X$, where $f_X$ is modelled as a mixture of Gaussian densities, is also of interest in itself.

\smallskip

\noindent
The main contributions of the paper can be thus summarized:
\begin{itemize}
\item we derive a novel inversion inequality relating the $L^1$-Wasserstein distance between the distributions of the signal 
to the $L^1$-distance between the corresponding mixture densities of the observations
in a $d$-dimensional deconvolution problem, for known error distributions having independent coordinates 
with ordinary smooth densities (Theorem \ref{theo:1}). This inequality
leads to the minimax-optimal rate $n^{-1/(2\beta d+1)}$ when $\beta \geq1$. Besides improving upon the rates existing in the literature, cf. \cite{caillerie2011,nguyen2013,gao2016,scricciolo:18}, the inversion inequality sheds light on the impact of the dimension $d$ on the minimax rate, see the discussion after Theorem \ref{theo:1};\\[-10pt]
\item we establish a general theorem on posterior contraction rates for latent mixing distributions with respect to the $L^1$-Wasserstein metric under model \eqref{model} (Theorem \ref{thm:22}). The theorem gives sufficient conditions that connect to those existing in the literature for deriving posterior convergence rates in the direct problem, see \cite{ghosal2000,ghosal:vdv:07}, which have been checked to hold for many prior distributions;\\[-10pt]
\item we construct a new adaptive approximation of the sampling density $f_{0Y}$ by convolving the Laplace
density with a well-chosen mixture of normal densities when $d=1$ (Lemma \ref{lem:contapprox}) and show that the posterior distribution automatically adapts to the Sobolev regularity of the mixing density $f_{0X}$, thus leading to a new Bayesian adaptive estimation procedure for mixing distributions with Sobolev regular densities under the $L^1$-Wasserstein metric (Theorem \ref{thm:5});\\[-10pt]
\item we validate our approach by establishing lower bounds on the rates of convergence with respect to the $L^1$-Wasserstein metric for multivariate deconvolution with independent, ordinary smooth error coordinates (Theorem \ref{thm:lower bound}). 
These lower bounds match with the upper bounds obtained, in dimension $d=1$, using a Dirichlet process mixture-of-Laplace-normals prior to deconvolve a mixing distribution with Sobolev regular density and, in any dimension $d\geq 1$, using a kernel type deconvolution estimator (Theorem \ref{th:comtelacour}).
We, therefore, fill a gap present in the literature because minimax rates for multivariate $W_1$-deconvolution with ordinary smooth errors having independent coordinates were previously not known and provide theoretical guarantees, in terms of optimal asymptotic performance, of the proposed Bayesian deconvolution procedures.
\end{itemize}

The paper is organized as follows. In Section \ref{sec:notation}, we describe the set-up and introduce the notation. 
In Section \ref{sec:general}, we present the inversion inequality.
In Section \ref{sec:geneBayes}, we first state a general theorem on posterior contraction rates for the signal distribution with respect to the $L^1$-Wasserstein metric and then apply it to the case where the noise has Laplace distribution and 
the mixing density is modelled as a Dirichlet process mixture of Gaussian densities. By constructing
a novel approximation of the sampling density, we also prove posterior rate adaptation to Sobolev regularity of the mixing density 
under the $L^1$-Wasserstein metric. 
In Section \ref{sec:comteLatour}, we present lower bounds for $L^1$-Wasserstein deconvolution 
in any dimension $d\geq1$ with ordinary smooth error distribution having independent coordinates and signal density 
belonging to a Sobolev class and show that they are attained by a frequentist minimum distance estimator.
Main proofs are deferred to Section \ref{sec:proofs}.
Extensions and open problems are discussed in Section \ref{sec:frmks}. 
Auxiliary results are reported in the Supplement \cite{rousseau:scricciolo:supp}.

\section{Set-up and notation}\label{sec:notation}
We observe a sample $\mathsf Y^{(n)} = (\mathsf Y_1,\,\dots,\, \mathsf Y_n)$ of $n$ i.i.d. random vectors $\mathsf Y_i$ of $\mathbb R^d$ from the multivariate convolution model $\mathsf Y_i = \mathsf X_i + \boldsymbol \varepsilon_i$ in \eqref{model},
where the random vectors $\mathsf X_i$ are i.i.d. according to an unknown probability measure 
$\mu_{0X}$. In case of errors with independent and identically distributed coordinates, 
the random vectors $\boldsymbol \varepsilon_i$ are i.i.d. according to the $d$-fold product probability measure $\mu_\varepsilon^{\otimes d}$ 
of the known distribution $\mu_\varepsilon$ having Lebesgue density $f_\varepsilon$, which is 
assumed to be ordinary smooth of order $\beta>0$, \emph{i.e.}, for constants $d_0>0$, 
its Fourier transform $\hat f_{\varepsilon}$ verifies
\begin{equation*}\label{eq:1}
d_0|t|^{-\beta}|\leq|\hat f_{\varepsilon}(t)| 
\end{equation*}
Examples of ordinary smooth densities are the gamma distribution with shape parameter $\beta>0$
and the Linnik distribution with index $\beta\in (0,\,2]$, the Laplace being a special case for $\beta=2$.
The common distribution of the $\mathsf Y_i$'s is given by $\mu_{0Y}=\mu_{\varepsilon}^{\otimes d}\ast \mu_{0X}$.

Let $\mathscr P(\mathbb R^d)$ stand for the set of probability measures on $(\mathbb R^d,\,\mathcal B(\mathbb R^d))$ and 
$\mathscr P_0(\mathbb R^d)$ for the subset of Lebesgue absolutely continuous distributions on $\mathbb R^d$. 
For $p\geq 1$, define $\mathcal P_p(\mathbb R^d)$ to be the set of probability measures on $\mathbb R^d$ having finite $p$th moments, \emph{i.e.} if 
$M_p(\mu):=\int_{\mathbb R^d}|\mathsf x|^p\mu(\di \mathsf x)<\infty$. In symbols, 
$\mathcal P_p(\mathbb R^d)=\{\mu\in\mathscr P(\mathbb R^d): \, M_p(\mu)<\infty\}$.  
For $M>0$, let $\mathcal P_p(\mathbb R^d,\,M)=\{\mu\in\mathscr P(\mathbb R^d):\,M_p(\mu)\leq M\}$ 
be the subset of $\mathcal P_p(\mathbb R^d)$ consisting of probability measures having $p$th moments uniformly bounded by $M$.
We denote by $\F$ the class of probability measures $\mu_Y=\mu_\varepsilon^{\otimes d}\ast \mu_X$, with $\mu_X\in\mathscr P(\mathbb R^d)$. 
Since $\mu_Y$ is Lebesgue absolutely continuous, we denote by $f_Y=f_\varepsilon^{\otimes d}\ast\mu_X$ its density.
For any subset $\mathscr P_1\subseteq\mathscr P(\mathbb R^d)$, let $\F(\mathscr P_1)$ stand for the set of probability measures 
$\mu_Y=\mu_\varepsilon^{\otimes d}\ast\mu_X$, with $\mu_X \in\mathscr P_1$. 

We consider a prior distribution $\Pi_n$ on $\mathscr P(\mathbb R^d)$ and denote by $\Pi_n(\cdot \mid \mathsf Y^{(n)})$ 
the corresponding posterior measure
$$\Pi_n(B\mid \mathsf Y^{(n)}) = \frac{ \int_B \prod_{i=1}^n f_Y(\mathsf Y_i) \,\di\Pi_n(\mu_X)}{\int\prod_{j=1}^n f_Y(\mathsf Y_j)\,\di\Pi_n(\mu_X)}.$$
Our aim is to assess the posterior contraction rate for $\mu_X$ in the $L^1$-Wasserstein distance, namely, to find a sequence
$\epsilon_n=o(1)$ such that, if $ \mathsf Y^{(n)}$ is an $n$-sample from model \eqref{model} with true mixing distribution $\mu_{0X}$, then,
for a sufficiently large constant $M>0$,
$$ 
\Pi_n(\mu_X:\,W_1(\mu_X, \,\mu_{0X})\leq M\epsilon_n \mid \mathsf Y^{(n)})  \rightarrow 1 \mbox{ in $P_{0Y}^n$-probability,}
$$ 
where $P_{0Y}^n$ stands for the $n$-fold product measure of $P_{0Y}\equiv\mu_{0Y}$.

We hereafter review some useful facts on Wasserstein metrics. 
It is known that $\mathcal P_p(\mathbb R^d)$ endowed with $W_p$ is a Polish space, \emph{i.e.}, a
separable and completely metrizable space, see, \emph{e.g.}, Theorem 6.18 of \cite{villani2009}. 
For $d=1$, the following explicit expression of $W_p$ holds true:
\begin{equation}\label{eq:wascdf}
W_p(\mu,\,\nu)=\pt{\int_0^1|F_\mu^{-1}(s)-F_\nu^{-1}(s)|^p\,\di s}^{1/p},
\end{equation}
where $F_\mu^{-1}(s):=\inf\{x\in\mathbb R:\,\mu((-\infty,\,x])> s\}$ and
$F_\nu^{-1}(s):=\inf\{x\in\mathbb R:\,\nu((-\infty,\,x])> s\}$ are the generalized inverse distribution functions associated 
to $\mu,\,\nu\in\mathcal P_p(\mathbb R)$. For $p=1$, 
\begin{equation}\label{eq:lcdf}
W_1(\mu,\,\nu)=\int_0^1|F_\mu^{-1}(s)-F_\nu^{-1}(s)|\,\di s=\int_{\mathbb R}|F_\mu(x)-F_\nu(x)|\,\di x=\|F_\mu-F_\nu\|_1.
\end{equation}
Since in the $\mathbb R^d$-case the closed-form expression in
\eqref{eq:wascdf} of the $L^p$-Wasserstein distance in terms of the inverse distribution functions no longer holds, 
we can exploit the connection between the $L^p$-Wasserstein distance and its max-sliced version, 
which only requires estimating the $L^p$-Wasserstein distances of the projected uni-dimensional distributions.
Let $\mathbb S^{d-1}:=\{\mathsf v\in\mathbb R^d:\,|\mathsf v|=1\}\subset \mathbb R^d$ be the unit sphere. For $\mu\in\mathcal P_p(\mathbb R^d)$ 
and $\mathsf v\in\mathbb S^{d-1}$, we set $\mu_{\mathsf v}:=\mu\circ \mathsf v_\ast ^{-1}$ to be the image measure of $\mu$ by 
$\mathsf v_\ast$, where $ \mathsf v_\ast :\,\mathbb R^d\rightarrow \mathbb R$ is the map defined by $\mathsf v_\ast(\mathsf x):=\mathsf v\cdot \mathsf x=\sum_{j=1}^dv_jx_j$. 
Then, $\mu_{\mathsf v}\in\mathcal P_p(\mathbb R)$ because 
\begin{equation}\label{eq:momentsineq}
M_p(\mu_{\mathsf v})=\int_{\mathbb R}|x|^p\mu_{\mathsf v}(\di x)=\int_{\mathbb R^d}|\mathsf v\cdot \mathsf x|^p\mu(\di \mathsf x)\leq 
\int_{\mathbb R^d}|\mathsf x|^p\mu(\di \mathsf x)=M_p(\mu)<\infty.
\end{equation}
For $\mu,\,\nu\in \mathcal P_p(\mathbb R^d)$, the max-sliced Wasserstein distance $\overline W_p(\mu,\,\nu)$
is defined as 
$$\overline W_p(\mu,\,\nu):=\sup_{\mathsf v\in \mathbb S^{d-1}}W_p(\mu_{\mathsf v},\,\nu_{\mathsf v}) = \max_{\mathsf v\in \mathbb S^{d-1}}W_1(\mu_{\mathsf v},\,\nu_{\mathsf v}).$$ 
Of particular importance for what follows is the strong equivalence between $\overline W_1$ and $W_1$ 
due to \cite{bay}, Theorem 2.1(ii), pp. 4 and 6--7, according to which $\overline W_1$ and $W_1$ are strongly equivalent for all $d\geq 1$, that is, there exists a constant $C_d\geq 1$ 
such that, for all $\mu,\,\nu\in\mathcal P_1(\mathbb R^d)$, 
\begin{equation}\label{eq:equivalence}
\overline W_1(\mu,\,\nu)\leq W_1(\mu,\,\nu)\leq C_d \overline W_1(\mu,\,\nu).
\end{equation}

We now introduce some notation that will be used throughout the article. 
For probability measures $P,\,Q\in\mathscr P_0(\mathbb R^d)$, with respective densities $f_P,\,f_Q$ relative to some reference measure, 
let $d_\mathrm{H}(f_P,\,f_Q):=\|\sqrt{f_P} - \sqrt{f_Q} \|_2$ be the Hellinger distance between $f_P$ and $f_Q$, 
where $\|f_P\|_r$ is the $L^r$-norm of $f_P$, for $r\geq1$.
Letting $Pf$ stand for the expected value $\int f\di P$, where the integral extends over the entire domain,
we define the Kullback-Leibler divergence of $Q$ from $P$ as $\mathrm{KL}(P;\,Q):=P\log(f_P/f_Q)$ and, for $\epsilon>0$, 
the $\epsilon$-Kullback-Leibler type neighbourhood of $P$ as
$$B_{\mathrm{KL}}(P;\,\epsilon^2)=
\pg{Q\in\mathscr P_0(\mathbb R^d):\, \mathrm{KL}(P;\,Q)\leq\epsilon^2,\,\,\,
P\pt{\log\frac{f_P}{f_Q}}^2\leq\epsilon^2}.$$

For $f\in L^1(\mathbb R^d)$, let $\hat f (\mathsf t):= \int_{\mathbb R^d} e^{\imath \mathsf t \cdot \mathsf x} f(\mathsf x)\,\di \mathsf x$, $\mathsf t \in \mathbb R^d$, 
be its Fourier transform. When $d=1$, for $\alpha\geq 0$ and $f\in L^1(\mathbb R)$ such that $\int_{\mathbb R}|t|^{\alpha}|\hat f(t)|\,\di t<\infty$, we define the 
$\alpha$th fractional derivative of $f$ as $D^{\alpha}\hspace*{-1pt}f(x):={(2\pi)}^{-1}\int_{\mathbb{R}}e^{-\imath t x}(-\imath t)^\alpha\hat f(t)\,\di t$, with $D^0f \equiv f$. Let $C_b(S)$ be the set of bounded, continuous real-valued functions on $S\subseteq \mathbb R^d$.

For global estimation, we consider Sobolev spaces. 
For $\boldsymbol{\alpha}=(\alpha_1,\,\ldots,\,\alpha_d)^t$, let the anisotropic Sobolev space $\mathcal S_d(\boldsymbol{\alpha},\,L)$ be defined as the class of integrable functions $f:\,\mathbb R^d\rightarrow\mathbb R$ satisfying
\[\sum_{j=1}^d\int_{\mathbb R^d}|\hat f(\mathsf t)|^2(1+t_j^2)^{\alpha_j}\,\di \mathsf{t}\leq L^2.\]
For pointwise estimation, we consider H\"{o}lder classes. Let the H\"{o}lder class $\mathcal H_d(\boldsymbol{\alpha},\,L)$ be defined as the class of functions $f:\,\mathbb R^d\rightarrow\mathbb R$ that admit derivatives with respect to $x_j$ up to the order $\lfloor \alpha_j\rfloor$ and
\[\abs{\frac{\partial ^{\lfloor \alpha_j\rfloor}f}{(\partial x_j)^{\lfloor \alpha_j\rfloor}}(x_1,\,\ldots,\,x_{j-1},\,x_j',\,x_{j+1},\,\ldots,\,x_d)-\frac{\partial ^{\lfloor \alpha_j\rfloor}f}{(\partial x_j)^{\lfloor \alpha_j\rfloor}}(\mathsf x)}\leq L|x_j'-x_j|^{\alpha_j-\lfloor \alpha_j\rfloor},\]
where $\lfloor \alpha_j \rfloor:=\max\, \{k\in\mathbb{Z} : k<\alpha_j\}$ is the lower integer part of $\alpha_j$. 
In the isotropic case, for $\alpha_1=\ldots=\alpha_d=\alpha$, we simply write $\mathcal S_d(\alpha,\,L)$ and $\mathcal H_d(\alpha,\,L)$. 
The Sobolev and H\"{o}lder spaces of dimension one are denoted by $\mathcal S(\alpha,\,L)$ and $\mathcal H(\alpha,\,L)$, respectively.

For $\epsilon>0$, let $D(\epsilon,\,B,\,d)$ be the $\epsilon$-packing number of a set $B$ with 
metric $d$, that is, the maximal number of points in $B$ such that the $d$-distance between every pair is at least $\epsilon$,
where $d$ can be either the Hellinger or the $L^1$-distance.

We denote by $\phi (x) =(2\pi)^{-1/2} e^{-x^2/2}$, $x\in\mathbb R$, the density of a standard Gaussian random variable and 
by $\phi_{\mu,\sigma}(x)=(1/\sigma)\phi((x-\mu)/\sigma)$, $x\in\mathbb R$, its recentered and rescaled version.
We write  $a\vee b=\max\{a,\,b\}$,  $a\wedge b=\min\{a,\,b\}$ and
 $a_+=a\vee 0$.
Also, $a_n\lesssim b_n$  (resp. $a_n\gtrsim b_n$) means that  $a_n\leq Cb_n$ (resp. $a_n\geq Cb_n$) for some  $C>0$ that is universal or depends only on $P_{0Y}$ 
and $a_n\asymp b_n$ means that both $a_n \lesssim b_n$ and $b_n \lesssim a_n$ hold.
Let $\mathbb N_0:=\{0,\,1,\,2,\,\ldots\}$. For any $d\in\mathbb N$, let $[d]:=\{1,\,\ldots,\,d\}$.


\section{Inversion inequality between the direct and inverse problems}\label{sec:general}
In this section, we present an inversion inequality relating the $L^1$-Wasserstein distance $W_1(\mu_X,\,\mu_{0X})$ 
between the mixing distributions to the $L^1$-distance $\|f_Y - f_{0Y}\|_1$ between the corresponding mixture densities. 
The inequality, which is stated in Section \ref{subsec:inversion}, is the key tool for proving 
a general theorem on $L^1$-Wasserstein contraction rates for the posterior distribution of the 
mixing measure based on properties of the prior law and the data generating process. 
The inequality may also be of interest in itself.

\subsection{Assumptions}\label{subsec:Ass}
In order to obtain $L^1$-Wasserstein posterior contraction rates for the latent distribution $\mu_X$, 
we make assumptions on the single coordinate error distribution $\mu_\varepsilon$ and the 
\vir{true} mixing measure $\mu_{0X}$.\\

\noindent
\emph{Error assumptions}\\[-4pt]

\noindent
If $|\hat f_{\varepsilon}(t)|\neq 0$, $t\in\mathbb R$, then the reciprocal of $\hat f_{\varepsilon}$,
\begin{equation}\label{eq:re}
r_\varepsilon(t):=\frac{1}{\hat f_\varepsilon(t)}, \quad t\in\mathbb{R},
\end{equation}
is well defined. For an $l$-times differentiable Fourier transform
$\hat f_\varepsilon$, with $l\in\mathbb N_0$, the $l$th derivative of $r_\varepsilon$ is denoted by $r^{(l)}_\varepsilon$, 
with $r_\varepsilon^{(0)}\equiv r_\varepsilon$.

\begin{ass}\label{ass:identifiability+error}
\emph{The single coordinate error distribution $\mu_\varepsilon\in\mathscr P_0(\mathbb R)\cap\mathcal P_1(\mathbb R)$ 
has Fourier transform 
$|\hat f_{\varepsilon}(t)|\neq 0$, $t\in\mathbb{R}$. 
Furthermore, there exists $\beta>0$ such that, for $l=0,\,1$,
\begin{equation}\label{eq:deriv}
|r_\varepsilon^{(l)}(t)|\lesssim (1+|t|)^{\beta-l},\quad t\in\mathbb R.
\end{equation}
}
\end{ass}

Assumption \ref{ass:identifiability+error} requires that $\hat f_\varepsilon$ is everywhere non-null.
This is a standard hypothesis in density deconvolution problems, related to the identifiability with respect to the $L^1$-metric, 
which is a necessary condition for the existence of consistent density estimators of $f_{0X}$ 
with respect to the $L^1$-metric, see \cite{Meister:2009}, pp. 23--26.
Finiteness of the first moment of $\varepsilon$, that is, $M_1(\mu_\varepsilon)<\infty$, is a technical condition with a two-fold aim.
First, if also $M_1(\mu_{0X})<\infty$, then it entails that $M_1(\mu_{0Y})<\infty$, thus allowing to define the $L^1$-Wasserstein distance between $\mu_{0Y}$ and $\mu_Y$, provided that $\mu_Y$ has finite expectation too.
Secondly, it implies that $\hat f_\varepsilon$ is continuously differentiable on $\mathbb R$ and the derivative
is $\hat f^{(1)}_\varepsilon(t)=\int_{\mathbb R}e^{\imath tu}(\imath u) f_\varepsilon(u)\,\di u$, $t\in\mathbb R$. Then, 
$r_\varepsilon^{(1)}$ exists and is well defined. 
Differently from \cite{dedecker2015}, in condition \eqref{eq:deriv}, we do not assume that 
$r_\varepsilon$ is at least twice continuously differentiable. Instead,
as in \cite{MR3449779}, we only assume the existence of the first derivative such that 
$|r_\varepsilon^{(1)}(t)|\lesssim (1+|t|)^{\beta-1}$, $t\in\,\mathbb R$.
Note that, for $l=0$, condition \eqref{eq:deriv} is equivalent to $|\hat f_\varepsilon(t)|\gtrsim (1+|t|)^{-\beta}$, $t\in\mathbb R$. 
We mention that only the lower bound on $|\hat f_\varepsilon|$ is required to derive upper bounds on the convergence rates.
Assumption \ref{ass:identifiability+error} is satisfied for ordinary smooth error densities covering the following examples.
\begin{itemize}  
 \item[$\bullet$] 
The symmetric Linnik distribution with $\hat f_\varepsilon(t)=(1+|t|^\beta)^{-1}$, $t\in\mathbb R$, 
for index $0<\beta\leq 2$ and scale parameter equal to $1$. 
The standard Laplace distribution corresponds to $\beta=2$, see $\S$ 4.3 in \cite{Kotz2001}, pp. 249--276.
\item[$\bullet$]
The gamma distribution with $\hat f_\varepsilon(t)=(1-\imath t)^{-\beta}$, $t\in\mathbb R$,
for shape parameter $\beta>0$ and scale parameter equal to $1$. 
The standard exponential distribution corresponds to $\beta=1$. 
Exponential-type densities have great interest in physical contexts, see, for instance, the fluorescence model
studied in \cite{10.1214/12-EJS737}, where the measurement error density is fitted as an exponential-type distribution.
\item[$\bullet$]
An error distribution with characteristic function $\hat f_\varepsilon$ that is the reciprocal of a polynomial,
$r_\varepsilon(t)=\sum_{j=0}^m a_j t^{s_j}$, $t\in\mathbb R$, with $a_j\in \mathbb C$, for $j=0,\,\ldots,\,m$, and exponents $0\leq s_0< s_1<\ldots<s_m=\beta$, with $\beta>0$. This extends Example 1 in 
\cite{bissantz}, p. 487,
wherein the $s_j$'s are taken to be non-negative integers $s_j=j$, for $j=0,\,\ldots,\,\beta$.
\item[$\bullet$]
The error distribution in Example 2 of \cite{bissantz}, p. 487, with $f_\varepsilon(u)=\gamma[g_0(u-\mu)+g_0(u+\mu)]/2+(1-\gamma)g_0(u)$, 
$u\in\mathbb R$, for a density $g_0$, constants $0<\gamma<{1}/{2}$ and $\mu\neq 0$,
having $\hat f_\varepsilon(t)=[(1-\gamma)+\gamma\cos(\mu t)]\hat g_0(t)$, $t\in\mathbb R$, 
with $|\hat g_0(t)|\gtrsim (1+|t|)^{-\beta}$, for $\beta>0$.
\end{itemize}  
Location and/or scale transformations of random variables with distributions as in the previous examples, 
as well as their convolutions, verify condition \eqref{eq:deriv}. In fact, if we consider the $m$-fold self-convolution of $f_\varepsilon$, then 
we obtain an ordinary smooth error density with degree $\beta m$, because the corresponding Fourier transform is equal to $(\hat f_\varepsilon)^m$. 
Nevertheless, there are important distributions, such as the \emph{uniform}, \emph{triangular} and \emph{symmetric gamma},
that cannot be classified neither as ordinary smooth nor as supersmooth. 
For nonstandard error densities, see, \emph{e.g.}, \cite{Meister:2009}, pp. 45--46, and the references therein.\\

In this paper we derive results for any $\mu_{0X}$ but also some more precise results for smooth mixing densities, i.e. under the following assumptions:

\noindent
\emph{Regularity assumptions on the mixing distribution}\\[-4pt]

We consider Sobolev or H\"older regularity for the Lebesgue density $f_{0X}$ of the mixing distribution 
$\mu_{0X}\in\mathscr P_0(\mathbb R^d)$. 

\begin{ass}\label{ass:smoothXXX}
\emph{The mixing distribution $\mu_{0X}\in\mathscr P_0(\mathbb R^d)\cap\mathcal P_1(\mathbb R^d)$ 
is such that there exists $\alpha>0$ for which
\begin{equation}\label{ass:smoothsob12}
\max_{\mathsf v\in \mathbb S^{d-1}}
\int_{\mathbb R}|t|^\alpha|\hat \mu_{0X}(t\mathsf v)|\,\di t<\infty \quad \mbox{and} \quad
\max_{\mathsf v\in \mathbb S^{d-1}}\|D^\alpha f_{0X,\mathsf v}\|_1<\infty,
\end{equation}
where $D^\alpha f_{0X,\mathsf v}$ is the inverse Fourier transform of $(-\imath \cdot)^\alpha\hat\mu_{0X}(\cdot\mathsf v)$.
}
\end{ass}

For $d=1$, the conditions in \eqref{ass:smoothsob12} reduce to $\int_{\mathbb R}|t|^\alpha|\hat \mu_{0X}(t)|\,\di t<\infty$ and  
$D^{\alpha}\hspace*{-1pt}f_{0X}\in L^1(\mathbb R)$. In dimension one,
we also consider the case where $f_{0X}$ belongs to a H\"older class.

\begin{ass}\label{ass:smoothXXX1}
\emph{The mixing distribution $\mu_{0X}\in\mathscr P_0(\mathbb R)\cap\mathcal P_1(\mathbb R)$ 
has density $f_{0X}$ verifying the following condition:
there exist $\alpha>0$ and $L_0\in L^1(\mathbb R)$ such that the derivative $f^{(\ell)}_{0X}$ of order 
$\ell=\lfloor \alpha\rfloor$ exists and
\begin{equation*} 
|f^{(\ell)}_{0X}(x+\delta)-f^{(\ell)}_{0X}(x)|\leq L_0(x)|\delta|^{\alpha-\ell} \,\mbox{ for every }\delta,\,x\in\mathbb R.
\end{equation*}
}
\end{ass}

\smallskip

Thus, when $d=1$, when we consider smoothness assumptions of $f_{0X}$, we assume that $f_{0X}$ belongs to either
a Sobolev or a H\"older class of densities, which are common nonparametric classes of regular functions.
With Assumption \ref{ass:smoothXXX1}, the density $f_{0X}$ is required to be locally H\"older smooth, namely, it has $\ell$ derivatives, 
for $\ell$ the largest integer strictly smaller than $\alpha$, 
with the $\ell$th derivative being H\"older of order $\alpha-\ell$ and integrable envelope $L_0$, 
the latter condition being used to bound the $L^1$-norm of the bias of $F_{0X}$, cf. Lemma \ref{lem:der}. 
With Assumption \ref{ass:smoothXXX}, instead,
$f_{0X}$ is required to have global Sobolev regularity $\alpha$. Requiring that $D^\alpha\hspace*{-1pt}f_{0X} \in L^2(\mathbb R)$ is equivalent to
imposing that $f_{0X}\in\mathcal S(\alpha,\,L)$ for some $L>0$, the difference being that $D^\alpha\hspace*{-1pt}f_{0X}$ is here assumed 
to be in $L^1(\mathbb R)$.

\smallskip

To prove the inversion inequalities of Theorem \ref{theo:1} below 
we use a kernel whose choice 
depends on the type of regularity of $f_{0X}$.
We consider $K\in L^1(\mathbb R)\cap L^2(\mathbb R)$, with $zK(z)\in L^1(\mathbb R)$, such that 
\begin{itemize} 
\item[(a)]under  Assumption \ref{ass:smoothXXX}, $K$ is symmetric 
with $\hat K$ supported on $[-2,\,2]$, while $\hat K\equiv 1$ on $[-1,\,1]$;\\[-7pt]
\item[(b)]under Assumption \ref{ass:smoothXXX1}, $K$ is a \emph{kernel of order $\ell$}, see, \emph{e.g.}, \cite{Meister:2009}, pp. 38-39: $\int_{\mathbb R} K(z)\,\di z=1$, while $\int_{\mathbb R}z^j K(z)\,\di z= 0$, for $j\in[\ell]$, 
with $\hat K$ supported on $[-1,\,1]$.
\end{itemize}

\smallskip
\noindent
In case (a), a key property is that, for $A:=1+\|K\|_1<\infty$,
\begin{equation*}
\sup_{t\in\mathbb R\setminus\{0\}}\frac{|1-\hat K(t)|}{|t|^\alpha}\leq A \,\mbox{ for all }\alpha>0.
\end{equation*}
For $h>0$, we define $K_h(\cdot):=(1/h)K(\cdot/h)$ as the rescaled kernel and
$b_{F_X}(h):=F_X - F_X\ast K_h$ as the \vir{bias} of the distribution function $F_X$
of a probability measure $\mu_X$ on $\mathbb R$.
In general, for $d\geq1$, we consider a multivariate kernel on $\mathbb R^d$ with independent coordinates defined as
\begin{equation}\label{eq:kmultiv}
K^{\otimes d}(\mathsf x):=\prod_{j=1}^dK(x_j), \quad \mathsf x\in\mathbb R^d.
\end{equation} 
For $\mu_X\in\mathcal P_1(\mathbb R^d)$ and $\mathsf v\in\mathbb S^{d-1}$, let
$b_{F_{X,\mathsf v}}(h):=F_{X,\mathsf v}-F_{X,\mathsf v}\ast (K^{\otimes d}_h)_{\mathsf v}$ be the bias of the distribution function 
$F_{X,\mathsf v}$ associated to $\mu_{X,\mathsf v}\in\mathcal P_1(\mathbb R)$.

\subsection{Inversion inequality}\label{subsec:inversion}
In this section, we establish, in the $d$-dimensional case and for measurement errors with independent coordinates 
having ordinary smooth densities that are known, possibly up to a scale parameter, an inversion inequality relating the $L^1$-Wasserstein distance between the mixing distributions to
the $L^1$-norm distance between the corresponding mixture densities.
This inequality plays a crucial role
in the proofs of Theorems \ref{thm:22} and \ref{th:comtelacour}. The proof  of Theorem \ref{theo:1} is reported in Section \ref{sec:rth1}. 
Starting from \cite{dedecker2015}, the idea is to use a suitable kernel
to smooth the distribution functions $F_X$, $F_{0X}$ corresponding to the mixing measures $\mu_X$, $\mu_{0X}$
and then to bound the $L^1$-Wasserstein distance between 
the smoothed versions, meanwhile controlling the bias induced by the smoothing. 

\begin{thm}\label{theo:1}
Let $\mu_X,\,\mu_{0X}\in\mathcal P_1(\mathbb R^d)$, $d\geq1$, and let the error distribution
$\mu_\varepsilon^{\otimes d}$ have single coordinate measure 
$\mu_\varepsilon\in\mathcal P_{1}(\mathbb R)$ 
satisfying Assumption \ref{ass:identifiability+error} for $\beta>0$. 
Then, for probability measures 
$\mu_Y:=\mu_\varepsilon^{\otimes d}\ast\mu_X$, $\mu_{0Y}:=\mu_\varepsilon^{\otimes d}\ast\mu_{0X}$, 
having densities $f_Y$, $f_{0Y}$, respectively, and a sufficiently small $h>0$,
\begin{equation*}
W_1(\mu_X,\,\mu_{0X})\lesssim h+  W_1(\mu_Y,\,\mu_{0Y}) +T,
\end{equation*}
with 
\begin{equation}\label{eq:t2}
\begin{split}
T &= |\log h|\max_{ \mathsf v \in \mathbb S^{d-1}}  
\bigg(  |\log h|\1_{(\beta|I_h^\ast(\mathsf v)|\leq 1)}+ h^{-\beta |I_h^\ast(\mathsf v)|+1} 
\prod_{j\in I_h^\ast(\mathsf v)} |v_j|^\beta \1_{(\beta |I_h^\ast(\mathsf v)|> 1)} \bigg)\\
&\qquad\qquad\qquad\qquad\qquad\qquad\qquad\qquad\qquad\qquad\qquad\qquad\times\|f_{Y,\mathsf v} -f_{0Y,\mathsf v}\|_1,
\end{split}
\end{equation}
where, for each $\mathsf v \in \mathbb S^{d-1}$, we let $I_h^\ast(\mathsf v):=\{j\in[d]:\,|v_j|>h\}$.

\smallskip

If, in addition, $\mu_{0X}$ satisfies  Assumption \ref{ass:smoothXXX} for $\alpha>0$ and
there exist a constant $C_1>0$ and a kernel $K$ as in (a) such that
\begin{equation}\label{eq:ass1}
\max_{\mathsf v\in\mathbb S^{d-1}}\|b_{F_{X,\mathsf v}}(h)\|_1\leq C_1 h^{\alpha+1},
\end{equation}
then 
\begin{equation*}\label{eq:ineqw21}
W_1(\mu_X,\,\mu_{0X})\lesssim h^{\alpha+1}+W_1(\mu_Y,\,\mu_{0Y})+T,
\end{equation*}
with $T$ as in \eqref{eq:t2}.
\end{thm}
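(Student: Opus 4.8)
The plan is to reduce the $d$-dimensional problem to a one-dimensional one via the strong equivalence \eqref{eq:equivalence} between $W_1$ and the max-sliced $\overline W_1$, so that it suffices to bound $W_1(\mu_{X,\mathsf v},\,\mu_{0X,\mathsf v})$ uniformly over $\mathsf v\in\mathbb S^{d-1}$. Fix $\mathsf v$. By \eqref{eq:lcdf}, $W_1(\mu_{X,\mathsf v},\,\mu_{0X,\mathsf v})=\|F_{X,\mathsf v}-F_{0X,\mathsf v}\|_1$. Following the strategy attributed to \cite{dedecker2015}, I would insert the $K^{\otimes d}_h$-smoothed distribution functions and use the triangle inequality:
\[
\|F_{X,\mathsf v}-F_{0X,\mathsf v}\|_1\leq \|b_{F_{X,\mathsf v}}(h)\|_1+\|b_{F_{0X,\mathsf v}}(h)\|_1+\|F_{X,\mathsf v}\ast (K^{\otimes d}_h)_{\mathsf v}-F_{0X,\mathsf v}\ast (K^{\otimes d}_h)_{\mathsf v}\|_1.
\]
For the first ``bias'' term: under Assumption \ref{ass:smoothXXX}, it is controlled by the hypothesis \eqref{eq:ass1} at the rate $h^{\alpha+1}$; in the general case (no smoothness), one only gets the crude bound of order $h$ from $zK(z)\in L^1$ and $\mu_X\in\mathcal P_1$, which is where the leading term $h$ versus $h^{\alpha+1}$ in the two displays comes from. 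The second bias term is handled identically since $\mu_{0X}$ satisfies the same hypotheses. So the real work is the third term, the distance between the two smoothed distribution functions.

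For that term, the idea is to move the smoothing kernel through the convolution with the error: writing $f_{Y,\mathsf v}=f_{\varepsilon,\mathsf v}^{\otimes d}\ast f_{X,\mathsf v}$ is not quite right since $\mathsf v\cdot\boldsymbol\varepsilon$ is not a product, so instead I would work on the full $\mathbb R^d$ and then project, or more cleanly use Fourier analysis on the line. Passing to Fourier transforms, $\widehat{F_{X,\mathsf v}\ast (K^{\otimes d}_h)_{\mathsf v}}(t)$ carries a factor $\hat\mu_{X}(t\mathsf v)\prod_{j=1}^d\hat K(h t v_j)/(\imath t)$, and I want to recover $f_{Y,\mathsf v}-f_{0Y,\mathsf v}$, whose Fourier transform is $(\hat\mu_X-\hat\mu_{0X})(t\mathsf v)\prod_{j=1}^d\hat f_\varepsilon(tv_j)$. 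Hence I divide and multiply by $\prod_{j}\hat f_\varepsilon(tv_j)$: the smoothed DF difference equals an inverse Fourier integral of
\[
(\hat f_{Y,\mathsf v}-\hat f_{0Y,\mathsf v})(t)\,\frac{1}{\imath t}\prod_{j=1}^d\frac{\hat K(htv_j)}{\hat f_\varepsilon(tv_j)}=(\hat f_{Y,\mathsf v}-\hat f_{0Y,\mathsf v})(t)\,\frac{1}{\imath t}\prod_{j=1}^d \hat K(htv_j)\,r_\varepsilon(tv_j).
\]
Using $\|g\|_1$-type control via the dual pairing (or directly, since $\widehat{f_{Y,\mathsf v}-f_{0Y,\mathsf v}}$ is bounded by $\|f_{Y,\mathsf v}-f_{0Y,\mathsf v}\|_1$ in sup-norm and the multiplier is compactly supported by $\hat K$), the smoothed-DF difference is bounded by $\|f_{Y,\mathsf v}-f_{0Y,\mathsf v}\|_1$ times the $L^1$-norm on the support of the multiplier $m_h(t):=\frac{1}{\imath t}\prod_{j}\hat K(htv_j)r_\varepsilon(tv_j)$; alternatively one splits off the low frequencies $|t|\lesssim 1$ to recover the genuine $W_1(\mu_Y,\,\mu_{0Y})$ term and estimates only the complementary part, which is what produces the additive $W_1(\mu_Y,\,\mu_{0Y})$ in both displayed inequalities. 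The key estimate is then a bound on $\|m_h\|_1$ (away from low frequencies): since $|r_\varepsilon(s)|\lesssim(1+|s|)^\beta$ by Assumption \ref{ass:identifiability+error} and $\hat K$ supported in $[-2,2]$ (case (a)) or $[-1,1]$ (case (b)) forces $|tv_j|\lesssim 1/h$, the product over the ``large'' coordinates $j\in I_h^\ast(\mathsf v)$, i.e.\ those with $|v_j|>h$, contributes $\prod_{j\in I_h^\ast(\mathsf v)}|tv_j|^\beta$, while coordinates with $|v_j|\le h$ contribute only an $O(1)$ factor. Integrating $|t|^{-1}\prod_{j\in I_h^\ast(\mathsf v)}|t|^\beta|v_j|^\beta$ over the frequency band $1\lesssim|t|\lesssim 1/h$ gives exactly the dichotomy in \eqref{eq:t2}: when $\beta|I_h^\ast(\mathsf v)|\le 1$ the integral of $|t|^{\beta|I_h^\ast(\mathsf v)|-1}$ is logarithmic, yielding the $|\log h|\1_{(\beta|I_h^\ast(\mathsf v)|\le1)}$ term; when $\beta|I_h^\ast(\mathsf v)|>1$ it is dominated by the upper endpoint, yielding $h^{-\beta|I_h^\ast(\mathsf v)|+1}\prod_{j\in I_h^\ast(\mathsf v)}|v_j|^\beta$; the overall $|\log h|$ prefactor in $T$ absorbs the low-frequency contribution of $|t|^{-1}$.

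The main obstacle I anticipate is twofold. First, the careful bookkeeping of which coordinates are ``active'' ($|v_j|>h$) versus negligible ($|v_j|\le h$): the support constraint from $\hat K$ must be combined coordinate-by-coordinate, and one needs the derivative bound $|r_\varepsilon^{(1)}(t)|\lesssim(1+|t|)^{\beta-1}$ from \eqref{eq:deriv} — not just the bound on $r_\varepsilon$ itself — to justify the integration by parts (or Plancherel-type argument) that converts the Fourier integral into an $L^1$-bound on distribution functions rather than densities, since we are controlling $\|F_{X,\mathsf v}\ast(\cdot)-F_{0X,\mathsf v}\ast(\cdot)\|_1$ and the extra $1/(\imath t)$ factor needs smoothness of the multiplier to be integrable near the band edges. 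Second, the passage from the $d$-dimensional kernel $K^{\otimes d}$ to its one-dimensional projection $(K^{\otimes d}_h)_{\mathsf v}$ and the identity $\widehat{(K^{\otimes d}_h)_{\mathsf v}}(t)=\prod_{j=1}^d\hat K(htv_j)$ must be set up correctly; once that is in place and one chooses $h$ small enough that the band $|t|\lesssim 1/h$ genuinely separates from the low-frequency region, the estimates assemble into the two claimed inequalities, with the $h$ (resp.\ $h^{\alpha+1}$) term coming from the bias, $W_1(\mu_Y,\,\mu_{0Y})$ from the low frequencies, and $T$ from the high-frequency multiplier norm, after applying \eqref{eq:equivalence} to return from $\overline W_1$ to $W_1$.
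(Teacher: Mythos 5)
Your global strategy matches the paper's: reduce to dimension one via the strong equivalence \eqref{eq:equivalence}, insert the $K_h^{\otimes d}$-smoothing, control the two bias terms of the distribution functions (crudely at $O(h)$ in general, at $O(h^{\alpha+1})$ under Assumption \ref{ass:smoothXXX} and hypothesis \eqref{eq:ass1}), pass to Fourier variables, multiply by $r_\varepsilon^{\otimes d}$ to expose $\hat\mu_Y-\hat\mu_{0Y}$, and split low from high frequencies, the low-frequency part yielding $W_1(\mu_Y,\mu_{0Y})$. The paper additionally switches between a probability-density kernel (Case 1, where the Wasserstein triangle inequality applies directly) and a superkernel (Case 2, where only the $L^1$-distance of distribution functions triangulates), and uses a smooth cutoff $\chi$ rather than a sharp truncation so that the low-frequency multiplier's inverse Fourier transform has bounded $L^1$-norm (Lemma \ref{lem:bigO}); these are refinements you do not address but they are not the central difficulty. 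The central difficulty is in your quantification of the high-frequency term. You claim that the smoothed-DF difference is $\lesssim\|f_{Y,\mathsf v}-f_{0Y,\mathsf v}\|_1\int|m_h(t)|\,\di t$ with $m_h(t)=\tfrac{1}{\imath t}\prod_j\hat K(htv_j)r_\varepsilon(tv_j)$, and that $\int_{1\lesssim|t|\lesssim1/h}|m_h(t)|\,\di t$ produces the dichotomy in \eqref{eq:t2}. Neither is right. The needed quantity is $\|\check m_h\|_{L^1(\di x)}$, the $L^1$-norm in $x$ of the \emph{inverse Fourier transform} of the multiplier (Young's inequality for the convolution $\check m_h\ast(f_{Y,\mathsf v}-f_{0Y,\mathsf v})$), and that is not controlled by $\|m_h\|_{L^1(\di t)}$, which only bounds $\|\check m_h\|_\infty$. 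Moreover, arithmetically, since $|I_h^\ast(\mathsf v)|\geq1$ and $\beta>0$, one always has $\int_1^{1/h}|t|^{\beta|I_h^\ast(\mathsf v)|-1}\,\di t\asymp h^{-\beta|I_h^\ast(\mathsf v)|}$, which is never logarithmic and is a full factor of $h^{-1}$ larger than the bound in \eqref{eq:t2}; the logarithmic case at $\beta|I_h^\ast(\mathsf v)|\le1$ corresponds to exponent $\beta|I_h^\ast(\mathsf v)|-2$, i.e., to the \emph{derivative} of the multiplier, not the multiplier itself.

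The missing step is the content of Lemma \ref{lem:F3} of the Supplement: split the $x$-integral of $|\check m_h|$ into $|x|\le h$, $h<|x|\le1$ and $|x|>1$. The first piece contributes $h\cdot\int|m_h(t)|\,\di t\lesssim h^{-\beta|I_h^\ast(\mathsf v)|+1}\prod_{j\in I_h^\ast(\mathsf v)}|v_j|^\beta$ — the extra factor $h$ from the small $x$-range is what rescues the power. The middle piece, after one integration by parts via identity \eqref{eq:identity21} (this is where $|r_\varepsilon^{(1)}(t)|\lesssim(1+|t|)^{\beta-1}$ from \eqref{eq:deriv} is genuinely used), contributes $\big(\int_{h<|x|\le1}|x|^{-1}\,\di x\big)\cdot\int_{\mathscr D_0}\big|\tfrac{\di}{\di t}\big(w_{2,h,\mathsf v}(t)/t\big)\big|\,\di t$; the outer integral is $|\log h|$ and the inner one has integrand $\asymp|t|^{\beta|I_h^\ast(\mathsf v)|-2}$, producing the threshold at $\beta|I_h^\ast(\mathsf v)|=1$ and the dominant term in $T$. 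The third piece is handled by Cauchy--Schwarz and an $L^2$-estimate of the derivative. You flag the derivative bound as ``the main obstacle'' but the argument you actually write does not invoke it, and without the three-part $x$-decomposition the high-frequency bound loses a power of $h$ and the statement as formulated in \eqref{eq:t2} would not follow.
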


\begin{rmk}
\emph{The terms $h$ and $h^{\alpha+1}$ in $W_1(\mu_X,\,\mu_{0X}) \lesssim h +W_1(\mu_Y,\,\mu_{0Y}) + T$ and $W_1(\mu_X,\, \mu_{0X}) \lesssim h^{\alpha+1}+W_1(\mu_Y,\,\mu_{0Y}) + T$, respectively, stem from bounding
$$\max_{{\mathsf v}\in\mathbb S^{d-1}}b_{F_{X,{\mathsf v}}}(h)+\max_{{\mathsf v}\in\mathbb S^{d-1}}b_{F_{0X,{\mathsf v}}}(h).$$}
\end{rmk}

\begin{rmk}\label{rkT}
\em{The key quantity in the inversion inequality is $h+T$ (or $h^{\alpha+1} + T$ when $\alpha>0$) because  typically  $W_1(\mu_Y,\, \mu_{0Y})$ can be bounded by a term of the same order as $\|f_Y- f_{0Y}\|_1$, up to a log-factor, see, {\em e.g.}, Theorem \ref{lem:1}.}
\end{rmk}

\begin{rmk}\label{rmk:ext1}
\emph{
For $d=1$, Theorem \ref{theo:1} also holds if Assumption \ref{ass:smoothXXX1}, in place of 
Assumption \ref{ass:smoothXXX}, is in force. Then,
$K$ is taken to be an $(\lfloor\alpha\rfloor+1)$-order kernel satisfying also the condition
$\int_{\mathbb R}|z|^{\alpha+1}|K(z)|\,\di z<\infty$. For $\alpha>2$, the kernel $K$ is not a probability density because it takes negative values. 
Nonetheless, if, in addition to Assumption \ref{ass:smoothXXX1}, $\mu_{0X}$ satisfies condition \eqref{eq:ass1}, then 
we still have
\begin{eqnarray*}\label{eq:Wi}
W_1(\mu_X,\,\mu_{0X}) \lesssim h^{\alpha+1}+T, \quad\,\,\,\, T \lesssim  W_1(\mu_Y,\,\mu_{0Y})+ h^{-(\beta-1)_+}|\log h|^{1+\1_{(\beta\leq1)}}\|f_Y-f_{0Y}\|_1.
\end{eqnarray*}
}
\end{rmk}

\begin{rmk}\label{rk:minimax}
\em{Theorem \ref{theo:1} can  be used to study both Bayesian and frequentist deconvolution procedures. In Section \ref{sec:geneBayes} we consider Bayesian posterior convergence rates while  in Section \ref{sec:comteLatour}, we analyse an estimator based on the deconvolution kernel density estimator considered in \cite{comtelacourihp}. Using Theorem \ref{theo:1}, we show that in both cases, for the Laplace noise ($\beta=2$), the derived rate is minimax-optimal.}
\end{rmk}

\smallskip
The result of Theorem \ref{theo:1} falls within the scope of inversion inequalities, 
which translate an $L^p$-distance, $p\geq1$, between kernel mixtures into a proximity measure
between the corresponding mixing distributions.
A first inequality has been obtained by \cite{nguyen2013}, Theorem 2, p. 377, for ordinary and supersmooth
kernel densities in convolution mixtures, see also \cite{heinrich2018}.
In dimension one, with ordinary smooth error distributions, refined inversion inequalities from the Hellinger or $L^1/L^2$-distance between $f_Y$ and $f_{0Y}$ to the $L^1$-Wasserstein distance between the corresponding mixing measures $\mu_X$ and $\mu_{0X}$ have been elaborated by \cite{gao2016, scricciolo:18}, but neither of these inequalities are sharp to lead to minimax-optimal estimation rates.

To better understand the implications of Theorem \ref{theo:1}, we first analyse the case $d=1$. In the direct problem, under the bound $\|f_Y - f_{0Y}\|_1 \leq \tilde \epsilon_n$ and in the context of Remark \ref{rkT}, one obtains that, for $\beta>1$, choosing $h\equiv h_n = [\tilde \epsilon_n(\log n)]^{1/(\alpha+\beta)}$, 
$$W_1(\mu_X,\, \mu_{0X}) \lesssim (\tilde\epsilon_n\log n)^{(\alpha+1)/(\alpha+\beta)}.$$ 
This reasoning has been used in Section \ref{sec:geneBayes}. As mentioned in Remark \ref{rkT}, in the case of Bayesian estimation and posterior contraction rates, 
Theorem \ref{lem:1} states that the Kullback-Leibler prior mass condition \eqref{con1}, together with the assumptions that $\mu_{0Y}\in \mathscr P_0(\mathbb R^d)\cap \mathcal P_{2+\delta}(\mathbb R^d)$, for some $\delta>0$, and that the posterior distribution is asymptotically supported on probability measures with uniformly bounded $(2+\delta)$th moments, yields a posterior contraction rate for $W_1(\mu_Y,\,\mu_{0Y})$ of the order $O(\tilde\epsilon_n \log n)$, where $\tilde \epsilon_n$ is the posterior convergence rate of
$\|f_Y-f_{0Y}\|_1$. For $\tilde\epsilon_n = n^{-(\alpha + \beta )/[2(\alpha + \beta )+1]} (\log n)^{q_1}$, we get
$$W_1(\mu_X,\,\mu_{0X}) \lesssim n^{-(\alpha+1)/[2(\alpha+\beta)+1]} (\log n)^{q_2}$$ 
for some $q_1,\, q_2>0$. For the sake of simplicity, we neglect logarithmic factors in the following discussion. 
The above rate $n^{-(\alpha+\beta)/[2(\alpha+\beta)+1]}$ for $\|f_Y-f_{0Y}\|_1$ in the direct density estimation problem is expected to occur when $f_{0X}$ has (H\"older or Sobolev) regularity $\alpha> 0$, see also Theorems \ref{thm:31} and \ref{thm:4} for the special case of a Laplace error. 
The rate $n^{-(\alpha+1)/[2(\alpha+\beta)+1]}$ matches with the lower bound on the $L^1$-Wasserstein risk for estimating $\mu_{0X}$ given in
Theorem \ref{thm:lower bound}, showing that, up to a log-factor, the rate $n^{-(\alpha+1)/[2(\alpha+\beta)+1]}$ is minimax-optimal.  
Theorem \ref{theo:1}, however, does not satisfactorily
cover the case when $0<\beta <1$. In this case, in fact,
it yields the rate $n^{-(\alpha+\beta)/[2(\alpha+\beta)+1]}$ when $f_{0X}$ is $\alpha$-regular and 
the rate $n^{-\beta/(2\beta+1)}$ when $\mu_{0X}$ is only known to have a density $f_{0X}$. 
Both rates are slower than the respective lower bounds $n^{-(\alpha+1)/[2\alpha+(2\beta\vee 1)+1]}$ and 
$n^{-1/[(2\beta\vee 1)+1]}$ given in Theorem \ref{thm:lower bound}.

When $d>1$, the use of the inversion inequality is less straightforward because, still assuming for the sake of simplicity that $\beta d>1$, the term
$$ \max_{ \mathsf v \in \mathbb S^{d-1}}  
\bigg(1 + h^{-\beta |I_h^\ast(\mathsf v)|+1} 
\prod_{j\in I_h^\ast(\mathsf v)} |v_j|^\beta\bigg)
\|f_{Y,\mathsf v} -f_{0Y,\mathsf v}\|_1
 $$
is more involved, even though it has the correct behaviour to control $W_1(\mu_X,\,\mu_{0X})$. 
In fact, it reduces the problem to univariate projections $\mathsf v\cdot \mathsf Y$, $\mathsf v\cdot \mathsf X$ and $\mathsf v \cdot \boldsymbol\varepsilon$, with a penalty in terms of $h$ that takes into account the \textit{correct regularity} of the resulting noise 
$\mathsf v \cdot \boldsymbol\varepsilon$, namely, $\beta |I_h^\ast(\mathsf v)|$. Following the above discussion and pretending that, for each $\mathsf v$, the kernel type deconvolution estimator $\tilde\mu_{1n}$ defined in Section \ref{sec:ke} only depends on the $(\mathsf v\cdot \mathsf Y_i)$'s, for $i\in[n]$, the distance $\|f_{\tilde\mu_{Yn,\mathsf v}}-f_{\mu_{0Y,\mathsf v}}\|_1$ would be bounded by $n^{-(\alpha+\beta |I_h^\ast(\mathsf v)|)/(2 \alpha + 2 \beta |I_h^\ast(\mathsf v)| + 1)}$. Then, considering $h = n^{-1/(2\alpha+2\beta d+1)}$ would yield $W_1(\tilde\mu_{1n},\, \mu_{0X}) \lesssim n^{-(\alpha+1)/(2 \alpha+2\beta d+1)}$, up to a log-factor. Obviously, $\tilde\mu_{1n}$ depends on the $\mathsf Y_i$'s and not only on the projected observations $(\mathsf v\cdot \mathsf Y_i)$'s, for $i\in[n]$. Nonetheless, we get a bound on $\|f_{\tilde\mu_{Yn,\mathsf v}}-f_{\mu_{0Y,\mathsf v}}\|_1$ of the order $O(n^{-(\alpha + \beta |I_h^\ast(\mathsf v)|)/(2 \alpha + 2 \beta d + 1)})$, which still leads to the minimax rate $n^{-(\alpha+1)/(2 \alpha + 2\beta d +1)}$. 

In a Bayesian framework, instead, controlling $\|f_{Y,\mathsf v} -f_{0Y,\mathsf v}\|_1$ for all $f_Y$ in the bulk of the posterior distribution is challenging and is left for future work.

\subsection{Error distribution with unknown scale parameter}
The inversion inequality presented in Theorem \ref{theo:1} 
goes through to the  
convolution model where the coordinate error distribution is known up to a common scale parameter.
Consider observations
\begin{equation}\label{eq:unknown_noise}
\mathsf Y_i= \mathsf X_i+\tau_0 \boldsymbol \varepsilon_i, \quad i\in[n],
\end{equation}
where the $\mathsf X_i$'s and $\boldsymbol \varepsilon_i$'s are as described in Section \ref{sec:intro}.  
There are two unknown elements in this model that need to be recovered: 
the common law $\mu_{0X}$ of the $\mathsf X_i$'s 
and the scale parameter $\tau_0>0$ of the coordinate error 
density $f_{\varepsilon,\tau_0}=(1/\tau_0)f_{\varepsilon}(\cdot/\tau_0)$.
\begin{prop}
Consider model \eqref{eq:unknown_noise} 
with the single coordinate error density satisfying the following condition:
there exists a constant $c>0$ such that
\begin{equation}\label{eq:lipschitz}
\forall\,\tau,\,\tau_0>0,\quad\|f_{\varepsilon,\tau}-f_{\varepsilon,\tau_0}\|_1\leq c\frac{|\tau-\tau_0|}{\tau\tau_0}.
\end{equation}
Under the assumptions of the first part of Theorem \ref{theo:1}, we have that
\begin{eqnarray}\label{eq:t2bis}
W_1(\mu_X, \mu_{0X}) &\lesssim & h+
W_1(\mu_{Y,\tau},\,\mu_{0Y,\tau_0})+|\tau-\tau_0| + T,
\end{eqnarray}
where $T$ is given by the expression in \eqref{eq:t2} with $\|f_{Y, \mathsf v}-f_{0Y, \mathsf v}\|_1$ replaced by 
$$\frac{|\tau-\tau_0|}{\tau\tau_0}+\|f_{Y, \tau,\mathsf v}-f_{0Y, \tau_0,\mathsf v}\|_1.$$
\end{prop}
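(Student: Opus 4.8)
The plan is to reduce the unknown-scale model \eqref{eq:unknown_noise} to the known-scale model of Theorem \ref{theo:1} by comparing the convolution $\mu_\varepsilon^{\otimes d}(\cdot/\tau)\ast\mu_X$ against a reference convolution built with the true scale $\tau_0$, and then accounting for the discrepancy in scales via the Lipschitz condition \eqref{eq:lipschitz}. Concretely, write $f_{\varepsilon,\tau}^{\otimes d}$ for the product density of the $d$-fold error at scale $\tau$, and set $f_{Y,\tau}=f_{\varepsilon,\tau}^{\otimes d}\ast\mu_X$. The first step is to apply the first part of Theorem \ref{theo:1} \emph{with the error distribution fixed at scale $\tau_0$}: since $f_{\varepsilon,\tau_0}$ satisfies Assumption \ref{ass:identifiability+error} for the same $\beta$ (scaling only changes the constant in \eqref{eq:deriv}), we obtain, for a sufficiently small $h>0$,
\begin{equation*}
W_1(\mu_X,\,\mu_{0X})\lesssim h+W_1(f_{\varepsilon,\tau_0}^{\otimes d}\ast\mu_X,\;\mu_{0Y,\tau_0})+\widetilde T,
\end{equation*}
where $\widetilde T$ is the quantity in \eqref{eq:t2} but with $\|f_{Y,\mathsf v}-f_{0Y,\mathsf v}\|_1$ replaced by $\|(f_{\varepsilon,\tau_0}^{\otimes d}\ast\mu_X)_{\mathsf v}-f_{0Y,\tau_0,\mathsf v}\|_1$. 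This is the inequality we already have; only the ``wrong-scale'' object $f_{Y,\tau}$ needs to be connected to the ``right-scale'' object $f_{\varepsilon,\tau_0}^{\otimes d}\ast\mu_X$.

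The second step is the error-in-scale comparison. By a telescoping argument over the $d$ coordinates combined with \eqref{eq:lipschitz} and the fact that $\|g\ast\cdot\|_1\le\|g\|_1$ for $L^1$ kernels, one gets
\begin{equation*}
\|f_{\varepsilon,\tau}^{\otimes d}\ast\mu_X-f_{\varepsilon,\tau_0}^{\otimes d}\ast\mu_X\|_1\le\sum_{j=1}^d\|f_{\varepsilon,\tau}-f_{\varepsilon,\tau_0}\|_1\lesssim\frac{|\tau-\tau_0|}{\tau\tau_0},
\end{equation*}
and the same bound holds for every one-dimensional projection $\mathsf v$, because projecting is a linear (hence $L^1$-contractive after the appropriate change of variables) operation on the mixture; thus
\begin{equation*}
\|(f_{\varepsilon,\tau_0}^{\otimes d}\ast\mu_X)_{\mathsf v}-f_{0Y,\tau_0,\mathsf v}\|_1\le\|f_{Y,\tau,\mathsf v}-f_{0Y,\tau_0,\mathsf v}\|_1+\frac{|\tau-\tau_0|}{\tau\tau_0},
\end{equation*}
which is exactly the replacement announced in the statement for the term inside $T$. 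For the Wasserstein term one uses that $W_1$ is dominated by the $L^1$-distance between densities up to moment factors (or directly that $W_1(P\ast\delta,P\ast Q)\le M_1(Q)$-type bounds), giving
\begin{equation*}
W_1(f_{\varepsilon,\tau_0}^{\otimes d}\ast\mu_X,\;f_{\varepsilon,\tau}^{\otimes d}\ast\mu_X)\lesssim|\tau-\tau_0|\,M_1(\mu_\varepsilon^{\otimes d})\lesssim|\tau-\tau_0|,
\end{equation*}
so that $W_1(f_{\varepsilon,\tau_0}^{\otimes d}\ast\mu_X,\mu_{0Y,\tau_0})\le W_1(\mu_{Y,\tau},\mu_{0Y,\tau_0})+|\tau-\tau_0|$. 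Substituting these two comparisons into the inequality from the first step yields \eqref{eq:t2bis}.

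The main obstacle I anticipate is the bookkeeping for the projected densities: one must check that $(f_{\varepsilon,\tau}^{\otimes d}\ast\mu_X)_{\mathsf v}$ behaves under the scale change exactly as in the one-dimensional Lipschitz bound \eqref{eq:lipschitz}, i.e.\ that projecting along $\mathsf v$ and rescaling commute up to the harmless constant. Since $\tau\boldsymbol\varepsilon$ projects to $\tau(\mathsf v\cdot\boldsymbol\varepsilon)$, the law of $\mathsf v\cdot\boldsymbol\varepsilon$ is a $d$-fold "mixed" convolution whose density is \emph{not} simply $f_{\varepsilon}$ rescaled, so one cannot apply \eqref{eq:lipschitz} verbatim to the projected noise; instead one should apply the $L^1$-bound \emph{before} projecting (as in the telescoping display above) and then use that the projection map $\mathsf v_\ast$ is an $L^1$-contraction on mixture densities via disintegration, which keeps the constant under control. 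A secondary, purely technical point is ensuring the moment hypotheses ($\mu_\varepsilon\in\mathcal P_1(\mathbb R)$, $\mu_X\in\mathcal P_1(\mathbb R^d)$) remain valid for $f_{Y,\tau}$ uniformly for $\tau$ in a neighbourhood of $\tau_0$, which follows since $M_1(\tau\boldsymbol\varepsilon)=\tau M_1(\boldsymbol\varepsilon)$ stays bounded. Once these are in place, the rest is substitution and the result follows from Theorem \ref{theo:1}.
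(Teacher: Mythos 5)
Your proposal is correct and follows essentially the same route as the paper: apply Theorem \ref{theo:1} at the true scale $\tau_0$, then transfer both the $W_1$ term (via a coupling giving the $|\tau-\tau_0|\,M_1(\mu_\varepsilon^{\otimes d})$ bound) and the projected $L^1$ term (by bounding $\|f_{Y,\tau,\mathsf v}-f_{Y,\tau_0,\mathsf v}\|_1 \leq \|f_{Y,\tau}-f_{Y,\tau_0}\|_1 \leq d\|f_{\varepsilon,\tau}-f_{\varepsilon,\tau_0}\|_1$ before projecting, exactly as you anticipated). The ``main obstacle'' you flag — that \eqref{eq:lipschitz} cannot be applied to the projected noise directly — is the right concern, and your resolution (apply the $L^1$ bound unprojected, then use that $\mathsf v_\ast$ is $L^1$-contractive) coincides with the paper's argument.
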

\begin{proof}
By Theorem \ref{theo:1}, we have that
$$W_1(\mu_X, \mu_{0X}) \lesssim h+ W_1(\mu_{Y,\tau_0},\,\mu_{0Y,\tau_0})+ T_{\tau_0},$$
where $T_{\tau_0}$ is given by the expression in \eqref{eq:t2} with $\|f_{Y, \mathsf v}-f_{0Y, \mathsf v}\|_1$ replaced by $\|f_{Y,\tau_0,\mathsf v}-f_{0Y, \tau_0,\mathsf v}\|_1$. 
 Let $\mathsf Y=\mathsf X+\tau\boldsymbol \varepsilon$ be distributed according to $\mu_{Y,\tau}$ and $\mathsf Y=\mathsf X+\tau_0\boldsymbol \varepsilon$ according to $\mu_{Y,\tau_0}$.
By the triangle inequality and the bound 
$W_1(\mu_{Y,\tau},\,\mu_{Y,\tau_0})\leq \mathbb E[|(\mathsf X+ \tau\boldsymbol \varepsilon)-(\mathsf X+\tau_0\boldsymbol \varepsilon)|]=M_1(\mu_\varepsilon^{\otimes d})|\tau-\tau_0|
\lesssim |\tau-\tau_0|$, we have that 
$$W_1(\mu_{Y,\tau_0},\,\mu_{0Y,\tau_0})\lesssim W_1(\mu_{Y,\tau},\,\mu_{0Y,\tau_0})+|\tau-\tau_0|.$$ Besides, 
from
$\|f_{Y,\tau,\mathsf v}-f_{Y,\tau_0,\mathsf v}\|_1\leq\|f_{Y,\tau}-f_{Y,\tau_0}\|_1\leq \|f_{\varepsilon,\tau}^{\otimes d}- f_{\varepsilon,\tau_0}^{\otimes d}\|_1\leq d\|f_{\varepsilon,\tau}-f_{\varepsilon,\tau_0}\|_1$ and condition \eqref{eq:lipschitz}, it follows that
\[
\begin{split}
\|f_{Y,\tau_0,\mathsf v}-f_{0Y,\tau_0,\mathsf v}\|_1 &\leq \|f_{Y,\tau_0,\mathsf v}-f_{Y,\tau,\mathsf v}\|_1 +\|f_{Y,\tau,\mathsf v}-f_{0Y,\tau_0,\mathsf v}\|_1\\
&\leq d c\frac{|\tau-\tau_0|}{\tau\tau_0}+
\|f_{Y,\tau,\mathsf v}-f_{0Y,\tau_0,\mathsf v}\|_1,
\end{split}
\]
which completes the proof. 
\end{proof}
\begin{rmk}
\emph{
Condition \eqref{eq:lipschitz} is verified for the ordinary smooth error distributions listed in Section \ref{subsec:Ass}. 
Specifically, for Laplace densities 
see, \emph{e.g.}, (A.6) with $p=1$ in Lemma A.2 of \cite{scricciolo:2011}, p. 300; for Linnik densities the result follows from the fact that 
they are scale mixtures of Laplace, while for gamma densities it can be directly checked when $|\tau-\tau_0|<1$.}
\end{rmk}

\begin{rmk}
\emph{Whether the inversion inequality with the term $T$ bounded as in \eqref{eq:t2bis} can be used to recover the mixing distribution 
in a convolution model with single coordinate error density known up to a scale parameter
is a critical question related to the identifiability as a sufficient condition for the existence of consistent estimators.
In the present context, it is not clear whether 
the distribution of the $\mathsf Y_i$'s uniquely determines the scale parameter $\tau_0$ and the distribution $\mu_{0X}$. In fact,
as remarked by \cite{butuceamathias:2005}, p. 312,
it is important that the distribution to be deconvolved be significantly less smooth than the error distribution,
which is not the case when both the error and mixing distributions are ordinary smooth. 
We should mention that, at least for $d=2$, identifiability has been proved by \cite{10.1214/21-AOS2106}, see Theorem 2.1, p. 306.
It remains, however, an open question whether fast rates of convergence are possible.
Typically, in presence of identifiability problems,
either more restrictive conditions are imposed on the mixing distribution or additional data are required.
If the scale parameter is estimable without loss in the speed of convergence, then the inversion inequality 
can be used to estimate the mixing distribution. Yet, a thorough investigation of this issue is 
beyond the scope of this paper and we refer the reader to Chapter 2 of \cite{Meister:2009}, pp. 5--105,
as well as to the references therein, for a more complete discussion of the 
various aspects of the problem.
}
\end{rmk}
 



\section{Application to Bayesian estimation: posterior rates of convergence for $L^1$-Wasserstein deconvolution} \label{sec:geneBayes}
In this section, we first provide a general theorem on posterior rates of convergence for $W_1(\mu_X, \mu_{0X})$ and then apply it to the univariate deconvolution problem using a Dirichlet process mixture-of-normals prior on the mixing density $f_X$. 
 
\subsection{Posterior rates of convergence for deconvolution on $\mathbb R^d$}\label{subsec:Wrate}
We state a general theorem on posterior contraction rates. The proof is reported in Section \ref{sec:proofs}. 

\begin{thm}\label{thm:22}
Let $\Pi_n$ be a prior distribution on $\mathscr P(\mathbb R^d)$, $d\geq1$. Suppose that,
for $\delta>0$, we have
$\mu_{0X}\in\mathcal P_{4+\delta}(\mathbb R^d)$ and the error distribution is $\mu_\varepsilon^{\otimes d}$, with single coordinate distribution $\mu_\varepsilon\in\mathcal P_{4+\delta}(\mathbb R)$ satisfying Assumption \ref{ass:identifiability+error} for some $\beta>0$. 
Furthermore, for a sequence $\tilde{\epsilon}_n\geq \sqrt{(\log n)/n}$ such that
$\tilde{\epsilon}_n\f0$,
constants $c_1,\, c_2,\,c_3,\,c_4,\,K'>0$ and sets $\mathscr P_n\subseteq \{\mu_X:\,M_{4+\delta} (\mu_Y)\leq K'\tilde \epsilon_n^{-2}\}$, 
\begin{equation}\label{con1}
\begin{split}
\log D(\tilde{\epsilon}_n,\,\F(\mathscr P_n),\,d) &\leq c_1n\tilde{\epsilon}^2_n,\\[-3pt]
 \Pi_n(\mathscr P_n^c)&\leq c_3\exp{(-(c_2+4)n\tilde{\epsilon}^2_n)},\\[-3pt] 
\Pi_n(B_{\mathrm{KL}}(P_{0Y};\,\tilde\epsilon_n^2))&\geq c_4\exp{(-c_2n\tilde{\epsilon}^2_n)}.
\end{split}
\end{equation}
Then, for $\epsilon_n:=[\tilde\epsilon_n(\log n)^{1+\1_{(\beta d\leq1)}}]^{1/(\beta d\vee 1)}$ and sufficiently large constant $\bar C>0$,
$$\Pi_n(\mu_X:\,W_1(\mu_X,\,\mu_{0X})>\bar C\epsilon_n\mid \mathsf Y^{(n)})\rightarrow0 \mbox{ in $P_{0Y}^n$-probability.}$$

If, in addition, $\mu_{0X}$ satisfies  Assumption \ref{ass:smoothXXX} for $\alpha>0$ and
there exist a constant $C_1>0$ and a kernel $K$ as in (a) such that, for every $\mu_X\in\mathscr P_n$, 
\begin{equation*}
\max_{\mathsf v\in\mathbb S^{d-1}}\|b_{F_{X,\mathsf v}}(h_n)\|_1\leq C_1 h_n^{\alpha+1}, \quad \mbox{ with } \, h_n=[\tilde\epsilon_n(\log n)^{1+\1_{(\beta d\leq 1)}}]^{1/[\alpha+(\beta d\vee 1)]},
\end{equation*}
then, for $\epsilon_{n,\alpha}:=[\tilde\epsilon_n(\log n)^{1+\1_{(\beta d\leq1)}}]^{(\alpha+1)/[\alpha+(\beta d\vee 1)]}$ and $C_\alpha>0$ large enough,
\begin{equation*}\label{eq:convsmooth}
\Pi_n(\mu_X:\,W_1(\mu_X,\,\mu_{0X})>C_\alpha\epsilon_{n,\alpha}\mid \mathsf Y^{(n)})\rightarrow0 \mbox{ in $P_{0Y}^n$-probability.}
\end{equation*}
\end{thm}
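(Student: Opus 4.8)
The plan is to combine the standard Ghosal--van der Vaart posterior contraction machinery for the \emph{direct} problem (density estimation of $f_{0Y}$ in the $L^1$-distance) with the inversion inequality of Theorem~\ref{theo:1}, with a short auxiliary step converting the entropy/prior-mass/sieve hypotheses in \eqref{con1} into simultaneous control of $\|f_Y-f_{0Y}\|_1$ and $W_1(\mu_Y,\mu_{0Y})$. First I would invoke the general testing theory: conditions \eqref{con1} are exactly the standard prior-mass, sieve-complement, and entropy conditions (with the $L^1$ or Hellinger metric on $\F(\mathscr P_n)$), so by the results of \cite{ghosal2000,ghosal:vdv:07} there is a constant $M_0$ with $\Pi_n(\mu_X:\|f_Y-f_{0Y}\|_1>M_0\tilde\epsilon_n\mid\mathsf Y^{(n)})\to0$ in $P_{0Y}^n$-probability. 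Next, I would upgrade this $L^1$-bound on densities into a $W_1$-bound on the mixtures $\mu_Y,\mu_{0Y}$: using the moment hypotheses $\mu_{0X},\mu_\varepsilon\in\mathcal P_{4+\delta}$ (so $\mu_{0Y}\in\mathcal P_{4+\delta}$, in particular in $\mathcal P_{2+\delta}$) together with the sieve restriction $\mathscr P_n\subseteq\{\mu_X:M_{4+\delta}(\mu_Y)\le K'\tilde\epsilon_n^{-2}\}$, I would apply the interpolation-type bound of Theorem~\ref{lem:1} (referenced in Remark~\ref{rkT}) to get that, on the event that $\|f_Y-f_{0Y}\|_1\le M_0\tilde\epsilon_n$ and $\mu_X\in\mathscr P_n$, one has $W_1(\mu_Y,\mu_{0Y})\lesssim\tilde\epsilon_n\log n$, and likewise each projected distance $\|f_{Y,\mathsf v}-f_{0Y,\mathsf v}\|_1\le\|f_Y-f_{0Y}\|_1\le M_0\tilde\epsilon_n$ uniformly in $\mathsf v\in\mathbb S^{d-1}$, since projection contracts the $L^1$-distance of densities only after one notes $f_{Y,\mathsf v}$ is itself a mixture of the same type. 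Intersecting these with the already-established posterior events, it suffices to bound $W_1(\mu_X,\mu_{0X})$ deterministically on this intersection.

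On that intersection I would feed these bounds into Theorem~\ref{theo:1}. For the first (non-smooth) conclusion, the inequality gives $W_1(\mu_X,\mu_{0X})\lesssim h+W_1(\mu_Y,\mu_{0Y})+T$ with $T$ as in \eqref{eq:t2}; bounding each $\|f_{Y,\mathsf v}-f_{0Y,\mathsf v}\|_1\lesssim\tilde\epsilon_n$ and taking the worst case over $\mathsf v$ (where $|I_h^\ast(\mathsf v)|$ can be as large as $d$, contributing the factor $h^{-\beta d+1}$ when $\beta d>1$, or the logarithmic factor when $\beta d\le1$) yields
\[
W_1(\mu_X,\mu_{0X})\lesssim h+\tilde\epsilon_n\log n+(\log n)^{1+\1_{(\beta d\le1)}}h^{-(\beta d-1)_+}\tilde\epsilon_n .
\]
Optimizing in $h$: when $\beta d>1$ one balances $h$ against $h^{-(\beta d-1)}\tilde\epsilon_n(\log n)$, giving $h\asymp[\tilde\epsilon_n\log n]^{1/(\beta d)}$ and rate $[\tilde\epsilon_n\log n]^{1/(\beta d)}$; when $\beta d\le1$ the $h$-term dominates and one simply sends $h\to0$ slowly, picking up the extra log from the $\1_{(\beta d\le1)}$ indicator, so in both regimes $W_1(\mu_X,\mu_{0X})\lesssim\epsilon_n=[\tilde\epsilon_n(\log n)^{1+\1_{(\beta d\le1)}}]^{1/(\beta d\vee1)}$. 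For the second (Sobolev) conclusion I would use instead the refined bound $W_1(\mu_X,\mu_{0X})\lesssim h^{\alpha+1}+W_1(\mu_Y,\mu_{0Y})+T$ available under Assumption~\ref{ass:smoothXXX} and the bias hypothesis $\max_{\mathsf v}\|b_{F_{X,\mathsf v}}(h_n)\|_1\le C_1h_n^{\alpha+1}$ assumed to hold for every $\mu_X\in\mathscr P_n$; the same optimization, now balancing $h^{\alpha+1}$ against $h^{-(\beta d-1)_+}\tilde\epsilon_n(\log n)^{1+\1_{(\beta d\le1)}}$, gives $h_n=[\tilde\epsilon_n(\log n)^{1+\1_{(\beta d\le1)}}]^{1/[\alpha+(\beta d\vee1)]}$ and the stated $\epsilon_{n,\alpha}$. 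Finally I would wrap up: since the set $\{\mu_X:W_1(\mu_X,\mu_{0X})>\bar C\epsilon_n\}$ is contained, up to the $P_{0Y}^n$-negligible complement, in $\{\|f_Y-f_{0Y}\|_1>M_0\tilde\epsilon_n\}\cup\{\mathscr P_n^c\}\cup\{W_1(\mu_Y,\mu_{0Y})>c\tilde\epsilon_n\log n\}$ for suitable $\bar C$, each of which has posterior mass $o_{P_{0Y}^n}(1)$, the conclusion follows; the smooth case is identical with $\epsilon_{n,\alpha}$ and $C_\alpha$.

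The main obstacle is the uniform-in-$\mathsf v$ control: Theorem~\ref{theo:1}'s term $T$ involves $\max_{\mathsf v\in\mathbb S^{d-1}}$ of a quantity containing $\|f_{Y,\mathsf v}-f_{0Y,\mathsf v}\|_1$, and one must argue that the posterior-concentration bound $\|f_Y-f_{0Y}\|_1\le M_0\tilde\epsilon_n$, which is a statement about the $d$-dimensional densities, indeed dominates \emph{every} one-dimensional projected $L^1$-distance simultaneously. This requires care because $f_{Y,\mathsf v}$ is the density of the pushforward $\mu_{Y,\mathsf v}$, not a literal projection of $f_Y$; the clean way is to note $\mu_{Y,\mathsf v}=(\mu_\varepsilon^{\otimes d})_{\mathsf v}\ast\mu_{X,\mathsf v}$ is again an absolutely continuous convolution and that the map $\mu_Y\mapsto\mu_{Y,\mathsf v}$ is a contraction in total variation, hence $\|f_{Y,\mathsf v}-f_{0Y,\mathsf v}\|_1=2\,\mathrm{TV}(\mu_{Y,\mathsf v},\mu_{0Y,\mathsf v})\le 2\,\mathrm{TV}(\mu_Y,\mu_{0Y})=\|f_Y-f_{0Y}\|_1$ uniformly over $\mathsf v$, so no supremum over an uncountable family actually needs to be controlled probabilistically --- the single $d$-dimensional event suffices. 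The second, more quantitative, subtlety is verifying that Theorem~\ref{lem:1} indeed applies to deliver $W_1(\mu_Y,\mu_{0Y})\lesssim\tilde\epsilon_n\log n$ from the $L^1$-density rate plus uniformly bounded $(2+\delta)$th moments (this is exactly the content of Remark~\ref{rkT}), which reduces to checking the moment condition $M_{4+\delta}(\mu_Y)\le K'\tilde\epsilon_n^{-2}$ for $\mu_X\in\mathscr P_n$ is compatible with the interpolation hypothesis there; this is where the $4+\delta$ (rather than $2+\delta$) moment assumption on $\mu_{0X}$ and $\mu_\varepsilon$ is used.
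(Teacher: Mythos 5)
Your proposal is correct and follows essentially the same path as the paper's own proof of Theorem~\ref{thm:22}: (i) apply the Ghosal--van der Vaart theory (via conditions~\eqref{con1}) to get the direct posterior rate $\|f_Y-f_{0Y}\|_1\lesssim\tilde\epsilon_n$; (ii) combine the sieve moment bound $M_{4+\delta}(\mu_Y)\le K'\tilde\epsilon_n^{-2}$ with Theorem~\ref{lem:1} to obtain $W_1(\mu_Y,\mu_{0Y})\lesssim\tilde\epsilon_n\log(1/\tilde\epsilon_n)$ on a posterior event of mass tending to one; and (iii) plug the resulting deterministic bounds into the inversion inequality of Theorem~\ref{theo:1}, optimize in $h$, and read off $\epsilon_n$ or $\epsilon_{n,\alpha}$. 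Your discussion of the two subtleties --- the uniform-in-$\mathsf v$ bound via $\|f_{Y,\mathsf v}-f_{0Y,\mathsf v}\|_1\le\|f_Y-f_{0Y}\|_1$ and the role of the $4+\delta$ moment condition in making Theorem~\ref{lem:1} applicable --- is precisely the bookkeeping the paper's proof relies on, and your $h$-optimization in the two regimes $\beta d\le 1$ and $\beta d>1$ reproduces the stated $h_n$ and rates exactly.
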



\smallskip

Theorem \ref{thm:22} provides sufficient conditions on the prior distribution and the data generating process 
so that the corresponding posterior measure asymptotically concentrates on $L^1$-Wasserstein balls centered at $\mu_{0X}$. As a consequence, the posterior mean $\hat \mu_n^{\mathrm B}:=\int\mu_X\,\di\Pi_n(\mu_X\mid \mathsf Y^{(n)})$ converges to $\mu_{0X}$ in the $L^1$-Wasserstein distance at least as fast as $\epsilon_n$ or $\epsilon_{n,\alpha}$.

\begin{cor}\label{cor:postmean}
Under the assumptions of Theorem \ref{thm:22}, the posterior mean $\hat \mu_n^{\mathrm B}$
converges to $\mu_{0X}$ in the $L^1$-Wasserstein distance at rate $\epsilon_n$, namely, there exists $M'>0$ such that, with 
$P_{0Y}^n$-probability tending to $1$,
$$W_1(\hat \mu_n^{\mathrm B},\, \mu_{0X}) \leq M' \epsilon_n,$$
or $\epsilon_{n,\alpha}$ under the assumptions of the second part of Theorem \ref{thm:22}. 
\end{cor}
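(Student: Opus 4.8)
The plan is to deduce Corollary \ref{cor:postmean} from the posterior concentration statement already established in Theorem \ref{thm:22}, using the convexity of $W_1$ to pass from concentration on Wasserstein balls to a rate for the posterior mean, and using the moment hypotheses of Theorem \ref{thm:22} to supply the uniform integrability that such a passage requires.

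First I would record the convexity of the functional $\nu\mapsto W_1(\nu,\,\mu_{0X})$ on $\mathcal P_1(\mathbb R^d)$: by the Kantorovich--Rubinstein identity \eqref{dual} it is a supremum of the affine maps $\nu\mapsto\int f\,\di(\nu-\mu_{0X})$ over $f\in\mathrm{Lip}_1(\mathbb R^d)$. Since $\hat\mu_n^{\mathrm B}=\int\mu_X\,\di\Pi_n(\mu_X\mid\mathsf Y^{(n)})$, Fubini's theorem gives, for each such $f$, that $\int f\,\di(\hat\mu_n^{\mathrm B}-\mu_{0X})=\int\big(\int f\,\di(\mu_X-\mu_{0X})\big)\,\di\Pi_n(\mu_X\mid\mathsf Y^{(n)})\le\int W_1(\mu_X,\,\mu_{0X})\,\di\Pi_n(\mu_X\mid\mathsf Y^{(n)})$; taking the supremum over $f$ yields the Jensen-type inequality
$$W_1(\hat\mu_n^{\mathrm B},\,\mu_{0X})\le\int W_1(\mu_X,\,\mu_{0X})\,\di\Pi_n(\mu_X\mid\mathsf Y^{(n)}),$$
which also certifies $\hat\mu_n^{\mathrm B}\in\mathcal P_1(\mathbb R^d)$ once the right-hand side is finite.

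Next I would split this integral at the level $\bar C\epsilon_n$ from Theorem \ref{thm:22}. Over $\{W_1(\mu_X,\,\mu_{0X})\le\bar C\epsilon_n\}$ the integrand is at most $\bar C\epsilon_n$. Over the complement I would use: (i) that the proof of Theorem \ref{thm:22} actually makes the posterior mass $\Pi_n(\mu_X:\,W_1(\mu_X,\,\mu_{0X})>\bar C\epsilon_n\mid\mathsf Y^{(n)})$ exponentially small in $n\tilde{\epsilon}_n^2$ on an event of $P_{0Y}^n$-probability tending to $1$ (there it is bounded by the posterior masses of the complements of $\{\|f_Y-f_{0Y}\|_1\lesssim\tilde{\epsilon}_n\}$, $\{W_1(\mu_Y,\,\mu_{0Y})\lesssim\tilde{\epsilon}_n\log n\}$ and $\mathscr P_n$, each decaying like $e^{-cn\tilde{\epsilon}_n^2}$); and (ii) that $W_1$ stays only polynomially large where the posterior lives, because $W_1(\mu_X,\,\mu_{0X})\le M_1(\mu_X)+M_1(\mu_{0X})$ and, since $\mathsf X=\mathsf Y-\boldsymbol\varepsilon$, $M_1(\mu_X)\le M_1(\mu_Y)+M_1(\mu_\varepsilon^{\otimes d})\le(M_{4+\delta}(\mu_Y))^{1/(4+\delta)}+M_1(\mu_\varepsilon^{\otimes d})$, so on $\mathscr P_n\subseteq\{M_{4+\delta}(\mu_Y)\le K'\tilde{\epsilon}_n^{-2}\}$ one gets $\sup_{\mu_X\in\mathscr P_n}W_1(\mu_X,\,\mu_{0X})\lesssim\tilde{\epsilon}_n^{-2/(4+\delta)}$. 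Combining (i) and (ii), the contribution of $\{W_1>\bar C\epsilon_n\}\cap\mathscr P_n$ is $\lesssim\tilde{\epsilon}_n^{-2/(4+\delta)}e^{-cn\tilde{\epsilon}_n^2}=o(\epsilon_n)$; the contribution of $\mathscr P_n^c$ is shown to be $o(\epsilon_n)$ by feeding the prior-mass bound $\Pi_n(\mathscr P_n^c)\le c_3e^{-(c_2+4)n\tilde{\epsilon}_n^2}$ of \eqref{con1}, the evidence lower bound $\int\prod_{i=1}^n\{f_Y(\mathsf Y_i)/f_{0Y}(\mathsf Y_i)\}\,\di\Pi_n(\mu_X)\ge e^{-(1+c_2)n\tilde{\epsilon}_n^2}$ (valid with high probability by the Kullback--Leibler condition in \eqref{con1}), Markov's inequality and a Hölder estimate using the $(4+\delta)$th moments. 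Putting the three pieces together gives $W_1(\hat\mu_n^{\mathrm B},\,\mu_{0X})\le(\bar C+o(1))\epsilon_n$ on an event of $P_{0Y}^n$-probability tending to $1$, i.e.\ the claim with any $M'>\bar C$; under the additional hypotheses of the second part of Theorem \ref{thm:22} the argument is verbatim with $\bar C\epsilon_n$ replaced by $C_\alpha\epsilon_{n,\alpha}$.

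The hard part, and the only place where more than a direct appeal to Theorem \ref{thm:22} is needed, is the uniform integrability just invoked: one must exclude that a vanishing amount of posterior mass parked far out in the space of mixing measures — in particular on $\mathscr P_n^c$, where $W_1(\mu_X,\,\mu_{0X})$ is a priori unbounded — could pull the mean away from $\mu_{0X}$. This is exactly what the $(4+\delta)$th-moment assumptions on $\mu_{0X}$ and $\mu_\varepsilon$ and the moment cap $M_{4+\delta}(\mu_Y)\le K'\tilde{\epsilon}_n^{-2}$ on the sieve are designed to control: they keep the growth of $W_1$ polynomial where the posterior concentrates and let it be swamped by the exponentially small posterior and prior masses coming from \eqref{con1}.
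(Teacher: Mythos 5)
Your skeleton --- convexity of $W_1$ via the Kantorovich duality, then splitting the posterior expectation at level $\bar C\epsilon_n$ --- is precisely the ``standard argument'' the paper points to by citing Theorem~8.8 of Ghosal and van der Vaart, and your treatment of the contribution from the sieve $\mathscr P_n$ is correct: there the cap $M_{4+\delta}(\mu_Y)\le K'\tilde\epsilon_n^{-2}$ forces $W_1(\mu_X,\mu_{0X})\le M_1(\mu_X)+M_1(\mu_{0X})\lesssim\tilde\epsilon_n^{-2/(4+\delta)}$, which is swamped by the exponentially small posterior mass of $\{W_1>\bar C\epsilon_n\}$.

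The step you sketch for $\mathscr P_n^c$ is, however, a genuine gap. On $\mathscr P_n^c$ the moment cap $M_{4+\delta}(\mu_Y)\le K'\tilde\epsilon_n^{-2}$ is by definition unavailable, so there is no ``H\"older estimate using the $(4+\delta)$th moments'' to invoke there: the hypotheses of Theorem~\ref{thm:22} control only $\Pi_n(\mathscr P_n^c)$, not $\int_{\mathscr P_n^c}W_1(\mu_X,\mu_{0X})\,\di\Pi_n(\mu_X)$. Nothing in those hypotheses prevents $\Pi_n$ from putting a positive (if exponentially small) mass on $\{\mu_X:\,M_1(\mu_X)=\infty\}\subseteq\mathscr P_n^c$; in that case $M_1(\hat\mu_n^{\mathrm B})=\int M_1(\mu_X)\,\di\Pi_n(\mu_X\mid\mathsf Y^{(n)})=\infty$, hence $W_1(\hat\mu_n^{\mathrm B},\mu_{0X})=\infty$, with $P_{0Y}^n$-probability one, and the corollary fails. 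Closing this requires an extra uniform-integrability condition on the prior off the sieve (for instance $\int_{\mathscr P_n^c}M_1(\mu_Y)\,\di\Pi_n(\mu_Y)\lesssim e^{-(c_2+4)n\tilde\epsilon_n^2}$), after which $\mathbb E_{0Y}^n\big[\int_{\mathscr P_n^c}W_1(\mu_X,\mu_{0X})\prod_{i=1}^n(f_Y/f_{0Y})(\mathsf Y_i)\,\di\Pi_n(\mu_X)\big]\le\int_{\mathscr P_n^c}W_1\,\di\Pi_n$, the evidence lower bound and Markov's inequality give the needed $o(\epsilon_n)$. This hidden hypothesis is also elided by the paper's one-line reference (GvdV's Theorem~8.8 is formulated for bounded losses), and it holds automatically in the Dirichlet-process applications where the corollary is actually used, so your instinct about what must be controlled is right; but, as written, neither your argument nor the paper's extracts it from the stated hypotheses of Theorem~\ref{thm:22}.
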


Corollary \ref{cor:postmean} can be proved using standard arguments, see, {\em e.g.}, Theorem 8.8 of \cite{bookgvdv}, p. 196.

\smallskip

Some remarks and comments on two main issues, (i) the relationship between the rates for the direct and the inverse problems,
(ii) rate optimality, are in order. As for issue (i),
Theorem \ref{thm:22} connects to existing results that give sufficient conditions
for assessing posterior convergence rates in the direct density estimation problem. 
In fact, the conditions in \eqref{con1} imply that, for a sufficiently large $\,\xbar M>0$, 
$$\mathbb E_{0Y}^n[\Pi_n(\mu_X:\,\|f_{Y}-f_{0Y}\|_1>\,\xbar M \,\tilde \epsilon_n\mid \mathsf Y^{(n)})]\rightarrow0,$$
see \cite{ghosal2000}, Theorem 2.1, p. 503, which states that the posterior concentration rate on 
$L^1$-neighbourhoods of $f_{0Y}$ is $\tilde\epsilon_n$. Alternative conditions for assessing posterior contraction rates
in $L^r$-metrics, $1\leq r\leq \infty$, are given in \cite{Gine:Nickl:11}, see Theorems 2 and 3, pp. 2891--2892. 
As for issue (ii), a remarkable feature of Theorem \ref{thm:22} is the fact that, 
to obtain $L^1$-Wasserstein posterior convergence rates for $\mu_X$, 
which is a mildly ill-posed inverse problem, it is enough to derive 
posterior contraction rates relative to the $L^1$-metric in the direct density estimation problem, 
which is more gestible. In fact, granted Assumption \ref{ass:smoothXXX},
the essential conditions to verify are those listed in \eqref{con1}, which are sufficient for the posterior 
distribution to contract at rate $\tilde\epsilon_n$ around $f_{0Y}$. This simplification is due to the inversion inequality of 
Theorem \ref{theo:1}, which holds true under Assumption \ref{ass:identifiability+error} only,
when no smoothness condition is imposed on $\mu_{0X}$, and jointly with
condition \eqref{eq:ass1}, when the smoothness Assumption \ref{ass:smoothXXX} on $\mu_{0X}$ is in force. 
Application of Theorem \ref{thm:22} to specific models gives further insight into this aspect.
In Section \ref{sec:lap+exp}, for the case $d=1$, we consider a Dirichlet process mixture-of-Laplace-normals prior 
and find the rate $n^{-1/5}(\log n)^\kappa$ when the latent distribution $\mu_{0X}$ is only known to have a density $f_{0X}$, and the rate $n^{-(\alpha +1)/(2\alpha+5)}(\log n)^{\kappa'}$ when the mixing density $f_{0X}$ is $\alpha$-Sobolev regular. These rates match with the lower bound given in Theorem \ref{thm:lower bound} and are, therefore, minimax-optimal, up to log-factors. 
When $d\geq2$ and $\beta d\geq 1$, to assess $W_1$-posterior contraction rates for $\mu_{0X}$ under no regularity assumptions on $f_{0X}$, we would need a posterior contraction rate for the direct density estimation problem (with respect to the $L_1$-norm distance between $f_Y$ and $f_{0Y}$) of the order 
$n^{-\beta d/[(2\beta +1)d]}=n^{-\beta/(2\beta +1)}$. However, the theory developed in Section \ref{sec:lap+exp} based on a Dirichlet process mixture-of-Laplace-normals prior does not immediately extend to the multivariate case.

\subsection{Deconvolution on $\mathbb R$ by a Dirichlet process mixture-of-Laplace-normals prior}\label{sec:lap+exp}
In this section, we study the problem of density deconvolution on the real line for mixtures with a Laplace error distribution, whose Fourier transform is given by $\hat f_\varepsilon(t) = (1 + t^2)^{-1}$, $t\in\mathbb R$.
The problem of density deconvolution with a Laplace error distribution arises also in nonparametric inference under local differential privacy, 
when a Laplace density is used in a convolution-based privacy mechanism, see, \emph{e.g.}, \cite{dwork2008}. 
In this case, in fact, the problem of recovering the common density, say $f_{0X}$ in our notation, of the original data before a perturbed version with additive errors is released, 
boils down to a density deconvolution problem with Laplace noise. 
Data privacy protection is nowadays a major issue due to the massive amount of data collected and stored.  
Local differential privacy, in particular, has lately attracted a lot of attention as a way to construct data privacy preserving mechanisms, 
see, for instance, \cite{dinur-nissim2003, Evfimievski2003LimitingPB, dwork-nissim2004, dwork2008, duchi2018} and the recent articles \cite{rohde20, butucea20} on nonparametric adaptive estimation of $f_{0X}$.

We use a Dirichlet process mixture-of-normals prior on the mixing density 
$f_X=\phi_\sigma\ast \mu_H$, so that the model density is $f_Y=f_\varepsilon \ast f_X=f_\varepsilon \ast (\phi_\sigma\ast \mu_H)$, with 
$\mu_H\sim\mathscr D_{H_0}$, a Dirichlet process with finite, positive base measure $H_0$ on $\mathbb R$, 
and $\sigma\sim\Pi_\sigma$. We consider the following assumptions on $H_0$ and $\Pi_\sigma$.

\begin{ass}\label{ass:basemeasure1}
\emph
{
The base measure $H_0$ has a continuous and positive density $h_0$ on $\mathbb R$ such that,
for constants $b_0,\,b_0',\,c_0,\,c_0'>0$ and $\iota>0$,
$$c_0\exp{(-b_0|u|^\iota)} \leq h_0(u) \leq c_0'\exp{(-b_0'|u|^\iota)},\quad u\in\mathbb R.$$
}
\end{ass}

\begin{ass}\label{ass:priorscale1}
\emph
{The prior distribution $\Pi_\sigma$ for $\sigma$ has a continuous 
density $\pi_\sigma$ on $(0,\,\infty)$ 
such that, for constants $D_1,\,D_2>0$ and $s_1, s_2,\,t_1,\,t_2\geq0$,
\[\sigma^{-s_1}\exp{(-D_1\sigma^{-1}|\log\sigma|^{t_1})}\lesssim  \pi_\sigma(\sigma) \lesssim \sigma^{-s_2}\exp{(-D_2\sigma^{-1}|\log\sigma|^{t_2})}
\]
for all $\sigma$ in a neighborhood of $0$. Furthermore, for constants $D_3,\,\varpi>0$, 
the tail probability $\Pi_{\sigma}((\bar\sigma,\infty))\lesssim \exp{(-D_3\bar\sigma^\varpi)}$ as $\bar\sigma\rightarrow \infty$.
}
\end{ass}

Assumption \ref{ass:basemeasure1} on the base measure $H_0$ of the Dirichlet process is analogous to (4.8) in \cite{scricciolo:2011}, p. 288, and holds true, for example, when $h_0$ is the density of an exponential power distribution with shape parameter $\iota>0$, which includes the Laplace distribution ( $\iota=1$), and the Gaussian distribution ( $\iota=2$).

The first part of Assumption \ref{ass:priorscale1} on the scale parameter $\sigma$ of the Gaussian kernel has become common in the literature since the articles \cite{vdV+vZanten, dejonge, kruijer:rousseau:vdv:10}.
Here we consider in addition the  tail condition for large values of $\sigma$, which requires $\Pi_\sigma$ to have 
an exponentially decaying tail also at infinity.  Examples of densities satisfying these two conditions are  inverse Gamma distribution restricted to $(0,\,\bar\sigma]$,
for $0<\bar\sigma<\infty$. An example of distribution supported on $(0,\,\infty)$ that verifies Assumption \ref{ass:priorscale1} is given in \cite{scricciolo:2011}, p. 291, where $\pi_\sigma$ is 
proportional to an inverse-gamma $\mathrm{IG}(1,\,\zeta)$ on $(0,\,1]$ and to a Weibull $W(\zeta,\,\nu)$ on $(1,\,\infty)$,
where $\zeta>0$ is the scale parameter and $\nu>0$ the shape parameter.
Then,  $s_1=s_2=\zeta+1$, $t_1=t_2=0$ and $\varpi =\nu$.
The assumption on the upper tail of $\Pi_\sigma$ is used to guarantee that condition \eqref{prior:moment} is satisfied, 
which, in virtue of Theorem \ref{lem:1}, allows to control $W_1(\mu_Y,\,\mu_{0Y})$ in terms of $\|f_Y-f_{0Y}\|_1$.

\smallskip

We also consider the following assumption on the tails of the mixing distribution: 

\begin{ass}\label{ass:twicwtailcond}
\emph{
The mixing distribution $\mu_{0X}\in\mathscr P_0(\mathbb R)$ 
has density $f_{0X}(x)\lesssim e^{-(1+C_0)|x|}$, $x\in\mathbb R$, with some constant $C_0>0$.
}
\end{ass}

First we study the case in which mixing distribution satisfies only Assumption \ref{ass:twicwtailcond} and then the  case where it also has a density
Sobolev regularity $\alpha$. In the latter case, the prior distribution on the mixing density does not depend on $\alpha$, 
yet it yields an adaptive posterior contraction rate. 
We refer to these two cases as non-adaptive and adaptive, respectively, and treat them separately.

\subsection{Non-adaptive case}\label{sec:nonadaptive}
Let $\Pi$ be the prior distribution induced on $\mathscr F$ by the product measure $\mathscr D_{H_0}\otimes \Pi_\sigma$ 
on the parameter $(\mu_H,\,\sigma)$ of the density $f_Y=f_\varepsilon\ast(\phi_\sigma\ast \mu_H)$, 
for a standard Laplace error density $f_\varepsilon$.
Let also the sampling density $f_{0Y}=f_\varepsilon\ast f_{0X}$ be a Laplace mixture,
with mixing density $f_{0X}$ satisfying the following exponential tail decay condition.


We begin by assessing posterior contraction rates in the $L^1$-metric for Laplace 
convolution mixtures with mixing distributions having exponentially decaying tails.

\begin{thm}\label{thm:31}
Let $Y_1,\,\ldots,\,Y_n$ be i.i.d. observations from
$f_{0Y}:=f_\varepsilon\ast f_{0X}$, where $f_\varepsilon$ is the density of the 
standard Laplace distribution and $f_{0X}$ satisfies Assumption \ref{ass:twicwtailcond}. 
Let $\Pi$ be the prior distribution induced by $\mathscr D_{H_0} \otimes \Pi_\sigma$, where 
${H_0}$ verifies Assumption \ref{ass:basemeasure1} and $\Pi_\sigma$ verifies
Assumption \ref{ass:priorscale1}.  
Then, the conditions in \eqref{con1} are satisfied for $\tilde\epsilon_n=n^{-2/5}(\log n)^{\varphi}$, 
with some $\varphi>0$, and there exists $D$ large enough so that
\[
\Pi(\mu_Y:\,\|f_Y-f_{0Y}\|_1>D \tilde\epsilon_n
\mid \Data)\rightarrow0\,\mbox{ in $P_{0Y}^n$-probability}.
\]
\end{thm}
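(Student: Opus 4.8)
The plan is to verify the three conditions in \eqref{con1} for the stated rate $\tilde\epsilon_n=n^{-2/5}(\log n)^{\varphi}$ and then invoke Theorem 2.1 of \cite{ghosal2000} (in its $L^1$ form) to conclude, since those three conditions — an entropy bound for a sieve $\mathscr P_n$, an exponentially small prior mass outside the sieve, and a Kullback--Leibler prior mass lower bound — are exactly the hypotheses of that general posterior contraction theorem. The nonstandard ingredient here, relative to the classical Gaussian-mixture analysis, is that $f_{0Y}=f_\varepsilon\ast f_{0X}$ is a Laplace convolution mixture rather than a plain Gaussian mixture, so the key device will be a \emph{two-stage approximation}: first approximate $f_{0X}$ (which satisfies only the tail condition of Assumption \ref{ass:twicwtailcond}) by a finitely supported mixture of normals $\phi_{\sigma_n}\ast\mu_H$ with a carefully chosen bandwidth $\sigma_n\asymp$ (a power of) $\tilde\epsilon_n$ and a number of support points $N_n\lesssim n\tilde\epsilon_n^2/\log n$, and then use that the convolution operator $f\mapsto f_\varepsilon\ast f$ is an $L^1$-contraction, so that $\|f_\varepsilon\ast f_{0X}-f_\varepsilon\ast(\phi_{\sigma_n}\ast\mu_H)\|_1\le\|f_{0X}-\phi_{\sigma_n}\ast\mu_H\|_1$, reducing everything to a statement purely about Gaussian mixture approximation of $f_{0X}$.

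First I would establish the \textbf{KL prior mass bound}. Using the exponential tail of $f_{0X}$ and the standard construction (as in \cite{kruijer:rousseau:vdv:10} or \cite{scricciolo:2011}), one builds a discrete mixing measure $H^\ast$ with $N_n\asymp|\log\tilde\epsilon_n|$ support points in a compact set of radius $O(|\log\tilde\epsilon_n|^{1/\iota})$, such that $\phi_{\sigma_n}\ast H^\ast$ approximates $f_{0X}$ in a strong enough sense; convolving with $f_\varepsilon$ and controlling the likelihood ratio then gives a density $f_Y^\ast$ with $\mathrm{KL}(P_{0Y};P_Y^\ast)$ and the associated second moment both $\lesssim\tilde\epsilon_n^2$. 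The prior mass of a KL-type neighbourhood is then bounded below by the Dirichlet process small-ball probability around $H^\ast$ times the prior mass of $\sigma$ near $\sigma_n$; Assumptions \ref{ass:basemeasure1} and \ref{ass:priorscale1} (lower bounds) give $\Pi(B_{\mathrm{KL}}(P_{0Y};\tilde\epsilon_n^2))\gtrsim\exp(-c_2 n\tilde\epsilon_n^2)$ with the $(\log n)^{\varphi}$ power of $\tilde\epsilon_n$ absorbing the $|\log\tilde\epsilon_n|$ losses from the number of support points and from the $\sigma$-density lower tail $\exp(-D_1\sigma_n^{-1}|\log\sigma_n|^{t_1})$. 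Next, for the \textbf{sieve} I would take $\mathscr P_n$ to consist of mixtures $\phi_\sigma\ast\mu_H$ with $\sigma\in[\underline\sigma_n,\bar\sigma_n]$ and $\mu_H$ supported on $[-a_n,a_n]$ up to a small mass remainder, intersected with the moment constraint $M_{4+\delta}(\mu_Y)\le K'\tilde\epsilon_n^{-2}$; the complement probability $\Pi(\mathscr P_n^c)$ is controlled by the Dirichlet process tail bound, the upper tail $\Pi_\sigma((\bar\sigma_n,\infty))\lesssim\exp(-D_3\bar\sigma_n^\varpi)$, the lower-$\sigma$ tail, and a Markov bound on the moment — choosing $\bar\sigma_n,a_n$ as suitable powers of $\log n$ makes this $\le c_3\exp(-(c_2+4)n\tilde\epsilon_n^2)$. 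The \textbf{entropy bound} $\log D(\tilde\epsilon_n,\mathscr F(\mathscr P_n),\|\cdot\|_1)\lesssim n\tilde\epsilon_n^2$ follows because $\|\cdot\|_1$-covering of the Laplace mixtures is dominated by $\|\cdot\|_1$-covering of the underlying Gaussian mixtures (again by the convolution contraction), and the latter is the classical entropy estimate for location-scale Gaussian mixtures on a compact set with bandwidth bounded below — $O(N_n\log(a_n/(\underline\sigma_n\tilde\epsilon_n)))$ points suffice.

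The main obstacle, and the step deserving the most care, is pinning down the \textbf{exponent $2/5$ and the exact power $\varphi$} in the approximation step: one must choose $\sigma_n$ so that the Gaussian-mollification bias $\|f_{0X}-\phi_{\sigma_n}\ast f_{0X}\|_1$ — or rather the relevant bias after convolution with the Laplace kernel, which is where the improvement over the $n^{-3/8}$ rate of \cite{gao2016,scricciolo:18} comes from — balances against the complexity/KL cost $\sigma_n^{-1}|\log\sigma_n|^{t_1}/n$ and the entropy $N_n/n$. The point is that $f_\varepsilon\ast f_{0X}$ inherits extra smoothness from the Laplace kernel ($f_\varepsilon$ is Sobolev of order just below $3/2$ as a function), so the effective regularity driving the bias is $2$ rather than $3/2$, and solving $\sigma_n^{2}\asymp n^{-1}\sigma_n^{-1}$ (up to logs) gives $\sigma_n\asymp n^{-1/5}$ and hence $\tilde\epsilon_n\asymp\sigma_n^{2}\asymp n^{-2/5}$, with $\varphi$ determined by the accumulated $t_1$, $\iota$, and covering-number logarithmic factors. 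Making this balance rigorous requires an explicit bound on $\|f_\varepsilon\ast(f_{0X}-\phi_{\sigma_n}\ast\mu_H)\|_1$ that exploits the Laplace smoothing — this is precisely the kind of refined convolution-approximation estimate that Lemma \ref{lem:contapprox} in the adaptive section is designed to provide, and I would adapt its non-adaptive specialization here.
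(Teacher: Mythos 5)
Your approach to the entropy and remaining-mass conditions is essentially the paper's: the sieve is the one from Theorem 5 of \cite{ghosal:shen:tokdar}, and the $L^1$-contraction $\|f_\varepsilon\ast(f_1-f_2)\|_1\le\|f_1-f_2\|_1$ does correctly transfer those bounds from Gaussian mixtures to Laplace-Gaussian mixtures. The moment control for $\mathscr P_n$ is also as in the paper's Theorem \ref{thm:32}.

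However, there is a genuine gap in your treatment of the Kullback--Leibler prior-mass condition, and it is exactly where the $n^{-2/5}$ rate is at stake. In your second paragraph you propose to reduce the KL step to a bound on $\|f_{0X}-\phi_{\sigma_n}\ast\mu_H\|_1$ via the contraction $\|f_\varepsilon\ast f_{0X}-f_\varepsilon\ast(\phi_{\sigma_n}\ast\mu_H)\|_1\le\|f_{0X}-\phi_{\sigma_n}\ast\mu_H\|_1$. This inequality is correct but wasteful: it erases precisely the Laplace smoothing that produces the $\sigma^2$ bias. Under Assumption \ref{ass:twicwtailcond} the density $f_{0X}$ has only an exponential tail and no smoothness, so $\|f_{0X}-\phi_{\sigma}\ast f_{0X}\|_1$ has no polynomial rate in $\sigma$ whatsoever, and $\|f_{0X}-\phi_{\sigma}\ast\mu_H\|_1$ cannot be pushed below that. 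You cannot, therefore, obtain $\tilde\epsilon_n\asymp n^{-2/5}$ (equivalently, a $\sigma^2$ bias at the KL step) from this reduction. Your third paragraph contradicts the second: you correctly say there that the improvement over the $n^{-3/8}$ rate of \cite{gao2016,scricciolo:18} comes from extra smoothness inherited from $f_\varepsilon$, i.e.\ from the factor $\hat f_\varepsilon(t)=(1+t^2)^{-1}$ in frequency space --- but that factor is exactly what the $L^1$ contraction in paragraph two discards. The paper resolves this by never dropping back to an approximation of $f_{0X}$: Lemma \ref{lem:discrete} bounds $d_{\mathrm H}(f_\varepsilon\ast(\phi_\sigma\ast\mu_H),\,f_\varepsilon\ast f_{0X})$ directly, via a Fourier/bilateral-Laplace computation on the functions $e^{\pm\cdot/2}f_Y$ and $e^{\pm\cdot/2}f_{0Y}$, where the presence of $|\hat f_\varepsilon(t)|^2=(1+t^2)^{-2}$ in the integrand supplies the needed decay and delivers $d_{\mathrm H}\lesssim\sigma^2$ even for a merely integrable $f_{0X}$. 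The discrete $\mu_H$ is then produced by moment-matching (constraints as in \eqref{eq:c2}, together with Lemma A.1 of \cite{ghosal2001}), not by an $L^1$-level approximation of $f_{0X}$. Note also that your support-point count $N_n\asymp|\log\tilde\epsilon_n|$ is wrong and incompatible with your own earlier bound $N_n\lesssim n\tilde\epsilon_n^2/\log n$: the correct count is $N=O\bigl((a_\sigma/\sigma)|\log\sigma|^{1/2}\bigr)\asymp n^{1/5}(\log n)^{3/2}$ for $\sigma_n\asymp n^{-1/5}$, which is polynomial rather than logarithmic in $n$, and this matters both for the small-ball probability of the Dirichlet process (Lemma 10 of \cite{ghosal2007} gives $\gtrsim\exp(-cK|\log\sigma|)$, with $K$ of the same size as $N$) and for the entropy balance.
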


\begin{proof}
We argue that the conditions in \eqref{con1} are 
satisfied for $\tilde\epsilon_n$ as in the statement. 
The small ball prior probability estimate in the third inequality of \eqref{con1}
is verified taking into account Remark \ref{rmk:KLneigh} and Lemma \ref{lem:KL}, 
which is based on the construction of an approximation of $f_{0Y}$ by 
$f_\varepsilon\ast(\phi_\sigma\ast\mu_H)$, for a carefully chosen probability measure $\mu_H$. 
This construction adapts the proof of Lemma 2 of \cite{gao2016}, pp. 615--616, to obtain an approximation error of the order $O(\tilde\epsilon_n)$, 
as shown in Lemma \ref{lem:discrete}.  
The entropy and remaining mass conditions, the first two inequalities in \eqref{con1},
are consequences of Theorem 5 of \cite{ghosal:shen:tokdar}, p. 631, because, for any pair of densities
 $f_1$ and $f_2$, we have $ \| f_\varepsilon \ast (f_1-f_2) \|_1 \leq \|f_1-f_2\|_1$. 
Finally, since $\mu_X$ has density $\phi_\sigma\ast\mu_H$ so that $X=\sigma Z+ U$, 
with $Z\sim N(0,\,1)$ and $U\sim \mu_H$, we have
$M_1(\mu_X)\leq\sigma\mathbb E[|Z|]+M_1(\mu_H)<\infty$, that is, $\mu_X\in \mathcal P_1(\mathbb R)$ almost surely,
because $\mathscr D_{H_0}(\mu_H:\,M_1(\mu_H)=\infty)=0$. 
The assertion follows.
\end{proof}


A rate of the order $O(n^{-2/5})$, up to a logarithmic factor,
is achieved for estimating mixtures of Laplace densities
if a kernel mixture prior on the mixing density is constructed using a Gaussian kernel,
with an inverse-gamma type bandwidth $\sigma$ and a Dirichlet process prior on $\mu_H$.
The result is new in Bayesian density estimation and is a preliminary step for the following
$L^1$-Wasserstein deconvolution result.

\begin{thm}\label{thm:32}
Let $Y_1,\,\ldots,\,Y_n$ be i.i.d. observations from
$f_{0Y}:=f_\varepsilon\ast f_{0X}$, where $f_\varepsilon$ is the density of the 
standard Laplace distribution and $f_{0X}$ satisfies Assumption \ref{ass:twicwtailcond}. 
Let $\Pi$ be the prior distribution induced by $\mathscr D_{H_0} \otimes \Pi_\sigma$, where 
${H_0}$ verifies Assumption \ref{ass:basemeasure1} for $\iota>1$ and $\Pi_\sigma$ verifies
Assumption \ref{ass:priorscale1}  with $\varpi>1$. Then, 
there exist $K$ large enough and $\kappa>0$ so that
\begin{equation*}\label{eq:69}
\Pi(\mu_X:\,W_1(\mu_X,\,\mu_{0X})>Kn^{-1/5}(\log n)^\kappa\mid\Data)
\rightarrow0\,\mbox{ in $P_{0Y}^n$-probability.}
\end{equation*}
\end{thm}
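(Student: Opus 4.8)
The plan is to obtain Theorem~\ref{thm:32} directly from the general posterior-contraction result of Theorem~\ref{thm:22}, whose abstract hypotheses have for the most part already been verified for this prior in Theorem~\ref{thm:31}; the inversion inequality of Theorem~\ref{theo:1} is what powers Theorem~\ref{thm:22}, so no fresh use of it is required here. Since the noise is the standard Laplace law we are in dimension $d=1$ with ordinary-smooth index $\beta=2$, so $\beta d=2>1$ and the rate produced by Theorem~\ref{thm:22} is $\epsilon_n=[\tilde\epsilon_n(\log n)^{1+\1_{(\beta d\leq 1)}}]^{1/(\beta d\vee 1)}=[\tilde\epsilon_n\log n]^{1/2}$. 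Feeding in the direct-problem rate $\tilde\epsilon_n=n^{-2/5}(\log n)^{\varphi}$ supplied by Theorem~\ref{thm:31} gives $\epsilon_n=n^{-1/5}(\log n)^{(\varphi+1)/2}$, which is the claimed rate with $\kappa=(\varphi+1)/2$ (any larger $\kappa$ works as well). So the proof reduces to checking the hypotheses of Theorem~\ref{thm:22}.

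First, the error law. For the standard Laplace density $\hat f_\varepsilon(t)=(1+t^2)^{-1}$, hence $r_\varepsilon(t)=1+t^2$ and $r_\varepsilon^{(1)}(t)=2t$, so $|r_\varepsilon^{(l)}(t)|\lesssim(1+|t|)^{2-l}$ for $l=0,1$ and Assumption~\ref{ass:identifiability+error} holds with $\beta=2$; moreover the Laplace law has finite moments of every order, so in particular $\mu_\varepsilon\in\mathcal P_{4+\delta}(\mathbb R)$ for any $\delta>0$. Second, Assumption~\ref{ass:twicwtailcond} forces $f_{0X}(x)\lesssim e^{-(1+C_0)|x|}$, so $\mu_{0X}$ has finite moments of every order and $\mu_{0X}\in\mathcal P_{4+\delta}(\mathbb R)$. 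Third, conditions~\eqref{con1} --- the entropy, remaining-mass and Kullback--Leibler prior-mass bounds --- are exactly those established in Theorem~\ref{thm:31} for $\tilde\epsilon_n=n^{-2/5}(\log n)^{\varphi}$. What is left is to arrange that the sieve $\mathscr P_n$ of Theorem~\ref{thm:31} can be taken inside $\{\mu_X:\,M_{4+\delta}(\mu_Y)\leq K'\tilde\epsilon_n^{-2}\}$ while keeping the remaining-mass bound, equivalently (via Theorem~\ref{lem:1}) so that $W_1(\mu_Y,\mu_{0Y})$ is controlled by $\|f_Y-f_{0Y}\|_1$, up to a log-factor, on the bulk of the posterior. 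This is precisely where the strengthened hypotheses $\iota>1$ in Assumption~\ref{ass:basemeasure1} and $\varpi>1$ in Assumption~\ref{ass:priorscale1} enter: the Ghosal--Shen--Tokdar sieve for $f_X=\phi_\sigma\ast\mu_H$ (cf. \cite{ghosal:shen:tokdar}) truncates the support of $\mu_H$ and bounds $\sigma$, so on it $M_{4+\delta}(\mu_X)$ --- hence $M_{4+\delta}(\mu_Y)$ with $\mu_Y=\mu_\varepsilon^{\otimes d}\ast\mu_X$ and $M_{4+\delta}(\mu_\varepsilon)<\infty$ --- grows only polynomially, while the lighter-than-Gaussian base-measure tail ($\iota>1$) and the super-exponential upper tail of $\Pi_\sigma$ ($\varpi>1$) make the prior mass of the complement super-exponentially small; balancing the truncation level against $n\tilde\epsilon_n^2$ then yields a sieve satisfying simultaneously the entropy bound, the moment constraint and the required remaining-mass estimate.

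Granted these checks, the first conclusion of Theorem~\ref{thm:22} --- the one that requires no smoothness of $f_{0X}$, only Assumption~\ref{ass:identifiability+error} --- gives, for a sufficiently large $\bar C>0$,
\[
\Pi_n\bigl(\mu_X:\,W_1(\mu_X,\,\mu_{0X})>\bar C\,n^{-1/5}(\log n)^{(\varphi+1)/2}\mid\mathsf Y^{(n)}\bigr)\rightarrow 0 \qquad \text{in } P_{0Y}^n\text{-probability},
\]
which is the assertion of Theorem~\ref{thm:32} with $K=\bar C$ and $\kappa=(\varphi+1)/2$.

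I expect the only genuinely delicate point to be the sieve/moment bookkeeping in the third step: reconciling the Ghosal--Shen--Tokdar construction used in Theorem~\ref{thm:31} with the additional requirement $\mathscr P_n\subseteq\{\mu_X:\,M_{4+\delta}(\mu_Y)\leq K'\tilde\epsilon_n^{-2}\}$, and the accompanying control of $W_1(\mu_Y,\mu_{0Y})$ through Theorem~\ref{lem:1}. Everything else is either immediate (the moment and Fourier-decay properties of the Laplace law, the exponential-tail bound on $f_{0X}$) or already contained in Theorems~\ref{theo:1}, \ref{thm:22} and~\ref{thm:31}.
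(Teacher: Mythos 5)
Your route is exactly the paper's: invoke Theorem~\ref{thm:22}, import the direct-problem rate $\tilde\epsilon_n$ and the conditions in \eqref{con1} from Theorem~\ref{thm:31}, check the Laplace $\beta=2$ smoothness and moment hypotheses, and close by establishing a prior-moment tail bound so the sieve can be intersected with $\{\mu_X:\,M_{4+\delta}(\mu_Y)\le K'\tilde\epsilon_n^{-2}\}$ without spoiling the remaining-mass condition. The bookkeeping step you flag as delicate is carried out in the paper exactly as you describe: writing $X=\sigma Z+U$ to get $M_{4+\delta}(\mu_X)\lesssim\sigma^{4+\delta}+M_{4+\delta}(\mu_H)$ and then bounding the two prior tails separately, using $\varpi>1$ for $\Pi_\sigma$ and $\iota>1$ for $H_0$ with $\delta$ chosen small enough that both $0<\delta\le 4(\varpi-1)$ and $0<\delta\le 4(\iota-1)$ hold.
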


\begin{proof}
We apply Theorem \ref{thm:22}.
For any $\delta>0$, with the Laplace distribution we have $\mu_\varepsilon\in\mathcal P_{4+\delta}(\mathbb R)$.
By Assumption \ref{ass:twicwtailcond}, also $\mu_{0X}\in\mathcal P_{4+\delta}(\mathbb R)$.
We know from Theorem \ref{thm:31} that the conditions in \eqref{con1} hold
for $\tilde\epsilon_n=n^{-2/5}(\log n)^{\varphi}$. 
It remains to show that, for a suitable $c>0$,
\begin{equation}\label{posteriorPnc}
\Pi(\mu_X:\, M_{4+\delta}(\mu_X) >K'' \tilde \epsilon_n^{-2})\lesssim \exp{(-cn\tilde\epsilon_n^2)}.
\end{equation}
Recalling that $\mu_X$ has density $\phi_\sigma\ast\mu_H$ so that $X=\sigma Z+ U$, 
with $Z\sim N(0,\,1)$ and $U\sim \mu_H$, we have
$M_{4+\delta}(\mu_X)\lesssim\sigma^{4+\delta}\mathbb E[|Z|^{4+\delta}]+M_{4+\delta}(\mu_H)$.
Therefore, for $M_1>0$, 
\[
M_{4+\delta}(\mu_X) \lesssim \sigma^{4+\delta} + M_{4+\delta}(\mu_H)\lesssim \sigma^{4+\delta} + M_1\tilde \epsilon_n^{-2}+
M_{4+\delta}(\1_{(|U|^{4+\delta}>M_1\tilde \epsilon_n^{-2})} \mu_H).
\]
Assumption \ref{ass:priorscale1} on the upper tail of $\Pi_\sigma$ implies that, 
for a suitable $c_1>0$, 
$$\Pi_\sigma (\sigma:\,\sigma>(M_1\tilde \epsilon_n^{-2})^{1/(4+\delta)})\leq 
\exp{(- D_3(M_1\tilde \epsilon_n^{-2})^{\varpi/(4+\delta)})} 
\lesssim \exp{(-c_1n\tilde \epsilon_n^2)}$$
provided that $0<\delta\leq4(\varpi-1)$, where $\varpi>1$ by hypothesis.
Besides, for a suitable $c_2>0$, 
\begin{align*}
\mathscr D_{H_0}(\mu_H:\,M_{4+\delta}(\1_{(|U|^{4+\delta}>M_1\tilde \epsilon_n^{-2})} \mu_H)> M_1\tilde \epsilon_n^{-2})&\lesssim \int_{|u|^{4+\delta}>M_1\tilde \epsilon_n^{-2}}|u|^{4+\delta} h_0(u)\,\di u \\
&\lesssim \exp{(- b_0'(M_1\tilde \epsilon_n^{-2})^{-\iota/(4+\delta)}/2)}\\
&\lesssim \exp{(-c_2n\tilde \epsilon_n^2)}
\end{align*}
provided that $0<\delta\leq4(\iota-1)$, where $\iota>1$ by hypothesis. Hence, choosing $\delta$ small enough so that both the above requirements are satisfied, condition \eqref{posteriorPnc} holds true and the proof is complete. 
\end{proof}

\smallskip
%

We now exhibit another example of convolution model for which
a statement in the same spirit as that of Theorem \ref{thm:32} can be obtained.
Let the random variable $Z$ have monotone non-increasing density $f_Z$ on $(0,\,\infty)$. 
Following \cite{williamson:56}, it is known that $f_Z$ is a scale mixture of uniform densities, $f_Z(z) = \int_0^\infty[\1_{[0,\,v]}(z)/v]\, \di F(v)$,
so that $Y = \log Z = X - \varepsilon$, 
where $\varepsilon \sim \mathrm{Exp}(1)$ is independent of $X$.  
Under suitable conditions, the posterior convergence rate at $f_{0Y}$ relative to the Hellinger or $L^1$-distance
is $n^{-1/3}$, up to a logarithmic factor, see, \emph{e.g.}, Theorem 2 in \cite{JBSalomond2014}, pp. 1384--1385.
Then, the posterior distribution of $\mu_X$ concentrates around $\mu_{0X}$ at rate $n^{-1/3}$, up to a log-factor,
in a metric similar to the $L^1$-Wasserstein. In fact, writing $f_Z(z) = \int_0^\infty [\1_{[z,\,\infty)}(v)/v]\, \di F(v)$ and 
$f_{0Z}(z) = \int_0^\infty[\1_{[z,\,\infty)}(v)/v]\, \di F_0(v)$, we have
$$\Pi(F:\, \tilde W(F,\,F_0) > M n^{-1/3} (\log n)^\nu \mid Z^{(n)})]\rightarrow 0\,\mbox{ in $P_{0Z}^n$-probability},$$
where $\tilde W(F,\,F_0):= \int_0^\infty[|F(v) - F_0(v)|/v]\, \di v$. 
We believe that this rate is optimal, up to a log-factor, since $F(v)=1-f_Z(v)/f_Z(0)$, see \cite{Balabdaoui&Wellner:2007}, p. 2538.
\subsection{Sobolev-regularity adaptive case}\label{subsec:adapt}
In this section, we focus on the case where the sampling density $f_{0Y}$ is a mixture of 
Laplace densities with a Sobolev regular mixing density. 
We still consider the prior distribution $\Pi$ induced on $\mathscr F$ by the product measure $\mathscr D_{H_0}\otimes \Pi_\sigma$ 
for the parameter $(\mu_H,\,\sigma)$ of $f_Y=f_\varepsilon\ast(\phi_\sigma\ast\mu_H)$, 
with a standard Laplace error density $f_\varepsilon$. Let the corresponding posterior distribution $\Pi(\cdot\mid Y^{(n)})$ 
be based on i.i.d. observations $Y_1,\,\ldots,\,Y_n$ from 
$f_{0Y}=f_\varepsilon\ast f_{0X}$, which is a Laplace mixture with mixing density $f_{0X}$ 
satisfying the following conditions. 
\begin{ass}\label{ass:sobolevcond}
\emph
{There exists $\alpha>0$ such that
\begin{equation*} \label{Sobolev:cond}
\forall\,b=\mp\frac{1}{2},\quad\int_{\mathbb R} |t|^{2\alpha}|\widehat{(e^{b\cdot}f_{0X})}(t)|^2\,\di t < \infty.
\end{equation*}
}
\end{ass}
\begin{ass}\label{ass:smoothf0}
\emph{For given $\alpha>0$, there exist $0<\upsilon\leq1$, $L_0\in L^1(\mathbb R)$ and $R\geq (2m/\upsilon)$, with the smallest integer $m\geq[2\vee(\alpha+2)/2]$, such that $f_{0X}$
satisfies
\begin{equation}\label{eq:holderf0}
|f_{0X}(x+\zeta)-f_{0X}(x)|\leq L_0(x)|\zeta|^{\upsilon} \,\mbox{ for every }x,\,\zeta\in\mathbb R,
\end{equation}
and
\begin{equation}\label{eq:ratiofo}
\int_{\mathbb R}e^{|x|/2}f_{0X}(x)\pt{\frac{L_0}{f_{0X}}(x)}^R\di x<\infty.
\end{equation}
}
\end{ass}

\smallskip
\noindent
Assumption \ref{ass:sobolevcond} requires that, for $b=\mp\frac{1}{2}$, the function 
$e^{b\cdot}f_{0X}$ is $\alpha$-Sobolev regular, while Assumption \ref{ass:smoothf0} requires that $f_{0X}$ is locally $\upsilon$-H\"older smooth, with envelope function 
$L_0$ satisfying the integrability condition \eqref{eq:ratiofo}. 
The model $f_Y=f_\varepsilon\ast(\phi_\sigma\ast\mu_H)$ acts as an approximation scheme 
for automatic posterior rate adaptation to the global regularity of $f_{0Y}$, without any knowledge of 
the regularity of $f_{0X}$ being used in the prior specification. 
We show that a rate-adaptive estimation procedure for Laplace mixtures 
can be obtained if the prior distribution is properly constructed, for instance, as a mixture of Laplace-normal convolutions, with
an inverse-gamma type bandwidth and a Dirichlet process on the mixing distribution.

\begin{thm}\label{thm:4}
Let $Y_1,\,\ldots,\,Y_n$ be i.i.d. observations from
$f_{0Y}:=f_\varepsilon\ast f_{0X}$, where $f_\varepsilon$ is the density of the 
standard Laplace distribution and $f_{0X}$ 
satisfies Assumptions \ref{ass:twicwtailcond}--\ref{ass:smoothf0}. 
Let $\Pi$ be the prior distribution induced by $\mathscr D_{H_0} \otimes \Pi_\sigma$, where 
${H_0}$ verifies Assumption \ref{ass:basemeasure1} and $\Pi_\sigma$ verifies
Assumption \ref{ass:priorscale1}.
Then, the conditions in \eqref{con1} are satisfied for 
$\tilde \epsilon_n = n^{-(\alpha+2)/(2\alpha+5)}(\log n)^{\varphi'}$, 
with some $\varphi'>0$, and there exists $D'$ large enough so that
\[
\Pi(\mu_Y:\,\|f_Y-f_{0Y}\|_1>D' \tilde \epsilon_n
\mid \Data)\rightarrow0\,\mbox{ in $P_{0Y}^n$-probability}.
\]
\end{thm}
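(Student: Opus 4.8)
The strategy follows the same template as the proof of Theorem \ref{thm:31}, namely to verify the three conditions in \eqref{con1} at the claimed rate $\tilde\epsilon_n = n^{-(\alpha+2)/(2\alpha+5)}(\log n)^{\varphi'}$; the conclusion then follows from \cite{ghosal2000}, Theorem 2.1, exactly as in Theorem \ref{thm:31}. The new ingredient relative to Theorem \ref{thm:31} is that we must exploit the Sobolev regularity of $f_{0X}$ (Assumptions \ref{ass:sobolevcond}--\ref{ass:smoothf0}) to obtain a \emph{faster} approximation of $f_{0Y}$, rather than settling for the $n^{-2/5}$ rate that the mere tail Assumption \ref{ass:twicwtailcond} gives. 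The key point is that the rate $n^{-(\alpha+2)/(2\alpha+5)}$ is precisely the minimax rate for estimating a density in a Sobolev ball of order $\alpha+2$ on the real line, reflecting that $f_{0Y} = f_\varepsilon \ast f_{0X}$ inherits two extra derivatives from the Laplace convolution while $f_{0X}$ itself has regularity $\alpha$.

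\textbf{Step 1 (small-ball prior mass, third inequality of \eqref{con1}).} This is the heart of the proof. I would construct an approximation of $f_{0Y} = f_\varepsilon \ast f_{0X}$ of the form $f_\varepsilon \ast (\phi_\sigma \ast \mu_H)$ for a carefully chosen discrete mixing measure $\mu_H$ and bandwidth $\sigma \asymp \tilde\epsilon_n^{1/(\alpha+2)}$ (up to log factors), achieving $\|f_{0Y} - f_\varepsilon\ast(\phi_\sigma\ast\mu_H)\|_1 = O(\tilde\epsilon_n)$ and $\|f_{0Y} - f_\varepsilon\ast(\phi_\sigma\ast\mu_H)\|_\infty$ controlled as well, so that the Kullback--Leibler neighborhood estimate goes through via Remark \ref{rmk:KLneigh} and a lemma analogous to Lemma \ref{lem:KL}. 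The construction should proceed by (i) first smoothing $f_{0X}$ by an order-$m$ kernel at bandwidth $\sim\sigma$, using Assumption \ref{ass:sobolevcond} (the weighted Sobolev condition) to bound the bias of $f_\varepsilon\ast f_{0X}$ by its Sobolev norm times $\sigma^{\alpha+2}$ — here the two extra powers come from the factor $|\hat f_\varepsilon(t)| = (1+t^2)^{-1}$ decaying like $t^{-2}$, which after convolution effectively raises the regularity by $2$; (ii) then discretizing the normal mixture $\phi_\sigma\ast f_{0X}$ into a finite mixture $\phi_\sigma \ast \mu_H$ with $N \lesssim \sigma^{-1}\log(1/\sigma)$ support points, as in Lemma 2 of \cite{gao2016} and Lemma \ref{lem:discrete}, controlling the discretization error in $L^1$ and $L^\infty$; (iii) using the H\"older/envelope condition \eqref{eq:holderf0}--\eqref{eq:ratiofo} and the tail condition Assumption \ref{ass:twicwtailcond} to control the lower-bound-on-the-density terms entering the KL-type second moment, i.e.\ to bound $\int f_{0Y}(\log(f_{0Y}/f_Y))^2$, as well as the prior mass of a Kullback--Leibler ball around this $(\mu_H,\sigma)$ using Assumptions \ref{ass:basemeasure1} and \ref{ass:priorscale1} on $\mathscr D_{H_0}$ and $\Pi_\sigma$. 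The prior mass lower bound $c_4\exp(-c_2 n\tilde\epsilon_n^2)$ emerges from the standard Dirichlet-process small-ball computation: placing mass on a neighborhood of an $N$-point mixing measure with $\sigma$ near $\tilde\epsilon_n^{1/(\alpha+2)}$ costs $\exp(-C N\log(1/\sigma)) = \exp(-C\sigma^{-1}(\log(1/\sigma))^2)$ in prior probability, and $\sigma^{-1} \asymp \tilde\epsilon_n^{-1/(\alpha+2)}$, so one checks $\sigma^{-1}(\log)^2 \lesssim n\tilde\epsilon_n^2$, which holds for $\tilde\epsilon_n^{2 + 1/(\alpha+2)} \gtrsim (\log n)^{c}/n$, i.e.\ $\tilde\epsilon_n \gtrsim n^{-(\alpha+2)/(2\alpha+5)}(\log n)^{c'}$ — exactly the claimed rate.

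\textbf{Step 2 (entropy and remaining mass, first two inequalities of \eqref{con1}).} These follow essentially verbatim from Theorem \ref{thm:31}: take the sieves $\mathscr P_n$ to be Dirichlet-mixture sieves truncating the number of components, the locations and the range of $\sigma$ as in Theorem 5 of \cite{ghosal:shen:tokdar}, and use the contraction $\|f_\varepsilon\ast(f_1 - f_2)\|_1 \le \|f_1 - f_2\|_1$ to transfer the entropy and prior-mass bounds for normal mixtures to Laplace-normal mixtures. The only adjustment is that the sieve dimension is now calibrated to the smaller $\tilde\epsilon_n$, but since the relevant counting bounds are polynomial/logarithmic in $1/\tilde\epsilon_n$ this is automatic; one also needs $\mathscr P_n \subseteq \{\mu_X: M_{4+\delta}(\mu_Y) \le K'\tilde\epsilon_n^{-2}\}$, which follows from the upper-tail control of $\Pi_\sigma$ and of $H_0$ exactly as in the proof of Theorem \ref{thm:32}.

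\textbf{Main obstacle.} The delicate part is Step 1(i)--(ii): building a \emph{single} finite normal mixture whose Laplace convolution approximates $f_{0Y}$ at the improved rate $\tilde\epsilon_n$ simultaneously in $L^1$ and in a sup-norm (or second-moment-of-log) sense, while keeping the number of components small enough ($\sim\sigma^{-1}\log(1/\sigma)$) that the Dirichlet prior charges its neighborhood with probability at least $\exp(-c_2 n\tilde\epsilon_n^2)$. This requires carefully combining the order-$m$ kernel smoothing argument (to get the $\sigma^{\alpha+2}$ bias, leveraging that $\hat f_\varepsilon$ only decays polynomially of degree $2$, so $|\widehat{D^{\alpha+2}f_{0Y}}(t)| = |t|^{\alpha+2}|\hat f_\varepsilon(t)||\hat f_{0X}(t)|$ is integrable under Assumption \ref{ass:sobolevcond}) with the discretization of a Gaussian mixture and the exponential-tilting argument ($b = \mp 1/2$) needed to handle the unbounded support — this is exactly the content promised for Lemma \ref{lem:contapprox} in the introduction, and the bookkeeping of the interplay between $\sigma$, $N$, the truncation level, and the log-exponents $\varphi'$ is where the real work lies. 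Everything downstream (plugging into Theorem \ref{thm:22} to get the $W_1$-deconvolution rate, which is Theorem \ref{thm:5}) is then routine given Theorem \ref{theo:1}.
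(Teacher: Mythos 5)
Your overall roadmap matches the paper's: verify the three conditions in \eqref{con1} at rate $\tilde\epsilon_n = n^{-(\alpha+2)/(2\alpha+5)}(\log n)^{\varphi'}$, re-use the entropy and remaining-mass arguments of Theorem \ref{thm:31}, and locate the gain entirely in a sharper Kullback--Leibler prior-mass estimate resting on an $O(\sigma^{\alpha+2})$-order approximation of $f_{0Y}$ by Laplace--Gaussian mixtures; this is precisely what the paper does via Lemma \ref{lem:deltabound1} together with a version of Lemma \ref{lem:KL} in which $\beta$ is replaced by $\alpha+2$. Your rate arithmetic and your treatment of Step 2 are also consistent with the paper.

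The gap is in the mechanism of Step 1(i)--(ii), which is internally inconsistent and misses the paper's key device. In (i) you smooth $f_{0X}$ by an external order-$m$ kernel at bandwidth $\sigma$ to obtain bias $O(\sigma^{\alpha+2})$; in (ii) you then discretize the \emph{Gaussian} mixture $\phi_\sigma\ast f_{0X}$. But $\phi_\sigma$ is only a second-order kernel, so $\|f_{0Y}-f_\varepsilon\ast\phi_\sigma\ast f_{0X}\|$ is of order $\sigma^2$ and not $\sigma^{\alpha+2}$, while the order-$m$ convolution you invoke in (i) is not of the form $\phi_\sigma\ast\mu_H$ for any $\mu_H$ and hence cannot be inserted into the model. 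Lemma \ref{lem:contapprox} resolves this differently: it keeps $\phi_\sigma$ and modifies the \emph{mixing density}, constructing $T_{m,b,\sigma}f_{0X}$, i.e.\ $f_{0X}$ plus explicit correction terms obtained by convolving $f_{0X}$ with $e^{-b\cdot}D^jH_\delta$, where $H_\delta$ is an auxiliary super-smooth kernel, proportional to the Fourier transform of a Gaussian (modulated by a bump function), at a second scale $\delta\asymp\sigma$. It is this \emph{double} smoothing --- the $\sigma$-scale Gaussian inside the model plus the $\delta$-scale $H_\delta$ used to control the low-frequency error --- that yields a weighted $L^2$-bias of order $\sigma^{\alpha+2}$ under the Sobolev Assumption \ref{ass:sobolevcond}, and the paper explicitly flags it as its departure from the H\"older-scale constructions of Kruijer--Rousseau--van der Vaart and Ghosal--Shen--Tokdar. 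The object that is then normalized (Lemma \ref{lem:Asigma}), truncated to $E_\sigma$, and discretized to produce an admissible $\mu_H$ in Lemma \ref{lem:deltabound1} is $T_{m,b,\sigma}f_{0X}$, not an order-$m$ smoothing of $f_{0X}$. Without this construction your Step~1 cannot be carried out, although you correctly identify it as the main obstacle and correctly describe the exponential tilting $b=\mp\tfrac12$ and the roughly $\sigma^{-1}$ (up to logs) number of support points needed downstream.
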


\begin{proof}
The entropy and remaining mass conditions, as well as the small ball prior probability estimate in \eqref{con1},
are satisfied for $\tilde\epsilon_n$ as in the statement.
For details of the entropy and remaining mass conditions, 
see, \emph{e.g.}, Theorem 5 of \cite{ghosal:shen:tokdar}, p. 631, while
for the small ball prior probability estimate apply Lemma \ref{lem:deltabound1},
together with a modified version of Lemma \ref{lem:KL}, with $\beta$ replaced by $(\alpha+2)$.
\end{proof}

Theorem \ref{thm:4} is based on the approximation Lemmas \ref{lem:contapprox} and \ref{lem:deltabound1}. 
The approximation of $f_{0Y}$ by $f_\varepsilon\ast(\phi_\sigma\ast \mu_H)$ used in the non-adaptive case of Theorem \ref{thm:31} 
is remarkably simpler than the approximation used in Lemma \ref{lem:contapprox} for the adaptive case. 
The latter is also different from the construction in \cite{kruijer:rousseau:vdv:10}. 
As in the non-adaptive case, $L^1$-Wasserstein posterior convergence rates for $\mu_X$ are derived from Theorem \ref{thm:4} 
by controlling the prior probability of the event in \eqref{posteriorPnc} and the $L^1$-norm of the bias in \eqref{eq:ass1}.

\begin{thm}\label{thm:5}
Granted the assumptions of Theorem \ref{thm:4} on $f_{0Y}$ and considered 
the same prior with $\iota>1$ and $\varpi>1$,
there exist $M'$ large enough and $\kappa'>0$ so that
\begin{equation*}\label{eq:70}
\Pi(\mu_X:\,W_1(\mu_X,\,\mu_{0X})>M'n^{-(\alpha+1)/(2\alpha+5)}(\log n)^{\kappa'}\mid\Data)
\rightarrow0\,\mbox{ in $P_{0Y}^n$-probability.}
\end{equation*}
\end{thm}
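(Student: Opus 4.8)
The plan is to follow the proof of Theorem~\ref{thm:32} almost verbatim, the only change being that we now invoke the second, smoothness-aware part of Theorem~\ref{thm:22} instead of the first. With the standard Laplace noise one has $\beta=2$ and $d=1$, hence $\beta d=2>1$; since $\hat f_\varepsilon(t)=(1+t^2)^{-1}$, the reciprocal is $r_\varepsilon(t)=1+t^2$ with $r_\varepsilon^{(1)}(t)=2t$, so Assumption~\ref{ass:identifiability+error} holds with $\beta=2$, and $\mu_\varepsilon\in\mathcal P_{4+\delta}(\mathbb{R})$ for every $\delta>0$. Assumption~\ref{ass:twicwtailcond} gives $f_{0X}(x)\lesssim e^{-(1+C_0)|x|}$, whence $\mu_{0X}\in\mathcal P_{4+\delta}(\mathbb{R})$ as well; and Theorem~\ref{thm:4} supplies the entropy, remaining-mass and small-ball conditions of~\eqref{con1} with $\tilde\epsilon_n=n^{-(\alpha+2)/(2\alpha+5)}(\log n)^{\varphi'}$. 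Thus the skeleton is in place, and only the two additional hypotheses of the second part of Theorem~\ref{thm:22} need checking.

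First I would settle the uniform moment requirement $\mathscr P_n\subseteq\{\mu_X:\,M_{4+\delta}(\mu_Y)\le K'\tilde\epsilon_n^{-2}\}$. Exactly as in Theorem~\ref{thm:32}, this reduces to the tail bound~\eqref{posteriorPnc}: writing $X=\sigma Z+U$ with $Z\sim N(0,1)$ and $U\sim\mu_H$, one has $M_{4+\delta}(\mu_X)\lesssim\sigma^{4+\delta}+M_{4+\delta}(\mu_H)$, and the two tail assumptions $\Pi_\sigma((\bar\sigma,\infty))\lesssim e^{-D_3\bar\sigma^\varpi}$ with $\varpi>1$ and $h_0(u)\lesssim e^{-b_0'|u|^\iota}$ with $\iota>1$ force the prior mass of $\{M_{4+\delta}(\mu_X)>K''\tilde\epsilon_n^{-2}\}$ to be at most $\exp(-cn\tilde\epsilon_n^2)$, provided $\delta$ is chosen with $0<\delta\le 4[(\varpi\wedge\iota)-1]$. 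This is exactly the computation in the proof of Theorem~\ref{thm:32}.

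Second, I would verify Assumption~\ref{ass:smoothXXX} for the given $\alpha$ and the bias bound~\eqref{eq:ass1}. In dimension one Assumption~\ref{ass:smoothXXX} amounts to $\int_{\mathbb{R}}|t|^\alpha|\hat\mu_{0X}(t)|\,\di t<\infty$ and $D^\alpha f_{0X}\in L^1(\mathbb{R})$; these follow from Assumptions~\ref{ass:sobolevcond}--\ref{ass:smoothf0} together with the exponential tail of $f_{0X}$, since $L^2$-Sobolev regularity of order $\alpha$ of $e^{\pm\cdot/2}f_{0X}$ upgrades, via a Cauchy--Schwarz split of $|t|^\alpha|\hat\mu_{0X}(t)|$ against a decaying weight (and splitting off integer derivatives when $\alpha\ge1$), to the required $L^1$ control of the $\alpha$th fractional derivative and Fourier weight. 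The bias bound $\max_{\mathsf v}\|b_{F_{X,\mathsf v}}(h_n)\|_1\le C_1 h_n^{\alpha+1}$ for \emph{every} $\mu_X\in\mathscr P_n$, with $K$ as in~(a) and $h_n=[\tilde\epsilon_n(\log n)]^{1/(\alpha+2)}$, is precisely what the approximation machinery of Lemmas~\ref{lem:contapprox}--\ref{lem:deltabound1} (already used in Theorem~\ref{thm:4}) is designed to guarantee: the prior is supported on convolutions $\phi_\sigma\ast\mu_H$ whose $K_{h_n}$-smoothed distribution functions are uniformly $(\alpha+1)$-regular. I expect this last point to be the main obstacle --- unlike in the non-adaptive Theorem~\ref{thm:32}, the bias has to be controlled not just at the truth $\mu_{0X}$ but uniformly over the posterior bulk $\mathscr P_n$, and it is exactly the tailored structure of the Gaussian-mixture prior that makes this uniform control possible.

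Granted these checks, the second part of Theorem~\ref{thm:22} yields $W_1$-contraction at the rate $\epsilon_{n,\alpha}=[\tilde\epsilon_n(\log n)^{1+\1_{(\beta d\le1)}}]^{(\alpha+1)/[\alpha+(\beta d\vee1)]}=[\tilde\epsilon_n(\log n)]^{(\alpha+1)/(\alpha+2)}$. Substituting $\tilde\epsilon_n=n^{-(\alpha+2)/(2\alpha+5)}(\log n)^{\varphi'}$ and using $\tfrac{\alpha+2}{2\alpha+5}\cdot\tfrac{\alpha+1}{\alpha+2}=\tfrac{\alpha+1}{2\alpha+5}$ gives $\epsilon_{n,\alpha}\asymp n^{-(\alpha+1)/(2\alpha+5)}(\log n)^{\kappa'}$ for a suitable $\kappa'>0$, which is the stated conclusion.
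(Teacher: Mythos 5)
Your high-level template is right: apply the second (smoothness-aware) part of Theorem~\ref{thm:22} with $\beta=2$, $d=1$ and $\tilde\epsilon_n=n^{-(\alpha+2)/(2\alpha+5)}(\log n)^{\varphi'}$ from Theorem~\ref{thm:4}, verify the moment condition~\eqref{posteriorPnc} exactly as in Theorem~\ref{thm:32} using $\varpi>1$, $\iota>1$, and substitute to obtain $\epsilon_{n,\alpha}=[\tilde\epsilon_n\log n]^{(\alpha+1)/(\alpha+2)}\asymp n^{-(\alpha+1)/(2\alpha+5)}(\log n)^{\kappa'}$. You also correctly flag the uniform bias bound over the sieve $\mathscr P_n$ as the crux.

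However, exactly at that crux your argument has a real gap. You write that the bound $\max_{\mathsf v}\|b_{F_{X,\mathsf v}}(h_n)\|_1\le C_1 h_n^{\alpha+1}$ for every $\mu_X\in\mathscr P_n$ is ``precisely what the approximation machinery of Lemmas~\ref{lem:contapprox}--\ref{lem:deltabound1} is designed to guarantee,'' but those lemmas concern a different quantity: they control the KL prior mass near $f_{0Y}$ by finding \emph{one} good Gaussian--Laplace mixture close to $f_{0Y}$, not the $K_{h_n}$-smoothing bias of arbitrary densities $\phi_\sigma\ast\mu_H$ in the sieve. The tool the paper actually uses is Lemma~\ref{lem:biasmixgaus}, which states that for $\mu_X=\phi_\sigma\ast\mu_H$ and any $\mu_H$, one has $\|F_X-F_X\ast K_h\|_1\lesssim h^{\alpha+1}$ as soon as $h\sqrt{(2\alpha+1)|\log h|}\le\sigma<1$. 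This is not a ``regularity of the smoothed distribution function'' statement; it is a comparison between the Gaussian bandwidth $\sigma$ and the kernel bandwidth $h$, exploiting the Gaussian tail decay in Fourier domain. Crucially, the reason this applies \emph{uniformly} over $\mathscr P_n$ is that the sieve constructed in Theorem~\ref{thm:4}'s proof via \cite{ghosal:shen:tokdar} enforces $\sigma\ge\sigma_n\equiv n^{-1/(2\alpha+5)}(\log n)^{q'}$, and one picks $q'$ large enough that $\sigma_n\ge h_n|\log h_n|^{\alpha+1/2}$ (with $h_n=[\tilde\epsilon_n(\log n)^q]^{1/(\alpha+2)}=n^{-1/(2\alpha+5)}(\log n)^{(q+\varphi')/(\alpha+2)}$). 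Without this sieve restriction on $\sigma$ the bias bound would simply fail, since $\sigma$ can be arbitrarily small under $\Pi_\sigma$. You never invoke this restriction, so your argument does not establish the hypothesis of Theorem~\ref{thm:22}'s second part.

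A secondary caveat: your Cauchy--Schwarz upgrade from the $L^2$-Sobolev condition of Assumption~\ref{ass:sobolevcond} to the $L^1$ integrability required in Assumption~\ref{ass:smoothXXX} is not free; splitting at $|t|=1$ still needs $\int_{|t|>1}|t|^{2\alpha+1+\epsilon}|\hat\mu_{0X}(t)|^2\,\di t<\infty$ for some $\epsilon>0$, i.e.\ strictly more than Sobolev-$\alpha$ regularity. This part is admittedly left implicit in the paper too, so I mention it as a source of imprecision rather than as the principal defect of your write-up.
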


\begin{proof}
Applying Theorem \ref{thm:4}, we get $\tilde \epsilon_n = n^{-(\alpha+2)/(2\alpha+5)}(\log n)^{\varphi'}$ for some $\varphi'>0$. 
Then, reasoning as in the proof of Theorem \ref{thm:32}, it can be shown that, for some $c,\,\delta,\,K''>0$, 
condition \eqref{posteriorPnc} is satisfied.
By Lemma \ref{lem:biasmixgaus}, for $0<h\sqrt{(2\alpha+1)|\log h|}\leq\sigma<1$, we have
$\|b_{F_X}(h)\|_1 \lesssim h^{\alpha+1}$. 
For $q>0$, replace $h$ with $h_n=[\tilde\epsilon_n(\log n)^q]^{1/(\alpha+2)}=n^{-1/(2\alpha+5)}(\log n)^{(q+\varphi')/(\alpha+2)}$.
From the proof of Theorem \ref{thm:4}, over the sieve set $\mathscr P_n$, we have $\sigma \geq \sigma_n \equiv n^{-1/(2\alpha+5)} (\log n)^{q'}$, for some $q'>0$.
We can choose $q'$ so that $\sigma_n\geq h_n|\log h_n|^{\alpha+1/2}$. Then, for every $\mu_X\in\mathscr P_n$, we have
$\|b_{F_X}(h_n)\|_1 \lesssim h_n^{\alpha+1}$ as prescribed by condition \eqref{eq:ass1} and the proof is complete. 
\end{proof}

\subsubsection{Approximation result}\label{sec:approxresult}
When assessing posterior rates of convergence for kernel mixture priors, a crucial step consists in
finding a suitable approximation of the true density within the model. 
Lemma \ref{lem:contapprox}, stated below, constructs an approximation of 
a Laplace mixture density $f_{0Y}=f_\varepsilon\ast f_{0X}$, with an $\alpha$-Sobolev regular mixing density 
$f_{0X}$, by a Laplace-normal convolution $f_\varepsilon\ast (\phi_\sigma\ast\mu_H)$
so that the \vir{bias}, the $L^2$-distance between the true density and the approximation,
is of the correct order $O(\sigma^{\alpha+2})$ in terms of the kernel bandwidth $\sigma$.
Even if the approximation is a crucial technical device within the Bayesian framework, 
the result is independent of the inferential paradigm adopted and is of interest in itself. 

For $h>0$, let $$H(x):=\frac{1}{2\pi}\reallywidehat{(\tau\ast\phi_h)}(x)=\frac{1}{2\pi}\hat \tau(x)\reallywidehat{\phi_h}(x)
=\frac{1}{2\pi}\hat \tau(x) e^{-(h x)^2/2}, \quad x\in\mathbb R,$$
where $|\hat\tau(x)|\leq (16^2/15)e^{-\sqrt{|x|/15}}$, $x\in\mathbb R$, is the Fourier transform of
$\tau:\,\mathbb R\rightarrow [0,\,1]$ defined in Theorem 25 of \cite{bourgain}, p. 29, such that
$$\tau (u) = \left\{ \begin{array}{ll}
1,&\quad \text{if }  |u| < 1, \\[3pt]
0,&\quad \text{if }  |u| > 17/15.
 \end{array} \right. $$
The function $\tau$ is such that $\hat \tau$ is infinitely differentiable and 
\begin{equation}\label{eq:poldec}
\mbox{for any $i\in\mathbb N_0$,}\quad|\hat\tau^{(i)}(x)|=O(|x|^{-\nu})\quad \mbox{for large $|x|$ and every $\nu>0$.}
\end{equation}
Given $m\in\mathbb N$, $b=\mp\frac{1}{2}$, $\delta,\,\sigma>0$ and a function $f:\,\mathbb R\rightarrow\mathbb R$, we define the operator
$$f\mapsto T_{m,b,\sigma} f:=f+\sum_{k=1}^{m-1}\frac{(-\sigma^2/2)^k}{k!}
\sum_{j=0}^{2k} \binom{2k}{j}(-b)^{2k-j}[f\ast (e^{-b\cdot}D ^jH_\delta)],$$
where $H_\delta(\cdot):=(1/\delta)H(\cdot/\delta)$. Since $\delta$ will be chosen proportional to $\sigma$, the operator does not ultimately depend on $\delta$.
If $M_{0X}(b):=\int_{\mathbb R}e^{bx}f_{0X}(x)\,\di x<\infty$, introduced the density
\begin{equation}\label{eq:barh}
\bar h_{0,b}:=\frac{e^{b\cdot}f_{0X}}{M_{0X}(b)}
\end{equation}
and the constant $\gamma:=-(1-e^{-\sigma^2/8})$, let the function $h_{m,b,\sigma}:\,\mathbb R\rightarrow \mathbb R$ be defined as
\begin{equation}\label{hm}
h_{m,b,\sigma}:=\frac{1}{\gamma} \sum_{k=1}^{m-1}\frac{ (-\sigma^2/2)^k}{k!}
\sum_{j=0}^{2k} \binom{2k}{j}(-b)^{2k-j}(\bar h_{0,b}\ast D^jH_\delta).
\end{equation}
Then,
\begin{equation}\label{eq:ident}
\frac{e^{b\cdot}}{M_{0X}(b)}T_{m,b,\sigma}f_{0X}=\bar h_{0,b}+\gamma  h_{m,b,\sigma}.
\end{equation}

\smallskip
The following lemma provides the order of the approximation error, in terms of the Gaussian bandwidth $\sigma$,
of the $L^2$-norm distance between the Laplace mixture density $f_{0Y}=f_\varepsilon\ast f_{0X}$ 
and the normal-Laplace mixture of the transformation $T_{m,b,\sigma}f_{0X}$ of $f_{0X}$.

\begin{lem}\label{lem:contapprox}
Let $f_\varepsilon$ be the standard Laplace density.
Let $f_{0X}$ be a density such that $(e^{|\cdot|/2}f_{0X})\in L^1(\mathbb R)$ and
satisfies Assumption \ref{ass:sobolevcond} for $\alpha>0$.
Then, for $m\geq [2\vee(\alpha+2)/2]$ and $\sigma>0$ small enough,
\begin{equation}\label{bound:tildehm12}
\sum_{b=\mp 1/2}
\|e^{b\cdot}\{f_\varepsilon\ast [\phi_\sigma\ast (T_{m,b,\sigma}f_{0X})-f_{0X}]\}\|_2^2\lesssim\sigma^{2(\alpha+2)}
\end{equation}
and
\begin{equation}\label{eq:bound}
\forall\, b=\mp\frac{1}{2},\quad \int_{\mathbb R} h_{m,b,\sigma}(x)\,\di x = 1 + O(\sigma^{2(m-1)}).
\end{equation}
\end{lem}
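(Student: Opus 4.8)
The plan is to move everything to the Fourier side, where the correction operator $T_{m,b,\sigma}$ becomes a multiplier acting on $\widehat{(e^{b\cdot}f_{0X})}$, and then to split the resulting integral at the frequency $|t|\asymp\sigma^{-1}$.

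\emph{Step 1 (reduction to a Fourier integral).} By the tilting identity $e^{b\cdot}(u\ast v)=(e^{b\cdot}u)\ast(e^{b\cdot}v)$, applied twice,
\begin{equation*}
e^{b\cdot}\big\{f_\varepsilon\ast[\phi_\sigma\ast(T_{m,b,\sigma}f_{0X})-f_{0X}]\big\}
=(e^{b\cdot}f_\varepsilon)\ast\big[(e^{b\cdot}\phi_\sigma)\ast(e^{b\cdot}T_{m,b,\sigma}f_{0X})-e^{b\cdot}f_{0X}\big].
\end{equation*}
Since $(e^{|\cdot|/2}f_{0X})\in L^1(\mathbb R)$ and $|b|=\tfrac12<1$, every tilted factor lies in $L^1(\mathbb R)$ (for $D^jH_\delta$ use the rapid decay of $H$ inherited from $\hat\tau$), so its Fourier transform is the analytic continuation $\widehat{e^{b\cdot}g}(t)=\hat g(t-\imath b)$. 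Using $\widehat{D^jH_\delta}(t)=(-\imath t)^j\hat H_\delta(t)$, the binomial identity $\sum_{j=0}^{2k}\binom{2k}{j}(-b)^{2k-j}(-\imath t)^j=(b+\imath t)^{2k}$, and $\widehat{e^{b\cdot}\phi_\sigma}(t)=\hat\phi_\sigma(t-\imath b)=e^{\sigma^2(b+\imath t)^2/2}$, a direct computation gives, writing $s=s(t):=-\sigma^2(b+\imath t)^2/2$ and $R_m(s):=\sum_{k\ge m}s^k/k!$,
\begin{equation*}
\widehat{e^{b\cdot}\{\phi_\sigma\ast(T_{m,b,\sigma}f_{0X})-f_{0X}\}}(t)
=\widehat{(e^{b\cdot}f_{0X})}(t)\Big[(e^{-s}-1)\big(1-\hat H_\delta(t)\big)-\hat H_\delta(t)\,e^{-s}R_m(s)\Big].
\end{equation*}
As $|\hat f_\varepsilon(t-\imath b)|=|1+(t-\imath b)^2|^{-1}\asymp(1+t^2)^{-1}$ for $b=\mp\tfrac12$, Parseval yields
\begin{equation*}
\big\|e^{b\cdot}\{f_\varepsilon\ast[\phi_\sigma\ast(T_{m,b,\sigma}f_{0X})-f_{0X}]\}\big\|_2^2
\asymp\int_{\mathbb R}\frac{|\widehat{(e^{b\cdot}f_{0X})}(t)|^2}{(1+t^2)^2}\Big|(e^{-s}-1)(1-\hat H_\delta(t))-\hat H_\delta(t)e^{-s}R_m(s)\Big|^2\di t.
\end{equation*}

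\emph{Step 2 (bounding the integral).} One has $|s|\asymp\sigma^2(1+t^2)$ and $\Re s\ge-\sigma^2/8$, so $|e^{-s}|\le e^{\sigma^2/8}$; moreover $\hat H_\delta(t)=(\tau\ast\phi_h)(\delta t)\in[0,1]$, with $\delta,h\asymp\sigma$ as in the construction, so $1-\hat H_\delta(t)$ is smaller than any power of $\sigma$ on $|t|\le\tfrac12\delta^{-1}$, while $|1-\hat H_\delta(t)|\le1$ always. On $|t|\le\sigma^{-1}$ we have $|s|\lesssim1$, hence $|e^{-s}R_m(s)|\lesssim|s|^m\lesssim(\sigma^2(1+t^2))^m$, and the first summand is negligible; since $m\ge[2\vee(\alpha+2)/2]$ forces $2m-2-\alpha\ge0$, bounding $(1+t^2)^{2m-2}\le(1+t^2)^\alpha\sigma^{-2(2m-2-\alpha)}$ there makes the contribution
\begin{equation*}
\lesssim\sigma^{4m}\sigma^{-2(2m-2-\alpha)}\int_{\mathbb R}(1+t^2)^\alpha|\widehat{(e^{b\cdot}f_{0X})}(t)|^2\,\di t
=\sigma^{2(\alpha+2)}\int_{\mathbb R}(1+t^2)^\alpha|\widehat{(e^{b\cdot}f_{0X})}(t)|^2\,\di t,
\end{equation*}
the integral being finite because $(e^{|\cdot|/2}f_{0X})\in L^1(\mathbb R)$ (boundedness of $\widehat{(e^{b\cdot}f_{0X})}$) and Assumption \ref{ass:sobolevcond} holds. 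On $|t|>\sigma^{-1}$ both error factors are bounded — for $e^{-s}R_m(s)$ write $e^{-s}R_m(s)=1-e^{-s}\sum_{k<m}s^k/k!$ and use $\sup_{x\ge0}e^{-x/2}x^k<\infty$ — so the contribution is $\lesssim\int_{|t|>\sigma^{-1}}(1+t^2)^{-2}|\widehat{(e^{b\cdot}f_{0X})}(t)|^2\,\di t\le\sigma^{2(\alpha+2)}\int_{\mathbb R}(1+t^2)^\alpha|\widehat{(e^{b\cdot}f_{0X})}(t)|^2\,\di t$, using $(1+t^2)^{-2}\le\sigma^{2(\alpha+2)}(1+t^2)^\alpha$ there. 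Summing the two regions and over $b=\mp\tfrac12$ gives \eqref{bound:tildehm12}.

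\emph{Step 3 (mass of $h_{m,b,\sigma}$).} Integrating \eqref{hm} and using $\widehat{\bar h_{0,b}\ast D^jH_\delta}(0)=\widehat{\bar h_{0,b}}(0)\,(-\imath 0)^j\hat H_\delta(0)$, which equals $\hat H_\delta(0)$ for $j=0$ (recall $\widehat{\bar h_{0,b}}(0)=1$ by \eqref{eq:barh}) and $0$ for $j\ge1$, together with $b^2=\tfrac14$,
\begin{equation*}
\int_{\mathbb R}h_{m,b,\sigma}(x)\,\di x=\frac{\hat H_\delta(0)}{\gamma}\sum_{k=1}^{m-1}\frac{(-\sigma^2/8)^k}{k!}
=\frac{\hat H_\delta(0)}{\gamma}\big(\gamma-R_m(-\sigma^2/8)\big),
\end{equation*}
since $\sum_{k\ge1}(-\sigma^2/8)^k/k!=e^{-\sigma^2/8}-1=\gamma$. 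Because $|\gamma|\asymp\sigma^2$, $R_m(-\sigma^2/8)=O(\sigma^{2m})$ and $\hat H_\delta(0)=(\tau\ast\phi_h)(0)=1+O(e^{-c/h^2})$ differs from $1$ by a term smaller than $\sigma^{2(m-1)}$, the right-hand side is $1+O(\sigma^{2(m-1)})$, which is \eqref{eq:bound}.

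\emph{Main obstacle.} The delicate point is the interplay with the frequency window $\hat H_\delta$: the scalings $\delta,h\asymp\sigma$ must be pinned down so that, simultaneously, the effective cutoff falls at $|t|\asymp\sigma^{-1}$ — exactly the break point of the Step 2 splitting, which is where the Laplace factor $(1+t^2)^{-2}$ starts to pay off and produces the extra $\sigma^4$ — and the truncation errors ($1-\hat H_\delta$ near the origin, $\hat H_\delta$ at infinity, and $\hat H_\delta(0)-1$) stay negligible against every power of $\sigma$ entering the bounds. Everything else reduces to the binomial and exponential-remainder identities of Step 1 and the elementary moment and Sobolev estimates on $e^{b\cdot}f_{0X}$.
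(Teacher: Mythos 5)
Your proof is correct and follows essentially the same route as the paper's: both pass to the Fourier side, express the error as $\widehat{(e^{b\cdot}f_{0X})}(t)\cdot[\text{multiplier}]$, split at $|t|\asymp\sigma^{-1}$, and use the Taylor-remainder bound at low frequency and $\sup_{x\geq0}e^{-x}x^k<\infty$ at high frequency; your closed-form multiplier $(e^{-s}-1)(1-\hat H_\delta)-\hat H_\delta e^{-s}R_m(s)$ is exactly the decomposition the paper encodes implicitly through its $\mathcal J_b^2$ bound. One small imprecision worth fixing: the paper's construction actually takes $h\asymp|\log\sigma|^{-1/2}$, not $h\asymp\sigma$ (only $\delta\asymp\sigma$); your argument still goes through since what matters is $\delta/h\to 0$ and the cutoff of $\hat H_\delta$ lies at $|t|\asymp\delta^{-1}\asymp\sigma^{-1}$, but the $h$ scaling as stated would contradict the $\delta=o(h)$ requirement used elsewhere in the supplement.
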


The proof of Lemma \ref{lem:contapprox} is reported in Section \ref{sec:rth3}. We note that 
the result also holds when only $(e^{|\cdot|/2}f_{0X})\in L^1(\mathbb R)\cap L^2(\mathbb R)$.
This case can be regarded as corresponding to $\alpha=0$ so that $m\geq2$. The approximation error in 
\eqref{bound:tildehm12} is then of the order $O(\sigma^4)$. 
However, in this case, we can directly prove the existence of a compactly supported discrete mixing probability measure $\mu_H$, with a sufficiently small number of support points, such that the corresponding Laplace-normal mixture 
$f_\varepsilon\ast (\phi_\sigma\ast\mu_H)$ has squared Hellinger distance of the order $O(\sigma^4)$ from $f_{0Y}$, see Lemma \ref{lem:discrete}.
When $\alpha>0$, to obtain the correct order of approximation $O(\sigma^{2(\alpha+2)})$ of the squared $L^2$-bias for a
Sobolev regularity $(\alpha+2)$ of the density $f_{0Y}=f_\varepsilon\ast f_{0X}$, we construct a modification of $f_{0Y}$, to be convolved with the Gaussian kernel $\phi_\sigma$, such that, for a global level of regularity strictly larger than $2$, the new function $\phi_\sigma\ast(f_{0Y}-f_\varepsilon\ast f_1) $, with a suitable $f_1$, outperforms the natural candidate $\phi_\sigma\ast f_{0Y}$ 
for the approximation. However, the new function is not a probability density and needs to be modified. 
The resulting high quality approximation allows to use the correct bandwidth, which is selected by the prior distribution for the scale parameter from the appropriate range. Thus, the posterior contracts at the minimax-optimal rate (up to a logarithmic factor) near $f_{0Y}$, without actually 
knowing the regularity of $f_{0Y}$ and without using that knowledge in the definition of the prior on the bandwidth, yet automatically adapting to the given regularity level.
Even if the idea of constructing a correct approximation for a given level of regularity by subtracting appropriate terms from $f_{0Y}$ has previously appeared in \cite{rousseau:09, kruijer:rousseau:vdv:10, ghosal:shen:tokdar}, 
there are two main differences with the approximation results of these articles:
first, we consider global regularity on a Sobolev scale, whereas all
the above articles deal with local smoothness on a H\"{o}lder scale; second, our approximation 
is more involved as it employs a double smoothing by the Gaussian kernel $\phi_\sigma$ and by another 
super-smooth kernel $H_\delta$ proportional to the Fourier transform of a normal density
to control the error for low frequencies.


\section{$W_1$-lower bound rates for deconvolution in any dimension and application of the inversion inequality to a frequentist estimator}
\label{sec:comteLatour}
In this section, we provide lower bounds on the $L^1$-Wasserstein deconvolution convergence rates in any dimension $d\geq 1$.
These bounds are attained by the Bayes' estimator for $d=1$ and, as shown in Section \ref{sec:ke}, 
by a frequentist minimum distance estimator for every $d\geq1$. 

\subsection{Lower bounds}\label{sec:LB}
To get a validation of our results, we derive lower bound rates for the $L^1$-Wasserstein risk 
extending Theorem 4.1 in \cite{dedecker2015}, pp. 246--248, to a multivariate setting with Sobolev regular mixing densities.
\begin{thm}\label{thm:lower bound}
Assume that there exists $\beta>0$ such that, for every $l=0,\,1,\,2$,
\begin{equation}\label{eq:condderiv}
|\hat f_{\varepsilon}^{(l)}(t)|\leq d_{l} (1+|t|)^{-(\beta+l)},\quad t\in\mathbb R,
\end{equation}
with $d_{l}>0$. For any $d\geq 1$, given $\alpha,\,L,\,M>0$, let $\mathcal D_d:=\mathcal P_1(\mathbb R^d,\,M)\cap \mathcal S_d(\alpha,\,L)$ and
$$\psi_n:=n^{(\alpha+1)/[2\alpha+(2\beta d\vee 1)+1]}.$$
Then, there exists $C>0$ such that, for any estimator $\hat \mu_n$,
\begin{equation*}\label{eq:LB1}
\mathop{\underline{\lim}}_{n \to \infty}\psi_n
\sup_{\mu\in\mathcal D_d}\mathbb E^n_{(\mu\ast \mu_\varepsilon^{\otimes d})}W_1(\hat\mu_n,\,\mu)>C.
\end{equation*}
\end{thm}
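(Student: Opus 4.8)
\emph{Plan.} This is a minimax lower bound, so I would proceed by an information‑theoretic reduction, and the term $2\beta d\vee 1$ in the exponent signals that two regimes must be treated separately. When $2\beta d\le 1$ one has $\psi_n^{-1}=n^{-(\alpha+1)/(2\alpha+2)}=n^{-1/2}$, the parametric floor, and a two‑point (Le Cam) argument suffices: fix a smooth, compactly supported density $f_0$ with $\|f_0\|_{\mathcal S_d(\alpha)}<L$, $M_1(\mu_0)<M$ and $f_0\ge c>0$ on a ball $B$; fix a smooth compactly supported $g$ with $\int g=0$, $\mathrm{supp}\,g\subset B$ and a nonzero bounded‑Lipschitz moment; and set $f_1:=f_0+c_n g$ with $c_n\asymp n^{-1/2}$. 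Then $f_1$ is a density in $\mathcal D_d$ for $n$ large, $W_1(\mu_0,\mu_1)\gtrsim c_n\asymp n^{-1/2}$, while $\mathrm{KL}(P_{0Y}^n;P_{1Y}^n)=n\,\mathrm{KL}(\mu_{0Y};\mu_{1Y})\lesssim n\,c_n^2\|f_\varepsilon^{\otimes d}\ast g\|_2^2\lesssim 1$, so Le Cam's lemma gives a risk bound $\gtrsim n^{-1/2}=\psi_n^{-1}$.

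For the main regime $2\beta d\ge 1$ I would run Assouad's hypercube lemma with a family of perturbations of $f_0$ oriented along the diagonal direction $\mathsf v:=d^{-1/2}(1,\dots,1)\in\mathbb S^{d-1}$; this orientation is the key device, since the projected noise $\mathsf v\cdot\boldsymbol\varepsilon$ has characteristic function $\prod_j\hat f_\varepsilon(t/\sqrt d)$ of size $\asymp|t|^{-\beta d}$, so the effective smoothness becomes $\beta d$ rather than $\beta$, which is exactly what produces the $d$‑dependence of $\psi_n$. Writing $\mathsf x=s\mathsf v+\mathsf u$ with $s=\mathsf v\cdot\mathsf x$, $\mathsf u\in\mathsf v^\perp$, fix a smooth $\rho\ge 0$ compactly supported on $\mathsf v^\perp$ with $\int\rho=1$ and a smooth $g_1$ supported in $[0,1]$ with vanishing moments up to order $\lceil\beta d\rceil$ (so its antiderivative $G_1$ is compactly supported), and for a bandwidth $h\downarrow 0$ place $N\asymp h^{-1}$ bumps $\mathsf x\mapsto g_1((s-kh)/h)\rho(\mathsf u)$, $k=1,\dots,N$, with disjoint supports inside $B$. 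For $\boldsymbol\xi\in\{0,1\}^N$ let $f_{\boldsymbol\xi}:=f_0+\delta h^\alpha\sum_k\xi_k\,g_1((s-kh)/h)\rho(\mathsf u)$, with $\delta>0$ a small constant, $\mu_{\boldsymbol\xi}$ the associated measure and $\mu_{\boldsymbol\xi Y}:=\mu_\varepsilon^{\otimes d}\ast\mu_{\boldsymbol\xi}$.

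Three things must be checked. (i) \emph{Membership:} $\mu_{\boldsymbol\xi}\in\mathcal D_d$ uniformly in $\boldsymbol\xi$; the moment bound is trivial by compact support, and a Fourier computation (factor $\widehat{f_{\boldsymbol\xi}-f_0}(\tau\mathsf v+\mathsf w)=\delta h^{\alpha+1}\hat g_1(h\tau)\hat\rho(\mathsf w)\sum_k\xi_k e^{\imath\tau kh}$, and use that, thanks to $\int g_1=0$ and the vanishing moments, the lattice sum $\sum_{k,k'}\xi_k\xi_{k'}\int|\hat g_1(h\tau)|^2(1+\tau^2)^{\alpha}e^{\imath\tau(k-k')h}\,\di\tau$ is $\lesssim N h^{-1-2\alpha}$) gives $\|f_{\boldsymbol\xi}-f_0\|_{\mathcal S_d(\alpha)}^2\lesssim\delta^2$, hence $f_{\boldsymbol\xi}\in\mathcal S_d(\alpha,L)$ for $\delta$ small. (ii) \emph{Separation:} using $W_1\ge\overline W_1\ge W_1((\mu_{\boldsymbol\xi})_{\mathsf v},(\mu_{\boldsymbol\xi'})_{\mathsf v})$ and the one‑dimensional identity $W_1=\|F_\cdot-F_\cdot\|_1$, the $\mathsf v$‑marginal of $\mu_{\boldsymbol\xi}-\mu_{\boldsymbol\xi'}$ has density $\delta h^\alpha\sum_k(\xi_k-\xi'_k)g_1((\cdot-kh)/h)$, whose antiderivative is a sum of disjoint bumps $\delta h^{\alpha+1}\sum_k(\xi_k-\xi'_k)G_1((\cdot-kh)/h)$, so $W_1(\mu_{\boldsymbol\xi},\mu_{\boldsymbol\xi'})\gtrsim H(\boldsymbol\xi,\boldsymbol\xi')\,\delta h^{\alpha+2}$ with $H$ the Hamming distance. (iii) \emph{Information:} for $\boldsymbol\xi,\boldsymbol\xi'$ differing at a single coordinate $k$, $\mathrm{KL}(P_{\boldsymbol\xi Y}^n;P_{\boldsymbol\xi'Y}^n)\lesssim n\,\chi^2(\mu_{\boldsymbol\xi Y};\mu_{\boldsymbol\xi'Y})$, and since $f_{\boldsymbol\xi Y}-f_{\boldsymbol\xi'Y}=\pm f_\varepsilon^{\otimes d}\ast p_k$ involves only the single bump $p_k$, one has $\chi^2\lesssim\|f_\varepsilon^{\otimes d}\ast p_k\|_2^2+(\text{tails})$; the main term, on the Fourier side, is $\|f_\varepsilon^{\otimes d}\ast p_k\|_2^2\asymp\delta^2 h^{2\alpha+2}\int|\hat f_\varepsilon^{\otimes d}(\tau\mathsf v+\mathsf w)|^2|\hat g_1(h\tau)|^2|\hat\rho(\mathsf w)|^2\,\di\tau\,\di\mathsf w\lesssim\delta^2 h^{2\alpha+2\beta d+1}$, using $|\hat f_\varepsilon(t)|\lesssim(1+|t|)^{-\beta}$ and the geometric fact that on the effective support of $\hat p_k$ every coordinate $\tau/\sqrt d+w_j$ has magnitude $\asymp 1/h$ (because $\hat g_1(h\tau)$ forces $|\tau|\asymp 1/h$ up to a low‑frequency remainder killed by the vanishing moments, while $\hat\rho$ keeps $|\mathsf w|=O(1)$), so the full product $|\hat f_\varepsilon^{\otimes d}|^2$ contributes the factor $h^{2\beta d}$; the tail terms are handled by the crude pointwise bound $|f_\varepsilon(u)|\lesssim(1+|u|)^{-2}$ supplied by the $l=1,2$ hypotheses in \eqref{eq:condderiv}, which lets $\mu_{0Y}$ inherit the polynomial tail of $f_\varepsilon^{\otimes d}$.

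Given (i)--(iii), Assouad's lemma yields $\inf_{\hat\mu_n}\sup_{\mu\in\mathcal D_d}\mathbb E^n_{(\mu\ast\mu_\varepsilon^{\otimes d})}W_1(\hat\mu_n,\mu)\gtrsim N\,\delta h^{\alpha+2}\asymp\delta h^{\alpha+1}$ as soon as $n\,\chi^2\lesssim n\delta^2 h^{2\alpha+2\beta d+1}$ is bounded by a sufficiently small constant; taking $\delta$ small and $h\asymp n^{-1/(2\alpha+2\beta d+1)}$ gives the bound $\gtrsim n^{-(\alpha+1)/(2\alpha+2\beta d+1)}=\psi_n^{-1}$ in the regime $2\beta d\ge 1$, which combined with the two‑point bound $\gtrsim n^{-1/2}=\psi_n^{-1}$ for $2\beta d\le 1$ proves the claim for every $d\ge 1$ (and recovers Theorem~4.1 of \cite{dedecker2015} when $d=1$). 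The step I expect to be the main obstacle is (iii): making the multivariate Fourier estimate sharp requires both the diagonal orientation of the perturbation — so that the noise seen by the problem is $\mathsf v\cdot\boldsymbol\varepsilon$, of smoothness $\beta d$ — and the verification that \emph{all} $d$ coordinates of $\boldsymbol\varepsilon$ are probed at scale $1/h$ on the support of $\hat p_k$, so the product $|\hat f_\varepsilon^{\otimes d}|$ really produces $h^{2\beta d}$ and not merely $h^{2\beta}$; a secondary delicate point is that membership in the fixed Sobolev ball must hold uniformly over the $2^N$ vertices, which is why taking $g_1$ compactly supported with vanishing moments is convenient, reducing the $\mathcal S_d(\alpha,L)$ check to a controllable lattice sum.
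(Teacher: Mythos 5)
Your proposal is correct and achieves the stated rate, but via a genuinely different construction from the paper's. Both arguments rest on the same key insight---orienting the perturbation along a direction that convolves in all $d$ coordinates of the noise, so the projected noise inherits effective smoothness $\beta d$---but the realizations differ substantively. The paper pushes forward a product measure $\mu_{\boldsymbol\theta}\otimes\mu_{0,r}^{\otimes(d-1)}$ through the shear matrix $\mathbf A$ (sending $\boldsymbol\varepsilon$ to $(\sum_j\varepsilon_j,\varepsilon_2,\dots,\varepsilon_d)$), and then runs a one-dimensional randomization argument built on Fan's spectrally localized kernel $H$ (with $\hat H$ supported on $|t|\in[1,2]$), following Dedecker--Michel. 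You instead work in orthogonal coordinates $(\mathsf v\cdot\mathsf x,\mathsf x_\perp)$ with $\mathsf v=d^{-1/2}(1,\dots,1)$, use a spatially compactly supported bump $g_1$ with vanishing moments, and apply Assouad's lemma after bounding the full $d$-dimensional $\chi^2$ directly on the Fourier side. The trade-off is real and worth naming: Fan's spectral kernel trivializes the $\chi^2$ Fourier estimate (the indicator of $|t|\in[1,2]$ kills all low frequencies by fiat), while your spatial bump trivializes the $W_1$ separation and the hypercube disjointness (the antiderivative bumps $G_1((\cdot-kh)/h)$ really do have disjoint supports). The price you pay is that the Fourier estimate you flag as the main obstacle requires more than the heuristic that ``every coordinate is probed at scale $1/h$'': one must split into $|\tau|\lesssim 1$, $1\lesssim|\tau|\lesssim 1/h$, and $|\tau|\gtrsim 1/h$, show that the middle band contributes the dominant $h^{2\beta d-1}$, and use the order-$\lceil\beta d\rceil$ vanishing moments of $g_1$ to suppress the low-frequency band, which would otherwise dominate and wreck the rate. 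Similarly the Sobolev membership step for fractional $\alpha$ is not a simple disjointness argument: the off-diagonal terms $\hat\psi(k-k')$ of the lattice sum must be controlled by the polynomial decay of the $2\alpha$-th fractional derivative of the compactly supported autocorrelation $g_1\ast g_1(-\cdot)$. An incidental benefit of your route is that by estimating the $d$-dimensional $\chi^2$ directly you never need to pass the testing affinity through a one-dimensional projection, a step the shear-based reduction has to justify and which is delicate precisely because $\mathbf A\boldsymbol\varepsilon$ does not have independent coordinates. The two-point $n^{-1/2}$ floor for $2\beta d\leq 1$ is handled in essentially the same Le Cam fashion in both proofs.
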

\smallskip
The proof of Theorem \ref{thm:lower bound} is reported in Appendix \ref{sec:lower bounds}.
Note that, for $d=1$, $\mathcal D_1=\mathcal P_1(\mathbb R,\,M)$ and $0<\beta<\frac{1}{2}$, the lower bound rate $n^{-1/2}$ of Theorem \ref{thm:lower bound} improves upon the lower bound $n^{-1/(2\beta+1)}$ of Theorem 4.1 in \cite{dedecker2015}, p. 246. The sharper lower bound $n^{-1/2}$ matches with the upper bound for the minimum distance deconvolution estimator proposed by \cite{dedecker2015}, see Theorem 3.1, p. 243, thus showing that, for all $\beta>0$, it attains minimax-optimal rates, up to log-factors.

\begin{table}[ht]
\centering
\begin{tabular}{||c|c|c||}
\hline\\[2pt]
 {\bf Mixing distribution $\mu_{0X}$}& {\bf Dimension ${d=1}$} & {\bf Any dimension ${d\geq 1}$}  \\[3pt]
\hline\\
$\mbox {$\mu_{0X}\in \mathcal P_1(\mathbb R^d,\,M)$}$ 
  &  $n^{-1/(2\beta+1)}$ \mbox{[Dedecker \emph{et al.} (2015)]} & \\ [10pt]
  &  $\boldsymbol{n^{-1/[(2\beta\vee 1)+1]}}$ & $\boldsymbol{n^{-1/[(2\beta d\vee 1)+1]}}$\\[15pt]
\mbox{$\mu_{0X}\in\mathcal P_1(\mathbb R^d,\,M)\cap \mathcal S_d(\alpha,\,L)$} & $\boldsymbol{n^{-(\alpha+1)/[2\alpha+(2\beta\vee 1)+1]}}$ & $\boldsymbol{n^{-(\alpha+1)/[2\alpha+(2\beta d\vee 1)+1]}}$\\[15pt]
\hline
\end{tabular}
\vspace{0.3cm} 
\caption{In bold our lower bound rates on the $L^1$-Wasserstein risk for error distributions $\mu_\varepsilon^{\otimes d}$ with ordinary $\beta$-smooth single coordinate distribution and Sobolev $\alpha$-regular mixing densities.}
\label{tab:ssos}
 \end{table}  

When $d=1$, as a consequence of Corollary \ref{cor:postmean} and Theorems \ref{thm:32}, \ref{thm:5}, 
the Bayes' estimator, namely the posterior expected mixing distribution, attains minimax rates, up to log-factors, under the Laplace noise.
Then a natural question is whether the lower bound rates of Theorem \ref{thm:lower bound} can also be attained when $d>1$.
For $d\geq1$, \cite{caillerie2011} consider a modification of the standard deconvolution
kernel estimator and find slower rates than $n^{-1/[(2\beta d\vee 1)+1]}$ with respect to the
$L^2$-Wasserstein distance.
In Section \ref{sec:ke}, using the inversion inequality of Theorem \ref{theo:1}, 
we show that a frequentist estimator of $\mu_{0X}$ attains the lower bound $n^{-1/(2\beta d+1)}=n^{-1/(4d+1)}$, when $\beta>1$.

\subsection{Frequentist deconvolution estimator}\label{sec:ke}
In this section, we consider a frequentist estimator of $\mu_{0X}$ and, using the inversion inequality of Theorem \ref{theo:1}, we show that it achieves the minimax rate, up to a log-factor. For the sake of simplicity, we restrict to the case of $\alpha=0$ and a standard Laplace noise distribution, but the proof extends to any $\alpha>0$ and ordinary smooth noise distribution. 

Let $b_n = n^{-1/(2\beta d + 1)}$ and define $\tilde f_n$ as the inverse Fourier transform of
$\hat K_{b_n}^{\otimes d} \phi_n r_{\varepsilon}^{\otimes d}$, where $\phi_n (\mathsf t):= \mathbb P_n (e^{\imath \mathsf t\cdot\mathsf Y})$ is the empirical characteristic function and the kernel $K = \hat \tau$ is defined in Section \ref{sec:approxresult}. In symbols,
$$\tilde f_n(\mathsf x):=\frac{1}{(2\pi)^d}\int_{\mathbb R^d}e^{-\imath \mathsf t\cdot \mathsf x}\hat K_{b_n}^{\otimes d}(\mathsf t)\phi_n(\mathsf t) r_{\varepsilon}^{\otimes d}(\mathsf t)\,\di \mathsf t,\quad \mathsf x\in\mathbb R^d.$$
Since $\tilde f_n$ is not necessarily non-negative and 
$F_{\tilde\mu_n}$ is not necessarily a distribution function,
we define $\tilde \mu_{1n}$ to be the probability measure such that the corresponding distribution function
$F_{\tilde \mu_{1n}}$ is, up to a term of order $O(n^{-1/2})$, the closest one
to $F_{\tilde \mu_n}$ in the max-sliced $L^1$-distance, that is,
for every $\mu\in\mathcal P_1(\mathbb R^d)$, 
$$\sup_{\mathsf v \in \mathbb S^{d-1}} \|F_{\tilde \mu_{n,\mathsf v}}-F_{\tilde \mu_{1n,\mathsf v}}\|_1 \leq \sup_{\mathsf v \in \mathbb S^{d-1}} \|F_{\tilde \mu_{n,\mathsf v}}- F_{\mu_{\mathsf v}}\|_1+O(n^{-1/2}).$$
The idea of defining the estimator as an approximate minimizer over all distribution functions of the
$L^1$-metric is considered in \cite{dedecker2015} for $d=1$. 
Here, instead, we choose the estimator as an approximate minimizer over all distribution functions
of the max-sliced $L^1$-distance. We then have the following result whose proof is reported in Appendix \ref{sec:prthcomte}.
\begin{thm}\label{th:comtelacour}
Let $f_\varepsilon$ be the standard Laplace density. 
Assume that $f_{0X}$ has exponential tails, that is, there exists a constant $c_2>0$ such that
\begin{equation}\label{eq:exptails}
f_{0X} (\mathsf x) \lesssim e^{-c_2 |\mathsf x|}, \quad \text{for $|\mathsf x|$ large enough}.
\end{equation}
Then, for suitable $q>0$,
$$W_1(\tilde\mu_{1n},\,\mu_{0X}) = O_{\mathsf P}(n^{-1/(4d+1)}(\log n)^q).$$ 
\end{thm}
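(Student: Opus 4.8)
The strategy is to combine the inversion inequality of Theorem~\ref{theo:1} (in the case $\alpha=0$) with a bound on $\|f_{\tilde\mu_{Yn,\mathsf v}} - f_{0Y,\mathsf v}\|_1$ obtained from standard kernel deconvolution analysis, together with a control of $W_1(\tilde\mu_{Yn},\,\mu_{0Y})$. Concretely, the first step is to record that, by the first part of Theorem~\ref{theo:1} applied to $\mu_X = \tilde\mu_{1n}$ and $\mu_{0X}$, we have for a small $h>0$
\[
W_1(\tilde\mu_{1n},\,\mu_{0X}) \lesssim h + W_1(\tilde\mu_{Yn},\,\mu_{0Y}) + T,
\]
where $T$ is the quantity in \eqref{eq:t2} with $\|f_{Y,\mathsf v} - f_{0Y,\mathsf v}\|_1$ replaced by $\|f_{\tilde\mu_{Yn,\mathsf v}} - f_{0Y,\mathsf v}\|_1$, and $\tilde\mu_{Yn}:=\mu_\varepsilon^{\otimes d}\ast\tilde\mu_{1n}$. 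Here one must be slightly careful: $\tilde\mu_{1n}$ is a genuine probability measure by construction, but $\tilde\mu_n$ (with density $\tilde f_n$) need not be; the defining near-minimality property of $F_{\tilde\mu_{1n}}$ in the max-sliced $L^1$-distance lets us pass from $\tilde\mu_n$ to $\tilde\mu_{1n}$ at the negligible cost $O(n^{-1/2})$, and from $\|F_{\tilde\mu_{n,\mathsf v}} - F_{\mu_{0X,\mathsf v}}\|_1 = W_1(\tilde\mu_{n,\mathsf v},\,\mu_{0X,\mathsf v})$ (interpreted appropriately for the signed measure) we get $\overline W_1(\tilde\mu_{1n},\,\mu_{0X}) \lesssim \overline W_1(\tilde\mu_n,\,\mu_{0X}) + O(n^{-1/2})$, which by \eqref{eq:equivalence} controls $W_1(\tilde\mu_{1n},\,\mu_{0X})$.

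\textbf{Bounding the projected $L^1$-errors.} The core estimate is a bound on $\|f_{\tilde\mu_{Yn,\mathsf v}} - f_{0Y,\mathsf v}\|_1$ for each $\mathsf v\in\mathbb S^{d-1}$, which I would obtain by a bias-variance decomposition of the deconvolution kernel density estimator. Writing $\tilde f_n$ via its Fourier transform $\hat K_{b_n}^{\otimes d}\,\phi_n\, r_\varepsilon^{\otimes d}$, the bias is controlled because $\hat K = \hat\tau$ equals $1$ on $[-1,1]$ and is supported on $[-17/15,17/15]$, so $K_{b_n}$ acts as an approximate identity at scale $b_n$; since no smoothness on $f_{0X}$ is assumed beyond the exponential-tail condition \eqref{eq:exptails} (this is the $\alpha=0$ case), the relevant bias term is $O(b_n)$ after the inversion-inequality smoothing absorbs it into the $h$-term. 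The stochastic term is handled by bounding $\mathbb E\|\,\tilde f_n - \mathbb E\tilde f_n\,\|_1$ (or directly the projected version) using Cauchy--Schwarz against a weight, Plancherel, and the ordinary-smoothness bound $|r_\varepsilon(t)|\lesssim (1+|t|)^\beta$ over $|\mathsf t|\le 17/(15 b_n)$, which yields a variance contribution of order $n^{-1/2} b_n^{-(\beta d + 1/2)}$ up to logarithmic factors coming from the $\mathbb R^d$-to-$\mathbb R$ projection and from integrating $r_\varepsilon^{\otimes d}$; the projected density $f_{\tilde\mu_{Yn,\mathsf v}}$ inherits a bound of the same order since projection is an $L^1$-contraction at the level of distribution functions and one can pass through the $d$-dimensional $L^1$-control. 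The exponential tail of $f_{0X}$ (hence of $f_{0Y}$) is used to upgrade an $L^1$-on-a-compact bound to a global $L^1$-bound by showing the tails contribute negligibly, and also to verify the moment hypotheses implicit in Theorem~\ref{theo:1} (namely $\tilde\mu_{1n},\mu_{0X}\in\mathcal P_1$). With $b_n = n^{-1/(2\beta d+1)}$ and $\beta=2$, this gives $\|f_{\tilde\mu_{Yn,\mathsf v}} - f_{0Y,\mathsf v}\|_1 = O_{\mathsf P}(n^{-\beta d/(2\beta d+1)}(\log n)^{q_0}) = O_{\mathsf P}(n^{-2d/(4d+1)}(\log n)^{q_0})$, uniformly in $\mathsf v$.

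\textbf{Assembling the rate and the main obstacle.} Plugging this into $T$ from \eqref{eq:t2}: for $h = b_n = n^{-1/(2\beta d+1)}$ one has $h^{-\beta|I_h^\ast(\mathsf v)|+1}\prod_{j\in I_h^\ast(\mathsf v)}|v_j|^\beta \le h^{-\beta d + 1}$ (the worst case $|I_h^\ast(\mathsf v)| = d$, using $|v_j|\le 1$), so $T \lesssim |\log h|^2\, h^{-\beta d+1}\, n^{-\beta d/(2\beta d+1)}(\log n)^{q_0}$; since $h^{-\beta d + 1} = n^{(\beta d-1)/(2\beta d+1)}$, the product of powers of $n$ is $n^{(\beta d - 1 - \beta d)/(2\beta d+1)} = n^{-1/(2\beta d+1)}$, matching the $h$-term. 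Finally $W_1(\tilde\mu_{Yn},\,\mu_{0Y})$ is shown to be of the same (or smaller) order: since $\overline W_1$ at the level of $Y$ is controlled by the $L^1$-distance of the projected distribution functions, which is dominated by the projected density $L^1$-errors integrated against the tails — again using the exponential tails of $f_{0Y}$ and a truncation at level $\sim \log n$ — this term is $O_{\mathsf P}(n^{-\beta d/(2\beta d+1)}(\log n)^{q_1})$, negligible compared to $n^{-1/(2\beta d+1)}$ when $\beta d > 1$, i.e. when $\beta = 2$ and $d\ge 1$. Collecting everything gives $W_1(\tilde\mu_{1n},\,\mu_{0X}) = O_{\mathsf P}(n^{-1/(2\beta d+1)}(\log n)^q) = O_{\mathsf P}(n^{-1/(4d+1)}(\log n)^q)$. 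The main obstacle I anticipate is the careful handling of the $\mathsf v$-dependence: the inversion inequality requires a \emph{uniform} (over $\mathsf v\in\mathbb S^{d-1}$) control of $\|f_{\tilde\mu_{Yn,\mathsf v}} - f_{0Y,\mathsf v}\|_1$, whereas the estimator $\tilde\mu_{1n}$ is built from the full $d$-dimensional sample, not from projected data; one resolves this by proving the $d$-dimensional $L^1$-bound $\mathbb E\|\tilde f_n - f_{0Y}\ast K_{b_n}^{\otimes d}\|_1$ first (with the exponential-tail truncation and a Plancherel/Cauchy--Schwarz argument) and then using that, for distribution functions, projection does not increase the $L^1$-norm, so $\sup_{\mathsf v}\|F_{\tilde\mu_{Yn,\mathsf v}} - F_{0Y,\mathsf v}\|_1 \le \|F_{\tilde\mu_{Yn}} - F_{0Y}\|_{1,\text{suitable sense}}$, converting the $d$-dimensional estimate into the required uniform projected bound, and similarly carrying the variance order through the projection at the density level.
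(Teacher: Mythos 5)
Your overall skeleton (invert via Theorem~\ref{theo:1}, pass from $\tilde\mu_n$ to $\tilde\mu_{1n}$ at an $O(n^{-1/2})$ cost, then bound the projected $L^1$-density errors) is the right one, and your argument does recover the case $d=1$. However, for $d\geq 2$ there is a genuine gap in how you assemble $T$: you claim a \emph{uniform-in-}$\mathsf v$ bound $\|f_{\tilde\mu_{Yn,\mathsf v}} - f_{0Y,\mathsf v}\|_1 = O_{\mathsf P}(n^{-\beta d/(2\beta d+1)})$ and then bound the penalty factor $h^{-\beta|I_h^\ast(\mathsf v)|+1}\prod_{j\in I_h^\ast(\mathsf v)}|v_j|^\beta$ by its worst case $h^{-\beta d+1}$. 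Neither of these crude bounds is correct in isolation. The bias of $f_{\tilde\mu_{Yn,\mathsf v}}$ is governed by $|\hat\mu_{0Y}(t\mathsf v)| = |\hat\mu_{0X}(t\mathsf v)|\prod_j(1+v_j^2t^2)^{-1}$, and is therefore strongly $\mathsf v$-dependent: for $\mathsf v$ nearly axis-aligned only one coordinate of the noise contributes, the characteristic function decays like $t^{-2}$ rather than $t^{-2d}$, and the projected bias is $\asymp b_n^{2}$, \emph{not} $\asymp b_n^{2d}$. The paper (equation~\eqref{bound B1B2} of the supplement) makes this precise: $B_1(\mathsf v)\lesssim b_n^{2|I_{b_n}^\ast(\mathsf v)|}\prod_{j\in I_{b_n}^\ast(\mathsf v)}v_j^{-2}$, so the uniform bound is $\asymp b_n^2$, not $n^{-2d/(4d+1)}$. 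If you now combine your worst-case penalty $h^{-\beta d+1}=b_n^{-2d+1}$ with the worst-case bias $b_n^2$, the product is $b_n^{3-2d}$, which \emph{diverges} for $d\geq 2$. Your proposal as written therefore yields a vacuous conclusion in every dimension $d\geq 2$.

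What the paper actually does is track the $\mathsf v$-dependence throughout and exploit an exact algebraic cancellation: the penalty factor $h^{-2|I_h^\ast(\mathsf v)|+1}\prod_{j\in I_h^\ast(\mathsf v)}v_j^2$ and the bias bound $b_n^{2|I_{b_n}^\ast(\mathsf v)|}\prod_{j\in I_{b_n}^\ast(\mathsf v)}v_j^{-2}$ multiply (with $h_n=b_n$) to give $\lesssim b_n$ uniformly in $\mathsf v$, because the exponents and the $v_j$-products are mutually reciprocal. That cancellation is the entire point of the sharp form of $T$ in Theorem~\ref{theo:1}, and it is lost the moment you decouple the two factors. Your fallback plan (prove a $d$-dimensional $L^1$ bound and use that projection contracts $L^1$ on densities) does not rescue the argument, because it again produces at best a $\mathsf v$-uniform bound and discards the $\mathsf v$-dependence needed for the cancellation. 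A secondary slip: your heuristic variance $n^{-1/2}b_n^{-(\beta d+1/2)}$ for the projected density error would equal a constant at $b_n=n^{-1/(2\beta d+1)}$ and is not the right scaling; the stochastic term $B_2(\mathsf v)$ in the paper is bounded by $(\log n)^{3/2}(nb_n)^{-1/2}$ uniformly in $\mathsf v$ via an empirical process/bracketing argument (Lemma~19.36 of van der Vaart), not by a Plancherel/Cauchy--Schwarz computation.
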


From the proof of Theorem \ref{th:comtelacour}, we see that the result extends straightforwardly to any noise distribution which satisfies Assumption \ref{eq:deriv} and such that 
 $$  f_\varepsilon(\varepsilon) \lesssim e^{ -c_2 |\varepsilon|}, $$
leading to a convergence rate of order $  O_{\mathsf P}(n^{-1/(\beta d+1)}(\log n)^q) $ as soon as $\beta >1/d$.

\section{Proofs}\label{sec:proofs}

We preliminarily recall an auxiliary result. 
For every $j\in \mathbb N$, let
$\hat f^{(j)}$ denote the $j$th derivative of the Fourier transform $\hat f$ of a function $f:\,\mathbb R\rightarrow \mathbb C$. 
If $\hat f^{(j)}\in L^1(\mathbb R)$, then
\begin{equation}\label{eq:identity21}
\mbox{for $z\neq0$}, \quad f(z)=\frac{1}{2\pi(\imath z)^j}\int_{\mathbb R}e^{-\imath tz}\hat f^{(j)}(t)\,\di t.
\end{equation}

\subsection{Proof of Theorem \ref{theo:1}} \label{sec:rth1}
Because $\mu_X,\,\mu_{0X}\in\mathcal P_1(\mathbb R^d)$ by assumption, we have $W_1(\mu_X,\,\mu_{0X})<\infty$,
see, \emph{e.g.}, \cite{villani2009}, p. 94. 
For $d\geq1$, the assumption 
$\mu_\varepsilon\in\mathcal P_{1}(\mathbb R)$ 
implies that $\mu_\varepsilon^{\otimes d}\in\mathcal P_1(\mathbb R^d)$ so that also $\mu_Y,\,\mu_{0Y}\in\mathcal P_1(\mathbb R^d)$ and $W_1(\mu_Y,\,\mu_{0Y})<\infty$.
From the strong equivalence, recalled in \eqref{eq:equivalence}, between the Wasserstein metric $W_1$ and the max-sliced Wasserstein metric $\overline W_1$, 
valid in any dimension $d\geq1$, we have that, for a constant $C_d\geq1$,
\[\frac{1}{C_d}
W_1(\mu_X,\,\mu_{0X})\leq
\overline W_1(\mu_X,\,\mu_{0X})=\max_{\mathsf v\in \mathbb S^{d-1}}
W_1(\mu_{X,\mathsf v},\,\mu_{0X,\mathsf v}).
\]
We now bound $W_1(\mu_{X,\mathsf v},\,\mu_{0X,\mathsf v})$. 
We first treat the case where only the condition 
$\mu_{0X}\in\mathcal P_1(\mathbb R^d)$ is required and then
the case where the smoothness Assumption \ref{ass:smoothXXX} holds.
 \vspace{0.2cm}

\noindent 
$\bullet$ \emph{Case 1: no smoothness assumption on $\mu_{0X}$}\\[4pt]
We consider a multivariate kernel with independent coordinates as in \eqref{eq:kmultiv}.
This assumption is not necessary, but simplifies the proof. The univariate kernel 
can be taken to be a symmetric probability density $K\in L^2(\mathbb R)\cap \mathcal P_{d\wedge2}(\mathbb R)$.
For $h>0$, let $K_h$ denote the rescaled kernel density and, with abuse of notation, let $K_h^{\otimes d}$ denote the corresponding $d$-fold product probability measure. 
For brevity, in what follows we also use the notation $K_{h,\mathsf v}:= (K_h^{\otimes d})_{\mathsf v}$ to denote the distribution of $\mathsf v\cdot \mathsf Z$ when $ \mathsf Z \sim K_h^{\otimes d}$.
By the triangle inequality for Wasserstein metrics,
\begin{eqnarray}\label{eq:triangneq}
W_1(\mu_{X,\mathsf v},\,\mu_{0X,\mathsf v}) &\leq& W_1(\mu_{X,\mathsf v},\,\mu_{X,\mathsf v} \ast K_{h,\mathsf v})+ W_1(\mu_{X,\mathsf v}\ast K_{h,\mathsf v},\,\mu_{0X,\mathsf v}\ast  K_{h,\mathsf v})\nonumber\\
&&\qquad\qquad\qquad\qquad\qquad\qquad\qquad\qquad\quad+ W_1(\mu_{0X,\mathsf v}\ast K_{h,\mathsf v},\,\mu_{0X,\mathsf v}).
\end{eqnarray}
 Also, 
$\mathsf v\cdot (\mathsf X +\mathsf Z)=(\mathsf v\cdot \mathsf X +\mathsf v\cdot \mathsf Z)\sim \mu_{X,\mathsf v} \ast K_{h,\mathsf v}$ and $W_1(\mu_{X,\mathsf v},\,\mu_{X,\mathsf v} \ast K_{h,\mathsf v})\leq \mathbb E[|\mathsf v\cdot \mathsf Z|]$. 
For $d=1$, we have $\mathbb E[|Z|]=h\int_{\mathbb R}|z|K(z)\,\di z<\infty$, while, for $d\geq 2$, we have
$\mathbb E[|\mathsf v\cdot \mathsf Z|]\leq(\mathbb E[|\mathsf Z|^2])^{1/2}=h(\int_{\mathbb R^d}|\mathsf z|^2K(\mathsf z)\,\di \mathsf z)^{1/2}<\infty$
as soon as $\int_{\mathbb R}z^2K(z)\,\di z<\infty$.
Thus, $W_1(\mu_{X,\mathsf v},\,\mu_{X,\mathsf v} \ast K_{h,\mathsf v})\lesssim h$ uniformly in $\mathsf v$. 
Analogously, letting $\mathsf X_0$ be distributed according to $\mu_{0X}$ and independent of $\mathsf Z$, we have
$W_1(\mu_{0X,\mathsf v},\,\mu_{0X,\mathsf v}\ast K_{h,\mathsf v})\lesssim h$.  
Also, $W_1(\mu_{X,\mathsf v}\ast K_{h,\mathsf v},\,\mu_{0X,\mathsf v}\ast  K_{h,\mathsf v}) \leq  \mathbb E[|\mathsf X-\mathsf X_0|]\leq M_1(\mu_X)+ M_1(\mu_{0X})<\infty$
because $\mu_X,\,\mu_{0X}\in\mathcal P_1(\mathbb R^d)$. 
Thus,
\begin{equation}\label{eq:distriang}
W_1(\mu_{X,\mathsf v},\,\mu_{0X,\mathsf v}) 
\lesssim h + W_1(\mu_{X,\mathsf v}\ast K_{h,\mathsf v},\,\mu_{0X,\mathsf v}\ast  K_{h,\mathsf v}).
\end{equation}
We derive an upper bound on $W_1(\mu_{X,\mathsf v}\ast K_{h,\mathsf v},\,\mu_{0X,\mathsf v}\ast  K_{h,\mathsf v})$.
\medskip

\noindent\emph{Control of the term $W_1(\mu_{X,\mathsf v}\ast K_{h,\mathsf v},\,\mu_{0X,\mathsf v}\ast  K_{h,\mathsf v})$}\\[-8pt]

\noindent
Taking into account that 
\begin{equation}\label{eq:rel1}
\hat\mu_{\mathsf v}(t)=\int_{\mathbb R}e^{\imath t x}\mu_{\mathsf v}(\di x)=\int_{\mathbb R^d}e^{\imath t\mathsf v\cdot\mathsf x}\mu(\di \mathsf x)=\hat\mu(t\mathsf v),\quad t\in\mathbb R,
\end{equation}
and using the representation of $W_1$, when $d=1$, as the 
$L^1$-distance between distribution functions, for all $\mu,\,\nu\in\mathcal P_1(\mathbb R^d)$ we have
\begin{eqnarray}\label{eq:identity}
W_1(\mu_{\mathsf v},\,\nu_{\mathsf v})=\|F_{\mu_{\mathsf v}}-F_{\nu_{\mathsf v}}\|_1 \nonumber &=&\frac{1}{2\pi}
\int_{\mathbb R}\abs{\int_{\mathbb R}e^{-\imath t x}\frac{\hat\mu_{\mathsf v}(t)-\hat\nu_{\mathsf v}(t)}{(-\imath t)}\,\di t}\,\di x \nonumber\\
&=&\frac{1}{2\pi}\int_{\mathbb R}\abs{\int_{\mathbb R}e^{-\imath t x}\frac{\hat\mu(t\mathsf v)-\hat\nu(t\mathsf v)}{(-\imath t)}\,\di t}\,\di x.
\end{eqnarray}
We introduce some more notation. Let $\chi:\,\mathbb R\rightarrow \mathbb R$ be a symmetric, continuously differentiable function,
equal to $1$ on $[-1,\,1]$ and to $0$ outside $[-2,\,2]$. 
For example, one such function could be $\chi(t)=e\exp{\{-1/[1-(|t|-1)^2]\}}$, $|t|\in(1,\,2)$. 
For the construction of smooth bump functions, see, \emph{e.g.}, \cite{FRY2002143}.
Define
$$w_{1,h}(t):=\hat K(ht)\chi(t)r_{\varepsilon}(t)\quad\mbox{ and }\quad w_{2,h}(t):=\hat K(ht)[1-\chi(t)]r_{\varepsilon}(t), \quad t\in\mathbb R.$$
Note that $K\in L^1(\mathbb R)$ implies that $\hat K$
is well-defined and $\|\hat K\|_\infty:=\sup_{t\in\mathbb R}|\hat K(t)|\leq \|K\|_1<\infty$. 
We consider a kernel with $\hat K$ supported on $[-1,\,1]$.
Since $\hat K$ is continuous and bounded on a compact, we have $\hat K\in L^1(\mathbb{R})$ and 
$K(\cdot)=(2\pi)^{-1}\int_{\mathbb R}e^{-\imath t \cdot}\hat K(t)\,\di t$. 
If $h<\frac{1}{2}$, the function $w_{1,h}$ is equal to $0$ outside $[-2,\,2]$, while $ w_{2,h}$ is equal to $0$ on $[-1,\,1]$ and
outside $[-1/ h,\,1/h]$. Thus, $w_{j,h}\in L^1(\mathbb R)$, for $j\in[2]$.
In fact, by the inequality \eqref{eq:deriv} with $l=0$, 
we have $
\|w_{1,h}\|_1
\lesssim
\int_{|t|\leq2}|\hat K(ht)||\chi(t)| (1+|t|)^\beta\,\di t <\infty$ because 
the integrand is in $C_b([-2,\,2])$.
Analogously, $\|w_{2,h}\|_1\lesssim 
\int_{1<|t|\leq1/h}|\hat K(ht)||1-\chi(t)| (1+|t|)^\beta\,\di t
<\infty$. Then, the inverse Fourier transform of $w_{j,h}$,
\begin{equation*}\label{eq:45}
z\mapsto K_{j,h}(z):=\frac{1}{2\pi}\int_{\mathbb R}
e^{-\imath tz}w_{j,h}(t)\,\di t
\end{equation*}
is well defined for $j\in[2]$. 
For $\mathsf v\in\mathbb S^{d-1}$, let $J_d^\ast(\mathsf v):=\{j\in [d]:\,v_j\neq0\}$ be the set of indices 
corresponding to non-null coordinates of $\mathsf v$. 
We denote by $|J_d^\ast(\mathsf v)|$ the cardinality of $J_d^\ast(\mathsf v)$.
Clearly, $J_d^\ast(\mathsf v)\neq\emptyset$ because $|\mathsf v|=1$. 
For later use, we note that
$$\reallywidehat{K_{h,\mathsf v}}(t)=\reallywidehat{(K_h^{\otimes d})}_{\mathsf v}(t)=\reallywidehat{(K_h^{\otimes d})}(t\mathsf v)=
\prod_{j=1}^d\hat K(v_jht)=\hat K^{\otimes d}(ht\mathsf v),\quad t\in\mathbb R.$$
By the inequality on the right-hand side of \eqref{eq:equivalence}, we have $W_1(\mu_X\ast K_h^{\otimes d},\,\mu_{0X}\ast K_h^{\otimes d})\leq
 C_d \overline{W}_1(\mu_X\ast K_h^{\otimes d},\,\mu_{0X}\ast K_h^{\otimes d})$, where, using the expression of 
$W_1(\mu_{\mathsf v},\,\nu_{\mathsf v})$ in \eqref{eq:identity}, we have 
\[\begin{split}
&W_1(\mu_{X,\mathsf v}\ast K_{h,\mathsf v},\,\mu_{0X,\mathsf v}\ast K_{h,\mathsf v}) \\
&\quad\qquad=\frac{1}{2\pi} \int_{\mathbb R}\abs{\int_{\mathbb R}e^{-\imath t x}\reallywidehat{(K_h^{\otimes d})}(t\mathsf v)\frac{\hat\mu_X(t\mathsf v)-\hat\mu_{0X}(t\mathsf v)}{(-\imath t)}\,\di t}\,\di x \\
&\quad\qquad=\frac{1}{2\pi}\int_{\mathbb R}\abs{\int_{\mathbb R}e^{-\imath t x}
\hat K^{\otimes d}(ht\mathsf v)r^{\otimes d}_{\varepsilon}(t\mathsf v)\frac{\hat\mu_Y(t\mathsf v)-\hat\mu_{0Y}(t\mathsf v)}{(-\imath t)}\,\di t}\,\di x \\
&\quad\qquad\leq \frac{1}{2\pi}\int_{\mathbb R}\abs{\int_{\mathbb R}e^{-\imath t x}\hat K^{\otimes d}(ht\mathsf v)r^{\otimes d}_{\varepsilon}(t\mathsf v)\chi^{\otimes d}(t\mathsf v)\frac{\hat\mu_Y(t\mathsf v)-\hat\mu_{0Y}(t\mathsf v)}{(-\imath t)}\,\di t}\,\di x \\
&\qquad\qquad\qquad + \frac{1}{2\pi}\int_{\mathbb R}\abs{\int_{\mathbb R}e^{-\imath t x}\hat K^{\otimes d}(ht\mathsf v)r^{\otimes d}_{\varepsilon}(t\mathsf v)[1-\chi^{\otimes d}(t\mathsf v)]\frac{\hat\mu_Y(t\mathsf v)-\hat\mu_{0Y}(t\mathsf v)}{(-\imath t)}\,\di t}\,\di x \\
&\quad\qquad=:T_1+T_2,
\end{split}
\]
for
$$\hat K^{\otimes d}(ht\mathsf v)r^{\otimes d}_{\varepsilon}(t\mathsf v)\chi^{\otimes d}(t\mathsf v)=\prod_{j=1}^d \hat K(v_jht)r_\varepsilon(v_j t)\chi(v_jt)=\prod_{j=1}^d w_{1,h}(v_jt)=\prod_{j\in J_d^\ast(\mathsf v)} w_{1,h}(v_jt)$$
because $w_{1,h}(v_jt)=1$ if $v_j=0$. Noting that the inverse Fourier transform of 
$\prod_{j\in J_d^\ast(\mathsf v)} w_{1,h}(v_jt)$
is $\Conv_{j\in J_d^\ast(\mathsf v)}[(1/v_j)K_{1,h}(\cdot/v_j)]$, we have
\[\begin{split}
2\pi T_1&\leq 
\int_{\mathbb R}\abs{\int_{\mathbb R}e^{-\imath t x}\hat K^{\otimes d}(ht\mathsf v)r^{\otimes d}_{\varepsilon}(t\mathsf v)\chi^{\otimes d}(t\mathsf v)\,\di t}\,\di x\times
\int_{\mathbb R}\abs{\int_{\mathbb R}e^{-\imath t x}\frac{\hat\mu_Y(t\mathsf v)-\hat\mu_{0Y}(t\mathsf v)}{(-\imath t)}\,\di t}\,\di x
\\
&=2\pi W_1(\mu_{Y,\mathsf v},\,\mu_{0Y,\mathsf v}) 
\int_{\mathbb R}\bigg|{\int_{\mathbb R}e^{-\imath t x}\prod_{j\in J_d^\ast(\mathsf v)} w_{1,h}(v_jt)\,\di t}\bigg|\,\di x
\\
&= (2\pi)^2 W_1(\mu_{Y,\mathsf v},\,\mu_{0Y,\mathsf v})
\int_{\mathbb R}\bigg|\bigg(\Conv_{j\in J_d^\ast(\mathsf v)}\pq{\frac{1}{v_j}K_{1,h}(\cdot/v_j)}\bigg)(x)\bigg|\,\di x \\
&\leq (2\pi)^2 W_1(\mu_{Y,\mathsf v},\,\mu_{0Y,\mathsf v})\prod_{j\in J_d^\ast(\mathsf v)}
\norm{\frac{1}{v_j}K_{1,h}(\cdot/v_j)}_1 = (2\pi)^2 W_1(\mu_{Y,\mathsf v},\,\mu_{0Y,\mathsf v})\norm{K_{1,h}}_1^{|J_d^\ast(\mathsf v)|},
\end{split}
\]
where $\|K_{1,h}\|_1=O(1)$ by Lemma \ref{lem:bigO} and 
$\max_{\mathsf v\in\mathbb S^{d-1}}\norm{K_{1,h}}_1^{|J_d^\ast(\mathsf v)|}=\max_{j\in[d]}\norm{K_{1,h}}_1^{j}<\infty$. Thus, $T_1\lesssim  W_1(\mu_{Y,\mathsf v},\,\mu_{0Y,\mathsf v})$.
Concerning the term $T_2$, set the position
$$w_{2,h,\mathsf v}(t):=\hat K^{\otimes d}(ht\mathsf v)r^{\otimes d}_{\varepsilon}(t\mathsf v)[1-\chi^{\otimes d}(t\mathsf v)]=
\bigg[1-\prod_{j=1}^d\chi(v_jt)\bigg]\prod_{k=1}^d\hat K(v_kht)r_\varepsilon(v_kt), \quad t\in\mathbb R,$$
by Lemma \ref{lem:F3}, recalling that $I_h^\ast(\mathsf v) = \{ j\in[d] :\, |v_j| >h\}$, we have
\[
\begin{split}
2\pi T_2&\leq
\int_{\mathbb R}\abs{\int_{\mathbb R}e^{-\imath t x}\frac{w_{2,h,\mathsf v}(t)}{(-\imath t)}\,\di t}\,\di x  \times
\int_{\mathbb R}\abs{\int_{\mathbb R}e^{-\imath t x}[\hat\mu_{Y,\mathsf v}(t)-\hat\mu_{0Y,\mathsf v}(t)]\,\di t}\,\di x\\
&
\lesssim |\log h|\left(  |\log h|\1_{(\beta|I_h^\ast(\mathsf v)|\leq 1)}+ h^{-\beta |I_h^\ast(\mathsf v)|+1} 
\prod_{j\in I_h^\ast(\mathsf v)} |v_j|^\beta \1_{(\beta |I_h^\ast(\mathsf v)|> 1)}\right)\|f_{Y,\mathsf v}-f_{0Y,\mathsf v}\|_1,
\end{split}\]
where $f_{Y,\mathsf v}$ and $f_{0Y,\mathsf v}$ are the densities of the measures $\mu_{Y,\mathsf v}$ and 
$\mu_{0Y,\mathsf v}$, respectively. 
Note also that, for every $\mathsf v\in\mathbb S^{d-1}$,
\[\begin{split}
\frac{1}{2}\|f_{Y,\mathsf v}-f_{0Y,\mathsf v}\|_1&=\sup_{A\in\mathscr B(\mathbb R)}|\mu_{Y,\mathsf v}(A)-\mu_{0Y,\mathsf v}(A)|\\
&\leq \sup_{B\in\mathscr B(\mathbb R^d)}|P_Y(\mathsf Y\in B)-P_{0Y}(\mathsf Y\in B)|=\frac{1}{2}\|f_Y-f_{0Y}\|_1.
\end{split}
\]
It follows that $\max_{\mathsf v\in \mathbb S^{d-1}}\|f_{Y,\mathsf v}-f_{0Y,\mathsf v}\|_1\leq \|f_Y-f_{0Y}\|_1$
and 
$$T_2\lesssim  |\log h| \left(  |\log h|\1_{(\beta|I_h^\ast(\mathsf v)|\leq 1)}+ h^{-\beta |I_h^\ast(\mathsf v)|+1} 
\prod_{j\in I_h^\ast(\mathsf v)} |v_j|^\beta \1_{(\beta |I_h^\ast(\mathsf v)|> 1)}\right)\|f_Y-f_{0Y}\|_1.$$
Combining the bounds on $T_1$ and $T_2$, we obtain the bound on $T$ reported in \eqref{eq:t2},
which, together with \eqref{eq:distriang}, proves the inversion inequality.

\smallskip

\noindent
$\bullet$ \emph{Case 2: smoothness Assumption \ref{ass:smoothXXX} on $\mu_{0X}$ is in force}\\[4pt]
If Assumption \ref{ass:smoothXXX} holds true, then $K\in L^1(\mathbb R)\cap L^2(\mathbb R)$ 
is taken to be a superkernel with $zK(z)\in L^1(\mathbb R)$
and $\int_{\mathbb R}z^2|K(z)|\,\di z<\infty$ when $d\geq2$. 
Since $\hat K\equiv 1$ on $[-1,\,1]$, while $\hat K\equiv 0$ on $[-2,\,2]^c$, 
by taking $\hat K(\cdot/2)$ the support reduces to $[-1,\,1]$. 
Note that, as $K$ need not be a probability density, the triangular inequality for the Wasserstein metric in \eqref{eq:triangneq} 
does not necessarily hold. Nevertheless, by the inequality on the right-hand side of \eqref{eq:equivalence} and
the representation of $W_1$, when $d=1$, as the $L^1$-distance between distribution functions, 
we have
\[\frac{1}{C_d}
W_1(\mu_X,\,\mu_{0X})\leq
\overline W_1(\mu_X,\,\mu_{0X})=\max_{\mathsf v\in \mathbb S^{d-1}}
W_1(\mu_{X,\mathsf v},\,\mu_{0X,\mathsf v})=\max_{\mathsf v\in \mathbb S^{d-1}}
\|F_{X,\mathsf v}-F_{0X,\mathsf v}\|_1.
\]
Then, by the triangular inequality for the $L^1$-norm distance,
\[
\begin{split}
\frac{1}{C_d}
W_1(\mu_X,\,\mu_{0X})&\leq\max_{\mathsf v\in \mathbb S^{d-1}}
\|F_{X,\mathsf v}-F_{0X,\mathsf v}\|_1\\
&\leq \max_{\mathsf v\in \mathbb S^{d-1}}
\|F_{X,\mathsf v}-(F_{X}\ast K_{\tilde h}^{\otimes d})_{\mathsf v}\|_1  +\max_{\mathsf v\in \mathbb S^{d-1}}
\|((F_{X}-F_{0X})\ast K_{\tilde h}^{\otimes d})_{\mathsf v}\|_1\\
&\qquad\qquad\qquad\qquad\qquad\qquad\quad\,\,\,\,\, +\max_{\mathsf v\in \mathbb S^{d-1}}
\|(F_{0X}\ast K_{\tilde h}^{\otimes d})_{\mathsf v}-F_{0X,\mathsf v}\|_1\\
&=:D_1+D_2+D_3,
\end{split}
\]
where $\tilde h:=h/2$. Note that, for $\mathsf v\in\mathbb S^{d-1}$, we have
$(F_{X}\ast K_{\tilde h}^{\otimes d})_{\mathsf v}=F_{X,\mathsf v}\ast (K_{\tilde h}^{\otimes d})_{\mathsf v}$ so that
$$b_{F_{X,\mathsf v}}(\tilde h) :=F_{X,\mathsf v}-
F_{X,\mathsf v}\ast (K_{\tilde h}^{\otimes d})_{\mathsf v}=F_{X,\mathsf v}-(F_{X}\ast K_{\tilde h}^{\otimes d})_{\mathsf v}.$$ Therefore, by condition \eqref{eq:ass1},
we have $D_1=\max_{\mathsf v\in\mathbb S^{d-1}}\|b_{F_{X,\mathsf v}}(\tilde h)\|_1=O(h^{\alpha+1})$. 
The term $D_2$ can be bounded using the same arguments 
as for $W_1(\mu_X\ast K_h^{\otimes d},\,\mu_{0X}\ast K_h^{\otimes d})$ in Case $1$, 
therefore
\[\begin{split}
D_2 &\lesssim W_1(\mu_Y,\,\mu_{0Y})\\
&\quad\,\, + |\log h| \left(  |\log h|\1_{(\beta|I_h^\ast(\mathsf v)|\leq 1)}+ h^{-\beta |I_h^\ast(\mathsf v)|+1} 
\prod_{j\in I_h^\ast(\mathsf v)} |v_j|^\beta \1_{(\beta |I_h^\ast(\mathsf v)|> 1)}\right)\|f_Y-f_{0Y}\|_1.
\end{split}\]
By the same arguments laid down for $D_1$, the term $D_3=\max_{\mathsf v\in \mathbb S^{d-1}}
\|b_{F_{0X,\mathsf v}}(\tilde h)\|_1$. We show that
\begin{equation}\label{eq:biasgeneral12}
D_3=O(h^{\alpha+1}).
\end{equation}
We make two preliminary remarks.
First, for $\mathsf v\in\mathbb S^{d-1}$, by \eqref{eq:rel1}, we have 
$\hat \mu_{\mathsf v}(t)=\hat\mu(t\mathsf v)$, $t\in\mathbb R$. Then, Assumption \ref{ass:smoothXXX} implies that
\begin{equation}\label{eq:supremum1}
\max_{\mathsf v\in \mathbb S^{d-1}}\|\reallywidehat{D^\alpha f_{0X,\mathsf v}}\|_1=
\max_{\mathsf v\in \mathbb S^{d-1}}\int_{\mathbb R}|t|^\alpha|\hat \mu_{0X,\mathsf v}(t)|\,\di t=\max_{\mathsf v\in \mathbb S^{d-1}}
\int_{\mathbb R}|t|^\alpha|\hat \mu_{0X}(t\mathsf v)|\,\di t<\infty.
\end{equation}
Second, note that $|1-\hat K^{\otimes d}(\tilde ht \mathsf v)|\neq 0$ for all those $t\in\mathbb R$
for which there exists at least an index $j\in J_d^\ast(\mathsf v)$ so that $|v_j\tilde ht|>1$. We define the set 
$$\mathscr D:=\{t\in\mathbb R:\,\exists\,j\in J_d^\ast(\mathsf v) \mbox{ so that }|v_j\tilde ht|>1\}.$$
The domain $\mathscr D$ depends on $h$ and $\mathsf v$, \emph{i.e.}, $\mathscr D\equiv \mathscr D_{h,\mathsf v}$, but we shall not emphasize this
dependence in what follows and simply write $\mathscr D$. Note that $
\mathscr D\subseteq
\{t\in\mathbb R:\,|t|>(\tilde h\|\mathsf v\|_\infty)^{-1}\}$,
where $\|\mathsf v\|_\infty:=\max_{j\in [d]} |v_j|\leq 1$.
By the same arguments used for the function $G_{2,h}$ in \cite{dedecker2015}, pp. 251--252, we have
\[\begin{split}
\|b_{F_{0X,\mathsf v}}(\tilde h)\|_1&=
\int_{\mathbb R}\bigg|\frac{1}{2\pi}\int_{\mathscr D}
e^{-\imath t x}\frac{[1-\hat K^{\otimes d}(\tilde ht\mathsf v)]}{(-\imath t)}\hat \mu_{0X,\mathsf v}(t)\,\di t
\bigg|\,\di x
\end{split}\]
because $t\mapsto[1-\hat K^{\otimes d}(\tilde ht\mathsf v)][\hat \mu_{0X,\mathsf v}(t)/t]$ is in $L^1(\mathbb R)$ due to \eqref{eq:supremum1}. 
To prove relationship \eqref{eq:biasgeneral12}, we write
 \[\begin{split}
\|b_{F_{0X,\mathsf v}}(\tilde h)\|_1&=\int_{\mathbb R}\bigg|\frac{1}{2\pi}\int_{\mathscr D}
e^{-\imath t x}\underbrace{(-\imath t)^{\alpha}\hat \mu_{0X,\mathsf v}(t)}_
{\reallywidehat{D^{\alpha}\hspace*{-1pt}f_{0X,\mathsf v}}(t)}\frac{[1-\hat K^{\otimes d}(\tilde ht\mathsf v)]}{(-\imath t)^{\alpha+1}}\,\di t
\bigg|\,\di x\\
&\leq \|D^{\alpha}\hspace*{-1pt}f_{0X,\mathsf v}\|_1 \times 
\bigg(\underbrace{\int_{|x|\leq h}}_{=:B_{1,\mathsf v}}+\underbrace{\int_{|x|>h}}_{=:B_{2,\mathsf v}}\bigg)\bigg|\frac{1}{2\pi}\int_{\mathscr D}
e^{-\imath t x}\frac{[1-\hat K^{\otimes d}(\tilde ht\mathsf v)]}{(-\imath t)^{\alpha+1}}\,\di t
\bigg|\,\di x,
\end{split}
\]
where $\|D^{\alpha}\hspace*{-1pt}f_{0X,\mathsf v}\|_1<\infty$ by Assumption \ref{ass:smoothXXX}.
Now, 
$$B_{1,\mathsf v}\lesssim h\int_{\mathscr D}
\frac{[1+|\hat K^{\otimes d}(\tilde ht \mathsf v)|]}{|t|^{\alpha+1}}\,\di t\lesssim 
h\int_{|t|>(\tilde h\|\mathsf v\|_\infty)^{-1}}
\frac{[1+|\hat K^{\otimes d}(\tilde ht \mathsf v)|]}{|t|^{\alpha+1}}\,\di t\lesssim
h^{\alpha+1}$$ 
because 
$\|\hat K\|_\infty<\infty$ and the bound is uniform over $\mathbb S^{d-1}$. Thus,
$\max_{\mathsf v\in\mathbb S^{d-1}}B_{1,\mathsf v}=O(h^{\alpha+1})$.
To bound $B_{2,\mathsf v}$, we use identity \eqref{eq:identity21}.
The conditions $K\in L^1(\mathbb R)$ and $zK(z)\in L^1(\mathbb R)$ jointly imply that $\hat K$ is 
continuously differentiable with $|\hat K^{(1)}(t)|\rightarrow 0$ as $|t|\rightarrow \infty$. 
Indeed, $\hat K^{(1)}(\cdot/2)\in C_b([-1,\,1])$. 
Define
$\hat f_{\mathsf v}(t):=[1-\hat K^{\otimes d}(\tilde ht\mathsf v)](-\imath t)^{-(\alpha+1)}$, $t\in\mathbb R$.
Taking into account that
\[\begin{split}
\frac{\di}{\di t}\bigg(\frac{[1-\hat K^{\otimes d}(\tilde ht\mathsf v)]}{t^{\alpha+1}}\bigg)=-\frac{\tilde h(\hat K^{\otimes d})^{(1)}(\tilde ht\mathsf v)}{t^{\alpha+1}}
-(\alpha+1)\frac{[1-\hat K^{\otimes d}(\tilde ht\mathsf v)]}{t^{\alpha+2}}
\end{split}\]
and using the bound in \eqref{eq:unifbound}, we have
\[\begin{split}
\|\hat f_{\mathsf v}^{(1)}\|_2^2 &=\int_{\mathscr D}\bigg|\frac{\di }{\di t}\bigg(\frac{[1-\hat K^{\otimes d}(\tilde ht\mathsf v)]}{t^{\alpha+1}}\bigg)\bigg|^2
\,\di t\\
&\lesssim \int_{|t|>(\tilde h\|\mathsf v\|_\infty)^{-1}}\pt{h^2\frac{|(\hat K^{\otimes d})^{(1)}(\tilde ht\mathsf v)|^2}{|t|^{2(\alpha+1)}}+
\frac{|1-\hat K^{\otimes d}(\tilde ht\mathsf v)|^2}{|t|^{2(\alpha+2)}}}\,\di t \lesssim  h^{2(\alpha+3/2)}
\end{split}\]
and the bound is uniform over $\mathbb S^{d-1}$ so that
$\max_{\mathsf v\in\mathbb S^{d-1}}\|\hat f_{\mathsf v}^{(1)}\|_2=O(h^{\alpha+3/2})$.
\noindent
For $f_{\mathsf v}(\cdot):=(2\pi)^{-1}\int_{\mathscr D}e^{-\imath t \cdot}\hat f_{\mathsf v}(t)\,\di t$, which is well defined because
$\hat f_{\mathsf v}\in L^1(\mathbb R)$, by identity \eqref{eq:identity21} and the Cauchy–Schwarz inequality, we have that
\[
\begin{split}
B_{2,\mathsf v}:=  \int_{|x|>h}|f_{\mathsf v}(x)|\,\di x &= \int_{|x|>h}\frac{1}{|x|}\abs{\frac{1}{2\pi}\int_{\mathscr D}
e^{-\imath t x}\hat f_{\mathsf v}^{(1)}(t)\,\di t
}\di x\\ 
&\lesssim
\pt{ \int_{\mathbb R}\frac{1}{x^2}\1_{(|x|>h)}\,\di x}^{1/2}
\|\hat f_{\mathsf v}^{(1)}\|_2\lesssim h^{-1/2}h^{\alpha+3/2}\lesssim  h^{\alpha+1}
\end{split}
\]
uniformly over $\mathbb S^{d-1}$. 
Thus, $\max_{\mathsf v\in\mathbb S^{d-1}}B_{2,\mathsf v}=O( h^{\alpha+1})$. Consequently, $D_3=O( h^{\alpha+1})$ and the proof is complete.
\qed

\subsection{Proof of Theorem \ref{thm:22}}\label{sec:prth22}
By the conditions in \eqref{con1}, Theorem 2.1 of \cite{ghosal2000}, p. 503, implies that, for sufficiently large $\,\xbar{M}>0$,
$$\mathbb E_{0Y}^n[\Pi_n(\mu_X:\,\|f_Y-f_{0Y}\|_1>\xbar{M}\,\tilde\epsilon_n \mid \mathsf Y^{(n)})]\rightarrow 0.$$
Since $\mu_{0X}\in\mathcal P_{4+\delta}(\mathbb R^d)$ and
$\mu_\varepsilon\in\mathcal P_{4+\delta}(\mathbb R)$, we have 
$M_{4+\delta}(\mu_{0Y}) < \infty$.  Also, since 
$$\mathbb E_{0Y}^n[\Pi_n(\mu_X:\,M_{4+\delta}(\mu_Y) > K'\tilde \epsilon_n^{-2}\mid\mathsf Y^{(n)})]\rightarrow0,$$
for $M>0$ and $\mathscr S_n :=\{\mu_X:\,W_1(\mu_Y,\,\mu_{0Y})\leq M \tilde \epsilon_n\log(1/\tilde\epsilon_n)\}$, by Theorem \ref{lem:1}
we have that
$\mathbb E_{0Y}^n[\Pi_n(\mathscr S_n^c\mid\mathsf Y^{(n)})] \rightarrow 0$.

The case where Assumption \ref{ass:smoothXXX} is in force is treated in details. 
By the bound in \eqref{eq:ass1}, Theorem \ref{theo:1} implies that, uniformly over $\mathscr P_n \cap \mathscr S_n$, 
$$W_1(\mu_X,\,\mu_{0X})\lesssim h_n^{\alpha+1}+\tilde \epsilon_n(\log n)+h_n^{-(\beta d-1)_+}(\log n)^{1+\1_{(\beta d\leq 1)}}\|f_Y-f_{0Y}\|_1.$$
Replacing $h_n$ with $[\tilde\epsilon_n(\log n)^{1+\1_{(\beta d\leq 1)}}]^{1/[\alpha+(\beta d\vee 1)]}$ and $\|f_Y-f_{0Y}\|_1$ with $\tilde \epsilon_n$ leads to 
$$W_1(\mu_X,\,\mu_{0X})\lesssim [\tilde\epsilon_n(\log n)^{1+\1_{(\beta d\leq 1)}}]^{(\alpha+1)/[\alpha+(\beta d\vee 1)]}.$$
There thus exists $C_\alpha>0$ such that
$$
\mathbb E_{0Y}^n[\Pi_n(\mu_X:\,W_1(\mu_X,\,\mu_{0X})>C_\alpha\epsilon_{n,\alpha}\mid \mathsf Y^{(n)})]
\rightarrow 0.
$$

The case where no regularity assumption on $\mu_{0X}$ is considered, except for the first moment condition $M_1(\mu_{0X})< \infty$, 
follows similarly from the inversion inequality of Theorem \ref{theo:1}, with $h_n$ in place of $h_n^{\alpha+1}$, choosing 
$h_n =[ \tilde \epsilon_n (\log n)^{1+\1_{(\beta d\leq 1)}}]^{1/(\beta d\vee 1)}$. \qed

\smallskip

\subsection{Proof of Lemma \ref{lem:contapprox}} \label{sec:rth3}
We begin by obtaining an equivalent expression for the $L^2$-norm in \eqref{bound:tildehm12}.
Denoting by $\mathcal F$ the Fourier transform operator, for any $f\in L^1(\mathbb R)$ we have $\mathcal F\{f\}:=\hat f$.
Recall that, given $f_\varepsilon(u)=e^{-|u|}/2$, $u\in\mathbb R$, for $b=\mp\frac{1}{2}$ we have $\mathcal F\{e^{b\cdot}f_\varepsilon\}(t)=[1/\varrho_b(t)]$, where
$\varrho_b(t):=[1-\psi_b^2(t)]$ and $\psi_b(t):=-(\imath t+b)$, $t\in\mathbb R$. Note that, as a consequence of the identity in \eqref{eq:ident}, we have that
$$\frac{1}{M_{0X}(b)}\mathcal F\{e^{b\cdot}(T_{m,b,\sigma}f_{0X})\}=\mathcal F\{\bar h_{0,b}\}+\gamma \mathcal F\{ h_{m,b,\sigma}\},$$
where $M_{0X}(b)<\infty$ by the assumption that $(e^{|\cdot|/2}f_{0X})\in L^1(\mathbb R)$. Then,
\begin{eqnarray}\label{eq:equivalent}
\Delta_0&:=&\sum_{b=\mp 1/2}\|e^{b\cdot}\{f_\varepsilon\ast [\phi_\sigma\ast (T_{m,b,\sigma}f_{0X})-f_{0X}]\}\|_2^2\nonumber\\
&=&
\sum_{b=\mp 1/2}M^2_{0X}(b)\|(e^{b\cdot}f_\varepsilon)\ast
[(e^{b\cdot}\phi_\sigma)\ast \{[M_{0X}(b)]^{-1}e^{b\cdot}(T_{m,b,\sigma}f_{0X})\}-\bar h_{0,b}]\|_2^2\nonumber\\
&=&
\frac{1}{2\pi}
\sum_{b=\mp 1/2}M^2_{0X}(b)\Big\|\frac{e^{\sigma^2 \psi_b^2/2}}{\varrho_b}
[(1-e^{-\sigma^2 \psi_b^2/2})\mathcal F\{\bar h_{0,b}\}+\gamma\mathcal F\{  h_{m,b,\sigma}\}]\Big\|_2^2.
\end{eqnarray}
Some facts are highlighted for later use. 
For every $\delta>0$, the function $\mathcal F\{H\}(\delta \cdot)$ is well defined because 
$\|H\|_1=(2\pi)^{-1}\|\hat \tau \reallywidehat{\phi_h}\|_1 <\infty$. Besides,
as $0\leq \tau\leq 1$,
\begin{equation}\label{eq:bilaplace1}
|\mathcal F\{H\}(\delta t)|=
|(\tau \ast \phi_{h})(-\delta t)|  \leq \|\phi_{h}(-\delta t-\cdot)\|_1=\|\phi_{-\delta t,h}\|_1=1,\quad t\in\mathbb R.
\end{equation}
Let $Z$ be a standard normal random variable. 
For constants $0<c_\delta,\,c_h<1$, take $\delta:=c_\delta\sigma$ and $h:=c_h|\log \sigma|^{-1/2}$.
Fix $u_0$ such that $0<c_\delta<u_0<1$. Then, for $\omega>0$ and 
$c_h$ such that $(1-u_0)\geq c_h\sqrt{2\omega}$, we have, for every $|t|\leq (u_0/\delta)$,
\begin{align}\label{eq:complement1}
|1-\mathcal F\{H\}(\delta t)|
\leq2\int_{|u|\geq1}\phi_{-\delta t,h}(u)\,\di u  &\leq2P(|Z|\geq (1-\delta |t|)/h) \nonumber\\
&\leq 2P(|Z|\geq(1-u_0) |\log \sigma|^{1/2}/c_h)\lesssim \sigma^{\omega}
\end{align}
as soon as $\sigma$ is small enough.
For every $j\in\mathbb N_0$, we have
$\mathcal F\{D^j H_\delta\}(t)=(-\imath t)^j \mathcal F \{H\} (\delta t)$, $t\in\mathbb R$. Then,
\[
\mathcal F\{h_{m,b,\sigma}\} (t)
=\frac{1}{\gamma}\mathcal F\{\bar h_{0,b}\}(t)
\mathcal F\{H\}(\delta t) \sum_{k=1}^{m-1}
\frac{\{-[\sigma\psi_b(t)]^2/2\}^k}{k!},\quad t\in\mathbb R.
\] 
Decomposing $\mathcal F\{\bar h_{0,b}\}(t)$ by means of $\mathcal F\{H\}(\delta t)$ 
and $[1-\mathcal F\{H\}(\delta t)]$, 
the numerator of the integrand of $\Delta_0$ in \eqref{eq:equivalent} can be bounded above by
\[\begin{split}
\mathcal J_b^2(t)&:=|e^{\sigma^2 \psi_b^2(t) /2}|^2|(1-e^{-\sigma^2 \psi_b^2(t) /2})\mathcal F\{\bar h_{0,b}\}(t)
\mathcal F\{H\}(\delta t)  + \gamma\mathcal F\{h_{m,b,\sigma}\}(t)|^2\\ 
&\hspace*{3.6cm} + |e^{\sigma^2 \psi_b^2(t) /2}-1|^2\, |\mathcal F\{\bar h_{0,b}\}(t)|^2 \abs{1 - \mathcal F\{H\} (\delta t)}^2, \quad t\in\mathbb R.
\end{split}\]
Set 
\[
\begin{split}
\Delta_{01}&:= \sum_{b=\mp1/2}M^2_{0X}(b)\int_{\delta|t|\leq u_0}[\mathcal J_b^2(t)/|\varrho_b(t)|^2]\,\di t,\\
\Delta_{02}&:=\sum_{b=\mp1/2}M^2_{0X}(b)\int_{\delta|t|> u_0}[\mathcal J_b^2(t)/|\varrho_b(t)|^2]\,\di t,
\end{split}
\]
we have $\Delta_0\lesssim\Delta_{01}+\Delta_{02}$. We prove that $\Delta_{0j}\lesssim \sigma^{2(\alpha+2)}$, for $j\in[2]$. 
Taking into account that $|e^{\sigma^2\psi_b^2(t)/2}|^2=e^{-\sigma^2(t^2-b^2)}=e^{-\sigma^2(t^2-1/4)}$, 
for $\omega\geq 2m\geq(\alpha+2)$ and $\sigma>0$ small enough, by Lemma \ref{lem:diseg}, 
relationships \eqref{eq:bilaplace1} and \eqref{eq:complement1}, we have
\[\begin{split}
\Delta_{01}&\lesssim  \sum_{b=\mp1/2}M^2_{0X}(b)\int_{\delta|t|\leq 
u_0} \frac{1}{|\varrho_b(t)|^2}
(
[\sigma^2(t^2+1/4)]^{2m}\\&\hspace*{5cm}+\sigma^{2\omega}\min\{4,\,\sigma^4(t^2+1/4)^2/4\}
)
|\mathcal F\{\bar h_{0,b}\}(t)|^2\,\di t\\
&\lesssim \sigma^{2(\alpha+2)} \sum_{b=\mp 1/2}\int_{\delta|t|\leq u_0} (|t|^{2\alpha}+1)|\widehat{(e^{b\cdot}f_{0X})}(t)|^2\,\di t
\lesssim \sigma^{2(\alpha+2)}
\end{split}
\]
because $\mathcal F\{\bar h_{0,b}\}(t)=[M_{0X}(b)]^{-1}\widehat{(e^{b\cdot}f_{0X})}(t)$, $t\in\mathbb R$, and $\int_{\mathbb R}(|t|^{2\alpha}\vee 1)|\widehat{(e^{b\cdot}f_{0X}})(t)|^2\,\di t<\infty$
by Assumption \ref{ass:sobolevcond} and the hypothesis that $(e^{|\cdot|/2}f_{0X})\in L^1(\mathbb R)$. 
Analogously, for $\sigma|t|>(u_0/c_\delta)>1$,
\[\begin{split}
\Delta_{02}&\lesssim \sum_{b=\mp 1/2}M^2_{0X}(b)\int_{\delta|t|>u_0} \frac{1}{ |\varrho_{b}(t)|^2}
\bigg(
\bigg|e^{\sigma^2 \psi_b^2(t) /2}\sum_{k=0}^{m-1}\frac{\{-[\sigma\psi_b(t)]^2/2\}^k}{k!}-1\bigg|^2\\[-4pt]
&\hspace*{6cm}+|e^{\sigma^2 \psi_b^2(t) /2}-1|^2\bigg)
|\mathcal F\{\bar h_{0,b}\}(t)|^2\,\di t\\
&\lesssim\sum_{b=\mp 1/2}M^2_{0X}(b)\int_{\delta|t|>u_0} \frac{1}{|\varrho_{b}(t)|^2}
\{e^{-\sigma^2(t^2-1/4)/2} (\sigma|t|)^{2m}+1\\[-4pt]
&\hspace*{6cm}+\min\{2,\,\sigma^2(t^2+1/4)/2\}\}^2 |\mathcal F\{\bar h_{0,b}\}(t)|^2 \,\di t \\ 
&\lesssim \sigma^{2(\alpha+2)}\sum_{b=\mp1/2}M^2_{0X}(b)\int_{\delta|t|>u_0}\frac{t^4}{|\varrho_{b}(t)|^2}
[e^{-(\sigma t)^2}(\sigma|t|)^{4m}+1]
|t|^{2\alpha} |\mathcal F\{\bar h_{0,b}\}(t)|^2  \,\di t\\
&\lesssim \sigma^{2(\alpha+2)}
\sum_{b=\mp1/2}\int_{\delta|t|>u_0}
|t|^{2\alpha}|\widehat{(e^{b\cdot}f_{0X}})(t)|^2 \,\di t \lesssim \sigma^{2(\alpha+2)}.
\end{split}\]
We prove relationship \eqref{eq:bound}.
Since $\mathcal F\{\bar h_{0,b}\}(0)=1$,
$(1-e^{-\sigma^2/8})/\gamma=-1$ and $\sigma^2/8\leq e^{\sigma^2/8}|\gamma|$,
from previous computations for the term $\Delta_{01}$ we have
$$\frac{\sigma^2}{8}|\mathcal F\{h_{m,b,\sigma}\}(0)-1|\leq
e^{\sigma^2/8}|\gamma|\Big|\mathcal F \{h_{m,b,\sigma}\}(0)+\frac{(1-e^{-\sigma^2/8})}{\gamma}\Big|
\lesssim \mathcal J_b(0)\lesssim \sigma^{2m},$$
whence $\int_{\mathbb R}h_{m,b,\sigma}(x)\,\di x=\mathcal F \{h_{m,b,\sigma}\}(0)=1+O(\sigma^{2(m-1)})$ and the proof is complete. 
\qed

\section{Final remarks}\label{sec:frmks}
In this paper, we have studied the problem of multivariate deconvolution with known ordinary smooth error distributions having independent coordinates, with respect to the $1$-Wasserstein loss. Prior to this work, optimal lower and upper bounds on the rates of convergence were derived only in \cite{dedecker2015} when $d=1$, under no smoothness assumption on the signal, leading to the minimax-optimal rate $n^{-1/(2\beta+1)}$ when the exponent $\beta$ of the Fourier transform of the noise distribution is such that $\beta\geq\frac{1}{2}$. The contributions of this work are four-fold: (1) propose an inversion inequality between $W_1(\mu_X,\,\mu_{0X})$ and $\|f_Y - f_{0Y}\|_1 $ (or $\|f_{Y,\mathsf v} - f_{0Y,\mathsf v}\|_1$ in the case where $d>1$), which can also be used in other contexts than those herein considered, for instance, as a first step to obtain Bernstein-von Mises type results for linear functionals of $\mu_{0X}$; (2) use this inversion inequality in a Bayesian framework under the Laplace noise to derive $\alpha$-adaptive minimax-optimal posterior contraction rates for any $\alpha>0$ when $d=1$; (3) prove that a kernel type deconvolution estimator achieves the minimax convergence rate under the Laplace noise for any $d\geq 1$ and (4) derive lower bounds on the $W_1$-convergence rates for any $\beta>0$ and $d\geq 1$. 
Note that the rate obtained for the kernel type deconvolution estimator easily extends to any other ordinary smooth noise distribution under additional moment assumptions. 
Along the way, we have obtained intermediate results which we believe are themselves of interest: a new approximation of a convolution between a Sobolev regular density and a Laplace distribution by the convolution of a mixture of Gaussian densities with a Laplace. This construction is different from (and significantly more involved than) the approximation of H\"older densities by mixtures of Gaussian densities constructed in \cite{kruijer2010}, which would not lead to the correct error rate in the present context.
Our method is validated by deriving lower bounds that match with the upper bounds in the case where the error coordinates are independent and homogeneous, in the sense that they are all ordinary smooth, possibly of different orders. These results pave the way to the study of the inhomogeneous case where there are mixed components, some ordinary smooth and some others supersmooth. Furthermore, the case where the error components are not independent remains to be completely investigated.


\section*{Acknowledgements}
The authors gratefully acknowledge financial support from Institut Henri Poincaré (IHP), Sorbonne Université (Paris), 
within the RIP program on \vir{Bayesian Wasserstein deconvolution} that has taken place in 2019 at the IHP-Centre Émile Borel, 
where part of this work was written. 

Catia Scricciolo has also been partially supported by Università di Verona and by MUR, PRIN project 2022CLTYP4.
She is a member of the Gruppo Nazionale per l’Analisi Matematica, la Probabilità e le loro Applicazioni (GNAMPA) of the Istituto Nazionale di Alta Matematica (INdAM). She wishes to dedicate this work to her mother and sister Emilia, with deep love and immense gratitude.

The project leading to this work has received funding from the European Research Council
(ERC) under the European Union’s Horizon 2020 research and innovation programme
(grant agreement No. 834175).


\newpage

\appendix

\begin{frontmatter}
\title{Wasserstein convergence in Bayesian deconvolution models: Supplementary Material}
\runtitle{Wasserstein convergence in Bayesian deconvolution models: Supplement}

\begin{aug}
\author{\fnms{Judith} \snm{Rousseau} and \fnms{Catia} \snm{Scricciolo}}
\address{University of Oxford and University of Verona}
\end{aug}

\begin{abstract}
This supplement contains auxiliary results for proving 
Theorems \ref{theo:1}, \ref{thm:22},  \ref{thm:31}, \ref{thm:4}, \ref{thm:lower bound} and \ref{th:comtelacour}
of the main document \cite{rousseau:scricciolo:main}. 
\end{abstract}

\end{frontmatter}



\section{Lemmas for Theorem \ref{theo:1} on the inversion inequality}\label{sec:lem:theo:1}

The following lemma provides the order of the $L^1$-norm
of the function $K_{1,h}$ that arises when controlling the term $T_1$ in Theorem \ref{theo:1}.
We recall the notation. The function $\chi:\,\mathbb R\rightarrow \mathbb R$ 
is symmetric, continuously differentiable, equal to $1$ on $[-1,\,1]$ and to $0$ 
outside $[-2,\,2]$. The kernel $K$ is defined in Section \ref{subsec:Ass} and has
Fourier transform $\hat K$ with compact support.
For $h>0$, we defined $w_{1,h}(\cdot):=\hat K(h\cdot)\chi(\cdot)r_\varepsilon(\cdot)$, with 
$r_\varepsilon$ as in \eqref{eq:re} satisfying Assumption \ref{ass:identifiability+error}.
The function $K_{1,h}(\cdot):=(2\pi)^{-1}\int_{\mathbb R}e^{-\imath t\cdot}w_{1,h}(t)\,\di t$ is the inverse 
Fourier transform of $w_{1,h}$.

\begin{lem}\label{lem:bigO}
If the single coordinate error distribution $\mu_\varepsilon$ satisfies Assumption \ref{ass:identifiability+error} for some $\beta>0$, then, for sufficiently small $h>0$,
$$\|K_{1,h}\|_1=O(1).$$
\end{lem}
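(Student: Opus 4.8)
The plan is to bound $\|K_{1,h}\|_1$ by splitting the real line into the region $|z| \le 1$, where a crude $L^\infty$-bound on $K_{1,h}$ suffices, and the region $|z| > 1$, where we integrate by parts in the Fourier integral to gain decay in $z$ and then apply Cauchy–Schwarz. Recall $w_{1,h}(t) = \hat K(ht)\chi(t) r_\varepsilon(t)$ is supported on $[-2,2]$ (since $\chi$ vanishes outside $[-2,2]$), so $K_{1,h}(z) = (2\pi)^{-1}\int_{-2}^{2} e^{-\imath tz} w_{1,h}(t)\,\di t$, an integral over a fixed compact set independent of $h$.

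First I would handle $\int_{|z|\le 1}|K_{1,h}(z)|\,\di z$. Since $|\hat K(ht)| \le \|\hat K\|_\infty \le \|K\|_1 < \infty$, $|\chi| \le 1$, and by Assumption \ref{ass:identifiability+error} with $l=0$ we have $|r_\varepsilon(t)| \lesssim (1+|t|)^\beta \le 3^\beta$ on $[-2,2]$, it follows that $\|w_{1,h}\|_1 \lesssim \int_{-2}^2 (1+|t|)^\beta\,\di t < \infty$ uniformly in $h$; hence $\|K_{1,h}\|_\infty \le (2\pi)^{-1}\|w_{1,h}\|_1 = O(1)$ and thus $\int_{|z|\le1}|K_{1,h}(z)|\,\di z = O(1)$.

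For $\int_{|z|>1}|K_{1,h}(z)|\,\di z$ I would use the standard device: for $z \ne 0$, $K_{1,h}(z) = \frac{1}{2\pi(\imath z)}\int_{\mathbb R} e^{-\imath tz} w_{1,h}'(t)\,\di t$ (this is the $j=1$ case of identity \eqref{eq:identity21}, valid once we check $w_{1,h}' \in L^1(\mathbb R)$). The derivative is $w_{1,h}'(t) = h\hat K'(ht)\chi(t) r_\varepsilon(t) + \hat K(ht)\chi'(t) r_\varepsilon(t) + \hat K(ht)\chi(t) r_\varepsilon^{(1)}(t)$; each term is supported on $[-2,2]$ and, using $\|\hat K'\|_\infty < \infty$ (which holds since $zK(z) \in L^1$, as noted in Section \ref{subsec:Ass}), $h \le 1$, boundedness of $\chi,\chi'$, and the bounds $|r_\varepsilon(t)| \lesssim (1+|t|)^\beta$, $|r_\varepsilon^{(1)}(t)| \lesssim (1+|t|)^{\beta-1}$ from \eqref{eq:deriv}, we get $\|w_{1,h}'\|_2 \lesssim (\int_{-2}^2 (1+|t|)^{2\beta}\,\di t)^{1/2} = O(1)$ uniformly in $h$. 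Then by Cauchy–Schwarz,
\[
\int_{|z|>1}|K_{1,h}(z)|\,\di z = \frac{1}{2\pi}\int_{|z|>1}\frac{1}{|z|}\Bigl|\int_{\mathbb R} e^{-\imath tz} w_{1,h}'(t)\,\di t\Bigr|\,\di z \lesssim \Bigl(\int_{|z|>1}\frac{\di z}{z^2}\Bigr)^{1/2}\|\,\reallywidehat{w_{1,h}'}\,\|_2,
\]
and by Plancherel $\|\reallywidehat{w_{1,h}'}\|_2 = \sqrt{2\pi}\,\|w_{1,h}'\|_2 = O(1)$, while $\int_{|z|>1} z^{-2}\,\di z = 2$. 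Combining the two regions gives $\|K_{1,h}\|_1 = O(1)$.

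The main obstacle — really the only non-routine point — is verifying that all the implicit constants in the bounds on $\|w_{1,h}'\|_2$ (and $\|w_{1,h}\|_1$) are genuinely uniform in $h$ for small $h>0$: this rests on the fact that the integration domain is the fixed compact $[-2,2]$ (forced by $\chi$, not by $\hat K$), that $\hat K$ and $\hat K'$ are uniformly bounded (continuity on compact support together with $K, zK \in L^1$), and that the polynomial growth bounds on $r_\varepsilon$ and $r_\varepsilon^{(1)}$ from Assumption \ref{ass:identifiability+error} are evaluated only on $[-2,2]$, where they contribute a constant depending on $\beta$ but not on $h$. Once this uniformity is in place the estimate is immediate.
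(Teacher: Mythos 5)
Your proof is correct and uses essentially the same approach as the paper: reduce $\|K_{1,h}\|_1$ to uniform-in-$h$ bounds on $\|w_{1,h}\|_1$ (or $\|w_{1,h}\|_2$) and $\|w_{1,h}^{(1)}\|_2$, which you obtain exactly as the paper does from the compact support forced by $\chi$, the boundedness of $\hat K$ and $\hat K^{(1)}$ on compact sets, and condition \eqref{eq:deriv} with $l=0,1$. The only difference is that the paper invokes the $L^1$-transfer inequality $\|K_{1,h}\|_1 \le 2^{-1/2}(\|w_{1,h}\|_2^2 + \|w_{1,h}^{(1)}\|_2^2)^{1/2}$ from \cite{Bobkov:2016} as a black box, whereas you re-derive an equivalent estimate directly by splitting at $|z|\le 1$ and $|z|>1$ and applying \eqref{eq:identity21}, Cauchy–Schwarz and Plancherel.
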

\begin{proof}
Denoted by $w_{1,h}^{(1)}$ the derivative of $w_{1,h}$,
we have $\|K_{1,h}\|_1 \leq 2^{-1/2}(
\|w_{1,h}\|_2^2 +\|w^{(1)}_{1,h}\|_2^2)^{1/2}$,
see the proof of Theorem 4.2 in \cite{Bobkov:2016}, pp. 1030--1031.
For $h\leq\frac{1}{2}$, by condition \eqref{eq:deriv} with $l=0$, we have
$\|w_{1,h} \|_2^2\lesssim\int_{|t|\leq2}|\hat K(ht)|^2|\chi(t)|^2(1+|t|)^{2\beta}\,\di t\lesssim 
\|\chi\|_2^2<\infty$ as $\hat K $ is bounded on any compact set.  Analogously, for 
$w^{(1)}_{1,h}(t)=[h\hat K^{(1)}(ht)\chi(t) +\hat K(ht)\chi^{(1)}(t)] r_\varepsilon(t)+\hat K(ht)\chi(t) r^{(1)}_\varepsilon(t)$, for $t\in\mathbb R$,
using condition \eqref{eq:deriv} with $l=1$, we have
\[\begin{split}
\|w^{(1)}_{1,h}\|_2^2&\lesssim\int_{|t|\leq2}[h|\hat K^{(1)}(ht)||\chi(t)|+|\hat K(ht)||\chi^{(1)}(t)|]^2(1+|t|)^{2\beta}\,\di t \\
&\hspace*{6cm} +\int_{|t|\leq2} |\hat K(ht)|^2|\chi(t)|^2 (1+|t|)^{2(\beta-1)}\,\di t 
\\[3pt]
&\lesssim \|\chi\|_2^2 + \|\chi^{(1)}\|_2^2<\infty
\end{split}\]
because also $\hat K^{(1)}$ is bounded on any compact set by continuity. 
The assertion follows.
\end{proof}

The following lemma gives the order, in terms of the kernel bandwidth $h$, 
of the $L^1$-norm of the 
\vir{distribution function} $F_{2,h,\mathsf v}$ associated to $K_{2,h,\mathsf v}$, which is
the inverse Fourier transform of
$$w_{2,h,\mathsf v}(t):=\hat K^{\otimes d}(ht\mathsf v)[1-\chi^{\otimes d}(t\mathsf v)]r^{\otimes d}_{\varepsilon}(t\mathsf v)=
\bigg[1-\prod_{j=1}^d\chi(v_jt)\bigg]\prod_{k=1}^d\hat K(v_kht)r_\varepsilon(v_kt),\quad t\in\mathbb R.$$

\begin{lem}\label{lem:F3}
If the error distribution $\mu_\varepsilon^{\otimes d}$, $d\geq1$, has single
coordinate measure $\mu_\varepsilon$ satisfying Assumption \ref{ass:identifiability+error} for some $\beta>0$, then, 
for $h>0$ small enough, defined, for every $\mathsf v \in \mathbb S^{d-1}$, the set
$I_h^\ast(\mathsf v):=\{ j\in[d]:\,|v_j| >h\}$, we have
\begin{eqnarray}\label{eq:vineq}
\|F_{2,h,\mathsf v}\|_1
&\leq& C |\log h|\nonumber\\
&&\qquad\,\,\,\times \left(  |\log h|\1_{(\beta|I_h^\ast(\mathsf v)|\leq 1)}+ h^{-\beta |I_h^\ast(\mathsf v)|+1} 
\prod_{j\in I_h^\ast(\mathsf v)} |v_j|^\beta \1_{(\beta |I_h^\ast(\mathsf v)|> 1)}\right),
\end{eqnarray}
where $|I_h^\ast(\mathsf v)|$ denotes the cardinality of $I_h^\ast(\mathsf v)$ and $C$ does not depend on $\mathsf v$ nor on $h$. 
\end{lem}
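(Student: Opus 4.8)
The plan is to reduce everything to a one–dimensional estimate for the inverse Fourier transform of the multiplier $w_{2,h,\mathsf v}$, following the treatment of the analogous quantity $G_{2,h}$ in \cite{dedecker2015} but carrying the coordinate–by–coordinate structure through. Throughout I use that $\|\mathsf v\|_\infty\geq d^{-1/2}$ (since $1=|\mathsf v|^2\leq d\|\mathsf v\|_\infty^2$), so that every power of $\|\mathsf v\|_\infty$ occurring below lies between two constants depending only on $d$ and $\beta$; these are absorbed into $C$. First I localise the support: since $\hat K$ is compactly supported and $\chi\equiv 1$ on $[-1,1]$, the function $w_{2,h,\mathsf v}$ vanishes off the annulus $S_{h,\mathsf v}:=\{t\in\R:\ \|\mathsf v\|_\infty^{-1}<|t|\leq (h\|\mathsf v\|_\infty)^{-1}\}$, whose logarithmic width is of order $|\log h|$. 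On $S_{h,\mathsf v}$, using $|\hat K|\leq\|K\|_1$, $|1-\prod_{j}\chi(v_j t)|\lesssim 1$ and $|r_\varepsilon(v_k t)|\lesssim(1+|v_k t|)^\beta$, I obtain $|w_{2,h,\mathsf v}(t)|\lesssim P(t):=\prod_{j\in I_h^\ast(\mathsf v)}(1+|v_j t|)^\beta$ (the coordinates with $|v_j|\leq h$ contribute only an $O(1)$ factor there). Differentiating the product and invoking the companion bound $|r_\varepsilon^{(1)}(v_k t)|\lesssim(1+|v_k t|)^{\beta-1}$ together with the fact that $|v_k|\lesssim|t|^{-1}$ and $h\lesssim|t|^{-1}$ on $S_{h,\mathsf v}$ gives $|w_{2,h,\mathsf v}'(t)|\lesssim |t|^{-1}P(t)$.

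Write $g(t):=w_{2,h,\mathsf v}(t)/(-\imath t)$, so that $F_{2,h,\mathsf v}=\check g$ (the inverse Fourier transform). I estimate $|\check g(x)|$ in three ranges of $|x|$: for small $|x|$, by the trivial bound $|\check g(x)|\leq(2\pi)^{-1}\|g\|_1$; for intermediate $|x|$, by one integration by parts, $|\check g(x)|\leq(2\pi|x|)^{-1}\|g'\|_1$; and for large $|x|$, by a further integration by parts performed on the factor $[1-\prod_j\chi(v_j t)]\prod_k\hat K(v_k h t)$, which is $C^\infty$, the remaining factor $\prod_k r_\varepsilon(v_k t)$ being only $C^1$ and handled by a bounded–variation estimate, which yields decay $|\check g(x)|\lesssim |x|^{-2}\,\|g'\|_1$. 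Integrating these three bounds over the matching dyadic ranges of $|x|$ produces three contributions of respective orders $\|g\|_1\cdot(\text{innermost radius})$, $\|g'\|_1\cdot|\log h|$ and $\|g'\|_1$; the middle one is exactly where the extra $|\log h|$ of the statement materialises, coming from an $\int\di|x|/|x|$ over the intermediate range.

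It remains to evaluate $\|g\|_1$ and $\|g'\|_1$, both of which reduce to integrals of the form $\int_{S_{h,\mathsf v}}P(t)\,|t|^{-q}\,\di t$ with $q\in\{1,2\}$. Ordering the coordinates of $I_h^\ast(\mathsf v)$ by decreasing modulus, $|v_{j_1}|\geq\cdots\geq|v_{j_m}|>h$ with $m=|I_h^\ast(\mathsf v)|$ and $|v_{j_1}|=\|\mathsf v\|_\infty$, I split $S_{h,\mathsf v}$ into the sub–annuli on which exactly $k+1$ of these coordinates satisfy $|v_j t|\geq 1$; there $P(t)\lesssim\bigl(\prod_{i\leq k+1}|v_{j_i}|\bigr)^\beta|t|^{\beta(k+1)}$, and a direct computation shows the dominant piece is the fully–active one, $R_{m-1}=(1/|v_{j_m}|,\,1/(h\|\mathsf v\|_\infty)]$, which gives $\int_{S_{h,\mathsf v}}P(t)|t|^{-2}\di t\lesssim h^{-\beta m+1}\prod_{j\in I_h^\ast(\mathsf v)}|v_j|^\beta$ when $\beta m>1$ and $\lesssim 1$ when $\beta m\leq 1$, while $\|g\|_1\lesssim h^{-\beta m}$ in the first case and $\lesssim|\log h|$ in the second (the borderline sub–annuli with $\beta(k+1)=1$ each contribute at most an extra $|\log h|$). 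Feeding this into the three–regime bound of the previous paragraph yields $\|F_{2,h,\mathsf v}\|_1\lesssim|\log h|\,h^{-\beta m+1}\prod_{j}|v_j|^\beta$ when $\beta m>1$, and $\lesssim 1\leq |\log h|^2$ when $\beta m\leq 1$, which is the claimed bound (with $|\log h|^2$ a non–sharp majorant in the second case).

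I expect the main obstacle to be making the large-$|x|$ estimate precise: the crude split of $\|F_{2,h,\mathsf v}\|_1$ into $\int_{|x|\leq h}$ and $\int_{|x|>h}$ with Cauchy–Schwarz applied to $\int_{|x|>h}|x|^{-1}|\check{g'}(x)|\,\di x\leq\|(1/x)\1_{|x|>h}\|_2\,\|\check{g'}\|_2$ only delivers the weaker power $h^{-\beta|I_h^\ast(\mathsf v)|+1/2}$, so one genuinely needs the $|x|^{-2}$-type decay of $\check g$ for large $|x|$; securing that decay forces the separation of the $C^\infty$ cut-off factors from the merely $C^1$ factor $\prod_k r_\varepsilon(v_k\cdot)$ (recall that only $r_\varepsilon^{(0)},r_\varepsilon^{(1)}$ are controlled under Assumption~\ref{ass:identifiability+error}), which is the delicate point. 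Keeping every implied constant independent of $\mathsf v$ and of $h$ — the purpose of the bound $\|\mathsf v\|_\infty\geq d^{-1/2}$, which lets one absorb all powers of $\|\mathsf v\|_\infty$ (bounded in absolute value by $O(\beta d)$) into $C$ — is the remaining bookkeeping one must be careful about.
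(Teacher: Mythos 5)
Your overall template — annular support, three ranges of $|x|$, trivial bound near the origin, one integration by parts in the middle range to harvest the $|\log h|$, and something more for the tail — is exactly the paper's strategy, and your reduction of $\|g\|_1$ and $\|g'\|_1$ to the sub-annuli where exactly $k+1$ coordinates are active is likewise what the paper does (with the coordinates reordered by decreasing modulus). Where you part ways from the paper is the tail range $|x|>1$, and that is where your argument has a genuine gap.

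For the tail you want quadratic decay $|\check g(x)|\lesssim |x|^{-2}\|g'\|_1$, which you propose to secure by a second integration by parts applied only to the $C^\infty$ cut-off factors, handling the $C^1$ factor $\prod_k r_\varepsilon(v_k\,\cdot)$ by a ``bounded-variation estimate.'' That step cannot be carried out under Assumption~\ref{ass:identifiability+error}. After the first integration by parts $g'$ already contains $r_\varepsilon^{(1)}$; whatever $C^\infty$ factor you peel off, a second integration by parts against it produces a term in which that factor's derivative multiplies $r_\varepsilon^{(1)}$, and another term in which the undifferentiated $C^\infty$ factor multiplies $r_\varepsilon^{(2)}$. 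The second term is unavoidable and requires control of $r_\varepsilon^{(2)}$, which is exactly what Assumption~\ref{ass:identifiability+error} refuses to give (only $l=0,1$ of \eqref{eq:deriv} are assumed). Nor is there an independent bounded-variation bound on $r_\varepsilon^{(1)}$ available: controlling $\mathrm{TV}(r_\varepsilon^{(1)})$ on the annulus is again tantamount to controlling $r_\varepsilon^{(2)}$ in $L^1$ there. So the decay $|x|^{-2}$ you are relying on is not established.

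Moreover, the obstruction you cite to justify this detour — that Cauchy--Schwarz ``only delivers the weaker power $h^{-\beta|I_h^\ast(\mathsf v)|+1/2}$'' — is a miscalculation. From your own estimate $|g'|\lesssim P(t)/t^2$ on the annulus, a direct evaluation (as in \eqref{eq:normder}) gives, for $\beta|I_h^\ast(\mathsf v)|>3/2$,
\begin{equation*}
\|g'\|_2 \lesssim h^{-\beta|I_h^\ast(\mathsf v)|+3/2}\prod_{j\in I_h^\ast(\mathsf v)}|v_j|^\beta,
\end{equation*}
not $h^{-\beta|I_h^\ast(\mathsf v)|+1}\prod_j|v_j|^\beta$ (you seem to have substituted the $L^1$-bound for the $L^2$-bound). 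With the correct exponent, even the ``crude'' split at $|x|=h$ followed by Cauchy--Schwarz produces $h^{-1/2}\cdot h^{-\beta|I_h^\ast(\mathsf v)|+3/2}\prod_j|v_j|^\beta = h^{-\beta|I_h^\ast(\mathsf v)|+1}\prod_j|v_j|^\beta$, which is already the claimed rate; and the paper's split at $|x|=1$ with $\int_{|x|>1}x^{-2}\,\di x = O(1)$ does still better. In short, the paper's route — one integration by parts, then Cauchy--Schwarz and Plancherel on $g'$ — needs only $g'\in L^2$, which is precisely what $l=1$ of \eqref{eq:deriv} delivers, and it closes the argument. You should replace the large-$|x|$ step by that estimate and drop the second integration by parts entirely; as written, the key decay your proof leans on for the tail is unsupported by the hypotheses.
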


\begin{proof}
For $\mathsf v\in\mathbb S^{d-1}$, let
$J_d^\ast(\mathsf v):=\{j\in [d]:\,v_j\neq0\}$. Note that $ \emptyset \neq I_h^\ast(\mathsf v) \subset  J_d^\ast(\mathsf v)$ because $|\mathsf v|=1$. 
Also, $|1-\chi^{\otimes d}(t\mathsf v)|\neq 0$ for all those $t\in\mathbb R$ 
for which there exists at least an index $j\in J_d^\ast(\mathsf v)$ so that $|v_jt|>1$. 
Besides, $|\hat K^{\otimes d}(h t\mathsf v)|\neq 0$
if and only if $|v_j t|\leq 1/h$ for all $j\in [d]$ because $\hat K$ is compactly supported on $[-1,\,1]$. 
Indeed, $\hat K$ is supported on $[-2,\,2]$, but, for ease of exposition and without loss of generality, 
we can assume that $\hat K$ has support on $[-1,\,1]$. 
For $h<1$ and $\mathsf v\in\mathbb S^{d-1}$, let
\[\begin{split}
\mathscr D_0&:=\cap_{j\in [d]}\{t\in\mathbb R:\,|v_jt|\leq 1/h\}\cap \{t\in\mathbb R:\,\exists\,j\in J_d^\ast(\mathsf v) \mbox{ so that }|v_jt|>1\}\\
&\,\,=
\{t\in\mathbb R:\,\|\mathsf v\|_\infty^{-1}<|t|\leq (h \|\mathsf v\|_\infty)^{-1}\}\\
&\,\,=
\{t\in\mathbb R:\,1<(\|\mathsf v\|_\infty|t|)\leq h^{-1}\},
\end{split}
\]
where $\|\mathsf v\|_\infty:=\max_{j\in [d]}|v_j|\leq1$. Note that $\mathscr D_0$ depends on $h$ and $\mathsf v$, \emph{i.e.}, $\mathscr D_0\equiv \mathscr D_{0,h,\mathsf v}$,
nevertheless, we shall not emphasize this dependence in what follows and simply write $\mathscr D_0$. 
By the same arguments used for the function $G_{2,h}$ in \cite{dedecker2015}, pp. 251--252, 
we have
\begin{equation*}\label{eq:F2}
F_{2,h,\mathsf v}(z)=
\frac{1}{2\pi}
\int_{\mathbb R}e^{-\imath t z}\frac{w_{2,h,\mathsf v}(t)}{(-\imath t)}\,\di t,\quad z\in\mathbb R,
\end{equation*}
where $t\mapsto [w_{2,h,\mathsf v}(t)/t]$ is in $L^1(\mathbb R)$ because 
$\int_{\mathscr D_0}[|w_{2,h,\mathsf v}(t)|/|t|]\,\di t
<\|w_{2,h,\mathsf v}\|_1<\infty$. Consider the integral decomposition
\[
\|F_{2,h,\mathsf v}\|_1=
\bigg(\int_{|z|\leq h}+\int_{h<|z|\leq 1}+\int_{|z|>1}\bigg)
|F_{2,h,\mathsf v}(z)|\,\di z=:F_2^{(1)}+F_2^{(2)}+F_2^{(3)}.
\]
We highlight some useful facts to study the terms $F_2^{(1)}$, $F_2^{(2)}$ and $F_2^{(3)}$.
By condition \eqref{eq:deriv} with $l=0,\,1$, over the set $\mathscr D_0$, we have
\begin{equation}\label{eq:ratio}
|r^{\otimes d}_{\varepsilon} (t\mathsf v)|\leq
\prod_{j\in J_d^\ast(\mathsf v)}(1+ |v_jt|)^\beta
\leq (1+\sqrt{d})^{ \beta (d- |I_h^\ast(\mathsf v)|)}  \prod_{j\in I_h^\ast(\mathsf v)}(1+ |v_jt|)^\beta 
\end{equation}
because $1=|\mathsf v|^2\leq d\|\mathsf v\|^2_\infty$, which implies that $\|\mathsf v\|_\infty\geq 1/\sqrt{d}$,
and
\begin{equation}\label{eq:ratioder}
\begin{split}
|(r^{\otimes d}_{\varepsilon})^{(1)} (t\mathsf v)|&\leq |r^{\otimes d}_{\varepsilon} (t\mathsf v)| \sum_{j=1}^d|v_j| \frac{|r^{(1)}_{\varepsilon}(v_ jt)|}{|r_{\varepsilon}( v_j t)|}\\
&\leq\sum_{j=1}^d|v_j| (1+|v_ jt|)^{\beta-1}\prod_{\substack{k\in[d]\\  k\neq j}}
(1+|v_k t|)^\beta < \sqrt{d}2^{\beta d}(\|\mathsf v\|_\infty| t|)^{\beta d}.
\end{split}
\end{equation}
We study $F_2^{(1)}$. By inequality \eqref{eq:ratio}, since
$\hat K\in C_b([-1,\,1])$, we have
\[\begin{split}
F_2^{(1)}:=
\int_{|z|\leq h}
\abs{F_{2,h,\mathsf v}(z)}\,\di z &\leq \frac{h}{\pi}
\int_{\mathscr D_0}
|\hat K^{\otimes d}(ht\mathsf v)| |1-\chi^{\otimes d}(t\mathsf v)|\frac{|r^{\otimes d}_{\varepsilon}(t\mathsf v)|}{|t|}\,\di t \\
&\lesssim h
\int_{\mathscr D_0}
\frac{1}{|t|}\prod_{j\in I_h^\ast(\mathsf v)}(1 +  |v_jt|)^\beta \,\di t.
\end{split}
\]  
If $d=1$, then $v_1=1$ and the above term is bounded above by $h^{-\beta+1}$. 
If $d>1$, without loss of generality, we can assume that $1\equiv v_0 \geq |v_1| \geq \ldots \geq |v_{d_h}| > h$ and, 
with abuse of notation, we can write $v_{d_h+1} \equiv h$,  where $1\leq d_h:=|I_h^\ast(\mathsf v)|\leq d$. Then,
\begin{equation*}
\begin{split}
\int_{\mathscr D_0}
\frac{1}{|t|} \prod_{j\in I_h^\ast(\mathsf v)}(1 +  |v_jt|)^\beta \,\di t & 
\lesssim \log (1/|v_1|)  + \sum_{l=1}^{d_h}\prod_{j=1}^l|v_j|^\beta\int_{1/|v_l|}^{1/|v_{l+1}|} |t|^{ \beta l-1}\,\di t \\
& \lesssim |\log h|  + \sum_{l=1}^{d_h}\frac{1}{\beta l} (|v_{l+1}|^{-\beta l } - |v_{l}|^{-\beta l} ) \prod_{j=1}^l |v_j|^\beta\\
&\lesssim |\log h|+h^{-\beta d_h}\prod_{j=1}^{d_h} |v_j|^\beta \lesssim h^{-\beta d_h}\prod_{j=1}^{d_h} |v_j|^\beta 
\end{split}
\end{equation*}
so that 
\[\begin{split}
F_2^{(1)}\lesssim h^{-\beta |I_h^\ast(\mathsf v)|+1}\prod_{j\in I_h^\ast(\mathsf v)}|v_j|^\beta . 
\end{split}
\] 
To bound $F_2^{(2)}$ and $F_2^{(3)}$, note that, by applying identity \eqref{eq:identity21} to $F_{2,h,\mathsf v}$ with $j=1$, 
we have
\begin{equation}\label{eq:F3der}
\mbox{for $z\neq0$,} \quad  F_{2,h,\mathsf v}(z)=\frac{1}{2\pi(\imath z)}\int_{\mathbb R}e^{-\imath tz}\,\pq{\frac{\di }{\di t }\pt{\frac{w_{2,h,\mathsf v}(t)}{-\imath t}}}\,\di t,
\end{equation} 
where
\begin{eqnarray}\label{eq:derw2}
&&\hspace*{-0.4cm}\frac{\di }{\di t }\pt{\frac{w_{2,h,\mathsf v}(t)}{t}}=h(\hat K^{\otimes d})^{(1)}(ht\mathsf v)[1 - \chi^{\otimes d}(t\mathsf v)] \frac{r_{\varepsilon}^{\otimes d}(t\mathsf v)}{t}\nonumber\\
&&\hspace*{3.5cm} -\hat K^{\otimes d}(ht\mathsf v) \bigg\{(\chi^{\otimes d})^{(1)}(t\mathsf v)\frac{ r^{\otimes d}_{\varepsilon}(t\mathsf v)}{t}\nonumber\\
&&\hspace*{5.6cm}\qquad-[1 - \chi^{\otimes d}(t\mathsf v)]
\pt{\frac{t(r^{\otimes d}_{\varepsilon})^{(1)}(t\mathsf v)-r^{\otimes d}_{\varepsilon}(t\mathsf v)}{t^2}}\bigg\},
\end{eqnarray}
with
\begin{eqnarray}\label{eq:unifbound}
 |(\hat K^{\otimes d})^{(1)}(ht\mathsf v)|&\leq&|\hat K^{\otimes d}(ht\mathsf v)|\sum_{j=1}^d|v_j|\frac{ |\hat K^{(1)}(v_j h t)|}{|\hat K(v_j h t)|}\nonumber\\
&=& \sum_{j=1}^d|v_j|| \hat K^{(1)}(v_j h t)|\prod_{\substack{k\in[d]\\  k\neq j}}|\hat K(v_k h t)|\leq\sqrt{d}\|K\|_1^{d-1}\int_{\mathbb R}|z||K(z)|\,\di z<\infty
\end{eqnarray}
because $K\in L^1(\mathbb R)$ as well as $zK(z)\in L^1(\mathbb R)$ by assumption, and 
\begin{equation}\label{eq:unifbchi}
 |(\chi^{\otimes d})^{(1)}(t\mathsf v)|\leq\sum_{j=1}^d|v_j||\chi^{(1)}(v_j t)|\prod_{\substack{k\in[d]\\  k\neq j}}|\chi(v_k t)|\leq \sqrt{d}\|\chi^{(1)}\|_\infty\|\chi\|_\infty^{d-1}<\infty
\end{equation}
since $\chi\in C_b([-2,\,2])$ and $\chi^{(1)}\in C_b([-1,\,1]^c\cap [-2,\,2])$. 
The bounds in \eqref{eq:unifbound} and \eqref{eq:unifbchi} are uniform over $\mathbb S^{d-1}$.
We prove below that
\begin{equation}\label{eq:normder}
\begin{split}
\int_{\mathscr D_0}\abs{\frac{\di }{\di t}\pt{\frac{w_{2,h,\mathsf v}(t)}{t}}}\,\di t & 
\lesssim  |\log h|\1_{(\beta|I_h^\ast(\mathsf v)|\leq 1)}\\
&\qquad\qquad  + h^{-\beta |I_h^\ast(\mathsf v)|+1} 
\prod_{j\in I_h^\ast(\mathsf v)} |v_j|^\beta \1_{(\beta |I_h^\ast(\mathsf v)|> 1)},\\
\pt{\int_{\mathscr D_0}\abs{\frac{\di }{\di t}\pt{\frac{w_{2,h,\mathsf v}(t)}{t}}}^2\,\di t}^{1/2} &\lesssim  |\log  h|^{1/2}\1_{(\beta |I_h^\ast(\mathsf v)|\leq3/2)} \\
&\qquad\qquad+ h^{-\beta  |I_h^\ast(\mathsf v)|+3/2}\prod_{j\in I_h^\ast(\mathsf v)} |v_j|^{\beta}\1_{(\beta  |I_h^\ast(\mathsf v)|>3/2)}.
\end{split}
\end{equation}
Then, in virtue of relationship \eqref{eq:F3der}, we have
\[
\begin{split}
F_{2,h,\mathsf v}^{(2)}&\leq\frac{1}{2\pi}\pt{\int_{h<|z|\leq 1}\frac{1}{|z|}\,\di z}\int_{\mathscr D_0}\abs{\frac{\di }{\di t}\pt{\frac{w_{2,h,\mathsf v}(t)}{t}}}\,\di t\\
&\lesssim |\log h|\left(  |\log h|\1_{(\beta|I_h^\ast(\mathsf v)|\leq 1)}+ h^{-\beta |I_h^\ast(\mathsf v)|+1} 
\prod_{j\in I_h^\ast(\mathsf v)} |v_j|^\beta \1_{(\beta |I_h^\ast(\mathsf v)|> 1)}\right)
\end{split}
\]
and 
\[
\begin{split}
F_{2,h,\mathsf v}^{(3)}&\leq\frac{1}{2\pi}\pt{\int_{|z|>1}\frac{1}{z^2}\,\di z}^{1/2}
\pt{\int_{\mathscr D_0}\abs{\frac{\di }{\di t}\pt{\frac{w_{2,h,\mathsf v}(t)}{t}}}^2\,\di t}^{1/2}\\
&\lesssim 
 |\log  h|^{1/2}\1_{(\beta |I_h^\ast(\mathsf v)|\leq3/2)} + h^{-\beta  |I_h^\ast(\mathsf v)|+3/2}\prod_{j\in I_h^\ast(\mathsf v)} |v_j|^{\beta}\1_{(\beta  |I_h^\ast(\mathsf v)|>3/2)}.
\end{split}
\]
We prove \eqref{eq:normder}. 
Using relationships \eqref{eq:derw2}, \eqref{eq:ratio}, \eqref{eq:unifbound} and reasoning as for term $F_2^{(1)}$, we get that
\[
\begin{split}
S_{1,1}:=h\int_{\mathscr D_0} | (\hat K^{\otimes d})^{(1)}(ht\mathsf v)| |1-\chi^{\otimes d}(t\mathsf v)| \frac{|r_\varepsilon^{\otimes d}(t\mathsf v)|}{|t|}\,\di t&\lesssim
h \int_{\mathscr D_0}\frac{1}{|t|}
\prod_{j\in I_h^\ast(\mathsf v)}(1+ |v_jt| )^\beta\,\di t\\
&\lesssim h^{-\beta d_h+1} 
 \prod_{j=1}^{d_h} |v_j|^\beta, 
\end{split}
\]
while
\[
\begin{split}
S_{1,2}&:=h^2\int_{\mathscr D_0} |(\hat K^{\otimes d})^{(1)}(ht\mathsf v)|^2 |1-\chi^{\otimes d}(t\mathsf v)|^2 \frac{|r^{\otimes d}_{\varepsilon}(t\mathsf v)|^2}{t^2}\,\di t\\
&\,\,\lesssim h^2 \int_{\mathscr D_0}\frac{1}{t^2}
\prod_{j\in I_h^\ast(\mathsf v)}(1+ |v_jt| )^{2\beta}\,\di t\\
&\,\,\lesssim h^2  + h^2\sum_{l=1}^{d_h}\prod_{j=1}^l|v_j|^{2\beta}\int_{1/|v_l|}^{1/|v_{l+1}|} |t|^{ 2(\beta l-1)}\,\di t \\
&\,\,\lesssim  h^2+ h^2 \sum_{l=1}^{d_h}\prod_{j=1}^l|v_j|^{2\beta}\Bigg[\log (|v_l|/|v_{l+1}|)\1_{(2\beta l = 1)}\\&\qquad\qquad\qquad\qquad\qquad\qquad\qquad+\frac{\1_{(2\beta l \neq 1)}}{ 2\beta l-1 } (|v_{l+1}|^{-2\beta l+1 } - |v_{j}|^{-2\beta l+1} )\Bigg] \\
&\,\,\lesssim   h^2 +h^2\sum_{l=1}^{d_h}\pq{|\log h|\1_{(2\beta l = 1)}+\frac{\1_{(2\beta l< 1)}}{ 1-2\beta l} + h^{-2\beta l+1}\frac{\1_{(2\beta l > 1)}}{ 2\beta l-1 }\prod_{j=1}^l |v_j|^{2\beta}} \\
&\,\,\lesssim h^2|\log  h| + h^{-2\beta d_h+3}\prod_{j=1}^{ d_h} |v_j|^{2\beta}.
\end{split}
\]
It is easily seen that
\[S_{2,1}:=\int_{\mathscr D_0}  |\hat K^{\otimes d}(ht\mathsf v)| |(\chi^{\otimes d})^{(1)}(t\mathsf v)|\frac{ |r^{\otimes d}_{\varepsilon}(t\mathsf v)|}{|t|}\,\di t=O(1)\]
and
\[S_{2,2}:=\int_{\mathscr D_0}  |\hat K^{\otimes d}(ht\mathsf v)|^2 |(\chi^{\otimes d})^{(1)}(t\mathsf v)|^2\frac{ |r^{\otimes d}_{\varepsilon}(t\mathsf v)|^2}{t^2}\,\di t=O(1).\]
Using \eqref{eq:ratio} and \eqref{eq:ratioder},  we have 
\[
\begin{split}
S_{3,1}&:=\int_{\mathscr D_0}|\hat K^{\otimes d}(ht\mathsf v)||1-\chi^{\otimes d}(t\mathsf v)|
\frac{|t(r^{\otimes d}_\varepsilon)^{(1)}(t\mathsf v)-r^{\otimes d}_{\varepsilon}(t\mathsf v)|}{t^2}\,\di t \\
&\,\,\lesssim 
\int_{\mathscr D_0}\pt{\frac{|(r^{\otimes d}_{\varepsilon})^{(1)}(t\mathsf v)|}{|t|}+\frac{|r^{\otimes d}_{\varepsilon}(t\mathsf v)|}{t^2}}\,\di t\\
&\,\,\lesssim \sum_{j=1}^d|v_j|
\int_{\mathscr D_0}\frac{1}{|t|}
(1+|v_j t|)^{\beta-1}
\prod_{\substack{k\in[d]\\  k\neq j}}(1+ |v_kt| )^\beta\,\di t
+
\int_{\mathscr D_0}\frac{1}{t^2}
\prod_{j\in I_h^\ast(\mathsf v)}(1+ |v_jt| )^\beta\,\di t\\
& \,\,\lesssim 1+\sum_{j=1}^d|v_j|\sum_{l=1}^{d_h} \int_{1/|v_l|}^{1/|v_{l+1}|}\frac{1}{|t|}(1+|v_j t|)^{\beta-1}
\prod_{\substack{k\in[d]\\  k\neq j}}(1+ |v_kt| )^\beta\,\di t\\
&\qquad\qquad\qquad\qquad\,\,+\sum_{l=1}^{d_h}\pq{|\log h|\1_{(\beta l = 1)}+\frac{\1_{(\beta l< 1)}}{ 1-\beta l} + h^{-\beta l+1}
\frac{\1_{(\beta l> 1)}}{ \beta l-1}\prod_{j=1}^l |v_j|^{\beta}}\\
& \,\,\lesssim \sum_{j=1}^d\sum_{l=1}^{d_h}\prod_{k=1}^l|v_k|^\beta
\int_{1/|v_l|}^{1/|v_{l+1}|}|t|^{\beta l-2}\,\di t+
|\log h| + h^{-\beta d_h+1}\prod_{j=1}^{d_h} |v_j|^{\beta}\\
& \,\,
\lesssim  |\log h| + h^{-\beta d_h+1} \prod_{j=1}^{d_h} |v_j|^\beta.
\end{split}
\]
Similarly,
\[
\begin{split}
S_{3,2}&:=\int_{\mathscr D_0}|\hat K^{\otimes d}(ht\mathsf v)|^2|1-\chi^{\otimes d}(t\mathsf v)|^2
\frac{|t(r_{\varepsilon}^{\otimes d})^{(1)}(t\mathsf v)-r^{\otimes d}_\varepsilon(t\mathsf v)|^2}{t^4}\,\di t \\
&\,\,\lesssim 
\int_{\mathscr D_0}\pt{\frac{|(r^{\otimes d}_{\varepsilon})^{(1)}(t\mathsf v)|^2}{t^2}+\frac{|r^{\otimes d}_{\varepsilon}(t\mathsf v)|^2}{t^4}}\,\di t\\
&\,\,\lesssim1 +\sum_{l=1}^{d_h}\pq{|\log h|\1_{(2\beta l = 3)}+ \frac{\1_{(2\beta l< 3)}}{ 3-2\beta l} + h^{-2\beta l+3}\frac{\1_{(2\beta l > 3)}}{ 2\beta l-3 }\prod_{j=1}^l |v_j|^{2\beta}} \\
&\,\,\lesssim |\log h|+ h^{-2\beta d_h+3}\prod_{j=1}^{d_h} |v_j|^{2\beta}.
\end{split}
\]
It follows that $S_{1,1}+S_{2,1}+S_{3,1}\lesssim |\log h|\1_{(\beta d_h \leq 1)}  +     h^{-\beta d_h+1}\prod_{j=1}^{d_h}  |v_j|^{\beta } \1_{(\beta d_h> 1)}$ and  
$S_{1,2}+S_{2,2}+S_{3,2}\lesssim  |\log h|\1_{(\beta d_h \leq 3/2)} +     h^{-2\beta d_h+3} \prod_{j=1}^{d_h}  |v_j|^{2\beta }\1_{(\beta d_h> 3/2)}$,
thus implying the first and second bounds in \eqref{eq:normder}, respectively.
Inequality \eqref{eq:vineq} follows by combining the bounds on $F_2^{(1)}$,  $F_2^{(2)}$ and $F_2^{(3)}$.
\end{proof}

The next lemma assesses the order of magnitude of the bias, in terms of the kernel bandwidth $h$, of any distribution function 
$F_{0X}$ having derivatives up to a certain order, with locally H\"older continuous derivative of the highest degree.
An $(\lfloor\alpha\rfloor+1)$-order kernel is used when $f_{0X}$ verifies Assumption \ref{ass:smoothXXX1} as in Lemma \ref{lem:der}.

\begin{lem}\label{lem:der}
Let $F_{0X}$ be the distribution function of $\mu_{0X}\in\mathscr P_0(\mathbb R)$ satisfying  Assumption \ref{ass:smoothXXX1} 
for $\alpha>0$. Let $K$ be a kernel of order $(\lfloor \alpha\rfloor+1)$ 
satisfying $\int_{\mathbb R}|z|^{\alpha+1}|K(z)|\,\di z<\infty$.
Then, there exists a constant $C_1>0$ such that, for every $h>0$,
\begin{equation}\label{eq:derivative}
\|F_{0X}\ast K_h-F_{0X}\|_1\leq C_1h^{\alpha+1}.
\end{equation}
\end{lem}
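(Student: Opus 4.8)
The plan is to expand the smoothed distribution function pointwise and use that $K$ has vanishing moments up to order $\ell+1$, where $\ell:=\lfloor\alpha\rfloor$, so that the entire polynomial part of the expansion is annihilated and only a remainder controlled by the H\"older envelope $L_0$ survives. First I would write, using $\int K_h=1$ together with the substitution $z=hu$,
\[
(F_{0X}\ast K_h)(x)-F_{0X}(x)=\int_{\mathbb R}\big[F_{0X}(x-hu)-F_{0X}(x)\big]K(u)\,\di u .
\]
By Assumption \ref{ass:smoothXXX1} the density $f_{0X}$ has $\ell$ derivatives and $f_{0X}^{(\ell)}$ is (locally H\"older, hence) continuous, so $F_{0X}\in C^{\ell+1}$ with $F_{0X}^{(\ell+1)}=f_{0X}^{(\ell)}$. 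Taylor's formula with integral remainder at order $\ell$ gives, for $\delta:=-hu$,
\[
F_{0X}(x+\delta)-F_{0X}(x)=\sum_{j=1}^{\ell}\frac{\delta^{j}}{j!}F_{0X}^{(j)}(x)+\frac{1}{\ell!}\int_{0}^{\delta}(\delta-s)^{\ell}f_{0X}^{(\ell)}(x+s)\,\di s .
\]
The key manipulation is to add and subtract $f_{0X}^{(\ell)}(x)$ inside the remainder and use $\int_{0}^{\delta}(\delta-s)^{\ell}\,\di s=\delta^{\ell+1}/(\ell+1)$; this turns the right-hand side into $\sum_{j=1}^{\ell+1}\frac{\delta^{j}}{j!}F_{0X}^{(j)}(x)+\rho(x,\delta)$, where
\[
\rho(x,\delta):=\frac{1}{\ell!}\int_{0}^{\delta}(\delta-s)^{\ell}\big[f_{0X}^{(\ell)}(x+s)-f_{0X}^{(\ell)}(x)\big]\,\di s .
\]
Integrating against $K(u)\,\di u$ and invoking $\int_{\mathbb R}u^{j}K(u)\,\di u=0$ for $j=1,\dots,\ell+1$ (the defining property of a kernel of order $\ell+1$; the integrability of $u^{j}K(u)$ for $j\le\ell+1$ follows from $K\in L^{1}(\mathbb R)$ and $\int|u|^{\alpha+1}|K(u)|\,\di u<\infty$ by interpolation, since $\ell+1<\alpha+1$) makes every polynomial term disappear, leaving
\[
(F_{0X}\ast K_h)(x)-F_{0X}(x)=\int_{\mathbb R}\rho(x,-hu)\,K(u)\,\di u .
\]

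Next I would estimate the remainder pointwise. Because $\ell=\lfloor\alpha\rfloor<\alpha$ we have $\alpha-\ell\in(0,1]$, and the H\"older bound $|f_{0X}^{(\ell)}(x+s)-f_{0X}^{(\ell)}(x)|\le L_{0}(x)|s|^{\alpha-\ell}$ gives, after the obvious rescaling (handled symmetrically for the two signs of $\delta$),
\[
|\rho(x,\delta)|\le\frac{L_{0}(x)}{\ell!}\,|\delta|^{\alpha+1}\int_{0}^{1}(1-t)^{\ell}t^{\alpha-\ell}\,\di t=c_{\ell,\alpha}\,L_{0}(x)\,|\delta|^{\alpha+1},
\]
with $c_{\ell,\alpha}:=\tfrac{1}{\ell!}B(\ell+1,\alpha-\ell+1)<\infty$ since $\alpha-\ell>0$. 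Hence $|\rho(x,-hu)|\le c_{\ell,\alpha}L_{0}(x)h^{\alpha+1}|u|^{\alpha+1}$, so that
\[
\big|(F_{0X}\ast K_h)(x)-F_{0X}(x)\big|\le c_{\ell,\alpha}\,L_{0}(x)\,h^{\alpha+1}\int_{\mathbb R}|u|^{\alpha+1}|K(u)|\,\di u .
\]

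Finally I would integrate in $x$: by Tonelli's theorem the iterated integral is unconditionally well defined, and
\[
\|F_{0X}\ast K_h-F_{0X}\|_{1}\le c_{\ell,\alpha}\Big(\int_{\mathbb R}|u|^{\alpha+1}|K(u)|\,\di u\Big)\|L_{0}\|_{1}\,h^{\alpha+1},
\]
which is \eqref{eq:derivative} with $C_{1}:=c_{\ell,\alpha}\|L_{0}\|_{1}\int_{\mathbb R}|u|^{\alpha+1}|K(u)|\,\di u$, finite because $L_{0}\in L^{1}(\mathbb R)$ and $\int|u|^{\alpha+1}|K(u)|\,\di u<\infty$ by hypothesis. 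The step that needs most care is the kernel-moment bookkeeping: one must cancel polynomial contributions up to degree $\ell+1$, not merely $\ell$, which is precisely why an $(\lfloor\alpha\rfloor+1)$-order kernel is required and why this becomes visible only after splitting the remainder by adding and subtracting $f_{0X}^{(\ell)}(x)$. A secondary technical point is justifying Taylor's formula with integral remainder (continuity of $f_{0X}^{(\ell)}$, hence $F_{0X}\in C^{\ell+1}$) and checking that the remainder estimate is uniform in the sign of $hu$; both are routine once the envelope $L_{0}$ is in play.
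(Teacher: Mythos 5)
Your argument is correct and mirrors the paper's proof: both Taylor-expand $F_{0X}(x-hu)$ to order $\ell=\lfloor\alpha\rfloor$ with integral remainder, use the vanishing moments of the $(\ell+1)$-order kernel to annihilate every polynomial contribution through degree $\ell+1$ (the paper performs the add-and-subtract of $f_{0X}^{(\ell)}(x)$ implicitly in writing the remainder identity, whereas you spell it out), apply the envelope bound $|f_{0X}^{(\ell)}(x-\tau hu)-f_{0X}^{(\ell)}(x)|\le L_0(x)(\tau h|u|)^{\alpha-\ell}$, and integrate in $x$ to extract $\|L_0\|_1$ and the Beta-type constant $\int_0^1(1-\tau)^\ell\tau^{\alpha-\ell}\,\di\tau$. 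No meaningful deviation from the paper's route.
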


\begin{proof}
Let $\ell=\lfloor \alpha\rfloor$. 
For any $x,\,u\in\mathbb R$ and $h>0$, by Taylor's expansion,
\[F_{0X}(x-hu)=F_{0X}(x)-huf_{0X}(x)+\,\ldots\,+\frac{(-hu)^{\ell+1}}{\ell!}\int_0^1(1-\tau)^{\ell}f_{0X}^{(\ell)}(x-\tau hu)\,\di \tau.\]
Since $K$ is a kernel of order $\ell+1=\lfloor\alpha\rfloor+1$, we have
\[
\begin{split}
(F_{0X}\ast K_h-F_{0X})(x)&=\int_{\mathbb R}[F_{0X}(x-hu)-F_{0X}(x)]K(u)\,\di u\\
&=\int_{\mathbb R} K(u)\frac{(-hu)^{\ell+1}}{\ell!}\int_0^1(1-\tau)^{\ell}
\big[f_{0X}^{(\ell)}(x-\tau hu)-f_{0X}^{(\ell)}(x)\big]\,\di \tau\,\di u.
\end{split}
\]
Recalling the notation $b_{F_{0X}}(h):=F_{0X}\ast K_h-F_{0X}$, Assumption \ref{ass:smoothXXX1} yields that
\[
\begin{split}
\|b_{F_{0X}}(h)\|_1&\leq\int_{\mathbb R}\int_{\mathbb R}|K(u)|\frac{(h|u|)^{\ell+1}}{\ell!}\int_0^1(1-\tau)^{\ell}
|f_{0X}^{(\ell)}(x-\tau hu)-f_{0X}^{(\ell)}(x)|\,\di \tau\,\di u\,\di x\\
&\leq h^{\alpha+1}
\|L_0\|_1\frac{1}{\ell!}\pt{\int_{\mathbb R}|u|^{\alpha+1}|K(u)|\,\di u}
\int_0^1(1-\tau)^{\ell}\tau^{\alpha-\ell}\,\di \tau.
\end{split}
\]
By the assumptions that $L_0\in L^1(\mathbb R)$ and $\int_{\mathbb R}|z|^{\alpha+1}|K(z)|\,\di z<\infty$,
we conclude that $\|b_{F_{0X}}(h)\|_1\leq C_1h^{\alpha+1}$.
\end{proof}

\begin{rmk}
\emph{
The constant $C_1$ appearing in \eqref{eq:derivative} 
depends only on the kernel $K$ and the distribution function $F_{0X}$.
}
\end{rmk}

\section{Auxiliary result for Theorem \ref{thm:22}}\label{sec:section3}


We state a theorem that gives sufficient conditions for the posterior distribution to concentrate
on $L^1$-Wasserstein neighborhoods of the sampling distribution on $\mathbb R^d$. 
The assertion extends Theorem 3.2 of \cite{walker21},
p. 3643, to the $L^1$-Wasserstein metric between probability measures on $\mathbb R^d$ 
and provides conditions in terms of the prior concentration rate $\tilde\epsilon_n$ on Kullback-Leibler
type neighborhoods of the sampling distribution and in terms of moments of the
probability measures in the support of the posterior distribution so that the latter contracts at a 
nearly $\tilde\epsilon_n$-rate (up to a log-factor) on $L^1$-Wasserstein neighborhoods of the truth.
The underlying idea is to exploit the equivalence between the Wasserstein metric $W_1$ and the 
max-sliced Wasserstein metric $\overline W_1$, valid in any dimension $d\geq1$, to construct tests 
for the projected uni-dimensional distributions so that they have exponentially decaying type I and type II error probabilities.

\begin{thm}\label{lem:1}
Let $\Pi_n$ be a prior distribution on $\mathscr P_0(\mathbb R^d)$, $d\geq1$. 
Suppose that, for $\delta>0$, we have $\mu_{0Y}\in\mathscr P_0(\mathbb R^d)\cap\mathcal P_{2+\delta}(\mathbb R^d)$.
If, for $C>0$ and a sequence $\tilde\epsilon_n\geq\sqrt{(\log n)/n}$ such that $\tilde\epsilon_n\rightarrow 0$,
\begin{equation}\label{eq:33}
\Pi_n(B_{\mathrm{KL}}(P_{0Y};\,\tilde\epsilon_n^2))\gtrsim
\exp{(-Cn\tilde\epsilon_n^2)}
\end{equation}
and there exists $K>0$ so  that
\begin{equation}\label{eq:37}
\Pi_n(\mu_Y:\,M_{2+\delta}(\mu_Y)> K\mid \mathsf Y^{(n)})\rightarrow 0\,\mbox{ in $P_{0Y}^n$-probability},
\end{equation}
then, for sufficiently large $M>0$,
\begin{equation}\label{eq:66}
\Pi_n(\mu_Y:\,W_1(\mu_Y,\,\mu_{0Y})>M
\tilde\epsilon_n\log (1/\tilde \epsilon_n)\mid\mathsf Y^{(n)})\rightarrow0\,\mbox{ in $P_{0Y}^n$-probability.}
\end{equation}

\smallskip

If, instead, for $\delta'>0$, we have $\mu_{0Y}\in\mathscr P_0(\mathbb R^d)\cap\mathcal P_{4+\delta'}(\mathbb R^d)$, 
the conditions in \eqref{con1} are satisfied and, for $K'>0$,
\begin{equation}\label{prior:moment}
\Pi_n(\mu_Y:\,M_{4+\delta'}(\mu_Y)> K' \tilde \epsilon_n^{-2}) \lesssim \exp{(-(C+4)n\tilde\epsilon_n^2)},
\end{equation}
then there exists a constant $K>0$ such that \eqref{eq:37} holds for $\delta=\delta'/2$.
Consequently, the convergence in \eqref{eq:66} takes place. 
\end{thm}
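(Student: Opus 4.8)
The plan is to follow the two-part structure of the statement. The first part says that the Kullback--Leibler prior mass bound \eqref{eq:33} together with the posterior moment control \eqref{eq:37} already forces the $1$-Wasserstein contraction \eqref{eq:66}; the second part says that, under the stronger set of hypotheses \eqref{con1}, \eqref{prior:moment} and $\mu_{0Y}\in\mathcal P_{4+\delta'}(\mathbb R^d)$, one can \emph{produce} \eqref{eq:37} with $\delta=\delta'/2$, after which the first part applies verbatim. Two ingredients are used throughout: the strong equivalence $W_1\le C_d\overline W_1$ of \eqref{eq:equivalence}, which replaces the Wasserstein ball by a supremum over one-dimensional projections $\mu_{Y,\mathsf v},\mu_{0Y,\mathsf v}$, and the classical evidence lower bound, namely that \eqref{eq:33} (or the third line of \eqref{con1}) implies $\int\prod_{i=1}^n(f_Y/f_{0Y})(\mathsf Y_i)\,\di\Pi_n\gtrsim\exp(-(C+c_0)n\tilde\epsilon_n^2)$ with $P_{0Y}^n$-probability tending to one, for a suitable constant $c_0$ (e.g.\ Lemma 8.1 of \cite{bookgvdv}).

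For the first part I would write $W_1(\mu_Y,\mu_{0Y})\le C_d\max_{\mathsf v}\|F_{\mu_{Y,\mathsf v}}-F_{\mu_{0Y,\mathsf v}}\|_1$ and, for a truncation level $A=A_n$ to be chosen, split $\|F_{\mu_{Y,\mathsf v}}-F_{\mu_{0Y,\mathsf v}}\|_1$ into its restriction to $\{|x|\le A\}$ and to $\{|x|>A\}$. The tail piece is bounded, \emph{uniformly in} $\mathsf v$, by Markov's inequality and the projection moment inequality $M_{2+\delta}(\mu_{Y,\mathsf v})\le M_{2+\delta}(\mu_Y)$ from \eqref{eq:momentsineq}; on the posterior-typical event $\{M_{2+\delta}(\mu_Y)\le K\}$ of \eqref{eq:37} it is $\lesssim A^{-(1+\delta)}$ (with a further sharpening at intermediate scales obtained from the Cauchy--Schwarz/Hellinger bound $\int_{|\mathsf z|>x}|f_Y-f_{0Y}|\le\sqrt2\,d_{\mathrm H}(f_Y,f_{0Y})(\mu_Y(|\mathsf z|>x)+\mu_{0Y}(|\mathsf z|>x))^{1/2}$, which is available whenever a Hellinger rate is). The bulk piece is at most $2A\sup_{\mathsf v}\|F_{\mu_{Y,\mathsf v}}-F_{\mu_{0Y,\mathsf v}}\|_\infty=2A\,\rho(\mu_Y,\mu_{0Y})$, where $\rho$ is the discrepancy over the half-spaces $\{\mathsf z:\mathsf v\cdot\mathsf z\le x\}$. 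The point is that the half-spaces of $\mathbb R^d$ form a VC class of index $d+1$, so the test $\psi_n:=\mathbbm 1\{\rho(\mathbb P_n,\mu_{0Y})>\gamma_n/2\}$ built from the empirical measure $\mathbb P_n$ has type-I error $\le c\,n^{d+1}e^{-c'n\gamma_n^2}$ and, via $\rho(\mathbb P_n,\mu_{0Y})\ge\rho(\mu_Y,\mu_{0Y})-\rho(\mathbb P_n,\mu_Y)$, type-II error $\sup_{\rho(\mu_Y,\mu_{0Y})>\gamma_n}\mathbb E_{\mu_Y^n}(1-\psi_n)\le c\,n^{d+1}e^{-c'n\gamma_n^2}$; this last bound is uniform over the whole (possibly huge) support of $\Pi_n$ \emph{without any entropy/sieve hypothesis}, which is exactly why \eqref{eq:33}+\eqref{eq:37} suffice. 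Feeding this into the usual split of the posterior mass (numerator bounded on $\{\psi_n=0\}$ by Fubini, denominator by the evidence bound) gives $\Pi_n(\rho(\mu_Y,\mu_{0Y})>\gamma_n\mid\mathsf Y^{(n)})\to0$ for $\gamma_n\asymp\tilde\epsilon_n$; choosing $A_n$ to balance the bulk term $A_n\tilde\epsilon_n$ against the tail term then yields \eqref{eq:66}.

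For the second part, \eqref{con1} is precisely the hypothesis set of Theorem 2.1 of \cite{ghosal2000}, so the posterior concentrates at rate $\tilde\epsilon_n$ around $f_{0Y}$ both in $L^1$ and in Hellinger distance; in addition \eqref{prior:moment} together with the evidence lower bound gives $\Pi_n(M_{4+\delta'}(\mu_Y)>K'\tilde\epsilon_n^{-2}\mid\mathsf Y^{(n)})\to0$. On the intersection of these posterior-typical events I would upgrade the growing moment bound to a fixed one by Cauchy--Schwarz: $M_{2+\delta'/2}(\mu_Y)\le M_{2+\delta'/2}(\mu_{0Y})+\int|\mathsf z|^{2+\delta'/2}|f_Y-f_{0Y}|\,\di\mathsf z$ and, writing $|f_Y-f_{0Y}|=|\sqrt{f_Y}-\sqrt{f_{0Y}}|\,(\sqrt{f_Y}+\sqrt{f_{0Y}})$,
\[
\int|\mathsf z|^{2+\delta'/2}|f_Y-f_{0Y}|\,\di\mathsf z\;\le\;d_{\mathrm H}(f_Y,f_{0Y})\Bigl(\,\textstyle\int|\mathsf z|^{4+\delta'}(\sqrt{f_Y}+\sqrt{f_{0Y}})^2\,\di\mathsf z\Bigr)^{1/2}\;\lesssim\;\tilde\epsilon_n\,\sqrt{M_{4+\delta'}(\mu_Y)+M_{4+\delta'}(\mu_{0Y})}\;=\;O(1),
\]
the last step using $M_{4+\delta'}(\mu_Y)\le K'\tilde\epsilon_n^{-2}$ and $\mu_{0Y}\in\mathcal P_{4+\delta'}(\mathbb R^d)$. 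This is where the choice $\delta=\delta'/2$ is forced: the exponent has to satisfy $2(2+\delta)=4+\delta'$ so that $|\mathsf z|^{2+\delta}\sqrt{f_Y}$ lands in $L^2$ with squared norm $M_{4+\delta'}(\mu_Y)$. Hence \eqref{eq:37} holds with $K=M_{2+\delta'/2}(\mu_{0Y})+O(1)$ and $\delta=\delta'/2$, and the first part delivers \eqref{eq:66}.

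The step I expect to be the real obstacle is the first one: obtaining a test for $\{\rho(\mu_Y,\mu_{0Y})>\gamma_n\}$ whose type-II error is controlled uniformly over the entire support of $\Pi_n$, since in Part 1 no entropy condition is available. This is what dictates both the reduction to one-dimensional projections through \eqref{eq:equivalence} — turning arbitrary Wasserstein balls, which do not lend themselves to covering arguments, into half-space discrepancies — and the use of the finite VC index of half-spaces; pinning down the truncation trade-off so as to land exactly on the $\tilde\epsilon_n\log(1/\tilde\epsilon_n)$ rate of \eqref{eq:66}, rather than on a cruder power of $\tilde\epsilon_n$, is the delicate bookkeeping that remains.
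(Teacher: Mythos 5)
Your Part 2 is essentially the paper's argument, and it is correct: conditions \eqref{con1} give a Hellinger rate $\tilde\epsilon_n$ at $f_{0Y}$ through the Ghosal--Ghosh--van der Vaart machinery; \eqref{prior:moment} plus the evidence bound restricts the posterior to $\{M_{4+\delta'}(\mu_Y)\le K'\tilde\epsilon_n^{-2}\}$; and on the intersection the Cauchy--Schwarz step
\[
\int|\mathsf z|^{2+\delta'/2}|f_Y-f_{0Y}|
\;\le\; d_{\mathrm H}(f_Y,f_{0Y})\Bigl(2M_{4+\delta'}(\mu_Y)+2M_{4+\delta'}(\mu_{0Y})\Bigr)^{1/2}
\;\lesssim\;\tilde\epsilon_n\cdot\tilde\epsilon_n^{-1}=O(1)
\]
forces $M_{2+\delta'/2}(\mu_Y)=O(1)$, which is exactly \eqref{eq:37} with $\delta=\delta'/2$; here the choice $\delta=\delta'/2$ is indeed dictated by wanting $|\mathsf z|^{2+\delta}\sqrt{f_Y}\in L^2$ with squared norm $M_{4+\delta'}(\mu_Y)$. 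This coincides with the paper.

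Part 1, however, contains a gap that is \emph{not} ``bookkeeping.'' Your decomposition gives, on the posterior-typical event $\{M_{2+\delta}(\mu_Y)\le K\}$ and with $\rho$ the half-space discrepancy,
\[
\max_{\mathsf v\in\mathbb S^{d-1}}\|F_{\mu_{Y,\mathsf v}}-F_{\mu_{0Y,\mathsf v}}\|_1
\;\lesssim\; A\,\rho(\mu_Y,\mu_{0Y})\;+\;K\,A^{-(1+\delta)},
\]
because a $(2+\delta)$-moment bound yields only a \emph{polynomial} tail $\mu_{Y,\mathsf v}((x,\infty))\lesssim Kx^{-(2+\delta)}$, hence $\int_{|x|>A}\lesssim A^{-(1+\delta)}$ and nothing better. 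The VC/half-space test you build indeed controls $\rho$ at the sup-norm scale $\gamma_n\asymp\tilde\epsilon_n$ with exponentially small errors, uniformly over $\Pi_n$ and without entropy assumptions — that part is a nice alternative to the paper's $\delta_n$-net over $\mathbb S^{d-1}$, since the VC class of half-spaces already absorbs the supremum over $\mathsf v$. But optimizing $A\tilde\epsilon_n\vee A^{-(1+\delta)}$ gives $A\asymp\tilde\epsilon_n^{-1/(2+\delta)}$ and thus the rate $\tilde\epsilon_n^{(1+\delta)/(2+\delta)}$, which is a \emph{polynomial power} slower than the claimed $\tilde\epsilon_n\log(1/\tilde\epsilon_n)$. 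You gesture at a ``Cauchy--Schwarz/Hellinger sharpening at intermediate scales,'' but in Part 1 there is no entropy condition and hence no Hellinger posterior rate available — you acknowledge exactly this point a sentence later — so this sharpening cannot be invoked. The two observations together mean the route you propose, as written, does not close at the stated rate.

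What the paper does instead is to import the one-dimensional tests $\phi_{m,F,\pm}$ of Theorem 8.9 in \cite{walker21}, which are constructed to test $W_1(\mu_{Y,\mathsf v},\mu_{0Y,\mathsf v})>M'\tilde\epsilon_n\log(1/\tilde\epsilon_n)$ \emph{directly}, not $\sup_x|F_{\mu_Y,\mathsf v}-F_{\mu_0,\mathsf v}|$, and to do so with type-I/II errors $\exp(-cn\tilde\epsilon_n^2)$ under exactly the $(2+\delta)$-moment restriction on the alternative. That test construction is where the moment bound is woven into the truncation more sharply than plain Markov, and where the log factor — rather than a $\tilde\epsilon_n^{1/(2+\delta)}$ loss — originates. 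Your approach replaces this with a Kolmogorov--Smirnov test for the half-space class, which only furnishes $\rho(\mu_Y,\mu_{0Y})\lesssim\tilde\epsilon_n$; the missing work is precisely converting a sup-norm control plus a polynomial moment bound into an $L^1$-control of CDF differences at rate $\tilde\epsilon_n\log(1/\tilde\epsilon_n)$, and the elementary truncation cannot do it. To fix the proposal you would either have to re-derive (or cite) a test for the truncated $W_1$ statistic whose concentration scale is tied to the moment envelope, or simply follow the paper in projecting one direction at a time and invoking the Walker tests together with a $\delta_n$-net of $\mathbb S^{d-1}$, using $W_1(\mu_{Y,\mathsf v},\mu_{Y,\mathsf v'})\le|\mathsf v-\mathsf v'|M_1(\mu_Y)$ for the chaining step.
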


The first part of the  proof is based on Theorem 3.2 of \cite{walker21}, p. 3643, but extends it to the multivariate case 
exploiting the equivalence between the Wasserstein metric $W_1$ and the 
max-sliced Wasserstein metric $\overline W_1$. 
The second part serves to prove that condition \eqref{eq:37} holds, 
provided that the posterior contraction $L^1$-norm rate has been derived. 

\begin{proof}[Proof of Theorem \ref{lem:1}]
Because $M_{2+\delta}(\mu_{0Y})<\infty$ implies that $M_1(\mu_{0Y})<\infty$, 
the hypothesis $\mu_{0Y}\in\mathcal P_{2+\delta}(\mathbb R^d)$ yields that
$\mu_{0Y}\in\mathcal P_1(\mathbb R^d)$. 
Assumption \eqref{eq:37} implies that also $\mu_Y\in\mathcal P_1(\mathbb R^d)$ so that $W_1(\mu_Y,\,\mu_{0Y})<\infty$,
see, \emph{e.g.}, \cite{villani2009}, p. 94, 
with posterior probability tending to one, in $P_{0Y}^n$-probability. 

By the inequalities in \eqref{eq:equivalence},
to prove \eqref{eq:66} it is enough to show that 
\begin{equation}\label{eq:condequiv}
\mathbb E_{0Y}^n[\Pi_n(\mu_Y:\,\overline W_1(\mu_Y,\,\mu_{0Y})>(M/C_d)\tilde\epsilon_n\log(1/\tilde\epsilon_n)\mid \mathsf Y^{(n)})]\rightarrow0.
\end{equation}
We apply a chaining argument.
For a sequence $0<\delta_n\leq\tilde\epsilon_n$, we consider a $\delta_n$-net for $\mathbb S^{d-1}$.
Since $\mathbb S^{d-1}\subseteq \{\mathsf v\in\mathbb R^d:\,|\mathsf v|\leq 1\}$, then, for $0<\epsilon<1$, 
the $\epsilon$-covering number of $\mathbb S^{d-1}$, that is,
the minimal number of $|\cdot|$-balls of radius $\epsilon$ needed to cover $\mathbb S^{d-1}$, denoted by $N(\epsilon,\,\mathbb S^{d-1},\,|\cdot|)$, is such that
\[N(\epsilon,\,\mathbb S^{d-1},\,|\cdot|)\leq N(\epsilon,\, \{\mathsf v\in\mathbb R^d:\,|\mathsf v|\leq 1\},\,|\cdot|)\leq 
3\epsilon^{-d},\]
see Proposition C.2 in \cite{bookgvdv}, p. 530. Thus, 
$N_{\delta_n}:=N(\delta_n,\,\mathbb S^{d-1},\,|\cdot|)\leq 3\delta_n^{-d}$.
Let $(\mathsf v_j)_{j\in[N_{\delta_n}]}$ be a minimal $\delta_n$-net for $\mathbb S^{d-1}$. 
Because for all $\mathsf v,\,\mathsf v_j\in \mathbb S^{d-1}$ and $\mu\in\mathcal P_1(\mathbb R^d)$,
$$W_1(\mu_{\mathsf v},\,\mu_{\mathsf v_j})\leq |\mathsf v-\mathsf v_j|M_1(\mu),$$
for every $\mu_Y\in\mathcal P_1(\mathbb R^d)$, $\mathsf v\in \mathbb S^{d-1}$ and $\mathsf v_j$ in a minimal $\delta_n$-net for $\mathbb S^{d-1}$, 
we have 
\[
\begin{split}
W_1(\mu_{Y,\mathsf v},\,\mu_{0Y,\mathsf v}) &\leq 
W_1(\mu_{Y,\mathsf v},\,\mu_{Y,\mathsf v_j})
+ W_1(\mu_{Y,\mathsf v_j},\,\mu_{0Y,\mathsf v_j})
+ W_1(\mu_{0Y,\mathsf v_j},\,\mu_{0Y,\mathsf v})\\
& \leq \max_{j\in[N_{\delta_n}]} W_1(\mu_{Y,\mathsf v_j},\,\mu_{0Y,\mathsf v_j}) \\
&\qquad\qquad\qquad+
\max_{j\in[N_{\delta_n}]}\sup_{|\mathsf v_j-\mathsf v|\leq \delta_n} 
[
W_1(\mu_{Y,\mathsf v},\,\mu_{Y,\mathsf v_j})
+ W_1(\mu_{0Y,\mathsf v_j},\,\mu_{0Y,\mathsf v})
]\\
& \leq \max_{j\in[N_{\delta_n}]} W_1(\mu_{Y,\mathsf v_j},\,\mu_{0Y,\mathsf v_j}) 
+\delta_n [M_1(\mu_Y)+M_1(\mu_{0Y})]\\
&\leq \max_{j\in[N_{\delta_n}]} W_1(\mu_{Y,\mathsf v_j},\,\mu_{0Y,\mathsf v_j}) 
+\tilde\epsilon_n [M_1(\mu_Y)+M_1(\mu_{0Y})].
\end{split}
\]
Thus, 
$$\overline W_1(\mu_{Y},\,\mu_{0Y})\leq \max_{j\in[N_{\delta_n}]} W_1(\mu_{Y,\mathsf v_j},\,\mu_{0Y,\mathsf v_j}) 
+\tilde\epsilon_n [M_1(\mu_Y)+M_1(\mu_{0Y})].$$
For $0<M'< (M/C_d)-[K+M_1(\mu_{0Y})]/\log(1/\tilde \epsilon_n)$, defined the event 
\[A_n:=\pg{\mu_Y:\, \max_{j\in[N_{\delta_n}]} W_1(\mu_{Y,\mathsf v_j},\,\mu_{0Y,\mathsf v_j})\leq M'\tilde\epsilon_n\log(1/\tilde\epsilon_n)},
\]
if $\mathbb E_{0Y}^n[\Pi_n(A_n^c\mid \mathsf Y^{(n)})]\rightarrow 0$, 
then the convergence in \eqref{eq:condequiv} follows by condition \eqref{eq:37}.

We define (a sequence of) tests $(\Psi_n)_{n\in\mathbb N}$ for the hypothesis
$H_0:\,P=P_{0Y}\equiv \mu_{0Y}$ \emph{versus} $H_1:\,P=P_Y\equiv \mu_Y$, for 
$\mu_Y\in C_n:=A_n^c\cap \{\mu_Y:\, M_{2+\delta}(\mu_Y)\leq K\}$, such that
\[
\mathbb E_{0Y}^n[\Psi_n] = o(1)
\quad
\mbox{ and}\quad\sup_{\mu_Y\in C_n}
\mathbb E_{\mu_Y}^n[1-\Psi_n]\leq \exp{(-n\tilde\epsilon_n^2)} \, \mbox{ for $n$ large enough}.
\]
Let 
\[\Psi_n:=\max_{j\in[N_{\delta_n}]}\phi_{n,j},\]
where $\phi_{n,j}$ is the test associated to $\mu_{0Y,\mathsf v_j}$ defined on pp. 3668--3669 of \cite{walker21},
with $\mu_{0Y,\mathsf v_j}$ playing the role of $P_0$ in the definition of $\phi_{m,F,-}$ and $\phi_{m,F,+}$. 
It is known from the proof of Theorem 8.9 in \cite{walker21}, p. 3665, that there exists a constant $c>0$ such that, for all $j\in[N_{\delta_n}]$,
\begin{equation*}\label{eq:typeI}
\mathbb E_{0Y,\mathsf v_j}^n[\phi_{n,j}] = \exp{(-cn\tilde\epsilon_n^2)}\quad
\mbox{ and}\quad 
\sup_{\mu_Y \in C_{n,j}}\mathbb E_{\mu_{Y,\mathsf v_j}}^n[1 - \phi_{n,j}] \leq \exp{(-cn\tilde\epsilon_n^2)},
\end{equation*}
where 
\[ C_{n,j}:= \{\mu_Y:\, W_1(\mu_{Y,\mathsf v_j},\,\mu_{0Y,\mathsf v_j})> M'\tilde\epsilon_n\log(1/\tilde\epsilon_n)\} \cap \{\mu_Y:\, M_{2+\delta}(\mu_Y)\leq K\} .\]
Recalling that $N_{\delta_n}\leq 3\delta_n^{-d}$,
\[
\mathbb E_{0Y}^n[\Psi_n]\leq \sum_{j=1}^{N_{\delta_n}}\mathbb E_{0Y,\mathsf v_j}^n[\phi_{n,j}]
\leq
 N_{\delta_n}\exp{(-cn\tilde\epsilon_n^2)} \lesssim \exp{(-cn\tilde\epsilon_n^2/2)}
\]
and
\[
 \sup_{\mu_Y \in C_n} \mathbb E_{\mu_Y}^n[1-\Psi_n] \leq  \max_{j\in[N_{\delta_n}]}
\sup_{\mu_Y \in C_{n,j}}\mathbb E_{\mu_{Y,\mathsf v_j}}^n[1 - \phi_{n,j}] \leq \exp{(-cn\tilde\epsilon_n^2)}.
\]
Using Theorem 3 of \cite{ghosal:vdv:07}, p. 196, together with assumption \eqref{eq:33}, we have that
$\mathbb E_{0Y}^n[\Pi_n(C_n\mid \mathsf Y^{(n)})]\rightarrow0$.
Then, under condition \eqref{eq:37}, the convergence in \eqref{eq:66} holds.

We now show that, under \eqref{con1}, assumption \eqref{eq:37} holds. 
The conditions in \eqref{con1} imply that 
$\mathbb E_{0Y}^n [\Pi_n (\mu_Y:\,d_{\mathrm H}(f_Y,\,f_{0Y})>M_0\tilde \epsilon_n\mid \mathsf Y^{(n)})] \rightarrow0$.
Besides, condition \eqref{prior:moment} and the Kullback-Leibler prior mass condition in \eqref{eq:33} imply that 
$$ \mathbb E_{0Y}^n[\Pi_n ( \mu_Y:\,M_{4+\delta'}(\mu_{Y})> K' \tilde \epsilon_n^{-2}\mid \mathsf Y^{(n)})] \rightarrow 0.$$
Let $\mu_Y$ be such that $M_{4+\delta'}(\mu_{Y})\leq K' \tilde \epsilon_n^{-2}$ and $d_{\mathrm H}(f_Y,\,f_{0Y})\leq M_0 \tilde \epsilon_n$. 
Since we are now assuming that $M_{4+\delta'}(\mu_{0Y})<\infty$, by the Cauchy-Schwarz inequality we have
\begin{align*}
M_{2+\delta'/2}(\mu_{Y})&\leq M_{2+\delta'/2}(\mu_{0Y})+ \int_{\mathbb R^d} |\mathsf y|^{2+\delta'/2}[|\sqrt{f_Y}-\sqrt{f_{0Y}}|(\sqrt{f_Y}+\sqrt{f_{0Y}})](\mathsf y)\,\di \mathsf y \\
& \leq M_{2+\delta'/2}(\mu_{0Y})+[M_{4+\delta'}(\mu_{Y})+M_{4+\delta'}(\mu_{0Y})]^{1/2}M_0\tilde \epsilon_n \\
& < \{M_{2+\delta'/2}(\mu_{0Y})+ [K'+M_{4+\delta'}(\mu_{0Y})]^{1/2}M_0\}=:K.
\end{align*}
Therefore, $M_{2+\delta'/2}(\mu_{Y})< K$,
which implies condition \eqref{eq:37} with $\delta=\delta'/2$.
\end{proof}

\begin{rmk}\label{eq:remarktransf}
\emph{
For $d=1$, the first part of Theorem \ref{lem:1} reduces to Theorem 3.2 of \cite{walker21}, pp. 3643 and 3667--3669, for the 
$L^1$-Wasserstein distance on $\mathbb R$. The assertion holds for any probability measure
$P_{0Y}\equiv\mu_{0Y}\in\mathscr P_0(\mathbb R^d)\cap\mathcal P_{2+\delta}(\mathbb R^d)$, with $\delta>0$. The
probability measure $\mu_{0Y}$ need not be a convolution, but if this is the case with error distribution $\mu_\varepsilon^{\otimes d}$, then
the condition $\mu_{0Y}\in\mathscr P_0(\mathbb R^d)\cap\mathcal P_{2+\delta}(\mathbb R^d)$ is implied by $\mu_{0X}\in \mathcal P_{2+\delta}(\mathbb R^d)$ and $\mu_\varepsilon\in \mathscr P_0(\mathbb R)\cap \mathcal P_{2+\delta}(\mathbb R)$.
Under the latter assumption on $\mu_\varepsilon$, condition \eqref{eq:37} boils down to require that
there exists $K^\ast>0$ such that 
$\Pi_n(\mu_X:\,M_{2+\delta}(\mu_X)> K^\ast\mid \mathsf Y^{(n)})\rightarrow 0$ in $P_{0Y}^n$-probability.
}
\end{rmk}

\begin{rmk}\label{rmk:KLneigh}
\emph{
If condition \eqref{eq:33} is replaced by
\begin{equation*}\label{eq:34}
\Pi_n(N_{\mathrm{KL}}(P_{0Y};\,\tilde\epsilon_n^2))\gtrsim
\exp{(-Cn\tilde\epsilon_n^2)},
\end{equation*}
where $N_{\mathrm{KL}}(P_{0Y};\,\tilde\epsilon_n^2):=
\{P_Y:\,\mathrm{KL}(P_{0Y};\,P_Y)\leq\tilde\epsilon_n^2\}$
is a Kullback-Leibler neighborhood of $P_{0Y}$, 
then, by Lemma 6.26 of \cite{bookgvdv}, pp. 143--144, for any sequence $L_n\rightarrow\infty$, with $P_{0Y}^n$-probability at least equal to $(1-L_n^{-1})$, 
we have 
\begin{equation}\label{eq:LBKL}
\int\prod_{i=1}^n \frac{f_Y}{f_{0Y}}(\mathsf Y_i)\,\di\Pi_n(\mu_Y)\gtrsim
\exp{(-(C+2L_n)n\tilde\epsilon_n^2)}.
\end{equation}
Following the proof of Theorem \ref{lem:1} and applying the 
lower bound in \eqref{eq:LBKL}, the convergence in \eqref{eq:66} continues to
hold with $M\tilde\epsilon_n\log(1/\tilde\epsilon_n)$ replaced by $M_n\tilde\epsilon_n\log(1/\tilde\epsilon_n)$, where
$M_n>(C+2L_n)$. Therefore, Kullback-Leibler type neighborhoods can be replaced by Kullback-Leibler neighborhoods. 
}
\end{rmk}


\section{Lemmas for Theorem \ref{thm:31} on posterior contraction rates for Dirichlet Laplace-normal mixtures}\label{sec:th31:supp}

In Lemmas \ref{lem:discrete} and \ref{lem:KL} below we prove the existence of a compactly supported discrete 
mixing probability measure such that the corresponding Laplace-normal mixture
has Hellinger distance of the appropriate order from a Laplace mixture and
the prior law on Laplace-normal mixtures concentrates 
on Kullback-Leibler neighborhoods of the true density $f_{0Y}$ at optimal rate, up to a logarithmic factor.

\smallskip

The next lemma provides an upper bound on the remainder term (or truncation  error) 
associated  with  the $(r-1)$th order Taylor polynomial about zero of the complex exponential function, 
see, \emph{e.g.}, Lemma 10.1.5 in \cite{Athreya:2006}, pp. 320--321.

\begin{lem}\label{lem:diseg}
For every $r\in\mathbb{N}$, we have 
$$\abs{e^{\imath x}-\sum_{k=0}^{r-1}\frac{(\imath x)^k}{k!}}\leq
\min\pg{\frac{|x|^{r}}{r!},\, \frac{2|x|^{r-1}}{(r-1)!}}, \quad x\in\mathbb R.$$
\end{lem}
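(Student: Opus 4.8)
The plan is to establish the two-sided estimate by induction on $r\in\mathbb N$, using the integral recursion satisfied by the Taylor remainder. Set $R_r(x):=e^{\imath x}-\sum_{k=0}^{r-1}(\imath x)^k/k!$, the quantity to be bounded. Differentiating termwise one finds $R_r'(x)=\imath e^{\imath x}-\sum_{k=1}^{r-1}\imath^k x^{k-1}/(k-1)!=\imath\,R_{r-1}(x)$, and since $R_r(0)=0$ this yields
\[
R_r(x)=\imath\int_0^x R_{r-1}(t)\,\di t,\qquad x\in\mathbb R.
\]
This recursion is the only structural input needed.

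For the base case $r=1$ I would argue directly: $R_1(x)=e^{\imath x}-1$, hence $|R_1(x)|=2|\sin(x/2)|\le\min\{|x|,\,2\}$, which is precisely $\min\{|x|^1/1!,\,2|x|^0/0!\}$ (with the convention that the second term is read as the constant $2$ when $r=1$, the factor $(r-2)!$ being absent).

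For the inductive step, assuming the claimed bound $|R_{r-1}(t)|\le\min\{|t|^{r-1}/(r-1)!,\,2|t|^{r-2}/(r-2)!\}$ for all $t\in\mathbb R$, I would bound $|R_r(x)|\le\int_0^{|x|}|R_{r-1}(t)|\,\di t$ and then integrate each of the two competing bounds on $|R_{r-1}|$ separately over $[0,|x|]$, using the elementary fact that $\int_0^{|x|}\min\{a(t),b(t)\}\,\di t\le\min\{\int_0^{|x|}a,\,\int_0^{|x|}b\}$. This produces $\min\{|x|^r/r!,\,2|x|^{r-1}/(r-1)!\}$ and closes the induction.

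There is essentially no obstacle here; the argument is elementary. The only points meriting a word of care are the reading of the bound at $r=1$ and the trivial-but-worth-stating inequality that the integral of a pointwise minimum is at most the minimum of the integrals. A slightly different, equally short route, if preferred, would be to start from the closed-form remainder $R_r(x)=\frac{\imath^r}{(r-1)!}\int_0^x(x-t)^{r-1}e^{\imath t}\,\di t$, use $|e^{\imath t}|\le1$ for the first estimate, and rewrite $R_r(x)=\frac{\imath^{r-1}}{(r-2)!}\int_0^x(x-t)^{r-2}(e^{\imath t}-1)\,\di t$ together with $|e^{\imath t}-1|\le2$ for the second.
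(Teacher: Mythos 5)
Your proof is correct. The paper does not actually prove this lemma — it simply cites Lemma~10.1.5 of Athreya and Lahiri (2006) — and the argument there is the same standard one you give via the integral Taylor remainder, so you have in effect reproduced the cited proof rather than taken a different route. Both of your variants are sound: the induction from the recursion $R_r(x)=\imath\int_0^x R_{r-1}(t)\,\di t$, and the direct derivation from the remainder formulas $R_r(x)=\frac{\imath^r}{(r-1)!}\int_0^x(x-t)^{r-1}e^{\imath t}\,\di t$ and $R_r(x)=\frac{\imath^{r-1}}{(r-2)!}\int_0^x(x-t)^{r-2}(e^{\imath t}-1)\,\di t$. The only point glossed over in the inductive version is the case $x<0$, where one should note explicitly that $\bigl|\int_0^x R_{r-1}(t)\,\di t\bigr|\le\int_0^{|x|}\min\{t^{r-1}/(r-1)!,\,2t^{r-2}/(r-2)!\}\,\di t$ because the inductive bound on $|R_{r-1}|$ is an even function of its argument; this is immediate and does not affect correctness.
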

\smallskip

For later use, we recall that the bilateral Laplace transform of a function 
$f:\,\mathbb R\rightarrow \mathbb C$ is defined as 
$\mathcal B\{f\}(s): =\int_{\mathbb R}e^{-sx} f(x)\,\di x$ for all $s\in\mathbb C$ such that
$\int_{\mathbb R}|e^{-sx}f(x)|\,\di x=\int_{\mathbb R}e^{-\mathrm{Re}(s)x}|f(x)|\,\di x<\infty$, 
where $\mathrm{Re}(s)$ denotes the real part of $s$. 
With abuse of notation, for a probability measure $\mu$ on $\mathbb R$, 
we define $\mathcal B\{\mu\}(s):=\int_{\mathbb R}e^{-sx}\mu(\di x)$, $s\in\mathbb C$.
For all $t\in\mathbb R$ such that $\int_{\mathbb R}e^{tx}\mu(\di x)<\infty$, the mapping $t\mapsto
M_{\mu}(t):=\int_{\mathbb R}e^{tx}\mu(\di x)$
is the moment generating function of $\mu$ and $M_\mu(t) = \mathcal B\{\mu\}(-t)$, $t\in\mathbb R$.

\smallskip


In the following lemma we prove the existence of a compactly supported discrete mixing
probability measure, with a sufficiently small number of support points, 
such that the corresponding Laplace-normal mixture
has Hellinger distance of the order $O(\sigma^{\beta})$, with $\beta=2$, for $\sigma>0$ small enough, 
from the sampling density $f_{0Y}$.


\begin{lem}\label{lem:discrete}
Let $f_\varepsilon$ be the standard Laplace density. 
Let  $\mu_{0X}\in\mathscr P_0(\mathbb R)$ be a probability measure supported on $[-a,\,a]$, with density $f_{0X}$
such that $(e^{|\cdot|/2}f_{0X})\in L^2(\mathbb R)$.
For $\sigma>0$ small enough, there exists a discrete probability measure $\mu_H$ on $[-a,\,a]$, with at most 
$N=O((a/\sigma)|\log\sigma|^{1/2})$ 
support points, such that, for $f_Y:=f_\varepsilon\ast (\phi_\sigma\ast \mu_H)$ and 
$f_{0Y}:=f_\varepsilon\ast f_{0X}$,
\begin{equation*}\label{eq:discrete}
d_{\mathrm{H}}(f_Y,\,f_{0Y})\lesssim \delta_0^{-1/2}e^{a_0/2}\sigma^\beta, \mbox{ with $\beta=2$,}
\end{equation*}
as soon as $\mu_{0X}(\{x\in\mathbb R:\,|x|\leq a_0\})\geq \delta_0$ for some $0<a_0<a$ and $0<\delta_0<1$.
\end{lem}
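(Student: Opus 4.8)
The plan is to reduce the Hellinger bound to a weighted $L^2$ bound, split the approximation error into a Gaussian‑smoothing bias of the Laplace mixture $f_{0Y}$ and a discretization error, and control the latter by a moment‑matching quadrature; the discretization error is the only delicate part.

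First I would use the explicit form $f_\varepsilon(u)=\tfrac12 e^{-|u|}$ to lower bound the sampling density: for every $y\in\mathbb R$ and $|x|\le a_0$ one has $|y-x|\le|y|+a_0$, so
\[
f_{0Y}(y)=\int_{\mathbb R}\tfrac12 e^{-|y-x|}\,\mu_{0X}(\di x)\ \ge\ \tfrac12 e^{-a_0-|y|}\,\mu_{0X}(\{|x|\le a_0\})\ \ge\ \tfrac{\delta_0}{2}e^{-a_0}e^{-|y|},
\]
whence $1/f_{0Y}(y)\le 2\delta_0^{-1}e^{a_0}e^{|y|}$. Combined with $d_{\mathrm H}^2(f_Y,f_{0Y})\le\int (f_Y-f_{0Y})^2/f_{0Y}$ this yields
\[
d_{\mathrm H}(f_Y,f_{0Y})\ \le\ \sqrt 2\,\delta_0^{-1/2}e^{a_0/2}\,\big\|e^{|\cdot|/2}(f_Y-f_{0Y})\big\|_2 ,
\]
so it suffices to produce $\mu_H$, supported on $[-a,a]$ with $O((a/\sigma)|\log\sigma|^{1/2})$ atoms, such that $\|e^{|\cdot|/2}(f_Y-f_{0Y})\|_2\lesssim\sigma^2$, the implicit constant depending only on $a$ and $\|f_{0X}\|_2$ (finite, since $f_{0X}$ has compact support and $e^{|\cdot|/2}f_{0X}\in L^2$).

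Next I would decompose $f_Y-f_{0Y}=A+B$ with $A:=f_\varepsilon\ast(\phi_\sigma\ast f_{0X}-f_{0X})$ and $B:=f_\varepsilon\ast\phi_\sigma\ast(\mu_H-f_{0X}\,\di x)$. For $A$, writing $A=\psi\ast f_{0X}$ with $\psi:=f_\varepsilon\ast\phi_\sigma-f_\varepsilon$, one has $\widehat\psi(t)=(e^{-\sigma^2t^2/2}-1)(1+t^2)^{-1}$, hence $|\widehat\psi(t)|\le\tfrac{\sigma^2}{2}\,t^2(1+t^2)^{-1}\le\tfrac{\sigma^2}{2}$, and Plancherel gives $\|A\|_2\le\tfrac{\sigma^2}{2}\|f_{0X}\|_2$. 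The exponential weight is harmless here: a direct computation from the explicit $f_\varepsilon$ shows $|\psi(w)|\lesssim\sigma^2 e^{-|w|}$ for $|w|\ge 1$, so $|A(y)|\lesssim\sigma^2 e^{a-|y|}$ for $|y|>a+1$ while $e^{|y|/2}$ is bounded for $|y|\le a+1$; altogether $\|e^{|\cdot|/2}A\|_2\lesssim\sigma^2$. Conceptually this says that smoothing a Laplace mixture at bandwidth $\sigma$ costs only $O(\sigma^2)$, because the Fourier multiplier $(1+t^2)^{-1}$ of the Laplace supplies exactly the two missing degrees of regularity — the same mechanism, in Fourier form, that underlies Lemma \ref{lem:contapprox}.

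The main obstacle is the construction of $\mu_H$ and the bound on $B$, for which I would adapt the quadrature scheme behind Lemma 2 of \cite{gao2016}. Partition $[-a,a]$ into $M\asymp a/(\sigma|\log\sigma|^{1/2})$ consecutive sub‑intervals $I_1,\dots,I_M$ of length $\ell\asymp\sigma|\log\sigma|^{1/2}$, and on each $I_j$ with $m_j:=\mu_{0X}(I_j)>0$ replace $\mu_{0X}|_{I_j}$ by the (rescaled) $\lceil (k+1)/2\rceil$‑point Gauss quadrature measure of total mass $m_j$ matching the first $k\asymp|\log\sigma|$ moments of $\mu_{0X}|_{I_j}$; then $\mu_H$ is a probability measure on $[-a,a]$ with $N=O((a/\sigma)|\log\sigma|^{1/2})$ atoms. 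Summing the cellwise Taylor estimate of Lemma \ref{lem:diseg} (applied with $x_j$ the centre of $I_j$) gives
\[
\big|\widehat{\mu_{0X}}(t)-\widehat{\mu_H}(t)\big|\ \le\ \min\Big\{\,2\,,\ \tfrac{2\,(|t|\ell/2)^{k+1}}{(k+1)!}\,\Big\},\qquad t\in\mathbb R,
\]
and, choosing the constant in $k\asymp|\log\sigma|$ large enough, the right‑hand side stays $\lesssim\sigma^{1/2}$ for all $|t|\le\sigma^{-1}|\log\sigma|^{1/2}$. Inserting this into
\[
\|B\|_2^2=\frac{1}{2\pi}\int_{\mathbb R}\frac{e^{-\sigma^2t^2}}{(1+t^2)^2}\,\big|\widehat{\mu_{0X}}(t)-\widehat{\mu_H}(t)\big|^2\,\di t
\]
and splitting the integral at $|t|\asymp\sigma^{-1}$ and $|t|\asymp\sigma^{-1}|\log\sigma|^{1/2}$ — using $\int_{|t|>\sigma^{-1}}(1+t^2)^{-2}\,\di t\asymp\sigma^3$ on the middle range and the fast decay of $e^{-\sigma^2t^2}$ on the outer range — gives $\|B\|_2^2\lesssim(\sigma^{1/2})^2\sigma^3+\sigma^4\lesssim\sigma^4$; the weight is absorbed exactly as for $A$, from the compact support of $\mu_H$ and Gaussian/Laplace tails, so $\|e^{|\cdot|/2}B\|_2\lesssim\sigma^2$. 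Combining the two bounds yields $\|e^{|\cdot|/2}(f_Y-f_{0Y})\|_2\lesssim\sigma^2$ and hence the claim. The genuinely subtle point is the joint calibration of the sub‑interval length, the number $k$ of matched moments, and the total atom count $N$, so that the discretization error stays below — rather than dominating — the $\sigma^2$ Gaussian‑smoothing bias; this is precisely where the argument refines that of \cite{gao2016}.
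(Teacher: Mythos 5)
Your reduction to a weighted $L^2$ bound (via the pointwise lower bound $f_{0Y}(y)\gtrsim\delta_0 e^{-a_0}e^{-|y|}$) and the decomposition into a Gaussian-smoothing bias $A$ and a discretisation term $B$ mirror the backbone of the paper's argument, but your construction of $\mu_H$ and your analysis of $B$ take a genuinely different route. The paper matches moments globally: a single discrete measure on $[-a,a]$ matching $\int u^j f_{0X}$ for $j<J\asymp (a/\sigma)|\log\sigma|^{1/2}$ \emph{together with} the two exponential moments $\int e^{\pm u/2}f_{0X}$, its existence guaranteed by the Tchakaloff-type result in Lemma A.1 of Ghosal and van der Vaart (2001); the error is then carried out entirely in the weighted Fourier domain via bilateral Laplace transforms at the complex frequencies $\psi_b(t)=-(\imath t+b)$, $b=\mp\frac12$, so the exponential weight is built in from the start. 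Your cellwise Gauss-quadrature scheme is an attractive and more elementary alternative that reproduces the same atom count and the same factorially small Taylor remainder (now at scale $\ell$ rather than $a$), with an unweighted Plancherel computation giving $\|B\|_2\lesssim\sigma^2$; this part is correct.

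The gap is in the sentence ``the weight is absorbed exactly as for $A$, from the compact support of $\mu_H$ and Gaussian/Laplace tails.'' The two terms are not symmetric in this respect. For $A=\psi\ast f_{0X}$ the convolving kernel $\psi=f_\varepsilon\ast\phi_\sigma-f_\varepsilon$ satisfies $|\psi(w)|\lesssim\sigma^2 e^{-|w|}$ for $|w|\geq1$, so the factor $\sigma^2$ is present \emph{pointwise} and survives the weight $e^{|y|/2}$ trivially on $|y|>a+1$. For $B=(f_\varepsilon\ast\phi_\sigma)\ast(\mu_H-f_{0X}\,\di x)$ the convolving kernel is $f_\varepsilon\ast\phi_\sigma(w)\asymp e^{-|w|}$, which carries no $\sigma^2$; the naive pointwise bound on the tail region gives only $|B(y)|\lesssim e^{a-|y|}$, hence $\int_{|y|>a+1}e^{|y|}|B(y)|^2\,\di y=O(1)$ rather than $O(\sigma^4)$. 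The smallness of $e^{|\cdot|/2}B$ in the tail comes purely from cancellation in $\mu_H-f_{0X}\,\di x$: for $y>a+1$ one has $(f_\varepsilon\ast\phi_\sigma)(y-u)=\tfrac12 e^{\sigma^2/2}e^{-(y-u)}+O\big(e^{-(y-a)^2/(2\sigma^2)}\big)$ uniformly in $u\in[-a,a]$, so the leading contribution is $\tfrac12 e^{\sigma^2/2}e^{-y}\int e^{u}\,(\mu_H-f_{0X}\,\di x)(\di u)$, and one must argue separately that this discrepancy in exponential moment is $O(\sigma^2)$ (or smaller). Fortunately your local polynomial moment matching \emph{does} imply this: Taylor-expanding $e^u$ about each $x_j$ to order $k$ and using the moment-matching kills the first $k+1$ terms, giving $\big|\int e^u\,(\mu_H-f_{0X}\,\di x)(\di u)\big|\lesssim e^a(\ell/2)^{k+1}/(k+1)!$, which is super-polynomially small for $k\asymp|\log\sigma|$; an identical argument handles $e^{-u}$ for the other tail. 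This observation — or alternatively carrying the weight through the Fourier argument at the complex frequencies $\psi_b(t)$ as the paper does, since your cellwise Taylor bound applies verbatim to $e^{\psi_b(t)(u-x_j)}$ with only a harmless extra factor $e^{\ell/4}$ from $|\mathrm{Re}\,\psi_b|=\tfrac12$ — is what closes the step. This is precisely the role the two exponential-moment constraints in \eqref{eq:c2} play in the paper's construction: you recover their effect approximately rather than exactly, but the approximation is more than sufficient. Once this is made explicit, your proof is complete.
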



\begin{proof}
For $a_0,\,\delta_0$ as in the statement, we have
\[\begin{split}
f_{0Y}(y)\geq\int_{|x|\leq a_0}f_\varepsilon(y-x)
f_{0X}(x)\,\di x\geq \frac{\delta_0}{2}e^{-(|y|+a_0)},\quad y\in\mathbb R.
\end{split}
\]
Define 
\begin{equation}\label{eq:defU}
U(y):=e^{-y/2}+e^{y/2},\quad y\in\mathbb R.
\end{equation}
By the inequality $e^{|y|/2}\leq U(y)$, $y\in\mathbb R$,
we have 
$$d_{\mathrm H}^2(f_Y,\,f_{0Y}) \leq 
2\delta_0^{-1}e^{a_0}\int_{\mathbb R}[e^{|y|/2}(f_Y-f_{0Y})(y)]^2\,\di y\leq 2\delta_0^{-1}e^{a_0}
\|g_Y-g_{0Y}\|_2^2,$$
where $g_Y:=Uf_Y$ and $g_{0Y}:=Uf_{0Y}$.
For $b=\mp \frac{1}{2}$, we have $e^{b\cdot}f_{0Y}=(e^{b\cdot}f_\varepsilon)\ast(e^{b\cdot}f_{0X})$, 
where $e^{b\cdot}f_{0X}\in L^1(\mathbb R)$ for compactly supported $f_{0X}$ and
$e^{b\cdot}f_\varepsilon\in L^p(\mathbb R)$ for every $1\leq p\leq \infty$. Hence, 
$\|e^{b\cdot}f_{0Y}\|_p\leq \|e^{b\cdot}f_\varepsilon\|_p\times\|e^{b\cdot}f_{0X}\|_1<\infty$.
Analogously, since $e^{b\cdot}f_Y=(e^{b\cdot}(f_\varepsilon\ast \phi_\sigma))\ast(e^{b\cdot}\mu_H)$, 
where 
$M_{\mu_H}(b)<\infty
$ 
for compactly supported $\mu_H$ and
$e^{b\cdot}(f_\varepsilon\ast \phi_\sigma)\in L^p(\mathbb R)$
for $1\leq p\leq\infty$, we have 
$\|e^{b\cdot}f_Y\|_p\leq \|e^{b\cdot}(f_\varepsilon\ast \phi_\sigma)\|_p\times M_{\mu_H}(b)<\infty$.
Consequently, $g_Y,\,g_{0Y}\in L^1(\mathbb R)\cap L^2(\mathbb R)$ and 
the corresponding Fourier transforms $\hat g_Y(t):=\int_{\mathbb R}e^{\imath t y}g_{Y}(y)\,\di y$ and 
$\hat g_{0Y}(t):=\int_{\mathbb R}e^{\imath t y}g_{0Y}(y)\,\di y$, $t\in\mathbb R$, are well defined.
Also, $\|g_{0Y}\|_2^2=(2\pi)^{-1}\|\hat g_{0Y}\|_2^2$ and $\|g_Y\|_2^2=(2\pi)^{-1}\|\hat g_Y\|_2^2$.
For $\psi_b(t):=-(\imath t+b)$, let $\varrho_b(t):=[1-\psi_b^2(t)]$, $t\in\mathbb R$. Note that 
$\varrho_{-1/2}(t)=\overline{\varrho_{1/2}(t)}$ and $|\varrho_{-1/2}(t)|^2=|\varrho_{1/2}(t)|^2=(t^4+5t^2/2+9/16)$. 
Since
$$\mathcal B\{f_\varepsilon(\cdot-x)\}(\psi_b(t))=
\frac{e^{-\psi_b(t)x}}{\varrho_b(t)},\quad t,\,x\in\mathbb R,$$
we have
\[
\begin{split}
r(t;\,x)&:=\int_{\mathbb R}e^{\imath t y}U(y)f_\varepsilon(y-x)\,\di y\\&=\sum_{b=\mp 1/2}\mathcal B\{f_\varepsilon(y-x)\}(\psi_b(t))
=\sum_{b=\mp 1/2}\frac{e^{-\psi_b(t)x}}{\varrho_{b}(t)}
,\quad t,\,x\in\mathbb R.
\end{split}
\]
Then,
$\hat g_{0Y}(t)
=\int_{|x|\leq a}r(t;\,x)f_{0X}(x)\,\di x
=\sum_{b=\mp 1/2}\mathcal B\{f_{0X}\}(\psi_b(t))/\varrho_{b}(t)$,
$t\in\mathbb R$.   
We derive the expression of $\hat g_Y$.
Since
\begin{equation*}\label{eq:lapnorm}
\mathcal B\{\phi_\sigma(\cdot-u)\}(\psi_b(t))
=\exp{(-\psi_b(t)u+\sigma^2\psi_b^2(t)/2)}, \quad t,\,u\in\mathbb R,
\end{equation*} 
we have
\[
\begin{split}
\hat g_Y(t)
&=\int_{|u|\leq a}\pt{\int_{\mathbb R}r(t;\,x)\phi_\sigma(x-u)\,\di x}\mu_H(\di u)\\
&=\sum_{b=\mp 1/2}\frac{e^{\sigma^2\psi_b^2(t)/2}}{\varrho_{b}(t)}
\mathcal B\{\mu_H\}(\psi_b(t)), 
\quad t\in\mathbb R.
\end{split}
\] 
For ease of notation, we introduce the integrals
\[I_b:=\int_{\mathbb R}\frac{1}{|\varrho_b(t)|^2}
|e^{\sigma^2\psi_b^2(t)/2}
\mathcal B\{\mu_H\}(\psi_b(t))-\mathcal B\{f_{0X}\}(\psi_b(t))
|^2\,\di t,\quad b=\mp\frac{1}{2}.\]
By Plancherel's theorem and the triangular inequality, 
$2\pi\|g_Y-g_{0Y}\|_2^2=\|\hat g_Y-\hat g_{0Y}\|_2^2\leq 2(I_{-1/2}+I_{1/2})$.
Both terms $I_{-1/2}$ and $I_{1/2}$ can be controlled using the same arguments, we therefore 
consider a unified treatment for $I_b$. For $M>0$, we have
\[
\begin{split}  
I_b
&\leq
\pt{\int_{|t|\leq M}+\int_{|t|>M}}\frac{|e^{\sigma^2\psi_b^2(t)/2}|^2}{{|\varrho_b(t)|^2}}
|(\mathcal B\{\mu_H\}-\mathcal B\{f_{0X}\})(\psi_b(t))|^2\,\di t\\
&\hspace*{3.5cm}+
\int_{\mathbb R}\frac{1}{{|\varrho_b(t)|^2}}
|e^{\sigma^2\psi_b^2(t)/2}-1|^2
\abs{\mathcal B\{f_{0X}\}(\psi_b(t))
}^2\,\di t=:\sum_{k=1}^3 I_b^{(k)}.
\end{split}
\]  
\noindent
\emph{Study of the term $I_b^{(1)}$}\\[5pt]
The term $I_b^{(1)}$ can be bounded similarly to $I_1$ in Lemma 2 of \cite{gao2016}, p. 616. 
Preliminarily note that,
for $\sigma<1/|b|=2$, we have
$|e^{\sigma^2\psi_b^2(t)/2}|^2=|e^{\sigma^2(-t^2+2\imath bt+b^2)/2}|^2
=e^{-\sigma^2(t^2-b^2)}=e^{-\sigma^2(t^2-1/4)}< e$.
Let $\mu_H$ be a discrete probability measure on $[-a,\,a]$ satisfying the constraints
\begin{equation}\label{eq:c2}
\begin{split}
\int u^j\mu_H(\di u) &=\int u^j f_{0X}(u)\,\di u, \quad j=0,\,\ldots,\,J-1, \\
\int e^{bu}\mu_H(\di u)& =\int e^{bu}f_{0X}(u)\,\di u, \quad  b= \mp \frac{1}{2},
\end{split}
\end{equation}
where $J=\lceil\eta ea M\rceil$ for some $\eta>1$, with $\lceil x \rceil:=\min\,\{k\in\mathbb{Z} : k>x\}$ the upper integer part of $x$. 
Note that the second set of 
constraints in \eqref{eq:c2} can be written as
$M_{\mu_H}(b)=M_{0X}(b)$, with $b=\mp\frac{1}{2}$.
Using Lemma \ref{lem:diseg} with $r=J$,
by the inequality $J!\geq (J/e)^J$, we have 
\[\begin{split}
I_b^{(1)}&\lesssim \int_{|t|\leq M}\frac{1}{{|\varrho_b(t)|^2}}
\bigg|\int_{|u|\leq a}e^{bu}  \bigg[e^{\imath t u}-\sum_{j=0}^{J-1}\frac{(\imath tu)^j}{j!}\bigg](\mu_H-\mu_{0X})(\di u)
\bigg|^2\,\di t\\
&\lesssim[M_{\mu_H}(b)+M_{0X}(b)]^2\frac{1}{(J!)^2}
\int_{|t|\leq M}\frac{(a|t|)^{2J}}{{|\varrho_b(t)|^2}}\,\di t\\
&\lesssim \frac{a^{2J}}{(J!)^2}
\int_{0}^M t^{2(J-2)}\,\di t \\
&\lesssim \frac{a^{2J}}{(J!)^2}\times
\frac{M^{2J-3}}{2J-3}\lesssim M^{-4} \frac{a^{2J}}{(J!)^2}\times
\frac{M^{2J+1}}{2J-3}\lesssim  M^{-4} \pt{\frac{eaM}{J}}^{2J+1}\lesssim M^{-4}.
\end{split}\]   
\noindent
\emph{Study of the term $I_b^{(2)}$}\\[5pt]
Note that, for $b=\mp\frac{1}{2}$, 
\begin{equation}\label{eq:disff}
\mathcal B\{f_{0X}\}(\psi_b(t))=(\widehat{e^{b\cdot}f_{0X}})(t), \quad t\in\mathbb R,
\end{equation} 
so that $|\mathcal B\{f_{0X}\}(\psi_b(t))|\leq M_{0X}(b)$. Similarly, 
$|\mathcal B\{\mu_H\}(\psi_b(t))|\leq M_{\mu_H}(b)= M_{0X}(b)$.
Choosing $M$ so that  
$(\sigma M)^2\geq|\log\sigma|$, equivalently, $M\geq \sigma^{-1}|\log\sigma|^{1/2}
$, and using the fact that $|e^{\sigma^2\psi_b^2(t)/2}|^2=O(e^{-(\sigma t)^2})$, we have
$$I_b^{(2)}
\lesssim M^2_{0X}(b)
e^{-(\sigma M)^2}
\int_{|t|>M}\frac{1}{t^4}
\,\di t
\lesssim
e^{-(\sigma M)^2}M^{-3}\lesssim 
\sigma M^{-3}
\lesssim \sigma^4.$$
\noindent
\emph{Study of the term $I_b^{(3)}$}\\[5pt]
By Lemma \ref{lem:diseg},
\begin{equation*}\label{eq:disf}
|e^{\sigma^2\psi_b^2(t)/2}-1|\leq \min\{2,\,\sigma^2(t^2+b^2)/2\}\leq\sigma^2(t^2+1/4)/2,
\end{equation*}
which combined with \eqref{eq:disff} gives
\[  
\begin{split}
I_b^{(3)}
&\leq \frac{\sigma^4}{2}\int_{\mathbb R}\frac{(t^4+b^4)}{{|\varrho_b(t)|^2}}
\abs{\mathcal B\{f_{0X}\}(\psi_b(t))
}^2\,\di t
\lesssim
\sigma^4
\int_{\mathbb R}
|(\widehat{e^{b\cdot}f_{0X}})(t)|^2\,\di t\lesssim
\sigma^4,
\end{split}   
\]  
where, by Plancherel's theorem,
$(2\pi)^{-1}\|\widehat{e^{b\cdot}f_{0X}}\|_2^2=\|e^{b\cdot}f_{0X}\|_2^2<\infty
$ by the assumption that $(e^{|\cdot|/2}f_{0X})\in L^2(\mathbb R)$.

\vspace*{-0.1cm}
\smallskip
The existence of a discrete probability measure $\mu_H$ supported on $[-a,\,a]$, with at most 
$[(J+2)+1]\propto (a M)\gtrsim (a/\sigma)|\log\sigma|^{1/2}$ support 
points, is guaranteed by Lemma A.1 of \cite{ghosal2001}, p. 1260.
Combining the bounds on $I_b^{(k)}$, $k\in[3]$, 
we conclude that 
$\|g_Y-g_{0Y}\|_2^2\lesssim 
\sigma^4$. It follows that $d^2_{\mathrm H}(f_Y,\,f_{0Y})n\lesssim \delta_0^{-1} e^{a_0}\sigma^{4}$, which completes the proof.
\end{proof}


The next lemma gives sufficient conditions on the distribution $\mathscr D_{H_0}\otimes \Pi_\sigma$ so that 
the induced prior probability measure $\Pi$ on Laplace-normal mixtures 
$f_Y=f_\varepsilon\ast(\phi_\sigma\ast\mu_H)$ concentrates on Kullback-Leibler neighborhoods 
of a Laplace mixture $f_{0Y}=f_\varepsilon\ast f_{0X}$, with mixing density $f_{0X}$ having exponentially decaying tails, at a rate of the order $O(n^{-2/5}(\log n)^\tau)$ for suitable $\tau>0$.

\begin{lem}\label{lem:KL}
Let $f_{0Y}:=f_\varepsilon\ast f_{0X}$, where $f_\varepsilon$ is the density of a standard 
Laplace distribution and $f_{0X}$ satisfies Assumption \ref{ass:twicwtailcond}. 
Consider the model $f_Y:=f_\varepsilon\ast (\phi_\sigma\ast \mu_H)$, with $\mu_H\in\mathscr P(\mathbb R)$.
If the base measure $H_0$ of the Dirichlet process prior $\mathscr D_{H_0}$ for $\mu_H$ 
satisfies Assumption \ref{ass:basemeasure1} and the prior $\Pi_\sigma$ for $\sigma$ satisfies 
Assumption \ref{ass:priorscale1} with $0<\gamma\leq1$, then
$\Pi
(N_{\mathrm{KL}}(P_{0Y};\,\tilde\epsilon_n^2)
)\gtrsim \exp{(-C n\tilde\epsilon_n^2)}$, for $\tilde\epsilon_n=n^{-2/5}(\log n)^{1/2+(2t_1\vee 3)/5}$.
\end{lem}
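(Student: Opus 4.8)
\textbf{Proof proposal for Lemma \ref{lem:KL}.}

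The plan is to lower-bound the prior mass of a Kullback--Leibler neighborhood of $f_{0Y}$ by reducing it, via an explicit choice of $\sigma$ and a suitable discrete mixing measure $\mu_H$, to a Dirichlet-process small-ball estimate. First I would exploit the structure $f_{0Y}=f_\varepsilon\ast f_{0X}$ and the fact that convolution with $f_\varepsilon$ is a contraction in $L^1$ (and, after reweighting, controls Hellinger distance) to replace the Kullback--Leibler neighborhood of $P_{0Y}$ by a Hellinger neighborhood of $f_{0Y}$. Concretely, by Remark \ref{rmk:KLneigh} it suffices to lower-bound $\Pi(N_{\mathrm{KL}}(P_{0Y};\tilde\epsilon_n^2))$, and a standard device (as in \cite{ghosal2001,gao2016}) bounds the relevant Kullback--Leibler and second-moment quantities by the squared Hellinger distance times a logarithmic factor coming from the tails of $f_{0Y}$, which decay exponentially because $f_{0X}$ does (Assumption \ref{ass:twicwtailcond}) and $f_\varepsilon$ is Laplace. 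So the target becomes: show that with prior probability at least $\exp(-Cn\tilde\epsilon_n^2)$ one has $d_{\mathrm H}^2(f_\varepsilon\ast(\phi_\sigma\ast\mu_H),f_{0Y})\lesssim \tilde\epsilon_n^2$ up to log-factors, for $\tilde\epsilon_n=n^{-2/5}(\log n)^{1/2+(2t_1\vee3)/5}$.

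Next I would fix $\sigma\asymp n^{-1/5}(\log n)^{r}$ for an appropriate exponent $r$, so that the "bias'' term $d_{\mathrm H}(f_\varepsilon\ast(\phi_\sigma\ast\mu_{0X}^{\mathrm{trunc}}),f_{0Y})$ is of order $\sigma^2$ (this is the $\beta=2$ rate produced by Lemma \ref{lem:discrete}, after truncating $f_{0X}$ to a compact set $[-a_n,a_n]$ with $a_n\asymp\log n$ to handle the tails, using the exponential tail bound on $f_{0X}$ to ensure the truncation error is negligible at the rate $\tilde\epsilon_n$). Lemma \ref{lem:discrete} then furnishes a discrete $\mu_H^\ast$ on $[-a_n,a_n]$ with $N_n=O((a_n/\sigma)|\log\sigma|^{1/2})\asymp n^{1/5}(\log n)^{3/2+\cdots}$ support points achieving $d_{\mathrm H}(f_\varepsilon\ast(\phi_\sigma\ast\mu_H^\ast),f_{0Y})\lesssim\sigma^2\lesssim n^{-2/5}(\log n)^{2r}$. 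The prior-mass step is then the usual two-part estimate: (i) the scale $\sigma$ lands in a window of width $\asymp\sigma$ around the target value with probability $\gtrsim\exp(-D_1\sigma^{-1}|\log\sigma|^{t_1})\gtrsim\exp(-c\,n^{1/5}(\log n)^{t_1-r})$, using the lower bound in Assumption \ref{ass:priorscale1}; (ii) conditionally on $\sigma$, the Dirichlet process $\mathscr D_{H_0}$ places mass at least $\exp(-c\,N_n\log(1/\xi_n))$ on measures $\mu_H$ that are $\xi_n$-close (in a suitable Wasserstein/weak sense on $[-a_n,a_n]$) to $\mu_H^\ast$, with $\xi_n$ a small polynomial in $n$; here the exponential lower tail of $h_0$ in Assumption \ref{ass:basemeasure1} guarantees $H_0([-a_n,a_n])$ is bounded below by $\exp(-b_0 a_n^\iota)$ which, since $a_n\asymp\log n$, is only a sub-polynomial loss. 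Multiplying, the combined log-prior-mass is $\gtrsim -\,n^{1/5}(\log n)^{t_1\vee(3/2)+\cdots}$, and one checks this is $\gtrsim -n\tilde\epsilon_n^2 = -n^{1/5}(\log n)^{1+(2t_1\vee3)\cdot 2/5}$ by matching the exponents of $\log n$; the need to absorb both the $t_1$ from the scale prior and the $3/2$ from the number of support points is exactly what forces the exponent $1/2+(2t_1\vee3)/5$ in $\tilde\epsilon_n$.

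The main obstacle I anticipate is the bookkeeping of logarithmic factors so that the final exponent on $\log n$ in $\tilde\epsilon_n$ comes out exactly as stated: one must track simultaneously the truncation level $a_n$, the bandwidth $\sigma$, the number $N_n$ of atoms, the discretization scale $\xi_n$ used for the Dirichlet small-ball bound, and the two competing contributions $\sigma^{-1}|\log\sigma|^{t_1}$ and $N_n\log(1/\xi_n)$ to the exponent, then optimize $r$ (the power of $\log n$ in $\sigma$) to balance the bias $\sigma^{2}\lesssim\tilde\epsilon_n$ against the prior-mass cost $n\tilde\epsilon_n^2\gtrsim\sigma^{-1}|\log\sigma|^{t_1}+N_n\log(1/\xi_n)$. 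A secondary technical point is converting the Hellinger bound from Lemma \ref{lem:discrete} into a genuine Kullback--Leibler-neighborhood bound: this requires the envelope-type argument controlling $P_{0Y}(\log(f_{0Y}/f_Y))^2$, for which one uses that $f_Y\gtrsim e^{-|y|}\times(\text{small})$ uniformly on compacts and that both densities have exponential tails, so the logarithm is at most polynomially large on the bulk and the tails contribute $o(\tilde\epsilon_n^2)$; this is routine but must be stated, and it is where the extra $\log$ in passing from $\sigma^2$ to $\tilde\epsilon_n$ partly originates.
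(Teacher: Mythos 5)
Your proposal follows essentially the same route as the paper's proof: truncate $f_{0X}$ to $[-a_\eta,a_\eta]$ with $a_\eta\asymp|\log\sigma|$, invoke Lemma \ref{lem:discrete} to get a discrete $\mu_H^\ast$ with $O((a_\eta/\sigma)|\log\sigma|^{1/2})$ atoms achieving Hellinger error $O(\sigma^2)$, convert the Hellinger bound to a Kullback--Leibler-neighborhood bound via the envelope/ratio argument of Lemma B2 in \cite{ghosal:shen:tokdar}, and multiply a Dirichlet small-ball estimate by the scale-prior mass on a window around the target $\sigma$. The only cosmetic difference is that the paper makes the Dirichlet small-ball estimate through a partition of $\mathbb R$ and cell-probability constraints (Lemma 10 of \cite{ghosal2007} and Lemma 5/B1 for the induced $L^1$-distance) rather than a Wasserstein-closeness neighborhood of $\mu_H^\ast$, but this yields the same $N\log(1/\sigma)$ cost in the exponent and does not change the log-bookkeeping.
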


\begin{proof} 
We use the generic exponent $\beta>0$ of the Fourier transform of the error density in those steps of the proof that do not depend on the specific form of the Laplace density.
We show that, for some constant $C>0$, the prior probability of a 
Kullback-Leibler neighborhood of $P_{0Y}$ of radius $\tilde\epsilon_n^2$ is at least $\exp{(-C n\tilde\epsilon_n^2)}$.
We apply Lemma B2 of \cite{ghosal:shen:tokdar}, pp. 638--639, to relate 
$N_{\mathrm{KL}}(P_{0Y};\,\xi^2)$ to a Hellinger ball of appropriate radius. 
By Assumption \ref{ass:twicwtailcond}, there exists $C_0 >0$ such that  
$\mu_{0X}([-a,\,a]^c) \lesssim  e^{-(1+C_0)a}$ for $a$ large enough. 
Set $a_\eta:=a_0|\log\eta|$, with $a_0\geq[2/(1+C_0)]$ and $\eta>0$ small enough,
we have $\mu_{0X}([-a_\eta,\,a_\eta]^c) \lesssim\eta^2$. Then,
Lemma A.3 of \cite{ghosal2001}, p. 1261, shows that the $L^1$-distance between $f_{0Y}$ and 
$f_{0Y}^\ast:=f_\varepsilon\ast f_{0X}^\ast$, where $f_{0X}^\ast$ is the density of the renormalized restriction of $\mu_{0X}$ to $[-a_\eta,\,a_
\eta]$, denoted by $\mu_{0X}^\ast$, is bounded above by $2\eta^2$. From
$d_{\mathrm{H}}^2(f_{0Y},\,f^\ast_{0Y})\leq\|f_{0Y}-f^\ast_{0Y}\|_1\leq 2\eta^2$, we have $d_{\mathrm{H}}(f_{0Y},\,f^\ast_{0Y})\lesssim\eta$.
Lemma \ref{lem:discrete} applied to $\mu^\ast_{0X}$ (which plays the role of $\mu_{0X}$ in the statement) shows that, for $\sigma>0$ small enough, there exists a discrete probability measure $\mu_H^\ast$ supported on $[-a_\eta,\,a_\eta]$, with at most $N=O((a_\eta/\sigma)|\log\sigma|^{1/2})$ support points, such that 
$f_Y^\ast:=f_\varepsilon\ast (\phi_\sigma\ast\mu_H^\ast)$ satisfies 
$$d_{\mathrm{H}}(f_Y^\ast,\,f^\ast_{0Y})\lesssim \sigma^\beta.$$
An analogue of Corollary B1 in \cite{ghosal:shen:tokdar}, p. 16, shows that 
$\mu_H^\ast=\sum_{j=1}^Np_j\delta_{u_j}$ has support points inside $[-a_\eta,\,a_\eta]$, with at least 
$\sigma^{1+2\beta}$-separation between every pair of points $u_i\neq u_j$, 
and that $d_{\mathrm{H}}(f_Y^\ast,\,f^\ast_{0Y})\lesssim \sigma^{\beta}$.
Consider disjoint intervals $U_j$, for $j\in[N]$, centred at $u_1,\,\ldots,\,u_N$, with length $\sigma^{1+2\beta}$ each.
Extend $\{U_1,\,\ldots,\,U_N\}$ to a partition $\{U_1,\,\ldots,\,U_K\}$ of $[-a_\eta,\,a_\eta]$ such that each 
$U_j$, for $j=N+1,\,\ldots,\,K$, has length at most $\sigma$.
Further extend this to a partition $U_1,\,\ldots,\, U_M$ of $\mathbb R$ such that, for some constant $a_1>0$, 
we have $\sigma^{a_1}\leq H_0(U_j)\leq 1$, 
for $j\in[M]$. 
The whole process can be done with a total number $M$ of intervals of the same order as $N$.
Define $p_j=0$, for $j=N+1,\,\ldots,\,M$. Let $\mathscr P_\sigma$ be the set of probability measures $\mu_H\in\mathscr P(\mathbb R)$ 
with 
\begin{equation}\label{eq:diffprob}
\sum_{j=1}^K|\mu_H(U_j)-p_j|\leq 2\sigma^{2\beta+1} \quad \mbox{and}\quad \min_{j\in[K]}\mu_H(U_j)\geq \sigma^{2(2\beta+1)}/2.
\end{equation} 
Note that $\sigma^{2\beta+1}K<1$.
By Lemma 5 in \cite{ghosal2007}, p. 711, or 
Lemma B1 in \cite{ghosal:shen:tokdar}, p. 16, with $V_0:=\cup _{j>N}U_j$ and $V_j\equiv U_j$, for $j\in[N]$, for any $\mu_H\in\mathscr P_\sigma$ 
we have $d_{\mathrm{H}}^2(f_Y,\,f^\ast_Y)\leq\|f_Y-f^\ast_Y\|_1\lesssim \sigma^{2\beta}$. Then, for 
$\eta=O(\sigma^\beta)$, we have
$d_{\mathrm{H}}^2(f_Y,\,f_{0Y}) \lesssim
d_{\mathrm{H}}^2(f_Y,\,f_Y^\ast) +d_{\mathrm{H}}^2(f_Y^\ast,\,f_{0Y}^*) + d_{\mathrm{H}}^2(f_{0Y}^*,\,f_{0Y})\lesssim \sigma^{2\beta}$.
To apply Lemma B2 of \cite{ghosal:shen:tokdar}, pp. 16--17, we study the ratio $(f_Y/f_{0Y})$.
Let $\mu_H\in\mathscr P_\sigma$. 
For a standard Laplace error distribution ($\beta=2$), since $\|f_{0Y}\|_\infty\leq\frac{1}{2}$, for $|y|<a_\eta$,
\begin{align*}
\frac{f_Y}{f_{0Y}}(y)
&\gtrsim \int_{|x|\leq a_\eta} f_\varepsilon (y-x)\int_{|x-u|\leq\sigma}\phi_\sigma(x-u)\mu_H(\di u)\,\di x\\
&\gtrsim \frac{1}{\sigma}\int_{|x|\leq a_\eta} f_\varepsilon (y-x)\mu_H(U_{J(x)})\,\di x\gtrsim \sigma^{4\beta+1}a_\eta e^{-2a_\eta},
\end{align*}
while, for $|y|\geq a_\eta$,
\[\begin{split}
\frac{f_Y}{f_{0Y}}(y)
&\gtrsim \int_{|x|\leq a_\eta} f_\varepsilon(y-x)\int_{|u|\leq a_\eta}\phi_\sigma(x-u)\mu_H(\di u)\,\di x
\gtrsim \frac{a_\eta}{\sigma}e^{-|y|}e^{-a_\eta}e^{-2(a_\eta/\sigma)^2},
\end{split}\]
where 
$\mu_H([-a_\eta,\,a_\eta])\geq 1-2\sigma^{2\beta+1}$ 
because of the first condition in \eqref{eq:diffprob}.
For $\lambda=\sigma^{4\beta+1}a_\eta e^{-2a_\eta}$, we have $\log(1/\lambda)\lesssim |\log\sigma|$.
Since $\{y\in\mathbb R:\,(f_Y/f_{0Y})(y)\leq\lambda\}\subseteq \{y\in\mathbb R:\,|y|\geq a_\eta\}$, 
\[
\begin{split}
P_{0Y}\hspace*{-0.1cm}\pt{\hspace*{-0.1cm}\pt{\log\frac{f_{0Y}}{f_Y}}\1_{\big(\frac{f_Y}{f_{0Y}}\leq \lambda\big)}}
&\lesssim\int_{|y|\geq a_\eta}\pt{\log\frac{f_{0Y}}{f_Y}(y)}f_{0Y}(y)\,\di y
\lesssim\frac{1}{\sigma^2}\int_{|y|\geq a_\eta}y^2f_{0Y}(y)\,\di y, 
\end{split}
\]
where
\[\begin{split}
\int_{|y|\geq a_\eta}y^2f_{0Y}(y)\,\di y
&\leq\pt{\int_{\mathbb R}e^{|x|}f_{0X}(x)\,\di x}
\int_{|y|\geq a_\eta}y^2f_\varepsilon(y)\,\di y\\
&\lesssim
\int_{|y|\geq a_\eta}e^{-|y|/2}\,\di y\lesssim e^{-a_\eta/2}\lesssim \sigma^6,
\end{split}
\]
see, \emph{e.g.}, Lemma A.7 in \cite{scricciolo:2011}, pp. 303--304,
provided that $a_0\geq\max\{6,\,2/(1+C_0)\}$. Consequently, 
$P_{0Y}(\log(f_{0Y}/f_Y)\1_{((f_Y/f_{0Y})\leq\lambda)})\lesssim \sigma^4$.

\smallskip

\noindent
Thus, $P_{0Y}(\log(f_{0Y}/f_Y)\1_{((f_Y/f_{0Y})\leq\lambda)})\lesssim
\sigma^{2\beta}$, with $\beta=2$.
Lemma B2 of \cite{ghosal:shen:tokdar}, pp. 16--17, implies that $P_{0Y}\log(f_{0Y}/f_Y)$ is bounded above by 
$\sigma^{2\beta}|\log \sigma|$. By Lemma 10 of \cite{ghosal2007}, p. 714, we have $\mathscr D_{H_0}(\mathscr P_\sigma)\gtrsim\exp{(-c_1K|\log\sigma|)}
\gtrsim \exp{(-c_2(a_\eta/\sigma)|\log\sigma|^{3/2})}$ for constants $c_1,\,c_2>0$ that depend on $H_0(\mathbb R)$ and $a_1$.
Given $\sigma>0$, define $\mathscr S_\sigma:=\{\sigma'>0:\,\sigma(1+\sigma^d)^{-1}\leq\sigma'\leq\sigma\}$ for a constant 
$0<d\leq s_1-1$. Then, $\Pi_\sigma(\mathscr S_\sigma)\gtrsim \exp{(-D_1\sigma^{-\gamma}|\log\sigma|^{t_1})}$.
Replace $\sigma$ at every occurrence with $\sigma'\in\mathscr S_\sigma$.
For $\xi:=\sigma^{\beta}|\log\sigma|^{1/2}$, noting that $|\log\sigma|\lesssim|\log\xi|$, 
since $\gamma\leq1$ we have
\[
\begin{split}
\Pi(N_{\textrm{KL}}(P_{0Y};\,\xi^2))
&\gtrsim\exp{(-c_2(a_\eta/\sigma)|\log\sigma|^{3/2})}\times\exp{(-D_1\sigma^{-\gamma}|\log\sigma|^{t_1})}\\
&\gtrsim 
\exp{(-c_3(a_\eta/\sigma)|\log\sigma|^{(t_1\vee 3/2)})}\\
&\gtrsim 
\exp{(-c_4\xi^{-1/\beta}|\log\xi|^{1/(2\beta)+1+(t_1\vee 3/2)})}.
\end{split}
\]
Replacing $\xi$ with $\tilde\epsilon_n=n^{-\beta/(2\beta+1)}(\log n)^{1/2+\beta(t_1\vee 3/2)/(2\beta+1)}$,
for a suitable constant $C>0$, we have
$
\Pi(N_{\textrm{KL}}(P_{0Y};\,\tilde\epsilon_n^2))\gtrsim \exp{(-C n\tilde\epsilon_n^2)}
$ and the proof is complete.
\end{proof} 


\section{Lemmas for Theorem \ref{thm:4} on adaptive
posterior contraction rates for Dirichlet Laplace-normal mixtures}


The following lemma assesses the order of the bias of the distribution function
corresponding to a Gaussian mixture, where the mixing distribution 
is any probability measure on $\mathbb R$ and the scale 
parameter is bounded below by a multiple of the kernel bandwidth $h$, times a logarithmic factor.
It shows that, when $d=1$, condition \eqref{eq:ass1} of Theorem \ref{theo:1} is verified 
for a universal positive constant $C_1$.

\begin{lem}\label{lem:biasmixgaus}
Let $F_{X}$ be the distribution function of $\mu_X=\phi_\sigma\ast \mu_H$, 
with $\sigma>0$ and $\mu_{H}\in\mathscr P(\mathbb R)$. 
Let $K\in L^1(\mathbb R)\cap L^2(\mathbb R)$ be symmetric, with 
$\int_{\mathbb R}|z||K(z)|\,\di z<\infty$ and $\hat K\in L^1(\mathbb R)$ such that
$\hat K\equiv 1$ on $[-1,\,1]$.
Then, there exists $C_1>0$, depending only on $K$, such that, 
for $\alpha>0$, $0<h<1$ and $h\sqrt{(2\alpha+1)|\log h|}\leq \sigma <1$, we have
\begin{equation*}\label{eq:derivative21}
\|F_{X}-F_{X}\ast K_h\|_1 \leq C_1 h^{\alpha+1}.
\end{equation*}
\end{lem}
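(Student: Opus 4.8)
The plan is to express the bias $b_{F_X}(h) = F_X - F_X \ast K_h$ via Fourier inversion and exploit the smoothing effect of the Gaussian kernel $\phi_\sigma$ to control the tail of the frequency integral. First I would observe that, because $\hat K \equiv 1$ on $[-1,1]$, the function $1 - \hat K(h\,\cdot)$ vanishes on $[-1/h, 1/h]$, so the Fourier transform of $b_{F_X}(h)$ is supported on $\{|t| > 1/h\}$. Writing $\hat\mu_X(t) = \hat\phi_\sigma(t)\hat\mu_H(t) = e^{-\sigma^2 t^2/2}\hat\mu_H(t)$ and using that $|\hat\mu_H(t)| \le 1$, one has, for $z \neq 0$, the representation
\[
b_{F_X}(h)(z) = \frac{1}{2\pi}\int_{|t|>1/h} e^{-\imath t z}\,[1 - \hat K(ht)]\,\frac{e^{-\sigma^2 t^2/2}\hat\mu_H(t)}{-\imath t}\,\di t,
\]
which is valid since the integrand is absolutely integrable (the Gaussian factor dominates). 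This mirrors the treatment of $\|b_{F_{0X,\mathsf v}}(\tilde h)\|_1$ in Case 2 of the proof of Theorem \ref{theo:1}, but here the Gaussian decay of $\hat\mu_X$ replaces the Sobolev regularity used there.

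Next I would split $\|b_{F_X}(h)\|_1 = \int_{|z|\le h} + \int_{|z|>h}$, exactly as in the proof of Theorem \ref{theo:1}. For the inner region $B_1 := \int_{|z|\le h}|b_{F_X}(h)(z)|\,\di z$, bound the integrand crudely by $\frac{1}{2\pi}\int_{|t|>1/h}\frac{(1 + \|K\|_1)e^{-\sigma^2 t^2/2}}{|t|}\,\di t$, so $B_1 \lesssim h \int_{|t|>1/h}\frac{e^{-\sigma^2 t^2/2}}{|t|}\,\di t$. Since $\sigma \ge h\sqrt{(2\alpha+1)|\log h|}$, on the range $|t| > 1/h$ we have $\sigma^2 t^2/2 \ge (2\alpha+1)|\log h|/2 \cdot (ht)^2 \ge (2\alpha+1)|\log h|/2$, and a standard Gaussian-tail estimate gives $\int_{|t|>1/h} \frac{e^{-\sigma^2 t^2/2}}{|t|}\,\di t \lesssim e^{-\sigma^2/(2h^2)} \lesssim h^{(2\alpha+1)/2} \le h^\alpha$, whence $B_1 \lesssim h^{\alpha+1}$, with the implied constant depending only on $K$. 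For the outer region $B_2 := \int_{|z|>h}|b_{F_X}(h)(z)|\,\di z$, I would use the identity \eqref{eq:identity21} with $j=1$ to gain a factor $1/z$: writing $\hat g(t) := [1 - \hat K(ht)]\,e^{-\sigma^2 t^2/2}\hat\mu_H(t)/(-\imath t)\,\mathbbm{1}_{(|t|>1/h)}$, we get $b_{F_X}(h)(z) = \frac{1}{2\pi \imath z}\int_{\mathbb R} e^{-\imath t z}\hat g^{(1)}(t)\,\di t$, so by Cauchy--Schwarz, $B_2 \lesssim \big(\int_{|z|>h} z^{-2}\di z\big)^{1/2}\|\hat g^{(1)}\|_2 \lesssim h^{-1/2}\|\hat g^{(1)}\|_2$. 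It then remains to show $\|\hat g^{(1)}\|_2 \lesssim h^{\alpha+3/2}$.

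The main obstacle is the estimate $\|\hat g^{(1)}\|_2 \lesssim h^{\alpha + 3/2}$, since differentiating $\hat g$ produces several terms that must each be controlled on $\{|t|>1/h\}$. The derivative has the schematic form $\hat g^{(1)}(t) = \big[ -h\hat K^{(1)}(ht)\,\frac{e^{-\sigma^2 t^2/2}\hat\mu_H(t)}{-\imath t} + [1-\hat K(ht)]\frac{\di}{\di t}\big(\frac{e^{-\sigma^2 t^2/2}\hat\mu_H(t)}{-\imath t}\big)\big]\mathbbm{1}_{(|t|>1/h)}$ (the boundary term from differentiating the indicator vanishes because $1 - \hat K(ht) = 0$ at $|t| = 1/h$). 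The delicate point is handling $\frac{\di}{\di t}\hat\mu_H(t)$, which need not be bounded for a general $\mu_H \in \mathscr P(\mathbb R)$; here I would instead write $\frac{\di}{\di t}\big(e^{-\sigma^2 t^2/2}\hat\mu_H(t)\big) = \widehat{(\imath \cdot)\mu_X}(t)$ only if $\mu_X$ has a first moment, which it does ($\mu_X = \phi_\sigma \ast \mu_H$ may fail to have finite mean if $\mu_H$ does not), so more robustly I would keep $\hat\mu_H$ intact and differentiate only the Gaussian and the $1/t$ factors, bounding $|\hat\mu_H| \le 1$ and $|\hat\mu_H^{(1)}|$ by first differentiating $e^{-\sigma^2 t^2/2}\hat\mu_H(t)$ as a product whose $\mathcal F^{-1}$ is $(\imath x)(\phi_\sigma \ast \mu_H)(x)$ — this integral is finite since $\int |x|\phi_\sigma(x-u)\,\di x \le \sigma\mathbb E|Z| + |u|$ is $\mu_H$-integrable only under a first-moment assumption on $\mu_H$. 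Since the lemma is stated for arbitrary $\mu_H \in \mathscr P(\mathbb R)$, the cleanest route is to avoid $\hat\mu_H^{(1)}$ altogether: bound $\big|\frac{\di}{\di t}\big(e^{-\sigma^2 t^2/2}/(-\imath t)\big)\big| \lesssim (\sigma^2 + t^{-2})e^{-\sigma^2 t^2/2}$ and, for the term carrying $\hat\mu_H^{(1)}$, use integration by parts in the inverse Fourier integral to transfer the derivative onto the smooth, rapidly decaying factor $[1-\hat K(ht)]e^{-\sigma^2 t^2/2}/(-\imath t)$, so that only $\hat\mu_H$ itself (bounded by $1$) appears. Collecting terms, each contributes an $L^2$-norm over $\{|t|>1/h\}$ of a product of a polynomial in $t$ (degree at most $2$) with $e^{-\sigma^2 t^2/2}$, and the Gaussian-tail bound together with $\sigma \ge h\sqrt{(2\alpha+1)|\log h|}$ yields $\lesssim h^{\alpha+3/2}$, hence $B_2 \lesssim h^{\alpha+1}$. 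Combining $B_1$ and $B_2$ gives $\|F_X - F_X\ast K_h\|_1 \le C_1 h^{\alpha+1}$ with $C_1$ depending only on $K$, as claimed.
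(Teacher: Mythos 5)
Your identification of the obstacle is exactly right: in the $B_2$ estimate, differentiating $\hat g$ in $t$ produces a term proportional to $\hat\mu_H^{(1)}$, and for a general $\mu_H\in\mathscr P(\mathbb R)$ this derivative need not exist at all (its existence as a function is tied to a first moment of $\mu_H$, which is not assumed). But your proposed remedy --- integrating by parts in the inverse Fourier integral to move the $t$-derivative off $\hat\mu_H$ --- is circular. Set $A(t):=[1-\hat K(ht)]\,e^{-\sigma^2 t^2/2}/(-\imath t)\,\1_{(|t|>1/h)}$, so that $\hat g^{(1)}=A^{(1)}\hat\mu_H+A\,\hat\mu_H^{(1)}$. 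Integrating the second term by parts yields
\[
\int_{\mathbb R}e^{-\imath t z}A(t)\hat\mu_H^{(1)}(t)\,\di t=\imath z\int_{\mathbb R}e^{-\imath t z}A(t)\hat\mu_H(t)\,\di t-\int_{\mathbb R}e^{-\imath t z}A^{(1)}(t)\hat\mu_H(t)\,\di t,
\]
and substituting this back into $b_{F_X}(h)(z)=(2\pi\imath z)^{-1}\int e^{-\imath tz}\hat g^{(1)}(t)\,\di t$ causes the two $A^{(1)}\hat\mu_H$ contributions to cancel: you are left with $(2\pi)^{-1}\int e^{-\imath tz}A(t)\hat\mu_H(t)\,\di t$, which is just $b_{F_X}(h)(z)$ with no $1/z$ gain. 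So this route cannot close.

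The paper sidesteps the problem by removing $\hat\mu_H$ from the Fourier side \emph{before} taking any $t$-derivative. Since the Fourier transform of $b_{F_X}(h)$ factors as $\hat\mu_H(t)\hat f(t)$ with $\hat f(t):=[1-\hat K(ht)]\,\hat\phi(\sigma t)/(-\imath t)\,\1_{(|t|>1/h)}\in L^1(\mathbb R)$, one has $b_{F_X}(h)=\mu_H\ast f$ and hence, by Young's inequality (a probability measure is a contraction on $L^1$), $\|b_{F_X}(h)\|_1\leq\|f\|_1$. The paper then bounds $\|f\|_1\leq 2^{-1/2}(\|\hat f\|_2^2+\|\hat f^{(1)}\|_2^2)^{1/2}$ using the inequality from \cite{Bobkov:2016} and evaluates the two $L^2$-norms; your own plan (the $z$-split, identity \eqref{eq:identity21} with $j=1$, Cauchy--Schwarz) would work just as well if applied to $f$ rather than to $b_{F_X}(h)$, since $\hat f$ involves only the kernel, the Gaussian and a power of $t$, all smooth and rapidly decaying. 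In either packaging, the key move you are missing is to peel off $\mu_H$ by Young's inequality first; without it, the Dedecker-style $z$-split does not go through for an arbitrary $\mu_H\in\mathscr P(\mathbb R)$.
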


\begin{proof}
Defined the function $\hat f(t):=[1-\hat K(ht)][\hat\phi(\sigma t)/t]$, $t\in\mathbb R$,
since $t\mapsto \hat\mu_H(t)\hat f(t)$ is in $L^1(\mathbb R)$, 
arguing as for $G_{2,h}$ in \cite{dedecker2015}, pp. 251--252, we have
\[\begin{split}
\|b_{F_X}(h)\|_1=\|F_{X}-F_{X}\ast K_h\|_1&=
\int_{\mathbb R}\bigg|\frac{1}{2\pi}\int_{|t|>1/h}
e^{-\imath t x}\hat \mu_H(t)\hat f(t)\,\di t
\bigg|\,\di x\\
&=\|\mu_H\ast f\|_1\leq \| f\|_1
\leq \frac{1}{\sqrt{2}}(\|\hat f\|_2^2+\|\hat f^{(1)}\|_2^2)^{1/2},
\end{split}\]
see, \emph{e.g.}, \cite{Bobkov:2016}, p. 1031, for the last inequality. 
Using the fact that $\|\hat K\|_\infty \leq \|K\|_1<\infty$, we have
$\|\hat f\|_2^2< \hat\phi(\sqrt{2}\sigma/h)(1+\|K\|_1)^2\int_{|t|>1/h}
t^{-2}\,\di t= 2(1+\|K\|_1)^2he^{-(\sigma/h)^2} \lesssim h^{2(\alpha+1)}$. Besides,
\[
\hat f^{(1)}(t)=-\pg{h\hat K^{(1)}(ht)+\pt{\frac{1}{t
}+\sigma^2 t
}[1-\hat K(ht)]
}
\frac{\hat\phi(\sigma t)}{t}\1_{[-1,\,1]^c}(ht),\quad t\in\mathbb R.
\]
Since $K\in L^1(\mathbb R)$ and $zK(z)\in L^1(\mathbb R)$ jointly imply that $\hat K$ is 
continuously differentiable with $|\hat K^{(1)}(t)|\rightarrow 0$, as $|t|\rightarrow \infty$, 
so that $\hat K^{(1)}\in C_b(\mathbb R)$, we have
\[\begin{split}
\|\hat f^{(1)}\|_2^2 &< 2 e^{-(\sigma/h)^2}
\int_{|t|>1/h}
\pq{\frac{h^2}{t^2}\|\hat K^{(1)}\|_\infty^2+\frac{2}{t^4}(1+\|K\|_1)^2}\,\di t\\
&\qquad\qquad\qquad\qquad\qquad\qquad\qquad\qquad\quad +4\sigma^4(1+\|K\|_1)^2\int_{|t|>1/h}e^{-(\sigma t)^2}\,\di t \\
&\lesssim (h^2+\sigma^2)he^{-(\sigma/h)^2}\lesssim he^{-(\sigma/h)^2}\lesssim h^{2(\alpha+1)}. 
\end{split}
\]
The assertion follows for a suitable constant $C_1>0$ depending only on $K$.
\end{proof}

\begin{rmk}
\emph{Due to the exponentially decaying tails of the Gaussian density and to
a suitable choice of the scale parameter $\sigma$ greater than or equal to a  
multiple of the kernel bandwidth $h$, times a logarithmic factor, 
a different argument than that used in the proof of Theorem \ref{theo:1} for the case 
when the smoothness Assumption \ref{ass:smoothXXX} on $\mu_{0X}$ holds 
is used to bound the bias of the distribution function $F_X$ associated to $\mu_X=\phi_\sigma\ast \mu_H$. 
}
\end{rmk}

We introduce some more notation. For $h=o(1)$, let $\delta=o(h)$. 
For $m\in\mathbb N$, $b=\mp\frac{1}{2}$ and $\sigma=o(1)$, we define the set
\begin{equation}\label{def:set}
A_{b,\sigma}:=\pg{x\in\mathbb R:\,\gamma h_{m,b,\sigma}(x)>-\frac{1}{2}\bar h_{0,b}(x)},
\end{equation}
with $\bar h_{0,b}$ and $h_{m,b,\sigma}$ as defined in \eqref{eq:barh} and \eqref{hm}, respectively,
and the function
\begin{equation}\label{def:functiong}
g_{b,\sigma}:=M_{0X}(b)e^{-b\cdot}\gamma h_{m,b,\sigma}\1_{A_{b,\sigma}}-\frac{1}{2}f_{0X}\1_{A_{b,\sigma}^c}.
\end{equation}

\smallskip


In the following lemma we prove the existence of a compactly supported discrete mixing
probability measure, with a sufficiently small number of support points, 
such that the corresponding Laplace-normal mixture
has Hellinger distance of the order $O(\sigma^{\alpha+2})$ from the Laplace mixture sampling density $f_{0Y}=f_\varepsilon\ast f_{0X}$
having an $\alpha$-Sobolev regular mixing density $f_{0X}$ with exponentially decaying tails.

\begin{lem}\label{lem:deltabound1}
Let $f_\varepsilon$ be the standard Laplace density.
Let $f_{0X}$ be a density 
satisfying Assumption \ref{ass:twicwtailcond}, Assumption \ref{ass:sobolevcond} for $\alpha>0$ and 
Assumption \ref{ass:smoothf0}.
For $\sigma>0$ small enough, there exist a constant $A_0>0$ and a discrete probability measure on $[-a_\sigma,\,a_\sigma]$, with 
$a_\sigma:=A_0|\log\sigma|$, having at most $N=O((a_\sigma/\sigma)|\log \sigma|^{1/2})$ support points, such that, for $f_Y:=f_\varepsilon\ast (\phi_\sigma\ast\mu_H)$ 
and $f_{0Y}:=f_\varepsilon\ast f_{0X}$,
$$d_{\mathrm H}(f_Y,\,f_{0Y})\lesssim \delta_0^{-1/2}e^{a_0/2}\sigma^{\alpha+2}$$
as soon as $\mu_{0X}(\{x\in\mathbb R:\,|x|\leq a_0\})\geq \delta_0$ for some $0<a_0<a_\sigma$ and $0<\delta_0<1$.
\end{lem}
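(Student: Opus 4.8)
The plan is to follow closely the structure of Lemma \ref{lem:discrete}, but now carrying the extra Sobolev regularity $\alpha$ through the estimates, and also using the correction operator $T_{m,b,\sigma}$ built in Section \ref{sec:approxresult} to upgrade the trivial $O(\sigma^2)$ Gaussian-convolution bias to the correct $O(\sigma^{\alpha+2})$ bias. First, as in the proof of Lemma \ref{lem:discrete}, I would reduce the Hellinger distance to an $L^2$-distance between exponentially weighted densities: since $f_{0X}$ has exponential tails (Assumption \ref{ass:twicwtailcond}) and $\mu_{0X}(\{|x|\le a_0\})\ge\delta_0$, one gets a pointwise lower bound $f_{0Y}(y)\gtrsim \delta_0 e^{-(|y|+a_0)}/2$, so that with $U(y)=e^{-y/2}+e^{y/2}$ as in \eqref{eq:defU}, $d_{\mathrm H}^2(f_Y,f_{0Y})\lesssim \delta_0^{-1}e^{a_0}\|U(f_Y-f_{0Y})\|_2^2$, and by Plancherel it suffices to bound $\sum_{b=\mp1/2}\|\mathcal F\{e^{b\cdot}(f_Y-f_{0Y})\}\|_2^2$. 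The Laplace factor contributes the weight $1/|\varrho_b(t)|^2\asymp (t^4+\dots)^{-1}$ exactly as before.

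Next, I would restrict $\mu_{0X}$ to $[-a_\sigma,a_\sigma]$ with $a_\sigma=A_0|\log\sigma|$, $A_0$ large, losing only $O(\sigma^{2(\alpha+2)})$ in squared Hellinger distance (using the exponential tails and choosing $A_0$ large relative to $1/(1+C_0)$ and relative to $\alpha+2$), and replace the natural approximant $\phi_\sigma\ast f_{0X}^\ast$ by $\phi_\sigma\ast(T_{m,b,\sigma}f_{0X}^\ast)$ with $m\ge[2\vee(\alpha+2)/2]$. By Lemma \ref{lem:contapprox} (applied to the truncated, renormalized density, whose exponentially weighted version lies in $L^1\cap L^2$ and still satisfies the Sobolev condition up to the tail correction), the $L^2$-bias of $f_\varepsilon\ast[\phi_\sigma\ast(T_{m,b,\sigma}f_{0X}^\ast)-f_{0X}^\ast]$, weighted by $e^{b\cdot}$, is $O(\sigma^{\alpha+2})$. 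The subtlety is that $T_{m,b,\sigma}f_{0X}^\ast$ is not a probability density — it involves the correction term $\gamma h_{m,b,\sigma}$ — so I would follow Section \ref{sec:approxresult}'s device: pass through the sets $A_{b,\sigma}$ of \eqref{def:set} and the functions $g_{b,\sigma}$ of \eqref{def:functiong}, using the local Hölder smoothness (Assumption \ref{ass:smoothf0}) and the integrability condition \eqref{eq:ratiofo} to show that the non-density part is negligible (of order $\sigma^{2(\alpha+2)}$ in the relevant weighted $L^2$ sense), so that the corrected function agrees, up to this small error, with a genuine sub-probability density that can be renormalized.

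Then I would discretize: exactly as in Lemma \ref{lem:discrete}, choose a cutoff frequency $M\asymp \sigma^{-1}|\log\sigma|^{1/2}$ so that $e^{-(\sigma M)^2}\lesssim \sigma^{2(\alpha+2)}$ (this only requires $(\sigma M)^2\ge (\alpha+2)|\log\sigma|$), and build a discrete $\mu_H$ supported on $[-a_\sigma,a_\sigma]$ matching the first $J=\lceil \eta e a_\sigma M\rceil$ ordinary moments of the (renormalized, corrected) mixing density together with the two exponential moments $M_{\mu_H}(\mp1/2)$; the number of support points is $O(a_\sigma M)=O((a_\sigma/\sigma)|\log\sigma|^{1/2})$ by Lemma A.1 of \cite{ghosal2001}. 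Splitting $I_b$ into the low-frequency moment-matching term, the high-frequency tail term, and the Gaussian-smoothing defect term $|e^{\sigma^2\psi_b^2/2}-1|$ — but now with the correction operator already absorbed into the mixing measure — each is $O(\sigma^{2(\alpha+2)})$: the first by Lemma \ref{lem:diseg} and $J!\ge(J/e)^J$ together with $(eaM/J)^{2J}\le 1$; the second by the frequency cutoff; the third by the bound $|e^{\sigma^2\psi_b^2/2}-1|\le \sigma^2(t^2+1/4)/2$ combined with the Sobolev integrability $\int|t|^{2\alpha}|\widehat{(e^{b\cdot}f_{0X})}(t)|^2\,dt<\infty$ from Assumption \ref{ass:sobolevcond}. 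Summing the pieces and tracing back through Plancherel and the weight $\delta_0^{-1}e^{a_0}$ gives $d_{\mathrm H}(f_Y,f_{0Y})\lesssim \delta_0^{-1/2}e^{a_0/2}\sigma^{\alpha+2}$.

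I expect the main obstacle to be controlling the correction term $\gamma h_{m,b,\sigma}$ when passing from the (signed) function $T_{m,b,\sigma}f_{0X}^\ast$ to an honest discrete probability measure — that is, the bookkeeping around the sets $A_{b,\sigma}$ and $g_{b,\sigma}$, and showing that the truncation to $[-a_\sigma,a_\sigma]$ together with this correction does not destroy either the Sobolev bound of Lemma \ref{lem:contapprox} or the exponential-moment matching. Everything else is a careful but routine adaptation of the $\beta=2$, $\alpha=0$ computation in Lemma \ref{lem:discrete}, now with $\beta$ replaced throughout by $\alpha+2$ (exactly the substitution indicated in the proof of Theorem \ref{thm:4}).
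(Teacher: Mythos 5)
Your overall architecture mirrors the paper's: reduce $d_{\mathrm H}$ to the exponentially weighted $L^2$-distance via the lower bound on $f_{0Y}$ and Plancherel; use the operator $T_{m,b,\sigma}$ from Lemma~\ref{lem:contapprox} to get the $O(\sigma^{\alpha+2})$ bias; repair the sign problem through $A_{b,\sigma}$ and $g_{b,\sigma}$; truncate to a logarithmically growing interval; and discretize with moment matching at frequency cutoff $M\asymp\sigma^{-1}|\log\sigma|^{1/2}$. That part of the plan is sound.

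However, there is a genuine gap in the order of operations. You propose to \emph{first} restrict $\mu_{0X}$ to $[-a_\sigma,a_\sigma]$ and \emph{then} apply Lemma~\ref{lem:contapprox} to the truncated, renormalized density $f_{0X}^\ast$. But a hard truncation introduces jump discontinuities at $\pm a_\sigma$, so $e^{b\cdot}f_{0X}^\ast$ no longer satisfies Assumption~\ref{ass:sobolevcond} for any $\alpha\ge\tfrac12$: its Fourier transform decays only like $|t|^{-1}$ and the integral $\int|t|^{2\alpha}|\widehat{(e^{b\cdot}f_{0X}^\ast)}(t)|^2\,\di t$ diverges. The phrase ``still satisfies the Sobolev condition up to the tail correction'' is precisely where this fails — the tail correction is small in $L^1$, but the truncation catastrophically worsens the Sobolev norm, and Lemma~\ref{lem:contapprox} is driven by the latter (it appears in both $\Delta_{01}$ and $\Delta_{02}$). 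So the low- and high-frequency estimates you need become infinite.

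The paper avoids this by reversing the order: $T_{m,b,\sigma}$ is applied to the \emph{full} $f_{0X}$ on $\mathbb R$ (where Assumption~\ref{ass:sobolevcond} genuinely holds), the correction $g_{b,\sigma}$ and the renormalization produce a true density $\bar h_{b,\sigma}$ still supported on all of $\mathbb R$, and only then is $\bar h_{b,\sigma}$ restricted to $E_\sigma=\{f_{0X}>\sigma^T\}\subset[-A_0|\log\sigma|,A_0|\log\sigma|]$. The resulting truncation error $V_4$ is bounded not through Sobolev control but through the $L^1$-smallness of the discarded piece $\bar h_{b,\sigma}\1_{E_\sigma^c}$, which gives a uniform bound on its Fourier transform; this is enough because the multiplier $|e^{\sigma^2\psi_b^2(t)/2}|^2/|\varrho_b(t)|^2\lesssim(1+t^4)^{-1}$ is integrable. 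If you want to keep your truncate-first ordering you would need a \emph{smooth} cutoff (so the truncated density retains Sobolev regularity) and then an additional argument that the cutoff's contribution to the Sobolev norm is uniformly bounded — but that is a different proof from the one you sketch, and you would also have to re-justify the compact support of the eventual discretization (since $T_{m,b,\sigma}$ involves convolution with the non-compactly-supported $H_\delta$, the corrected approximant is never compactly supported, which is another reason the paper truncates last rather than first).
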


\begin{proof}
Reasoning as in Lemma \ref{lem:discrete}, for $a_0,\,\delta_0$ as in the statement, 
$d_{\mathrm H}^2(f_{Y},\,f_{0Y}) \leq 2\delta_0^{-1}e^{a_0}\|g_{Y}-g_{0Y}\|_2^2$, 
where $g_{Y}:=Uf_{Y}$ and $g_{0Y}:=Uf_{0Y}$, with $U$ defined in \eqref{eq:defU}.
Note that $(e^{|\cdot|/2}f_{0X})\in L^1(\mathbb R)\cap L^2(\mathbb R)$ by Assumption \ref{ass:twicwtailcond}. 
Also, $g_Y,\,g_{0Y}\in L^1(\mathbb R)\cap L^2(\mathbb R)$ so that, not only are 
the corresponding Fourier transforms
$\hat g_{Y},\,\hat g_{0Y}$ well defined, but $\|g_{Y}\|_2^2=(2\pi)^{-1}\|\hat g_{Y}\|_2^2$ and  
$\|g_{0Y}\|_2^2=(2\pi)^{-1}\|\hat g_{0Y}\|_2^2$. In order to bound $\|g_{Y}-g_{0Y}\|_2^2$, 
some definitions and preliminary facts are exposed. For $T\geq[(\alpha+2)/\vartheta]$, with $\vartheta\in(0,\,1)$, 
we define the set $E_\sigma:=\{x\in\mathbb R:\,f_{0X}(x)>\sigma^T\}$. The tail condition on $f_{0X}$ of Assumption \ref{ass:twicwtailcond} implies that $E_\sigma \subset \{x\in\mathbb R:\, |x| \leq  A_0 
|\log\sigma|\} $ for some $A_0>0$. Note that $A_0$ can be chosen arbitrarily large by choosing $T$ large enough because 
$A_0$ is proportional to $T/(1+C_0)$. Set $B_0:=\int_{\mathbb R}[f_{0X}(x)]^{1-\vartheta}\,\di x<\infty$, then 
\begin{equation}\label{eq:Ecompl}
\mu_{0X}(E_\sigma^c)\leq B_0 \sigma^{\vartheta T} \lesssim \sigma^{\alpha+2} 
\end{equation}
by definition of $T$. For $b=\mp\frac{1}{2}$, introduced the densities
$$\bar h_{b,\sigma}:=\frac{f_{0X}+g_{b,\sigma}}{\|f_{0X}+g_{b,\sigma}\|_1} \qquad \text{and} \qquad \frac{ \bar h_{b,\sigma}\1_{E_\sigma}}{\|\bar h_{b,\sigma}\1_{E_\sigma}\|_1},$$
where $g_{b,\sigma}$ is as defined in \eqref{def:functiong}, we consider the decomposition
\[\begin{split}
\|g_{0Y}-g_Y\|_2^2&\lesssim
\sum_{b=\mp 1/2}\|e^{b\cdot}\{f_\varepsilon\ast [f_{0X}-\phi_\sigma\ast (T_{m,b,\sigma}f_{0X})]\}\|_2^2
\\ &\qquad\qquad\quad +\sum_{b=\mp 1/2}\|e^{b\cdot}\{f_\varepsilon\ast \phi_\sigma\ast [(T_{m,b,\sigma}f_{0X})-(f_{0X}+g_{b,\sigma})]\}\|_2^2\\
&\qquad\qquad\quad+\sum_{b=\mp 1/2}\|e^{b\cdot}\{f_\varepsilon\ast \phi_\sigma\ast [(f_{0X}+g_{b,\sigma})-\bar h_{b,\sigma}]\}\|_2^2\\
&\qquad\qquad\quad+\sum_{b=\mp 1/2}\|e^{b\cdot}\{f_\varepsilon\ast \phi_\sigma\ast[\bar h_{b,\sigma}-(\bar h_{b,\sigma}\1_{E_\sigma}/\|\bar h_{b,\sigma}\1_{E_\sigma}\|_1)]\}\|_2^2\\
&\qquad\qquad\quad+\sum_{b=\mp 1/2}
\|e^{b\cdot}\{f_\varepsilon\ast \phi_\sigma\ast[(\bar h_{b,\sigma}\1_{E_\sigma}/\|\bar h_{b,\sigma}\1_{E_\sigma}\|_1)-\mu_H]\}\|_2^2\\
&=:\sum_{r=1}^5 V_r.
\end{split}
\]
We show that each term $V_1,\,\ldots,\,V_5$ is of order $O(\sigma^{2(\alpha+2)})$.
By inequality \eqref{bound:tildehm12} of Lemma \ref{lem:contapprox}, we have $V_1\lesssim \sigma^{2(\alpha+2)}$.

\smallskip

\noindent
\emph{Study of the term $V_2$}\\[5pt]
We recall that $\varrho_b(t):=[1-\psi_b^2(t)]$, with $\psi_b(t):=-(\imath t+b)$, $t\in\mathbb R$, and
\begin{equation*}
h_{m,b,\sigma}:=\frac{1}{\gamma} \sum_{k=1}^{m-1}\frac{(-\sigma^2/2)^k}{k!}
\sum_{j=0}^{2k} \binom{2k}{j}(-b)^{2k-j}(\bar h_{0,b}\ast D^jH_\delta).
\end{equation*}
As in Lemma \ref{lem:contapprox}, for constants $0<c_\delta,\,c_h<1$, 
we take $\delta:=c_\delta\sigma$ and $h:=c_h|\log \sigma|^{-1/2}$.
We write 
$$e^{b\cdot}\Big\{f_\varepsilon\ast \phi_\sigma\ast 
\Big[M_{0X}(b)e^{-b\cdot}\gamma h_{m,b,\sigma}\1_{A_{b,\sigma}^c}+\frac{1}{2}f_{0X}\1_{A_{b,\sigma}^c}\Big]\Big\} $$ 
as 
$$ M_{0X}(b) \Big\{ e^{b\cdot}[f_\varepsilon\ast \phi_\sigma]\ast 
\Big[\Big(\gamma h_{m,b,\sigma}+\frac{1}{2}\bar h_{0,b}\Big)\1_{A_{b,\sigma}^c}\Big]\Big\}$$
so that,
using the definition of $g_{b,\sigma}$ in \eqref{def:functiong},
\[\begin{split}
V_2&=\sum_{b=\mp 1/2}\Big\|e^{b\cdot}\Big\{f_\varepsilon\ast \phi_\sigma\ast 
\Big[M_{0X}(b)e^{-b\cdot}\gamma h_{m,b,\sigma}\1_{A_{b,\sigma}^c}+\frac{1}{2}f_{0X}\1_{A_{b,\sigma}^c}\Big]\Big\}\Big\|_2^2\\
&\lesssim\sum_{b=\mp 1/2}M^2_{0X}(b)
\int_{\mathbb R}  \frac{|e^{\sigma^2 \psi_b(t)^2/2}|^2}{|\varrho_b(t)|^2 } 
| \mathcal F\{[2\gamma h_{m,b,\sigma}+\bar h_{0,b}]\1_{A_{b,\sigma}^c}\}(t)|^2\,\di t\\&\lesssim 
\sum_{b=\mp 1/2}M^2_{0X}(b)(\|\gamma h_{m,b,\sigma}\1_{A_{b,\sigma}^c}\|_1+\|\bar h_{0,b}\1_{A_{b,\sigma}^c}\|_1 )^2,
\end{split}
\] 
where we have used the facts that 
$$\|\mathcal F\{[2\gamma h_{m,b,\sigma}+\bar h_{0,b}]\1_{A_{b,\sigma}^c}\}\|_\infty  \leq 2\|\gamma h_{m,b,\sigma}\1_{A_{b,\sigma}^c}\|_1+\|\bar h_{0,b}\1_{A_{b,\sigma}^c}\|_1 $$
and
$$  \frac{|e^{\sigma^2 \psi_b(t)^2/2}|^2}{|\varrho_b(t)|^2 } \lesssim \frac{ 1 }{1+t^4}.$$ 
Finally, using the inequalities in \eqref{cm12} of Lemma \ref{lem:Asigma}, we obtain that
\[
V_2 \lesssim \sigma^{2\upsilon R}\lesssim \sigma^{2(\alpha+2)}.
\] 

\smallskip

\noindent
\emph{Study of the term $V_3$}\\[5pt]
By the inequalities in \eqref{cm12} and \eqref{eq:norm1} of Lemma \ref{lem:Asigma}, noting that $\|\mathcal F\{\bar h_{0,b}\}\|_\infty\leq 1$, we have
\[\begin{split}
V_3&=
\sum_{b=\mp 1/2}\pt{1-\frac{1}{\|f_{0X}+g_{b,\sigma}\|_1}}^2
\|e^{b\cdot}[f_\varepsilon\ast \phi_\sigma\ast (f_{0X}+g_{b,\sigma})]\|_2^2\\
&\lesssim \sigma^{2\upsilon R}\sum_{b=\mp 1/2}M_{0X}^2(b)\int_{\mathbb R}  \frac{  |e^{\sigma^2 \psi_b(t)^2/2}|^2 }{|\varrho_b(t)|^2} 
[|\mathcal F\{\bar h_{0,b}\}(t)|^2+|\mathcal F\{\gamma h_{m,b,\sigma}\}(t)|^2]\,\di t.
\end{split}
\]
Recalling that, by inequality \eqref{eq:bilaplace1}, we have $|\mathcal F\{D^j H_\delta\}(t)|\leq |t|^j |\mathcal F \{H\} (\delta t)|\leq |t|^j$, for $j=0,\,\ldots,\,2k$, we find
\[\begin{split}
\hspace*{-1cm}\int_{\mathbb R}  \frac{  |e^{\sigma^2 \psi_b(t)^2/2}|^2 }{|\varrho_b(t)|^2} 
|\mathcal F\{\gamma h_{m,b,\sigma}\}(t)|^2\,\di t &\lesssim
\sum_{k=1}^{m-1} \frac{e^{\sigma^2/4}}{(2^k k!)^2}
\int_{\mathbb R}\frac{[\sigma^2(t^2+1/4)]^{2k}}{e^{(\sigma t)^2}|\varrho_b(t)|^2}
|\mathcal F\{\bar h_{0,b}\}(t)|^2 \,\di t\\&
\lesssim 
\|\widehat{(e^{b\cdot}f_{0X}})\|_2^2\lesssim\|e^{|\cdot|/2}f_{0X}\|_2^2<\infty, 
\end{split}
\]
which implies that 
\[
V_3
\lesssim \sigma^{2\upsilon R}\lesssim \sigma^{2(\alpha+2)}.
\]

\smallskip

\noindent
\emph{Study of the term $V_4$}\\[5pt]
Taking into account that $\|f_{0X}+g_{b,\sigma}\|_1\geq 1$ (see \eqref{eq:norm1} of Lemma \ref{lem:Asigma}), we have
\[\begin{split}
V_4&\lesssim\sum_{b=\mp 1/2}\Big(\|\bar h_{b,\sigma}\1_{E_\sigma^c}\|_1^2\times
\|e^{b\cdot}\{f_\varepsilon\ast \phi_\sigma\ast(\bar h_{b,\sigma}\1_{E_\sigma}/\|\bar h_{b,\sigma}\1_{E_\sigma}\|_1)\}\|_2^2\\
&\hspace*{5.2cm}+
\|e^{b\cdot}\{f_\varepsilon\ast \phi_\sigma\ast(\bar h_{b,\sigma}\1_{E_\sigma^c})\}\|_2^2\Big)
\\
&\lesssim \sum_{b=\mp 1/2}\|(f_{0X}+g_{b,\sigma})\1_{E^c_\sigma}\|_1^2 
\int_{\mathbb R}  \frac{  |e^{\sigma^2 \psi_b(t)^2/2}|^2 }{|\varrho_b(t)|^2}|
\mathcal F\{e^{b\cdot}\bar h_{b,\sigma}\1_{E_\sigma}/\|\bar h_{b,\sigma}\1_{E_\sigma}\|_1\}(t)|^2 \,\di t\\
&\hspace*{5.2cm} +\sum_{b=\mp 1/2}
\int_{\mathbb R}  \frac{|e^{\sigma^2 \psi_b(t)^2/2}|^2 }{|\varrho_b(t)|^2}|\mathcal F\{e^{b\cdot}\bar h_{b,\sigma}\1_{E_\sigma^c}\}(t)|^2 \,\di t.
\end{split}
\]
Note that, by \eqref{eq:Ecompl},
\[\begin{split}
\|(f_{0X}+g_{b,\sigma})\1_{E^c_\sigma}\|_1&\leq \frac{3}{2}\mu_{0X}(E_\sigma^c)+M_{0X}(b) \int_{A_{b,\sigma}\cap E_\sigma^c} e^{-bx}|\gamma h_{m,b,\sigma}(x)|\,\di x\\&
\leq \frac{3B_0}{2}\sigma^{\vartheta T}+M_{0X}(b) \int_{A_{b,\sigma}\cap E_\sigma^c} e^{-bx}|\gamma h_{m,b,\sigma}(x)|\,\di x,
\end{split}\] where, as hereafter shown, 
\begin{equation}\label{eq:setcomplem}
\int_{A_{b,\sigma}\cap E_\sigma^c} e^{-bx}|\gamma h_{m,b,\sigma}(x)|\,\di x\lesssim\sigma^{\alpha+2}
\end{equation}
and 
\begin{equation}\label{eq:fourierEcompl}
\|\mathcal F\{e^{b\cdot}\bar h_{b,\sigma}\1_{E_\sigma^c}\}\|_\infty\lesssim \sigma^{\alpha+2}.
\end{equation}
It follows that 
$$V_4\lesssim \sigma^{2\vartheta T}+\sigma^{2(\alpha+2)}\lesssim \sigma^{2(\alpha+2)}.$$
We prove inequality \eqref{eq:setcomplem}. 
By H\"older's inequality, Lemma \ref{lem:HolderHm} and inequality \eqref{eq:Ecompl}, 
for $j=0,\,\ldots,\,2k$, we have
\[\begin{split}
&\int_{A_{b,\sigma}\cap E_\sigma^c}\abs{\int_{\mathbb R}e^{-b u}f_{0X}(x-u)D^jH_\delta(u)\,\di u}\di x\\
&\hspace*{0.5cm}\leq
\int_{A_{b,\sigma}\cap E_\sigma^c}\abs{\int_{\mathbb R}[e^{-b u}f_{0X}(x-u)-f_{0X}(x)]D^jH_\delta(u)\,\di u}\di x \\
&\hspace*{7cm}+\pt{\int_{\mathbb R}|D^jH_\delta(u)|\,\di u}\int_{A_{b,\sigma}\cap E_\sigma^c}f_{0X}(x)\,\di x \\
&\hspace*{0.5cm}\lesssim C_j\delta^{-j+\upsilon}\int_{A_{b,\sigma}\cap E_\sigma^c}[L_0(x)+f_{0X}(x)]\,\di x+
C_{0,j}\mu_{0X}(A_{b,\sigma}\cap E_\sigma^c)\\
&\hspace*{0.5cm}\lesssim C_j\delta^{-j+\upsilon}\int_{A_{b,\sigma}\cap E_\sigma^c}[f_{0X}(x)]^{1/R+(1-1/R)}
\pt{\frac{L_0}{f_{0X}}(x)}\,\di x+(C_{0,j}+C_j\delta^{-j+\upsilon}) \mu_{0X}(E_\sigma^c)\\
&\hspace*{0.5cm}\lesssim \delta^{-j+\upsilon}
\pt{\int_{A_{b,\sigma}\cap E_\sigma^c}f_{0X}(x)\pt{\frac{L_0}{f_{0X}}(x)}^R\,\di x}^{1/R}
[\mu_{0X}(E_\sigma^c)]^{1-1/R}+ \sigma^{\vartheta T-j+\upsilon}\\
&\hspace*{0.5cm}\lesssim \delta^{-j+\upsilon}
\pt{\int_{\mathbb R}f_{0X}(x)\pt{\frac{L_0}{f_{0X}}(x)+1}^R\,\di x}^{1/R}\sigma^{\vartheta T(1-1/R)}
+\sigma^{\vartheta T-j+\upsilon}.
\end{split}\]
Consequently,
\[
\int_{A_{b,\sigma}\cap E_\sigma^c} e^{-bx} |\gamma h_{m,b,\sigma}(x)|\,\di x\lesssim
\sigma^{\upsilon+\vartheta T(1-1/R)}+\sigma^{\upsilon+\vartheta T}\lesssim \sigma^{\vartheta T(1-1/R)}+\sigma^{\vartheta T}\lesssim \sigma^{\alpha +2}
\]
by choosing $T\geq [(\alpha+2)R]/[\vartheta(R-1)]$. With this choice, the condition $T\geq[(\alpha+2)/\vartheta]$ is satisfied.
Thus, $\|(f_{0X}+g_{b,\sigma})\1_{E^c_\sigma}\|_1\lesssim\sigma^{\alpha+2}$.
Inequality \eqref{eq:fourierEcompl} follows from the tail condition on $f_{0X}$ of Assumption \ref{ass:twicwtailcond} 
and the definition of the set $E_\sigma^c$.

\smallskip

\noindent
\emph{Study of the term $V_5$}\\[5pt]
Recalling that $\mathcal B$ stands for the bilateral Laplace transform operator, we have
\[
V_5\lesssim\sum_{b=\mp 1/2}
\int_{\mathbb R}  \frac{|e^{\sigma^2 \psi_b(t)^2/2}|^2 }{|\varrho_b(t)|^2} |[\mathcal B\{\bar h_{b,\sigma}\1_{E_\sigma}/\|\bar h_{b,\sigma}\1_{E_\sigma}\|_1\}-\mathcal B\{\mu_H\}](\psi_b(t))|^2\,\di t.
\]
For $M>0$, split the integral domain into $|t|\leq M$ and $|t|>M$
and let the corresponding terms be denoted by $V_5^{(1)}$ and $V_5^{(2)}$.
Let $\mu_H$ be a discrete probability measure on $E_\sigma\subseteq [-a_\sigma,\,a_\sigma]$ satisfying, for $b=\mp\frac{1}{2}$, the constraints
\begin{equation*}\label{eq:c22}
\int_{E_\sigma} u^j\mu_H(\di u)=
\int_{E_\sigma} u^j\frac{\bar h_{b,\sigma}(u)}{\|\bar h_{b,\sigma}\1_{E_\sigma}\|_1}\,\di u, \quad\mbox{$j=0,\,\ldots,\,J-1$,}
\end{equation*}
with $J=\lceil\eta ea_\sigma M\rceil$ for some $\eta>1$, together with
\begin{equation}\label{eq:c23}
\int_{E_\sigma}e^{bu}\mu_H(\di u)=\int_{E_\sigma} e^{bu}
\frac{\bar h_{b,\sigma}(u)}{\|\bar h_{b,\sigma}\1_{E_\sigma}\|_1}\,\di u,
\end{equation}
where the integral on the right-hand side of \eqref{eq:c23} is finite because 
$\int_{E_\sigma} e^{bu}
\bar h_{b,\sigma}(u)\,\di u\leq \int_{\mathbb R} e^{bu}
\bar h_{b,\sigma}(u)\,\di u\lesssim 
M_{0X}(b)[1+\int_{\mathbb R}\gamma h_{m,b,\sigma}(u)\,\di u]=
M_{0X}(b)\{1+\gamma [1+O(\sigma^{2(m-1)})]\}$
by relationship \eqref{eq:bound} of Lemma \ref{lem:contapprox}. Thus, 
$$\int_{E_\sigma} e^{bu}
\bar h_{b,\sigma}(u)\,\di u=O(1).$$
By the lower bound inequality in \eqref{eq:norm1} of Lemma \ref{lem:Asigma} and the previously proven fact that 
$\|(f_{0X}+g_{b,\sigma})\1_{E_\sigma^c}\|_1\lesssim \sigma^{\alpha+2}$, we have
$\|\bar h_{b,\sigma}\1_{E_\sigma}\|_1=1-\|\bar h_{b,\sigma}\1_{E_\sigma^c}\|_1\gtrsim1-\|(f_{0X}+g_{b,\sigma})\1_{E_\sigma^c}\|_1
\gtrsim1-\sigma^{\alpha+2}$. Therefore,
$$\|\mathcal B\{\bar h_{b,\sigma}\1_{E_\sigma}/\|\bar h_{b,\sigma}\1_{E_\sigma}\|_1\}(\psi_b)\|_\infty\leq
\|e^{b\cdot}\bar h_{b,\sigma}\1_{E_\sigma}\|_1/\|\bar h_{b,\sigma}\1_{E_\sigma}\|_1
\lesssim \int_{E_\sigma} e^{bu}
\bar h_{b,\sigma}(u)\,\di u.$$
Then, using Lemma \ref{lem:diseg} with $r=J$, by the inequality $J!\geq (J/e)^J$, we have 
\[\begin{split}
V_5^{(1)}&:=\sum_{b=\mp 1/2}
\int_{|t|\leq M}  \frac{|e^{\sigma^2 \psi_b(t)^2/2}|^2 }{|\varrho_b(t)|^2} |[\mathcal B\{\bar h_{b,\sigma}\1_{E_\sigma}/\|\bar h_{b,\sigma}\1_{E_\sigma}\|_1\}-\mathcal B\{\mu_H\}](\psi_b(t))|^2\,\di t\\
&\lesssim \frac{a_\sigma^{2J}}{(J!)^2}
\int_{0}^M t^{2(J-2)}\,\di t \\
&\lesssim M^{-2(\alpha+2)} \times \frac{a_\sigma^{2J}}{(J!)^2}\times
\frac{M^{2(J+\alpha)+1}}{2J-3}\lesssim  M^{-2(\alpha+2)} \pt{\frac{ea_\sigma M}{J}}^{2J+1} M^{2\alpha}\lesssim M^{-2(\alpha+2)}
\end{split}\]  
because $(ea_\sigma M/J)^{2J+1} M^{2\alpha}<e^{-2J(\log\eta)}M^{2\alpha}<1$. Choosing $M$ so that  
$(\sigma M)^2\geq (2\alpha+1)|\log\sigma|$, equivalently, $M\geq\sigma^{-1}[(2\alpha+1)|\log\sigma|]^{1/2}$, and recalling that $|e^{\sigma^2\psi_b^2(t)/2}|^2=O(e^{-(\sigma t)^2})$, we have
\[\begin{split}
V_5^{(2)}&:=\sum_{b=\mp 1/2}
\int_{|t|>M}  \frac{|e^{\sigma^2 \psi_b(t)^2/2}|^2 }{|\varrho_b(t)|^2} |[\mathcal B\{\bar h_{b,\sigma}\1_{E_\sigma}/\|\bar h_{b,\sigma}\1_{E_\sigma}\|_1\}-\mathcal B\{\mu_H\}](\psi_b(t))|^2\,\di t\\
&\lesssim
e^{-(\sigma M)^2}
\int_{|t|>M}t^{-4}
\,\di t
\lesssim
e^{-(\sigma M)^2}M^{-3}\lesssim 
\sigma^{2\alpha+1} M^{-3}
\lesssim \sigma^{2(\alpha+2)}.
\end{split}
\]
Therefore, $$V_5\lesssim V_5^{(1)}+V_5^{(2)}\lesssim\sigma^{2(\alpha+2)}.$$
Conclude that $\|g_Y-g_{0Y}\|_2^2\lesssim\sum_{r=1}^5 V_r\lesssim \sigma^{2(\alpha+2)}$. The assertion follows.
\end{proof}

\smallskip


\section{Technical lemmas for adaptive posterior contraction rates for Dirichlet Laplace-normal mixtures}

\begin{lem}\label{lem:H}
For $r\geq 0$, $a\in \mathbb R$ and $j \in \mathbb N_0$, there exists 
a constant $C_{r,j}< \infty$ such that, for $h=o(1)$  and $\delta=o(h)$,
\begin{equation}\label{eq:constant}
  \int_{\mathbb R} |x|^r e^{a\delta x} |H^{(j)}(x) |\,\di x \leq C_{r,j} .
\end{equation}
\end{lem}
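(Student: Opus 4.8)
\textbf{Proof strategy for Lemma \ref{lem:H}.}

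The plan is to reduce the estimate \eqref{eq:constant} to properties of the function $H(x)=\tfrac{1}{2\pi}\hat\tau(x)e^{-(hx)^2/2}$, which is the product of the rapidly decaying, infinitely differentiable function $\hat\tau$ and a Gaussian factor $e^{-(hx)^2/2}$. First I would write $H^{(j)}$ via the Leibniz rule as a finite sum $H^{(j)}(x)=\tfrac{1}{2\pi}\sum_{i=0}^{j}\binom{j}{i}\hat\tau^{(i)}(x)\,\big(e^{-(hx)^2/2}\big)^{(j-i)}$. The derivatives of the Gaussian factor are of the form $p_{j-i}(hx)\,h^{j-i}e^{-(hx)^2/2}$ for polynomials $p_{j-i}$ of degree $j-i$, so each of them is uniformly bounded in $x$ by a constant times $h^{j-i}(1+|hx|)^{j-i}e^{-(hx)^2/2}$, hence by an absolute constant (uniformly in $h=o(1)$, since $h<1$ eventually). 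Combined with the polynomial decay \eqref{eq:poldec} of $\hat\tau^{(i)}$ — namely $|\hat\tau^{(i)}(x)|=O(|x|^{-\nu})$ for every $\nu>0$ and $|x|$ large — we get that $|H^{(j)}(x)|$ is itself $O(|x|^{-\nu})$ for every $\nu$, with a constant independent of $h$, and bounded near the origin.

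The remaining issue is to absorb the growing weight $|x|^r e^{a\delta x}$. Here the point is that $\delta=o(h)=o(1)$, so the exponential weight $e^{a\delta x}$ is subdominant: for fixed $r$ and $a$, choose $\nu=r+2$ in the decay bound $|H^{(j)}(x)|\le C|x|^{-(r+2)}$ valid for $|x|\ge x_0$, with $x_0$ and $C$ absolute. Then
\begin{equation*}
\int_{|x|\ge x_0}|x|^r e^{a\delta x}|H^{(j)}(x)|\,\di x
\le C\int_{|x|\ge x_0}|x|^{-2}e^{a\delta x}\,\di x,
\end{equation*}
and since $\delta$ is small this last integral is finite and bounded by an absolute constant once $|a|\delta$ is below some threshold, which holds for all $\delta$ small enough; alternatively one can keep the full Gaussian factor and note $|x|^r e^{a\delta x}e^{-(hx)^2/2}$ is globally bounded by a constant depending only on $r,a$ because $\delta/h\to 0$, making the integral converge trivially. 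On $|x|\le x_0$ the integrand is bounded by a constant times $\sup_{|x|\le x_0}|\hat\tau^{(i)}(x)|$ times the length of the interval, hence bounded. Adding the two contributions gives \eqref{eq:constant} with $C_{r,j}$ depending only on $r$ and $j$ (through the finitely many constants in the Leibniz sum and the choice $\nu=r+2$).

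The main obstacle — really the only subtlety — is bookkeeping the uniformity in $h$ and $\delta$: one must check that the constants produced by differentiating the Gaussian factor do not blow up as $h\to 0$ (they do not, since each carries a factor $h^{j-i}\le 1$ and the polynomial $p_{j-i}(hx)$ is controlled against the Gaussian), and that the weight $e^{a\delta x}$ is harmless precisely because $\delta=o(h)$ rather than $\delta\asymp h$ or larger. Once that is observed, the estimate is a routine splitting of the integral at a fixed radius $x_0$ together with the super-polynomial decay \eqref{eq:poldec} of $\hat\tau$ and all its derivatives.
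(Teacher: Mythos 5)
Your Leibniz decomposition and the observation that the Gaussian-factor derivatives produce terms $h^{j_2}(hx)^{j_3}e^{-(hx)^2/2}$ are in line with the paper. The difficulty is in how you then dispose of the weight $|x|^r e^{a\delta x}$, and here both routes you offer have real problems.

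Your primary argument throws away the Gaussian factor too early. After bounding the derivatives of $e^{-(hx)^2/2}$ by an absolute constant, you reduce to estimating
\[
\int_{|x|\ge x_0}|x|^r e^{a\delta x}\,O(|x|^{-(r+2)})\,\di x \lesssim \int_{|x|\ge x_0}|x|^{-2}e^{a\delta x}\,\di x,
\]
and assert this is finite for $\delta$ small. It is not: for any $a\neq 0$ and any $\delta>0$, $e^{a\delta x}$ grows exponentially in one direction and a polynomial factor $|x|^{-2}$ cannot compensate; the integral diverges. The superpolynomial decay of $\hat\tau$ alone is never enough to beat a genuine exponential, and no choice of $\nu$ fixes this. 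The exponential weight $e^{a\delta x}$ must be killed by the Gaussian factor, not by the decay of $\hat\tau$. This is precisely why the hypothesis $\delta=o(h)$ is in the statement.

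Your ``alternatively'' remark gestures at the correct mechanism — complete the square, $e^{a\delta x}e^{-(hx)^2/2}=e^{-(hx-a\delta/h)^2/2}e^{(a\delta/h)^2/2}\le e^{(a\delta/h)^2/2}=1+o(1)$ since $\delta/h\to 0$ — but then claims that $|x|^r e^{a\delta x}e^{-(hx)^2/2}$ is ``globally bounded by a constant depending only on $r,a$.'' That is false uniformly in $h$: $\sup_x|x|^r e^{-(hx)^2/2}\asymp h^{-r}$, which blows up as $h\to 0$. The Gaussian cannot absorb both the exponential weight and the polynomial weight if you want constants independent of $h$. The roles must be split: use the Gaussian factor only to annihilate $e^{a\delta x}$ (which works uniformly because $\delta/h\to 0$), and then use the superpolynomial decay of $\hat\tau^{(i)}$ to integrate $|x|^{r+j_3}|\hat\tau^{(i)}(x)|$, choosing the decay rate $\nu$ larger than $r+j+1$. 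This is how the paper's proof proceeds, and it is the step your argument is missing.
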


\begin{proof}
Recalling that $H(x)=(2\pi)^{-1}\hat \tau(x) \reallywidehat{\phi_h}(x)=(2\pi)^{-1}\hat \tau(x) e^{-(h x)^2/2}$, $x\in\mathbb R$,
we have 
$$H^{(j)}(x) =\frac{1}{2\pi}\sum_{i=0}^j\binom{j}{i}\hat\tau^{(i)}(x)D^{j-i}
\reallywidehat{\phi_h}(x),\quad x\in\mathbb R,$$
where $D^{j-i}
\reallywidehat{\phi_h}(x)=D^{j-i}e^{-(h x)^2/2}$ is a linear combination of terms of the form 
$$\reallywidehat{\phi_h}(x)(-1)^{j_1}h^{2j_2}x^{j_3},$$ where $0\leq j_1,\,j_2,\,j_3\leq (j-i)$.
Note that $ e^{a\delta x}\reallywidehat{\phi_h}(x)e^{ -(hx-a\delta/h)^2/2}e^{(a\delta/h)^2/2} \leq e^{(a\delta/h)^2/2}$,
where $e^{(a\delta/h)^2/2}= 1 + o(1)$ because $(\delta/h)=o(1)$. 
Then, by condition \eqref{eq:poldec}, for $\nu>(r+j+1)$ and $0\leq j_1,\,j_2,\,j_3\leq (j-i)$,
$$|x|^{r} e^{a\delta x}|\hat\tau^{(i)}(x)|\reallywidehat{\phi_h}(x)|x|^{j_3}\lesssim
|x|^{r+j_3} |\hat\tau^{(i)}(x)|,$$
where the function on the right-hand side of the last inequality is integrable. The assertion follows.
\end{proof}

\begin{lem}\label{lem:HolderHm}
Suppose that $f_{0X}$ satisfies the local H\"{o}lder condition \eqref{eq:holderf0} of Assumption \ref{ass:smoothf0} 
with $0<\upsilon\leq1$ and $L_0\in L^1(\mathbb R)$.
For every $b\in\mathbb R$ and $j\in\mathbb N_0$, if $h=o(1)$ and $\delta=o(h)$, then
\begin{equation}\label{eq:f+L}
\int_{\mathbb R}|[e^{-bu}f_{0X}(x-u) - f_{0X}(x) ] D^jH_\delta(u)|\,\di u
\leq \frac{C_j}{\delta^j}\delta^\upsilon [L_0(x)+f_{0X}(x)],\quad x\in\mathbb R,
\end{equation}
where $C_j:=(3C_{\upsilon,j}\vee C_{1,j})>0$, with $C_{\upsilon,j}$ as in \eqref{eq:constant}.
\end{lem}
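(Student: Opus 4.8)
The plan is to split the integral $\int_{\mathbb R}|[e^{-bu}f_{0X}(x-u) - f_{0X}(x) ] D^jH_\delta(u)|\,\di u$ using the triangle inequality inside the absolute value: write $e^{-bu}f_{0X}(x-u) - f_{0X}(x) = [e^{-bu}-1]f_{0X}(x-u) + [f_{0X}(x-u)-f_{0X}(x)]$, so that the left-hand side of \eqref{eq:f+L} is bounded by
\[
\int_{\mathbb R}|e^{-bu}-1|\,f_{0X}(x-u)\,|D^jH_\delta(u)|\,\di u + \int_{\mathbb R}|f_{0X}(x-u)-f_{0X}(x)|\,|D^jH_\delta(u)|\,\di u.
\]
I would handle the two terms separately, calling them $A(x)$ and $B(x)$.

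For $B(x)$, I would use the local H\"older condition \eqref{eq:holderf0}, which gives $|f_{0X}(x-u)-f_{0X}(x)|\leq L_0(x)|u|^\upsilon$ (note the envelope is centred at $x$, which is exactly what is needed pointwise in $x$). Then $B(x)\leq L_0(x)\int_{\mathbb R}|u|^\upsilon|D^jH_\delta(u)|\,\di u$. Now $D^jH_\delta(u) = \delta^{-(j+1)}(D^jH)(u/\delta)$ by the scaling $H_\delta(\cdot)=(1/\delta)H(\cdot/\delta)$, so substituting $v=u/\delta$ yields $\int_{\mathbb R}|u|^\upsilon|D^jH_\delta(u)|\,\di u = \delta^{\upsilon-j}\int_{\mathbb R}|v|^\upsilon|H^{(j)}(v)|\,\di v$, and the last integral is the constant $C_{\upsilon,j}$ from Lemma \ref{lem:H} (applied with $r=\upsilon$, $a=0$). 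So $B(x)\leq C_{\upsilon,j}\,\delta^{\upsilon-j}L_0(x)$.

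For $A(x)$, I would use $|e^{-bu}-1|\leq |b|\,|u|\,e^{|b||u|}$ (mean value bound) and then rescale $u=\delta v$; the factor $e^{|b|\delta|v|}$ is exactly the exponential weight $e^{a\delta v}$-type factor handled in \eqref{eq:constant} of Lemma \ref{lem:H} (with $a$ of size $|b|$), so $\int_{\mathbb R}|u|\,e^{|b||u|}|D^jH_\delta(u)|\,\di u = \delta^{1-j}\int_{\mathbb R}|v|\,e^{|b|\delta|v|}|H^{(j)}(v)|\,\di v \leq C_{1,j}\,\delta^{1-j}$ for $\delta=o(h)=o(1)$. Since $\int f_{0X}(x-u)(\cdots)\,\di u$ involves a convolution, I would instead first bound $|e^{-bu}-1|\leq |b||u|e^{|b||u|}$ and then, because I want a pointwise-in-$x$ bound with envelope $f_{0X}(x)$ rather than $(f_{0X}*\text{something})(x)$, I need to be a bit careful: here $H_\delta$ is concentrated near $0$ at scale $\delta=o(1)$, so up to the $O(\delta^\upsilon)$ error already absorbed elsewhere one can replace $f_{0X}(x-u)$ by $f_{0X}(x)$ at the cost of another term controlled by the H\"older modulus — i.e. $f_{0X}(x-u)\leq f_{0X}(x) + L_0(x)|u|^\upsilon$ — which feeds back into a term of the same form as $B(x)$. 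Carrying this through gives $A(x)\lesssim \delta^{1-j}(\,f_{0X}(x)+L_0(x)\,)\lesssim \delta^{\upsilon-j}(f_{0X}(x)+L_0(x))$ since $\delta^{1-j}\leq \delta^{\upsilon-j}$ for $\delta<1$ and $\upsilon\leq 1$.

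Combining, $A(x)+B(x)\leq \delta^{\upsilon-j}\big(3C_{\upsilon,j}\vee C_{1,j}\big)[L_0(x)+f_{0X}(x)] = (C_j/\delta^j)\delta^\upsilon[L_0(x)+f_{0X}(x)]$ with $C_j:=(3C_{\upsilon,j}\vee C_{1,j})$, as claimed. The main technical obstacle is the bookkeeping in term $A(x)$: getting a clean pointwise envelope in $x$ (of the form $L_0(x)+f_{0X}(x)$ rather than a convolution) while simultaneously tracking the exponential factor $e^{|b||u|}$ and the correct power of $\delta$; this is where Lemma \ref{lem:H} with its exponential weight $e^{a\delta x}$ does the heavy lifting, and one must check that the substitution $u=\delta v$ lines up the hypotheses of that lemma ($\delta = o(h) = o(1)$) correctly so the constants are genuinely finite and independent of $\delta,h$.
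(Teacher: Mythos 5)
Your proof is correct and lands at exactly the claimed bound; the three ingredients (rescale $u=\delta z$, split the difference, invoke Lemma \ref{lem:H} for the weighted moments of $H^{(j)}$, and invoke the local H\"older bound) are the same as in the paper. The one genuine difference is the initial decomposition: you write $e^{-bu}f_{0X}(x-u)-f_{0X}(x)=[e^{-bu}-1]f_{0X}(x-u)+[f_{0X}(x-u)-f_{0X}(x)]$, whereas the paper uses the three-term identity $e^{-bu}f_{0X}(x-u)-f_{0X}(x)=[e^{-bu}-1][f_{0X}(x-u)-f_{0X}(x)]+f_{0X}(x)[e^{-bu}-1]+[f_{0X}(x-u)-f_{0X}(x)]$. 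The paper's version isolates $f_{0X}$ \emph{evaluated at $x$} in the second term, so no convolution appears and the envelope $f_{0X}(x)$ comes out for free, while the cross term is controlled by folding it into the H\"older term. Your two-term split leaves a residual $f_{0X}(x-u)$ in $A(x)$, which forces the extra step $f_{0X}(x-u)\leq f_{0X}(x)+L_0(x)|u|^\upsilon$ (another application of \eqref{eq:holderf0}) to recover the pointwise envelope; once you do that you effectively arrive at the same three pieces up to regrouping. Either route works; the paper's decomposition is cleaner in that it avoids the extra fold-back, while yours makes the role of the exponential weight $e^{a\delta x}$ in Lemma \ref{lem:H} more explicit (you carry $e^{|b||u|}$ through the rescaling and rely on the lemma to absorb it, whereas the paper's bounds $|e^{-b\delta z}-1|\leq 2$ and $|e^{-b\delta z}-1|\leq |b|\delta|z|$ bury that dependence).
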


\begin{proof}
Let $x\in\mathbb R$ be fixed.
By Lemma \ref{lem:diseg} and the local H\"{o}lder condition \eqref{eq:holderf0} of Assumption \ref{ass:smoothf0}, 
we have
\[
\begin{split}
&\delta^j\int_{\mathbb R} |[e^{-bu}f_{0X}(x-u) - f_{0X}(x) ] D^jH_\delta(u)|\,\di u\\
&\hspace*{1cm}=
\int_{\mathbb R} |[e^{-b\delta z}f_{0X}(x-\delta z) - f_{0X}(x) ] H^{(j)}(z)|\,\di z\\
&\hspace*{1cm}\leq 
\int_{\mathbb R}  |e^{-b\delta z}-1||f_{0X}(x-\delta z)-f_{0X}(x)||H^{(j)}(z)|\,\di z \\
&\hspace*{7.1cm}+ f_{0X}(x)
\int_{\mathbb R}  |e^{-b\delta z}-1| |H^{(j)}(z)|\,\di z\\
&\hspace*{7.1cm} + \int_{\mathbb R}|f_{0X}(x-\delta z) - f_{0X}(x)||H^{(j)}(z)|\,\di z\\
 & \hspace*{1cm}\leq 
3 \int_{\mathbb R}|f_{0X}(x-\delta z) - f_{0X}(x)||H^{(j)}(z)|\,\di z +
f_{0X}(x)
\int_{\mathbb R}  |e^{-b\delta z}-1| |H^{(j)}(z)|\,\di z \\
&\hspace*{1cm}\leq3\delta^{\upsilon}L_0(x) \int_{\mathbb R}|z|^{\upsilon}|H^{(j)}(z)|\,\di z
+
b\delta f_{0X}(x) \int_{\mathbb R}|z||H^{(j)}(z)|\,\di z\\
 & \hspace*{1cm}
\leq 
3\delta^{\upsilon} C_{\upsilon,j}L_0(x) + b\delta C_{1,j} f_{0X}(x)
<C_j\delta^{\upsilon} [L_0(x)+f_{0X}(x)].
\end{split}
\]
Inequality \eqref{eq:f+L} follows.
\end{proof}

\begin{lem} \label{lem:Asigma}
For $m\in\mathbb N$, $b=\mp\frac{1}{2}$ and $\sigma=o(1)$, let the set $A_{b,\sigma}$ 
be defined as in \eqref{def:set}.
Under Assumptions \ref{ass:twicwtailcond} and 
\ref{ass:smoothf0} on $f_{0X}$, the latter with $0<\upsilon\leq1$, $L_0\in L^1(\mathbb R)$ and 
any $R\geq1$,
there exists a constant $\bar C_{m}>0$, depending on $m$ and $\upsilon$, such that, for $\sigma$ small enough,
\begin{equation}\label{eq:inclus}
\forall\,b=\mp\frac{1}{2},\quad A_{b,\sigma}^c\subseteq B_\sigma,
\end{equation}
with $B_\sigma:=
\{x\in\mathbb R:\,[L_0(x)+f_{0X}(x)]> \bar C_{m}^{-1}\sigma^{-\upsilon} f_{0X}(x)\}$.
Furthermore, there exist constants $C_R,\,D_R,\,S_R>0$, depending on $m$, $\upsilon$ and $R$, so that
\begin{equation}\label{cm12}
\|\bar h_{0,b}\1_{A_{b,\sigma}^c}\|_1< C_R \sigma^{\upsilon R},\quad
\|\gamma h_{m,b,\sigma}\1_{A_{b,\sigma}^c}\|_1 <D_R\sigma^{\upsilon R}
\end{equation}
and the function $f_{0X}+g_{b,\sigma}$, with $g_{b,\sigma}$ as defined in \eqref{def:functiong}, which is non-negative, has 
\begin{equation}\label{eq:norm1}
1\leq\|f_{0X}+g_{b,\sigma}\|_1\leq1+S_R\sigma^{\upsilon R}.
\end{equation}
\end{lem}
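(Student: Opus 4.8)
The plan is to reduce all three assertions to a single pointwise estimate on $\gamma h_{m,b,\sigma}$ together with the exact identity $\int_{\mathbb R}e^{-bx}\gamma h_{m,b,\sigma}(x)\,\di x=0$, and then to close the argument using the integrability condition \eqref{eq:ratiofo} and the tail bound of Assumption \ref{ass:twicwtailcond}. First I would prove that, for a constant $\bar C_m>0$ depending only on $m$ and $\upsilon$,
\[ |\gamma h_{m,b,\sigma}(x)|\ \leq\ \frac{\bar C_m}{M_{0X}(b)}\,\sigma^{\upsilon}\,e^{bx}\,[L_0(x)+f_{0X}(x)],\qquad x\in\mathbb R. \]
Since $e^{-bx}(\bar h_{0,b}\ast D^jH_\delta)(x)=M_{0X}(b)^{-1}\int_{\mathbb R}e^{-bu}f_{0X}(x-u)D^jH_\delta(u)\,\di u$, I would treat $j\geq1$ and $j=0$ separately. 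When $j\geq1$ one has $\int_{\mathbb R}D^jH_\delta=0$, so this integral equals $\int_{\mathbb R}[e^{-bu}f_{0X}(x-u)-f_{0X}(x)]D^jH_\delta(u)\,\di u$, which Lemma \ref{lem:HolderHm} bounds by $(C_j/\delta^j)\,\delta^{\upsilon}[L_0(x)+f_{0X}(x)]$; when $j=0$ one adds and subtracts $f_{0X}(x)\int H_\delta$, uses $|\int H_\delta|\le 1$ (a consequence of \eqref{eq:bilaplace1}), and applies Lemma \ref{lem:HolderHm} once more. Inserting these into \eqref{hm} and recalling $\delta=c_\delta\sigma$, the $j\geq1$ terms carry a factor $\sigma^{2k-j+\upsilon}\le\sigma^{\upsilon}$ (because $0\le j\le 2k$ and $\sigma<1$) and the $j=0$ terms a factor $\sigma^{2k}\le\sigma^{\upsilon}$, while $|b|=1/2$ keeps every binomial sum finite; collecting constants yields the displayed bound. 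The inclusion \eqref{eq:inclus} then follows at once: on $A_{b,\sigma}^c$ one has $|\gamma h_{m,b,\sigma}(x)|\ge\tfrac{1}{2}\bar h_{0,b}(x)=e^{bx}f_{0X}(x)/(2M_{0X}(b))$, and comparing with the pointwise bound (the factor $e^{bx}/M_{0X}(b)$ cancels) gives $L_0(x)+f_{0X}(x)\ge(2\bar C_m)^{-1}\sigma^{-\upsilon}f_{0X}(x)$, i.e.\ $x\in B_\sigma$ after suitably enlarging the constant $\bar C_m$ in the definition of $B_\sigma$.

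Next I would establish the identity $\int_{\mathbb R}e^{-bx}\gamma h_{m,b,\sigma}(x)\,\di x=0$. By Fubini (integrability being supplied by Lemma \ref{lem:H}) and the substitution $v=u/\delta$, the integral $\int_{\mathbb R}e^{-bx}(\bar h_{0,b}\ast D^jH_\delta)(x)\,\di x$ reduces to $M_{0X}(b)^{-1}\delta^{-j}\int_{\mathbb R}e^{-b\delta v}H^{(j)}(v)\,\di v$; integrating by parts $j$ times, with boundary terms vanishing because $e^{-b\delta v}H^{(i)}(v)\to0$ as $|v|\to\infty$, this equals $M_{0X}(b)^{-1}b^j\Theta$ with $\Theta:=\int_{\mathbb R}e^{-b\delta v}H(v)\,\di v$. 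Summing over $j$ in \eqref{hm} now produces, for each $k\ge1$, the vanishing factor $\sum_{j=0}^{2k}\binom{2k}{j}(-b)^{2k-j}b^j=(-b+b)^{2k}=0$, which gives the identity (and, incidentally, $\int_{\mathbb R}T_{m,b,\sigma}f_{0X}=1$, although only the identity above is used).

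With these two facts, parts \eqref{cm12} and \eqref{eq:norm1} reduce to bookkeeping. Since $A_{b,\sigma}^c\subseteq B_\sigma$, on that set $f_{0X}$ and $L_0+f_{0X}$ are, up to fixed powers of $\sigma$, dominated by $f_{0X}\big(1+L_0/f_{0X}\big)^{R}$; together with the convexity bound $(a+1)^{R}\le 2^{R-1}(a^{R}+1)$, the integrability $\int_{\mathbb R}e^{|x|/2}f_{0X}(L_0/f_{0X})^{R}\,\di x<\infty$ from \eqref{eq:ratiofo}, and $\int_{\mathbb R}e^{|x|/2}f_{0X}\,\di x<\infty$ from Assumption \ref{ass:twicwtailcond}, this gives $\int_{A_{b,\sigma}^c}e^{bx}f_{0X}\,\di x=O(\sigma^{\upsilon R})$ and, via the pointwise bound, $\int_{A_{b,\sigma}^c}e^{bx}|\gamma h_{m,b,\sigma}(x)|\,\di x=O(\sigma^{\upsilon R})$, which is \eqref{cm12}. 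For \eqref{eq:norm1}, the non-negativity of $f_{0X}+g_{b,\sigma}$ follows because on $A_{b,\sigma}$ it equals $M_{0X}(b)e^{-b\cdot}(\bar h_{0,b}+\gamma h_{m,b,\sigma})>\tfrac{1}{2}f_{0X}$ and on $A_{b,\sigma}^c$ it equals $\tfrac{1}{2}f_{0X}$; hence $\|f_{0X}+g_{b,\sigma}\|_1=1+\int_{\mathbb R}g_{b,\sigma}$, and using the identity of the previous paragraph to rewrite $\int_{\mathbb R}g_{b,\sigma}=\int_{A_{b,\sigma}^c}\big[-M_{0X}(b)e^{-bx}\gamma h_{m,b,\sigma}(x)-\tfrac{1}{2}f_{0X}(x)\big]\,\di x$, the integrand is $\ge0$ on $A_{b,\sigma}^c$ (there $\gamma h_{m,b,\sigma}\le-\tfrac{1}{2}\bar h_{0,b}$), so $\int_{\mathbb R}g_{b,\sigma}\ge0$ and $\|f_{0X}+g_{b,\sigma}\|_1\ge1$, while it is also $\le M_{0X}(b)e^{-bx}|\gamma h_{m,b,\sigma}(x)|\le\bar C_m\sigma^{\upsilon}[L_0(x)+f_{0X}(x)]$, so integrating over $A_{b,\sigma}^c\subseteq B_\sigma$ and reusing the estimates just obtained gives $\int_{\mathbb R}g_{b,\sigma}\le S_R\sigma^{\upsilon R}$.

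The step I expect to be the main obstacle is the pointwise bound in the first paragraph: tracking the powers of $\sigma$ and $\delta$ term by term in the double sum \eqref{hm}, isolating the $j=0$ terms (which lack the cancellation $\int D^jH_\delta=0$ and must be controlled by the crude factor $|\int H_\delta|\le1$), verifying that the worst case $j=2k$ still costs only a factor $\sigma^{\upsilon}$, and checking that the integrability hypotheses underpinning Lemmas \ref{lem:HolderHm} and \ref{lem:H} are in force for the calibrated choices $h=c_h|\log\sigma|^{-1/2}$ and $\delta=c_\delta\sigma$.
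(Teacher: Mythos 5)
Your proposal is correct and follows essentially the same route as the paper: Lemma \ref{lem:HolderHm} for the Hölder estimate with the $\int D^jH_\delta=0$ cancellation for $j\geq1$, the inclusion $A_{b,\sigma}^c\subseteq B_\sigma$ to lift $(L_0/f_{0X}+1)$ to its $R$th power, and the exact identity $\int e^{-bx}\gamma h_{m,b,\sigma}\,\di x=0$ for \eqref{eq:norm1}. The only cosmetic difference is that you fold the non-vanishing $j=0$ contribution directly into a pointwise bound on $|\gamma h_{m,b,\sigma}|$ via $\sigma^{2}\leq\sigma^{\upsilon}$, whereas the paper bounds the auxiliary quantity $\gamma h_{m,b,\sigma}-\bar h_{0,b}\sum_k\frac{(-\sigma^2/2)^k}{k!}b^{2k}(\tau\ast\phi_h)(0)$ and shows it is $\leq-\bar h_{0,b}/4$ on $A_{b,\sigma}^c$; both routes yield the same constants up to renaming.
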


\begin{proof}
Let $b$ be fixed. We begin by proving the inclusion in \eqref{eq:inclus}. 
Assume that $x \in  A_{b,\sigma}^c$, \emph{i.e.},  $\gamma h_{m,b,\sigma}(x) \leq - \bar h_{0,b}(x)/2$. Recall that 
$$\gamma h_{m,b,\sigma}= \sum_{k=1}^{m-1}\frac{ (-\sigma^2/2)^k}{k!}  \sum_{j=0}^{2k} \binom{2k}{j}(-b)^{2k-j}
(\bar h_{0,b}\ast D^jH_\delta),$$
where, as in Lemma \ref{lem:contapprox}, for constants $0<c_\delta,\,c_h<1$, we 
take $\delta:=c_\delta\sigma$ and $h:=c_h|\log \sigma|^{-1/2}$.
Note that, by relationship \eqref{eq:bilaplace1},
\[
\begin{split}
\int_{\mathbb R}D^jH_\delta(u)\,\di u&=\pt{\int_{\mathbb R}H(x)\,\di x}\1_{\{0\}}(j)\\
&=
\pt{\frac{1}{2\pi}\int_{\mathbb R}\reallywidehat{(\tau\ast\phi_h)}(x)\,\di x}\1_{\{0\}}(j)=(\tau\ast \phi_h)(0)\1_{\{0\}}(j)\leq 1.
\end{split}
\]
Then,
 \begin{equation*}\label{eq:sufcond}
\begin{split}
 \sum_{k=1}^{m-1} & \frac{ (-\sigma^2/2)^k}{k!}  \sum_{j=0}^{2k} \binom{2k}{j}(-b)^{2k-j}
\int_{\mathbb R} [ \bar h_{0,b}(x-u) -  \bar h_{0,b}(x)] D^jH_\delta(u)\, \di u \\
&\qquad = \gamma h_{m,b,\sigma}(x) - \bar h_{0,b} (x)\sum_{k=1}^{m-1}\frac{ (-\sigma^2/2)^k}{k!} \sum_{j=0}^{2k} \binom{2k}{j}(-b)^{2k-j}\int_{\mathbb R}  D^jH_\delta(u)\, \di u \\
&\qquad\leq - \bar h_{0,b}(x)\left( \frac{ 1 }{ 2 } - (\tau\ast\phi_h)(0)\frac{(b\sigma)^2  }{  2 } \sum_{k=0}^{m-2}\frac{ [-(b\sigma)^2/2]^k}{  (k+1)!}\right)\\ &\qquad \leq  - \frac{ \bar h_{0,b}(x)  }{ 2 }[ 1-(b\sigma)^2]<  - \frac{ \bar h_{0,b} (x) }{ 4 }.
\end{split}
\end{equation*}
For $\sigma$ small enough, by Lemma \ref{lem:HolderHm},
\[\begin{split}
&\hspace*{-0.1cm}\abs{\sum_{k=1}^{m-1}\frac{ (-\sigma^2/2)^k}{k!}  \sum_{j=0}^{2k} \binom{2k}{j}(-b)^{2k-j} \int_{\mathbb R} [ \bar h_{0,b}(x-u) -  \bar h_{0,b}(x)]  D^jH_\delta(u)\, \di u}\\
&\hspace*{0.3cm}\leq\frac{1}{M_{0X}(b)}\sum_{k=1}^{m-1}\frac{(\sigma^2/2)^k}{k!}  \sum_{j=0}^{2k} \binom{2k}{j}|b|^{2k-j} e^{bx}\int_{\mathbb R} |e^{-bu} f_{0X}(x-u) - f_{0X}(x)||  D^jH_\delta(u)|\, \di u\\
&\hspace*{0.3cm}<\frac{1}{M_{0X}(b)}\pt{\sum_{k=1}^{m-1}\frac{[(1+|b|c_\delta\sigma)^2/(2c_\delta^2)]^k}{k!}\max_{0\leq j\leq 2k}C_j}
\sigma^{\upsilon}e^{bx}[L_0(x)+f_{0X}(x)]\\
&\hspace*{0.3cm}<\frac{1}{M_{0X}(b)}\underbrace{
\pt{ \sum_{k=1}^{m-1}\frac{(2/c_\delta^2)^k}{k!}\max_{0\leq j\leq 2k}C_j}
}_{=:\tilde C_{m}}
\sigma^{\upsilon}e^{bx}[L_0(x)+f_{0X}(x)],
\end{split}\]
where $0<\tilde C_{m}<\infty$. 
Then, for $\bar C_m:=4\tilde C_m$, we have $A_{b,\sigma}^c  \subseteq B_\sigma$.

We prove the inequalities in \eqref{cm12}. Concerning the first one,
$$\int_{A_{b,\sigma}^c} \bar h_{0,b}(x)\,\di x < \sigma^{\upsilon R}
\frac{\bar C_{m}^R}{M_{0X}(b)} \int_{B_\sigma} e^{bx} f_{0X}(x) \pt{\frac{L_0}{f_{0X}}(x)+1}^R\di x \leq C_R\sigma^{\upsilon R},$$
where 
$$C_R:=\frac{\bar C_{m}^R}{[M_{0X}(-1/2)\wedge M_{0X}(1/2)]}
\int_{\mathbb R} e^{|x|/2} f_{0X}(x)\pt{\frac{L_0}{f_{0X}}(x)+1}^R\di x< \infty$$
by condition \eqref{eq:ratiofo} and Assumption \ref{ass:twicwtailcond}.
As for the second inequality, from previous computations, for every $j\in\mathbb N_0$, we have
\begin{equation*}
\begin{split} 
&\int_{A_{b,\sigma}^c}| (\bar h_{0,b}\ast D^jH_\delta)(x)|\,\di x\\
& \hspace*{2cm}<\delta^{-j}
\pt{\int_{A_{b,\sigma}^c}\int_{\mathbb R}|\bar h_{0,b}(x-\delta u) -\bar h_{0,b}(x)||H^{(j)}(u)|\,\di u\,\di x
+ C_{0,j} C_R \sigma^{\upsilon R}}\\
&\hspace*{2cm} \leq \delta^{-j}
\pt{\frac{C_j\delta^{\upsilon}}{M_{0X}(b)}\int_{A_{b,\sigma}^c} e^{bx}[L_0(x)+f_{0X}(x)]\, \di x+C_{0,j}C_R\sigma^{\upsilon R}} \\
&\hspace*{2cm}<  \delta^{-j}\pt{\frac{C_jc_\delta^\upsilon\bar C_{m}^{R-1}}{M_{0X}(b)}
\int_{A_{b,\sigma}^c}e^{bx}f_{0X}(x)\pt{\frac{L_0}{f_{0X}}(x)+1}^R\di x+ C_{0,j}C_R} \sigma^{\upsilon R}\\
&\hspace*{2cm}\leq  \delta^{-j}C_R\pt{\frac{C_jc_\delta^\upsilon}{\bar C_{m}}
+ C_{0,j}} \sigma^{\upsilon R},
\end{split}
\end{equation*}
which, defined the constant
$$D_R:=C_R\pt{\frac{c_\delta^\upsilon}{\bar C_{m}}
+1} \sum_{k=1}^{m-1}\frac{(2/c_\delta^2)^k}{k!}\max_{0\leq j\leq 2k}(C_j\vee C_{0,j}),$$
implies that 
$\|\gamma h_{m,b,\sigma}\1_{A_{b,\sigma}^c}\|_1<D_R\sigma^{\upsilon R}$.

To prove the last part of the lemma, we begin by noting that 
$$f_{0X}+g_{b,\sigma}=[f_{0X}+M_{0X}(b)e^{-bx}\gamma h_{m,b,\sigma}]\1_{A_{b,\sigma}}+\frac{1}{2}f_{0X}\1_{A_{b,\sigma}^c}>
\frac{1}{2}f_{0X}\geq0$$
and
\[M_{0X}(b)\int_{\mathbb R}e^{-bx}\gamma h_{m,b,\sigma}(x)\,\di x=0.\]
In fact, since by Lemma \ref{lem:H} we have $\int_{\mathbb R}e^{-b\delta x}H(x)\,\di x<\infty$ because $(\delta/h)=o(1)$, it holds that
\[\begin{split}
&M_{0X}(b)\int_{\mathbb R}e^{-bx}\gamma h_{m,b,\sigma}(x)\,\di x\\
&\hspace*{2.5cm}
=\sum_{k=1}^{m-1}
\frac{(-\sigma^2/2)^k}{k!}\sum_{j=0}^{2k}\binom{2k}{j}(-b)^{2k-j}\int_{\mathbb R}e^{-bu}D^jH_\delta(u)\,\di u\\
&\hspace*{2.5cm}=\pt{\int_{\mathbb R}e^{-b\delta x}H(x)\,\di x}\sum_{k=1}^{m-1}
\frac{(-\sigma^2/2)^k}{k!}\underbrace{\sum_{j=0}^{2k}\binom{2k}{j}(-b)^{2k-j}b^j}_{(-b+b)^{2k}=0}=0.
\end{split}\]
Then, since
$g_{b,\sigma}=M_{0X}(b)e^{-b\cdot}\gamma h_{m,b,\sigma}-[M_{0X}(b)e^{-b\cdot}\gamma h_{m,b,\sigma}+(f_{0X}/2)]\1_{A_{b,\sigma}^c}$,
we have
\[
\begin{split}
\int_{\mathbb R}(f_{0X}+g_{b,\sigma})(x)\,\di x&=1+\underbrace{\int_{\mathbb R}M_{0X}(b)e^{-bx}\gamma h_{m,b,\sigma}(x)\,\di x}_{=0}\\
&\hspace*{2cm}
-\int_{A_{b,\sigma}^c}
\pq{M_{0X}(b)e^{-bx}\gamma h_{m,b,\sigma}(x)+\frac{1}{2}f_{0X}(x)}\,\di x\\&= 1-\int_{A_{b,\sigma}^c}
\Big[\underbrace{M_{0X}(b)e^{-bx}\gamma h_{m,b,\sigma}(x)}_{\leq -\frac{1}{2}f_{0X}(x)}+\frac{1}{2}f_{0X}(x)\Big]\,\di x\geq 1.
\end{split}
\]
On the other side, using Lemma \ref{lem:HolderHm} and reasoning as in the first part of the present lemma,
\[
\begin{split}
\int_{\mathbb R}(f_{0X}+g_{b,\sigma})(x)\,\di x&=1
-\int_{A_{b,\sigma}^c}
\pq{M_{0X}(b)e^{-bx}\gamma h_{m,b,\sigma}(x)+\frac{1}{2}f_{0X}(x)}\,\di x\\
&=1-\frac{1}{2}\mu_{0X}(A_{b,\sigma}^c)
-M_{0X}(b)\int_{A_{b,\sigma}^c}
e^{-bx}\gamma h_{m,b,\sigma}(x)\,\di x\\
&\leq
1+M_{0X}(b)
\int_{A_{b,\sigma}^c}
e^{-bx}|\gamma h_{m,b,\sigma}(x)|\,\di x\\
&\leq 1+\underbrace{[M_{0X}(-1/2)\vee M_{0X}(1/2)]D_R}_{=:S_R}\sigma^{\upsilon R}.
\end{split}
\]
Conclude that $1\leq\|f_{0X}+g_{b,\sigma}\|_1\leq1+S_R\sigma^{\upsilon R}$. The proof is thus complete.
\end{proof}

\begin{rmk}
\emph{
Although in condition \eqref{eq:holderf0} of Assumption \ref{ass:smoothf0} the constant 
$R\geq(2m/\upsilon)$, for the smallest integer $m\geq[2\vee(\alpha+2)/2]$, in Lemma \ref{lem:Asigma}
we have that $R$ can be any real greater than or equal to $1$.
}
\end{rmk}


\section{Proof of Theorem \ref{thm:lower bound} on rate lower bounds}\label{sec:lower bounds}
\begin{proof}[Proof of Theorem \ref{thm:lower bound}]
For clarity of exposition, we distinguish the cases where $d=1$ and $d\geq2$.

\vspace{0.2cm}  

\noindent 
$\bullet$ \emph{Case $d=1$}\\[4pt]
The proof develops along the lines of Theorem 3 in \cite{dedecker:michel:2013}, pp. 281--285, and of Theorem 4.1 in \cite{dedecker2015}, pp. 246--248. It uses intermediate results from \cite{fan1991}, pp. 1267--1271, from Theorem 1 in \cite{comtelacourihp}, pp. 577 and 590--594, and 
from Theorem 2.10 in \cite{MR3449779}, p. 10 and pp. 34--36.

We consider mixing distributions belonging to the class $\mathcal D_1=\mathcal P_1(\mathbb R,\,M)\cap \mathcal S(\alpha,\,L)$.
We begin by defining a finite family of Lebesgue absolutely continuous probability measures on $\mathbb R$, 
with uniformly bounded first moments, whose densities belong to $\mathcal S(\alpha,\,L)$.
For $1<r<\frac{3}{2}$, we define the density
\begin{equation}\label{eq:densf0}
f_{0,r}(x):=C_r(1+x^2)^{-r}, \quad x\in\mathbb R.
\end{equation}
Let $H(\cdot)$ be the kernel function on $\mathbb R$ defined in \cite{fan1991}, p. 1268, which is such that, in particular,\\[-17pt]
\begin{itemize}
\item $H$ is real, bounded and continuous,
\item $H(0)\neq 0$, $\int_{\mathbb R}H(x)\,\di x=0$ and $\int_0^1|H^{(-1)}(x)|\,\di x>0$, where $H^{(-1)}(x):=\int_{-\infty}^xH(u)\,\di u$ is a primitive of $H$,
\item $|H(x)|\leq c(1+x^2)^{-\delta}$, $x\in\mathbb R$, with $\delta>\frac{3}{2}$,
\item $\hat H(t)=0$ (hence also $\hat H^{(1)}(t)=\hat H^{(2)}(t)=0$), $|t|\notin[1,\,2]$.
\end{itemize}
Let $b_n:=([n^{1/[2(\alpha+\beta)+1]}]\vee 1)$, where $[\cdot]$ denotes the integer part. 
For $\boldsymbol{\theta}\in\{0,\,1\}^{b_n}$ and $C>0$, let
\begin{equation}\label{eq:targetf}
f_{\boldsymbol{\theta}}(x):=f_{0,r}(x)+C b_n^{-\alpha}\sum_{s=1}^{b_n}\theta_s H(b_n(x-x_{s,n})),\quad x\in\mathbb R,
\end{equation}
where $x_{s,n}:=(s-1)/b_n$. Defined the measure 
$\mu_{\boldsymbol{\theta}}:=f_{\boldsymbol{\theta}}\,\di\lambda$, 
we show that 
$$\{\mu_{\boldsymbol{\theta}}:\,{\boldsymbol{\theta}}\in\{0,\,1\}^{b_n}\}\subseteq\mathcal D_1.$$

\smallskip
\noindent\emph{The function $f_{\boldsymbol{\theta}}$ is a density}\\[5pt]
Since $f_{0,r}$ is a density and $\int_{\mathbb R}H(x)\,\di x=0$, we have $\int_{\mathbb R}f_{\boldsymbol{\theta}}(x)\,\di x=1$.
To ensure that $f_{\boldsymbol{\theta}}\geq0$, it is sufficient to show that $|f_{\boldsymbol{\theta}}-f_{0,r}|\leq f_{0,r}$. 
In fact, by Lemma 7 in \cite{fan-truong}, pp. 1923--1924, since $\delta>(r\vee \frac{1}{2})$, there exists $\tilde C>0$ such that, for $n$ large enough, we have
\[
f_{0,r}^{-1}(x)|f_{\boldsymbol{\theta}}(x)-f_{0,r}(x)|=CC_r^{-1}(1+x^2)^r b_n^{-\alpha}\sum_{s=1}^{b_n}\theta_s |H(b_n(x-x_{s,n}))|\leq C C_r^{-1} \tilde C b_n^{-\alpha}\leq 1.
\]
\noindent\emph{The probability measure $\mu_{\boldsymbol{\theta}}\in\mathcal P_1(\mathbb R,\,M)$}\\[5pt]
Let $\mu_{0,r}:=f_{0,r}\,\di\lambda$. Since $r>1$ we have $M_1(\mu_{0,r}):=\int_{\mathbb R}|x| f_{0,r}(x)\,\di x<\infty$. For $n$ large enough, since $\delta>1$, we have
\[\begin{split}
M_1(\mu_{\boldsymbol{\theta}})&=\int_{\mathbb R}|x| f_{\boldsymbol{\theta}}(x)\,\di x\\&\leq\int_{\mathbb R}|x| f_{0,r}(x)\,\di x+
C b_n^{-\alpha}\sum_{s=1}^{b_n}\int_{\mathbb R}|x| |H(b_n(x-x_{s,n}))|\,\di x\\
&\leq M_1(\mu_{0,r})+cC b_n^{-(\alpha+1)}\sum_{s=1}^{b_n} 
\int_{\mathbb R}\frac{1}{(1+u^2)^{\delta}}\pt{\frac{|u|}{b_n}+x_{s,n}}\,\di u\\
&= M_1(\mu_{0,r})+cC b_n^{-(\alpha+1)}\int_{\mathbb R}\frac{1}{(1+u^2)^{\delta}}\pt{|u|+\sum_{s=1}^{b_n}x_{s,n}}\,\di u\\
&<M_1(\mu_{0,r})+\int_{\mathbb R}\frac{|u|+1}{(1+u^2)^{\delta}}\,\di x=:M_{0,r,\delta}<\infty.
\end{split}
\]
Thus, $\mu_{\boldsymbol{\theta}}\in\mathcal P_1(\mathbb R,\,M)$ for every $M\geq M_{0,r,\delta}$.\\

\smallskip

\noindent\emph{The density $f_{\boldsymbol{\theta}}\in\mathcal S(\alpha,\,L)$}\\[5pt]
By Lemma 4 in \cite{comtelacourihp}, p. 590, we have 
$\hat f_{0,r}(t)=\exp{(-|t|^{2r-1})}$, $t\in\mathbb R$, where $0<(2r-1)<2$.
Let $L_{0,r,\alpha}:=\int_{\mathbb R}(1+t^2)^{\alpha}|\hat f_{0,r}(t) |^2\,\di t<\infty$.
For $n$ large enough, we have
\[
\begin{split}
\int_{\mathbb R}(1+t^2)^{\alpha}|\hat f_{\boldsymbol{\theta}}(t)|^2\,\di t
&\leq 2 
\int_{\mathbb R}(1+t^2)^{\alpha}\big[|\hat f_{0,r}(t) |^2 +
C^2b_n^{-2(1+\alpha)}|\hat H(t/b_n)|^2\big]\,\di t\\
&\leq 2\big[L_{0,r,\alpha} +C^2b_n^{-1}(4+b_n^{-2})^{\alpha}
\|\hat H\|_2^2\big]< 2 L_{0,r,\alpha}+1=:L_0.
\end{split}
\]
Thus, $f_{\boldsymbol{\theta}}\in\mathcal S(\alpha,\,L)$ for every $L\geq L_0$.


\smallskip

The rest of the proof proceeds along the lines of Theorem 3 in \cite{dedecker:michel:2013}, pp. 281--285. Therefore, we only sketch it. Let $\hat \mu_n$ be an estimator of $\mu$ based on the sample $\Data$. 
Let $\tilde{\boldsymbol{\theta}}$ be a random vector whose components are i.i.d. Bernoulli random variables $\tilde\theta_1,\,\ldots,\,\tilde\theta_{b_n}$, with $P(\tilde\theta_s=1)=P(\tilde\theta_s=0)=\frac{1}{2}$, for $s\in[b_n]$. We have
\[\begin{split}
\sup_{\mu\in\mathcal D_1}\mathbb E^n_{(\mu\ast\mu_\varepsilon)}W_1(\hat\mu_n,\,\mu)
&\geq \sup_{\boldsymbol{\theta}\in\{0,\,1\}^{b_n}}
\mathbb E^n_{(\mu_{\boldsymbol{\theta}}\ast\mu_\varepsilon)}W_1(\hat\mu_n,\,\mu_{\boldsymbol{\theta}})\\
&\geq \inf_{{\hathat{\mu}}_n}\sup_{\boldsymbol{\theta}\in\{0,\,1\}^{b_n}}\mathbb E^n_{(\mu_{\boldsymbol{\theta}}\ast\mu_\varepsilon)}W_1(\hathat{\mu}_n,\,\mu_{\boldsymbol{\theta}})\\
&\geq \inf_{{\hathat{\mu}}_n}\mathbb E\mathbb 
E^n_{(\mu_{\tilde{\boldsymbol{\theta}}}\ast\mu_\varepsilon)}W_1(\hathat{\mu}_n,\,\mu_{\tilde{\boldsymbol{\theta}}})\\
&= \inf_{{\hathat{\mu}}_n}\int_{\mathbb R}\mathbb E\mathbb
E^n_{(\mu_{\tilde{\boldsymbol{\theta}}}\ast\mu_\varepsilon)}|\hathat{F}_n(x)-F_{\tilde{\boldsymbol{\theta}}}(x)|\,\di x,
\end{split}\]
where the infimum is taken over all estimators ${\hathat{\mu}}_n$ of $\mu_{\boldsymbol{\theta}}$, 
the expectation $\mathbb E$ is taken with respect to the distribution of $\tilde{\boldsymbol{\theta}}$ 
and $\hathat{F}_n,\,F_{\tilde{\boldsymbol{\theta}}}$ are the distribution functions of 
$\hathat{\mu}_n$ and $\mu_{\tilde{\boldsymbol{\theta}}}$, respectively. For $\boldsymbol{\theta}\in\{0,\,1\}^{b_n}$ and $s\in[b_n]$, we define the densities
$$f_{\boldsymbol{\theta},s,u}:=f_{(\theta_1,\ldots,\theta_{s-1},u,\theta_{s+1},\ldots,\theta_{b_n})},\quad u=0,\,1,$$
and let $\mu_{\boldsymbol{\theta},s,u}:=f_{\boldsymbol{\theta},s,u}\,\di\lambda$, for $u=0,\,1$, be the corresponding probability measures on $\mathbb R$. Let $h_{\boldsymbol{\theta},s,u}$ be the density of 
$\mu_{\boldsymbol{\theta},s,u}\ast\mu_\varepsilon$, for $u=0,\,1$.
Using a standard randomization argument, 
it can be shown that there exists a constant $C_1>0$ such that 
\[
\sup_{\mu\in\mathcal D_1}\mathbb E^n_{(\mu\ast\mu_\varepsilon)}W_1(\hat\mu_n,\,\mu)\geq C_1 b_n^{-(\alpha+1)}\int_0^1|H^{(-1)}(u)|\,\di u\gtrsim n^{-(\alpha+1)/[2(\alpha+\beta)+1]},\]
provided that $[1-\chi^2(h_{\boldsymbol{\theta}, s,0}; \,h_{\boldsymbol{\theta}, s,1})/2]^{2n}$ is bounded below by a constant, where the $\chi^2$-divergence between two densities $h_0$ and $h_1$ on $\mathbb R$ is defined as
$$\chi^2(h_0;\,h_1)=\int_{\mathbb R}\frac{[h_0(x)-h_1(x)]^2}{h_0(x)}\,\di x.$$
Using standard arguments from \cite{fan1991}, see the proof of Theorem 5, pp. 1269--1271, under assumption \eqref{eq:condderiv}, it can be shown that there exists a constant $C_2>0$ so that 
$\chi^2(h_{\boldsymbol{\theta}, s,0}; \,h_{\boldsymbol{\theta}, s,1})\leq C_2b_n^{-[2(\alpha+\beta)+1]}\leq C_2n^{-1}$. 

\smallskip

We now show that also the sequence $n^{-1/2}$ is a lower bound on $\sup_{\mu\in\mathcal D_1}\mathbb E^n_{(\mu\ast\mu_\varepsilon)}W_1(\hat\mu_n,\,\mu)$.
In fact, replacing the function in \eqref{eq:targetf} with 
$$f_{\theta}(x):=f_{0,r}(x)+C a_n^{-1}\theta H(x),\quad x\in\mathbb R,\quad \theta\in\{0,1\},$$
and taking $a_n=([n^{1/2}]\vee 1)$, all previous steps go through and we find that, for a constant $C_3>0$,
\[
\sup_{\mu\in\mathcal D_1}\mathbb E^n_{(\mu\ast\mu_\varepsilon)}W_1(\hat\mu_n,\,\mu)\geq C_3 a_n^{-1}\gtrsim n^{-1/2}\]
and $\chi^2(f_0\ast\mu_\varepsilon; \,f_1\ast\mu_\varepsilon)\lesssim a_n^{-2}\lesssim n^{-1}$. 
Combining the two previously obtained bounds, we conclude that
\[
\sup_{\mu\in\mathcal D_1}\mathbb E^n_{(\mu\ast\mu_\varepsilon)}W_1(\hat\mu_n,\,\mu)\gtrsim 
\max\{n^{-(\alpha+1)/[2(\alpha+\beta)+1]},\,n^{-1/2}\}=n^{-(\alpha+1)/[2\alpha+(2\beta\vee 1)+1]}.\]

\vspace{0.5cm}  

\noindent 
$\bullet$ \emph{Case $d\geq2$}\\[4pt]
We define the $d\times d$ matrix
\[
\mathbf{A}=
\left(
\begin{array}{l@{}l}
 1 & \quad\mathbf{1}^t\\
 \mathbf{0} &\quad\mathbf{I}_{d-1}\\
\end{array}\right),
\]    
where $\mathbf{0}$ is a $(d-1)\times1$-column vector with all elements equal to $0$,  
while $\mathbf{1}^t$ is a $1\times(d-1)$-row vector with all elements equal to $1$ and
$\mathbf{I}_{d-1}$ is the identity matrix of size $(d-1)$.
The matrix $\mathbf{A}$ is invertible and, being upper triangular, the determinant is the product of the main diagonal entries, therefore $\mathrm{det}(\mathbf{A})=1$. 
For each observation $\mathsf{Y}_i$, we consider the transformation
\[\mathsf{Z}_i:=\mathbf{A}\mathsf{Y}_i=\mathbf{A}(\mathsf{X}_i+\boldsymbol{\varepsilon}_i)=\mathbf{A}\mathsf{X}_i+\mathbf{A}\boldsymbol{\varepsilon}_i,\quad i\in[n].\]
Set the position
\[\boldsymbol{\eta}_i:=\mathbf{A}\boldsymbol\varepsilon_i,\]
the $d\times 1$-column vector $\boldsymbol{\eta}_i$ has elements
\[
\eta_{i,j}=
\begin{dcases*}
\sum_{k=1}^d\varepsilon_{i,k},
   & if  $j=1$,\\
\varepsilon_{i,j},
   & if $j=2,\,\ldots,\,d$\,.
\end{dcases*}
\]
The random variables $\eta_{1,1},\,\ldots,\,\eta_{n,1}$ are i.i.d. according to the $d$-fold convolution measure $\mu_\varepsilon^{\ast d}$, that is,
\[\mu_{\eta_{i,1}}=\mu_\varepsilon^{\ast d},\quad i\in[n].\]
We now show that condition \eqref{eq:condderiv} implies that, for every $l=0,\,1,\,2$,
\begin{equation}\label{eq:derivbd}
|\hat\mu_{\eta_{i,1}}^{(l)}(t)|\lesssim (1+|t|)^{-(\beta d+l)},\quad t\in\mathbb R,\quad\mbox{for } i\in[n].
\end{equation}
In fact, by condition \eqref{eq:condderiv} with $l=0$,
$$|\hat\mu_{\eta_{i,1}}(t)|=|[\hat\mu_\varepsilon(t)]^{d}|\lesssim (1+|t|)^{-\beta d}, \quad t\in\mathbb R,\quad\mbox{for } i\in[n].$$
By the same condition, with $l=1$,
\[\begin{split}
|\hat\mu_{\eta_{i,1}}^{(1)}(t)|&
=d|\hat\mu_\varepsilon(t)|^{d-1}|\hat\mu_\varepsilon^{(1)}(t)|\lesssim (1+|t|)^{-(\beta d+1)},\quad t\in\mathbb R,\quad\mbox{for } i\in[n],
\end{split}
\]
and, with $l=2$,
\[\begin{split}
|\hat\mu_{\eta_{i,1}}^{(2)}(t)|&=|d(d-1)[\hat\mu_\varepsilon(t)]^{d-2}[\hat\mu_\varepsilon^{(1)}(t)]^2+
d[\hat\mu_\varepsilon(t)]^{d-1}\hat\mu_\varepsilon^{(2)}(t)|\\
&\lesssim (1+|t|)^{-\beta(d-2)}
(1+|t|)^{-2(\beta +1)}+(1+|t|)^{-\beta(d-1)}(1+|t|)^{-(\beta +2)}\\
&\lesssim (1+|t|)^{-(\beta d+2)},\quad t\in\mathbb R,\quad\mbox{for } i\in[n].
\end{split}
\]
This proves that condition \eqref{eq:derivbd} holds.

We make a preliminary remark for bounding below the supremum of the $L^1$-Wasserstein risk.
For a random vector $\mathsf X$ in $\mathbb R^d$ with distribution $\mu\in\mathcal P_1(\mathbb R^d,\,M)$, we denote by $\mu^{\mathbf A}$ the distribution of the transformation $\mathbf A\mathsf X$, which is the image measure of $\mu$ by ${\mathbf A}$. Let $\hat\mu_n$ be any estimator of $\mu$ based on the observations 
$\mathsf Y^{(n)}=(\mathsf{Y}_1,\,\ldots,\,\mathsf{Y}_n)$.
We denote by $\hat\mu_n^{\mathbf A}$ the corresponding estimator of $\mu^{\mathbf A}$,
which is a function of $\mathsf Z^{(n)}=(\mathsf{Z}_1,\,\ldots,\,\mathsf{Z}_n)$, with 
$\mathsf Z_i={\mathbf A}\mathsf Y_i$, for $i\in[n]$. Then,
\[\begin{split}
W_1(\hat\mu_n^{\mathbf A},\,\mu^{\mathbf A})&=\inf_{\tau\in\Gamma(\hat \mu_n^{\mathbf A},\,\mu^{\mathbf A})}
\int_{\mathbb R^d\times\mathbb R^d}|\mathsf z-\mathsf z'|\,\tau(\di\mathsf z,\,\di\mathsf z')\\
&=\inf_{\gamma\in\Gamma(\hat \mu_n,\,\mu)}\int_{\mathbb R^d\times\mathbb R^d}|{\mathbf A}\mathsf y-{\mathbf A}\mathsf y'|\,\gamma(\di\mathsf y,\,\di\mathsf y')\\
&= \inf_{\gamma\in\Gamma(\hat \mu_n,\,\mu)}\int_{\mathbb R^d\times\mathbb R^d}|\mathbf A(\mathsf y-\mathsf y')|\,\gamma(\di\mathsf y,\,\di\mathsf y')\leq |{\mathbf A}|W_1(\hat\mu_n,\,\mu),
\end{split}\]
where $|{\mathbf A}|:=(\sum_{i=1}^d\sum_{j=1}^d a_{ij}^2)^{1/2}=\sqrt{2d-1}$. Therefore, 
\begin{equation}\label{eq:reltransf}
|{\mathbf A}|W_1(\hat\mu_n,\,\mu)\geq W_1(\hat\mu_n^{\mathbf A},\,\mu^{\mathbf A}).
\end{equation}

\smallskip

We now study the supremum of the $L^1$-Wasserstein risk over the class $\mathcal D_d=\mathcal P_1(\mathbb R^d,\,M)\cap \mathcal S_d(\alpha,\,L)$. We begin by defining a finite family of Lebesgue absolutely continuous probability measures on $\mathbb R^d$, with uniformly bounded first moments, whose densities belong to $\mathcal S_d(\alpha,\,L)$.
Let $b_n:=([n^{1/[2(\alpha+\beta d)+1]}]\vee 1)$.
Let $f_{0,r}$ be the density defined in \eqref{eq:densf0} and $\mu_{0,r}=f_{0,r}\,\di\lambda$ the corresponding probability measure. 
For $\boldsymbol{\theta}\in\{0,\,1\}^{b_n}$, let $\mu_{\boldsymbol{\theta}}=f_{\boldsymbol{\theta}}\,\di\lambda$ be the probability measure corresponding to the density $f_{\boldsymbol{\theta}}$ defined in \eqref{eq:targetf}.
Define the product probability measure on $\mathbb R^d$
\[\bar\mu_{\boldsymbol{\theta}}^{\mathbf A}:=\mu_{\boldsymbol{\theta}}\otimes\mu_{0,r}^{\otimes (d-1)}=
(f_{\boldsymbol{\theta}}\,\di\lambda)\otimes (f_{0,r}\,\di\lambda)\otimes\ldots\otimes(f_{0,r}\,\di\lambda)\]
having Lebesgue density $\bar f_{\boldsymbol{\theta}}^{\mathbf A}(\tilde {\mathsf x})=f_{\boldsymbol{\theta}}(\tilde x_1)\times\prod_{j=2}^d f_{0,r}(\tilde x_j)$, $\tilde {\mathsf x} \in \mathbb R^d$. Define $\bar\mu_{\boldsymbol{\theta}}$ to be the distribution of $\mathsf X:= \mathbf A^{-1} \tilde {\mathsf X}$ when $\tilde {\mathsf X} \sim \bar\mu_{\boldsymbol{\theta}}^{\mathbf A}$. In other words, $\bar\mu_{\boldsymbol{\theta}}$ has density 
$\bar f_{\boldsymbol{\theta}}(\mathsf x) = \bar f_{\boldsymbol{\theta}}^{\mathbf A}(\mathbf A \mathsf x)$. 
We show that 
$$\{\bar\mu_{\boldsymbol{\theta}}:\,{\boldsymbol{\theta}}\in\{0,\,1\}^{b_n}\}\subseteq\mathcal D_d.$$

\smallskip
\noindent\emph{The probability measure $\bar \mu_{\boldsymbol{\theta}}\in\mathcal P_1(\mathbb R^d,\,M)$}\\[5pt]
Taking into account that the Euclidean norm of any vector is bounded above by its $1$-norm, that is, $|\mathsf x|\leq \sum_{j=1}^d|x_j|$,
for $n$ large enough we have
\[
\begin{split}
M_1(\bar\mu_{\boldsymbol{\theta}}) &\leq |\mathbf A^{-1}|  M_1(\bar\mu_{\boldsymbol{\theta}}^{\mathbf A}) \leq  |\mathbf A^{-1}| \left[  
M_1(\mu_{\boldsymbol{\theta}})+\sum_{j=2}^dM_1(\mu_{0,r})\right] =:\bar M_{0,r,\delta}<\infty
\end{split}
\]
by the same arguments laid down for the case $d=1$.
Thus, $\bar \mu_{\boldsymbol{\theta}}\in\mathcal P_1(\mathbb R^d,\,M)$ for every $M\geq \bar M_{0,r,\delta}$.\\

\smallskip

\noindent\emph{The density $\bar f_{\boldsymbol{\theta}}\in\mathcal S_d(\alpha,\,L)$}\\[5pt]
First note that 
$\hat {\bar f}_{\boldsymbol{\theta}}(\mathsf t)  = \hat {\bar f}_{\boldsymbol{\theta}}^{\mathbf A}(\mathsf t \cdot \mathbf A^{-1})$.
By the same arguments exposed for the case $d=1$, for $n$ large enough we have
\[
\begin{split}
\sum_{j=1}^d
\int_{\mathbb R^d}|\hat {\bar f}_{\boldsymbol{\theta}}(\mathsf t)|^2(1+t_j^2)^{\alpha}\,\di \mathsf t
=&
\sum_{j=1}^d
\int_{\mathbb R^d}|\hat {\bar f}_{\boldsymbol{\theta}}^{\mathbf A}(\mathsf t \cdot\mathbf A^{-1})|^2(1+t_j^2)^{\alpha}\,\di \mathsf t\\ 
&\leq d 
\int_{\mathbb R^d}|\hat {\bar f}_{\boldsymbol{\theta}}^{\mathbf A}(\mathsf t\cdot\mathbf A^{-1})|^2(1+|\mathsf  t|^2)^{\alpha}\,\di \mathsf t \\ 
&< d |\mathbf A|^{2\alpha}
\int_{\mathbb R^d}|\hat {\bar f}_{\boldsymbol{\theta}}^{\mathbf A}(\mathsf t)|^2(1+|\mathsf  t|^2)^{\alpha}\,\di \mathsf t\\
&=d |\mathbf A|^{2\alpha}
\bigg(\int_{\mathbb R}|\hat f_{\boldsymbol{\theta}}(t_1)|^2(1+t_1^2)^{\alpha}\,\di t_1 \\
&\qquad\qquad\qquad\quad+
\sum_{j=2}^d\int_{\mathbb R}|\hat f_{0,r}(t_j)|^2(1+t_j^2)^{\alpha}\,\di t_j\bigg)=:L_0.
\end{split}
\]
Therefore, there exists a finite constant $\bar L_0>0$ such that $\sum_{j=1}^d
\int_{\mathbb R^d}|\hat {\bar f}_{\boldsymbol{\theta}}(\mathsf t)|^2(1+t_j^2)^{\alpha}\,\di \mathsf t \leq \bar L_0$.
Thus, $\bar f_{\boldsymbol{\theta}}\in\mathcal S_d(\alpha,\,L)$ for every $L\geq\bar L_0$.

\smallskip

Let $\tilde{\boldsymbol{\theta}}$ be a random vector whose components are i.i.d. Bernoulli random variables $\tilde\theta_1,\,\ldots,\,\tilde\theta_{b_n}$, with $P(\tilde\theta_s=1)=P(\tilde\theta_s=0)=\frac{1}{2}$, for $s\in[b_n]$. By the inequality in \eqref{eq:reltransf}, 
for any estimator $\hathat\mu_n$ that is a measurable function of the observations $\mathsf{Z}^{(n)}$ 
from $(\mathbb R^d)^n$ into the set of probability measures on $\mathbb R$, we have
\[\begin{split}
|{\mathbf A}|\sup_{\mu\in\mathcal D_d}\mathbb E^n_{(\mu\ast\mu_\varepsilon^{\otimes d})}W_1(\hat\mu_n,\,\mu)
&\geq |{\mathbf A}|
\sup_{\boldsymbol{\theta}\in\{0,\,1\}^{b_n}}
\mathbb E^n_{(\bar\mu_{\boldsymbol{\theta}}\ast\mu_\varepsilon^{\otimes d})}W_1(\hat\mu_n,\,\bar\mu_{\boldsymbol{\theta}})\\
&\geq
\sup_{\boldsymbol{\theta}\in\{0,\,1\}^{b_n}}
\mathbb E^n_{(\bar\mu_{\boldsymbol{\theta}}\ast\mu_\varepsilon^{\otimes d})}
W_1(\hat\mu_n^{\mathbf A},\,\bar\mu_{\boldsymbol{\theta}}^{\mathbf A})\\
&\geq
\sup_{\boldsymbol{\theta}\in\{0,\,1\}^{b_n}}
\mathbb E^n_{(\bar\mu_{\boldsymbol{\theta}}\ast\mu_\varepsilon^{\otimes d})}
W_1((\hat\mu_n^{\mathbf A})_1,\,(\bar\mu_{\boldsymbol{\theta}}^{\mathbf A})_1)\\
&\geq \inf_{\hathat\mu_n}\sup_{\boldsymbol{\theta}\in\{0,\,1\}^{b_n}}\mathbb E^n_{(\bar\mu_{\boldsymbol{\theta}}\ast\mu_\varepsilon^{\otimes d})}W_1(\hathat\mu_n,\,(\bar\mu_{\boldsymbol{\theta}}^{\mathbf A})_1)\\
&\geq \inf_{\hathat\mu_n}\mathbb E\mathbb 
E^n_{(\bar\mu_{\tilde{\boldsymbol{\theta}}}\ast\mu_\varepsilon^{\otimes d})}W_1(\hathat\mu_n,\,(\bar\mu_{\tilde{\boldsymbol{\theta}}}^{\mathbf A})_1)\\
&= \inf_{\hathat\mu_n}\int_{\mathbb R}\mathbb E\mathbb 
E^n_{(\bar\mu_{\tilde{\boldsymbol{\theta}}}\ast\mu_\varepsilon^{\otimes d})}|\hathat F_n(x)-F_{(\bar\mu_{\tilde{\boldsymbol{\theta}}}^{\mathbf A})_1}(x)|\,\di x,
\end{split}\]
where $\hathat F_n$ is the distribution function of $\hathat \mu_n$ and 
$F_{(\bar\mu_{\tilde{\boldsymbol{\theta}}}^{\mathbf A})_1}$ is the distribution function of 
$(\bar\mu_{\tilde{\boldsymbol{\theta}}}^{\mathbf A})_1$, which is the marginal distribution of $\bar\mu_{\tilde{\boldsymbol{\theta}}}^{\mathbf A}$ on the first coordinate, that is, $\mu_{\tilde{\boldsymbol{\theta}}}$, whose density is 
\[f_{\tilde{\boldsymbol{\theta}}}=f_{0,r}+
C b_n^{-\alpha}\sum_{s=1}^{b_n}\tilde \theta_s H(b_n(\cdot-x_{s,n})).\]
For $\boldsymbol{\theta}\in\{0,\,1\}^{b_n}$ and $s\in[b_n]$, we define the densities
$$\bar f_{\boldsymbol{\theta},s,u}:=\bar f_{(\theta_1,\ldots,\theta_{s-1},u,\theta_{s+1},\ldots,\theta_{b_n})},\quad \mbox{for }u=0,\,1,$$
and let $\bar\mu_{\boldsymbol{\theta},s,u}:=\bar f_{\boldsymbol{\theta},s,u}\,\di\lambda$, for $u=0,\,1$, be the corresponding probability measures on $\mathbb R$. 
For any $x\in[x_{s,n},\,x_{s+1,n}]$, taking the expected value with respect to $\tilde\theta_s$ and using the subscript 
$\tilde{\boldsymbol{\theta}}\mysetminus s:=(\tilde{\theta}_1,\,\ldots,\,\tilde{\theta}_{s-1},\,\tilde{\theta}_{s+1},\,\ldots,\,\tilde{\theta}_{b_n})$ to denote the expected value with respect to the remaining 
components of the vector $\tilde{\boldsymbol{\theta}}$, we have
\[\begin{split}
&\mathbb E\mathbb 
E^n_{(\bar\mu_{\tilde{\boldsymbol{\theta}}}\ast\mu_\varepsilon^{\otimes d})}|\hathat F_n(x)-F_{(\bar\mu_{\tilde{\boldsymbol{\theta}}}^{\mathbf A})_1}(x)|\\
&\quad=
\frac{1}{2}
\mathbb E_{\tilde{\boldsymbol{\theta}}\mysetminus s}
\bigg[\sum_{u=0}^1\mathbb 
E^n_{(\bar\mu_{\tilde{\boldsymbol{\theta}},s,u}\ast\mu_\varepsilon^{\otimes d})}|\hathat F_n(x)-
F_{(\bar\mu_{\tilde{\boldsymbol{\theta}},s,u}^{\mathbf A})_1}(x)|
\bigg]\\
&\quad\geq
\frac{1}{2}
\mathbb E_{\tilde{\boldsymbol{\theta}}\mysetminus s}
\int_{\mathbb R^d}\ldots\int_{\mathbb R^d}
|F_{(\bar\mu_{\tilde{\boldsymbol{\theta}},s,0}^{\mathbf A})_1}(x)-
F_{(\bar\mu_{\tilde{\boldsymbol{\theta}},s,1}^{\mathbf A})_1}(x)|\\
&\qquad\quad
\times \min\bigg\{\prod_{i=1}^n(\bar f_{\tilde{\boldsymbol{\theta}},s,0}\ast f_\varepsilon^{\otimes d})^{\mathbf A}(\mathsf z_i),\,\prod_{i=1}^n(\bar f_{\tilde{\boldsymbol{\theta}},s,1}\ast f_\varepsilon^{\otimes d})^{\mathbf A}(\mathsf z_i)\bigg\}\,\di \mathsf{z}_1\ldots\di \mathsf{z}_n\\
&\quad=
\frac{1}{2}b_n^{-(\alpha+1)}\big|H^{(-1)}(b_n(x-x_{s,n}))\big|\\
&\qquad\quad\times 
\mathbb E_{\tilde{\boldsymbol{\theta}}\mysetminus s}
\int_{\mathbb R^n}
\min\bigg\{\prod_{i=1}^n\big( f_{\tilde{\boldsymbol{\theta}},s,0}\ast f_{\eta_{1,1}}\big)(z_{i,1}),\,\prod_{i=1}^n\big(f_{\tilde{\boldsymbol{\theta}},s,1}\ast f_{\eta_{1,1}}\big)(z_{i,1})\bigg\}\,\di {z_{1,1}}\ldots\di {z_{n,1}}\\
&\quad\geq
\frac{1}{4}b_n^{-(\alpha+1)}\big|H^{(-1)}(b_n(x-x_{s,n}))\big|\\
&\qquad\quad\times 
\mathbb E_{\tilde{\boldsymbol{\theta}}\mysetminus s}\bigg[1-\frac{1}{2}
\chi^2\Big(f_{\tilde{\boldsymbol{\theta}},s,0}\ast f_{\eta_{1,1}};\,f_{\tilde{\boldsymbol{\theta}},s,1}\ast f_{\eta_{1,1}}\Big)\bigg]^{2n}
\end{split}
\]
because the following facts hold: 
\begin{itemize}
\item for any $\boldsymbol{\theta}\in\{0,\,1\}^{b_n}$,
\[
\begin{split}
|F_{(\bar\mu_{{\boldsymbol{\theta}},s,0}^{\mathbf A})_1}(x)-
F_{(\bar\mu_{{\boldsymbol{\theta}},s,1}^{\mathbf A})_1}(x)|&=
Cb_n^{-\alpha}\bigg|\int_{-\infty}^x (f_{\boldsymbol{\theta},s,0}-
f_{\boldsymbol{\theta},s,1})(u)\,\di u
\bigg|\\
&=Cb_n^{-(\alpha+1)}\abs{\int_{-\infty}^x
H(b_n(u-x_{s,n}))\,\di u}\\
&=C
b_n^{-(\alpha+1)}\big|H^{(-1)}(b_n(x-x_{s,n}))\big|;
\end{split}\]
\item for any $\boldsymbol{\theta}\in\{0,\,1\}^{b_n}$ and $u=0,\,1$, all observations ${z_{1,1}},\,\ldots,\,{z_{n,1}}$ are i.i.d. according to the probability measure
\[((\bar\mu_{{\boldsymbol{\theta}},s,u}\ast\mu_\varepsilon^{\otimes d})^{\mathbf A})_1=
((\bar\mu_{{\boldsymbol{\theta}},s,u})^{\mathbf A})_1\ast((\mu_\varepsilon^{\otimes d})^{\mathbf A})_1=
\mu_{\boldsymbol{\theta},s,u}\ast\mu_{\eta_{1,1}};
\]
\item by the same arguments as in Theorem 3 of \cite{dedecker:michel:2013}, p. 283, for any $\boldsymbol{\theta}\in\{0,\,1\}^{b_n}$,
\[\begin{split}
&\int_{\mathbb R^n}
\min\bigg\{\prod_{i=1}^n\big( f_{\boldsymbol{\theta},s,0}\ast f_{\eta_{1,1}}\big)(z_{i,1}),\,\prod_{i=1}^n\big(f_{\boldsymbol{\theta},s,1}\ast f_{\eta_{1,1}}\big)(z_{i,1})\bigg\}\,\di {z_{1,1}}\ldots\di {z_{n,1}}\\
&\qquad\qquad\qquad\qquad\qquad\qquad\qquad\qquad\,\geq
\frac{1}{2} \bigg[1-\frac{1}{2}
\chi^2\Big(f_{\boldsymbol{\theta},s,0}\ast f_{\eta_{1,1}}, f_{\boldsymbol{\theta},s,1}\ast f_{\eta_{1,1}}\Big)\bigg]^{2n}.
\end{split}\]
\end{itemize}
By applying the same arguments as in \cite{fan1991}, p. 1270, 
with the difference that the error density is ordinary smooth of order $\beta d$ instead of $\beta$ and condition \eqref{eq:derivbd} holds, we get that there exists a constant $c>0$ such that, for any $\boldsymbol{\theta}\in\{0,\,1\}^{b_n}$, we have 
\[\chi^2\Big(f_{\boldsymbol{\theta},s,0}\ast f_{\eta_{1,1}}, f_{\boldsymbol{\theta},s,1}\ast f_{\eta_{1,1}}\Big)\leq cb_n^{-[2(\alpha+\beta d)+1]}\lesssim n^{-1}.\]
Thus, for a suitable constant $C'>0$,
\[\begin{split}
|{\mathbf A}|\sup_{\mu\in\mathcal D_d}\mathbb E^n_{(\mu\ast\mu_\varepsilon^{\otimes d})}W_1(\hat\mu_n,\,\mu)
&\geq \inf_{\hathat\mu_n}\int_{\mathbb R}\mathbb E\mathbb 
E^n_{(\bar\mu_{\tilde{\boldsymbol{\theta}}}\ast\mu_\varepsilon^{\otimes d})}|\hathat F_n(x)-F_{(\bar\mu_{\tilde{\boldsymbol{\theta}}}^{\mathbf A})_1}(x)|\,\di x\\
&\geq C'b_n^{-(\alpha+1)}
\sum_{s=1}^{b_n}\int_{x_{s,n}}^{x_{s+1,n}}
\big|H^{(-1)}(b_n(x-x_{s,n}))\big|\,\di x\\
&= C'b_n^{-(\alpha+1)}
\int_0^1
\big|H^{(-1)}(u)\big|\,\di u\\
&\gtrsim n^{-(\alpha+1)/[2(\alpha+\beta d)+1]}.
\end{split}\]

To show that also the sequence $n^{-1/2}$ is a lower bound on $\sup_{\mu\in\mathcal D_d}\mathbb E^n_{(\mu\ast\mu_\varepsilon^{\otimes d})}W_1(\hat\mu_n,\,\mu)$ we can reason as in the case $d=1$.
Therefore, combining the two previously obtained bounds, we have that
\[
\sup_{\mu\in\mathcal D_d}\mathbb E^n_{(\mu\ast\mu_\varepsilon^{\otimes d})}W_1(\hat\mu_n,\,\mu)\gtrsim 
\max\{n^{-(\alpha+1)/[2(\alpha+\beta d)+1]},\,n^{-1/2}\}=n^{-(\alpha+1)/[2\alpha+(2\beta d\vee 1)+1]}\]
and the proof is complete.
\end{proof}


\section{Proof of Theorem \ref{th:comtelacour} and related results} \label{sec:prthcomte} 
\subsection{Proof of Theorem \ref{th:comtelacour}}
We first note that 
$$ \sup_{\mathsf v \in \mathbb S^{d-1}} \| F_{\tilde \mu_{1 n, \mathsf v}} -  F_{ \mu_{0X, \mathsf v}} \|_1 \leq \sup_{\mathsf v \in \mathbb S^{d-1} } \| F_{\tilde \mu_{n, \mathsf v}} -  F_{\mu_{0X , \mathsf v}} \|_1 + O(n^{-1/2}).$$ 
Then, by inequality \eqref{eq:equivalence} and Theorem \ref{theo:1} with $\beta>1$ and any sequence $h_n\rightarrow 0$,
\begin{equation*}
\begin{split}
W_1(\tilde \mu_{1n},\,\mu_{0X}) &\leq C_d \overline W_1(\tilde \mu_{1n},\,\mu_{0X})\\
&=C_d\sup_{\mathsf v \in \mathbb S^{d-1}} \| F_{\tilde \mu_{1 n, \mathsf v}} -  F_{ \mu_{0X, \mathsf v}} \|_1\\
&\lesssim \sup_{\mathsf v \in \mathbb S^{d-1} } \| F_{\tilde \mu_{ n, \mathsf v}} -  F_{\mu_{0X, \mathsf v}} \|_1 +n^{-1/2}\\
&\lesssim  h_n +   \sup_{\mathsf v \in \mathbb S^{d-1} } \| F_{\tilde \mu_{Yn, \mathsf v}} -  F_{\mu_{0Y, \mathsf v}} \|_1 + n^{-1/2} \\
& \qquad\,\,  +  (\log n)\sup_{\mathsf v \in \mathbb S^{d-1} } \bigg(h_n^{-\beta |I^*_{h_n} (\mathsf v)|  + 1}\prod_{j \in I^*_{h_n} (\mathsf v) } |v_j|^\beta \|f_{\tilde \mu_{Yn, \mathsf v}} -  f_{\mu_{0Y , \mathsf v}}\|_1\bigg).
\end{split}
\end{equation*}
We now bound $\|f_{\tilde \mu_{Yn, \mathsf v}} -  f_{\mu_{0Y , \mathsf v}}\|_1$ uniformly in $\mathsf v$. In \cite{comtelacourihp}, the authors control the errors in the $L^2$-distance, therefore we need to control the above term differently.
For every $\mathsf v\in\mathbb R^d$, with $\mathbb G_n:=\sqrt{n}(\mathbb P_n-P_{0Y})$ the empirical process, we have
\begin{eqnarray*} \label{comte:decomp1}
\| f_{\tilde \mu_{Yn, \mathsf v}} -  f_{\mu_{0Y, \mathsf v}} \|_1 &\leq& \frac{1 }{2\pi} \bigg[ 
 \int_{\mathbb R} \bigg(\abs{\int_{\mathbb R}e^{-\imath t x}\hat\mu_{0Y}(t\mathsf v) [1 - \hat K_{b_n}^{\otimes d}(t \mathsf v)] \,\di t} \nonumber\\
&&\qquad\qquad\qquad\qquad\qquad\quad + \abs{\int_{\mathbb R}e^{-\imath t x} \hat K_{b_n}^{\otimes d}(t \mathsf v) \frac{  \mathbb G_n( e^{\imath t \mathsf v \cdot \mathsf Y} )}{\sqrt{n}}  \,\di t}\bigg)\,\di x\bigg].
 \end{eqnarray*}
We denote by $B_1(\mathsf v)$ the first integral and by $B_2(\mathsf v)$ the second one.
In Section \ref{sec:prboundB1B2}, we show that 
\begin{equation}\label{bound B1B2}
B_1(\mathsf v)\lesssim  b_n^{2 | I^*_{b_n} (\mathsf v)|} \prod_{j \in I^*_{b_n} (\mathsf v)} v_j^{-2} \quad \mbox{ and }\quad  \mathbb E_{0Y}^{n}\bigg[\sup_{\mathsf v \in \mathbb S^{d-1}} B_2(\mathsf v)\bigg] \lesssim \frac{ (\log n)^{3/2} }{ \sqrt{n b_n}}.
\end{equation}
Since $\beta=2$, choosing $h_n = b_n= [n/(\log n )^3]^{-1/(4d+1)}$, the bounds in \eqref{bound B1B2} imply that
\begin{equation*}
\begin{split}
\sup_{\mathsf v \in\mathbb S^{d-1}}
\bigg(b_n^{-2|I^*_{b_n} (\mathsf v)|+1}\prod_{j \in I^*_{b_n} (\mathsf v)}v_j^2 \|f_{\tilde \mu_{Yn,\mathsf v}}-f_{\mu_{0Y,\mathsf v}}\|_1\bigg) &\lesssim b_n + \frac{ b_n^{-2d+1/2}(\log n)^{3/2}}{\sqrt{n}} \\
&\lesssim [n/(\log n )^3]^{-1/(4d+1)}. 
\end{split}
\end{equation*}
The bound on $\sup_{\mathsf v \in\mathbb S^{d-1}} \| F_{\tilde \mu_{Yn, \mathsf v}} -  F_{\mu_{0Y, \mathsf v}}\|_1$ proceeds similarly noting that 
\begin{equation*}
\begin{split}
F_{\tilde \mu_{Yn, \mathsf v}}(y) & = \frac{ 1 }{ 2\pi}\int_{-\infty}^y \int_{\mathbb R} e^{-\imath t x } \hat K_{b_n}^{\otimes d}(t \mathsf v) \phi_n(t \mathsf v)\,\di t\,\di x \\
& =  F_{\mu_{0Y, \mathsf v}}(y)+ \frac{ 1 }{ 2\pi \sqrt{n}} \int_{-\infty}^y\int_{\mathbb R} e^{-\imath t x } \hat K_{b_n}^{\otimes d}(t \mathsf v) \mathbb G_n(e^{\imath t\mathsf v \cdot\mathsf Y})\,\di t\,\di x\\
 & \qquad\qquad\quad+
 \frac{ 1 }{ 2\pi} \int_{\mathbb R} e^{-\imath t y }\frac{[\hat K_{b_n}^{\otimes d}(t \mathsf v)-1]}{ -\imath t}  \hat \mu_{0Y}(t\mathsf v)\,\di t,
 \end{split}
\end{equation*}
so that 
\begin{equation*}
\begin{split}
\int_{\mathbb R}| F_{\tilde \mu_{Yn, \mathsf v}} (y)- F_{\mu_{0Y , \mathsf v}}(y)|\,\di y & \leq    \frac{ 1 }{ 2\pi} \int_{\mathbb R}\bigg|\int_{\mathbb R} e^{-\imath t y }\frac{[\hat K_{b_n}^{\otimes d}(t \mathsf v) -1]}{ -\imath t}  \hat \mu_{0Y}(t\mathsf v)\,\di t \bigg|\,\di y  \\
 & \qquad +\frac{ 1 }{ 2\pi \sqrt{n}}\int_{\mathbb R}\bigg|  \int_{-\infty}^y \int_{\mathbb R} e^{-\imath t x } \hat K_{b_n}^{\otimes d}(t \mathsf v) \mathbb G_n(e^{\imath t\mathsf v \cdot\mathsf Y})\,\di t \,\di x\bigg|\,\di y \\
 &=: \bar B_1(\mathsf v) +  \bar B_2(\mathsf v).
 \end{split}
\end{equation*}
The first term $\bar B_1(\mathsf v)$  is similar to $B_1(\mathsf v)$, therefore $$\bar B_1(\mathsf v) \lesssim b_n$$
uniformly in $\mathsf v \in\mathbb S^{d-1}$. 
We now study the term $\bar B_2(\mathsf v)$ following the control of $B_2$ in Section \ref{sec:prboundB1B2} below. First note that
$\bar B_2(\mathsf v) =(2\pi \sqrt{n})^{-1} \int_{\mathbb R}|\mathbb G_n(\int_{-\infty}^y g_{x,\mathsf v}(\mathsf Y) \,\di x)|\,\di y$,
where the function $g_{x,\mathsf v}(\mathsf Y)$ is defined in \eqref{B2}. Set $G_{y, \mathsf v}(\mathsf Y):= \int_{-\infty}^y g_{x,\mathsf v}(\mathsf Y)\,\di x$, $y\in\mathbb R$, write
$$ \bar B_2(\mathsf v) = \bigg(\int_{-\infty}^0+\int_0^{\infty}\bigg)\big|\mathbb G_n\big(G_{y, \mathsf v}(\mathsf Y)\big)\big|\,\di y.$$
We only study the case where $y\leq0$ because the case $y>0$ can be treated similarly. Without loss of generality, we assume that $v_j>0$ for all $j \in  J^*_d(\mathsf v)$.  
Since 
$$g_{x,\mathsf v}(\mathsf Y)=(2\pi)\Conv_{j\in  J^*_d(\mathsf v)} K_{b_nv_j}(x-\mathsf v\cdot\mathsf Y),\quad x\in\mathbb R,$$

\begin{equation*}
\begin{split}
|G_{y,\mathsf v}(\mathsf Y) | \lesssim  \int_{-\infty}^y \Conv_{j\in  J^*_d(\mathsf v)} |K_{ b_n v_j} (x-\mathsf v \cdot \mathsf Y)|\,\di x
&=\int_{-\infty}^{y-\mathsf v \cdot \mathsf Y} \Conv_{j\in  J^*_d(\mathsf v)}|K_{ b_n v_j} (u)|\,\di u\\
    &\lesssim \mathsf P\left( \sum_{j \in J^*_d(\mathsf v)}  b_n v_j Z_j \leq y-\mathsf v \cdot \mathsf Y\right)\\
    &  \leq \sum_{j \in J^*_d(\mathsf v)} \mathsf P\left(  v_j  Z_j \leq (y +\|\mathsf Y\|_1)/( db_n)\right) ,
\end{split}
\end{equation*}
where the $Z_j$'s are i.i.d. random variables with density  $ |K|/\|K\|_1$.
Hence, for all $ k>1 $, defined $\mathsf v_{\min}:=\min_{j\in[d]}|v_j|$, we have
\begin{equation*}
\begin{split}
G_{y,\mathsf v}(\mathsf Y) &\leq \1_{(y\leq (-2 \|\mathsf Y\|_1\wedge -1))}  d\mathsf P\left(  Z_1 \leq y /(2db_n\mathsf v_{\min}) \right) + d\1_{(0\geq y>  (-2 \|\mathsf Y\|_1\wedge -1))} \\
 & \lesssim  \1_{(y\leq  -1)}(b_n/|y|)^{k-1}+ \1_{(-1< y\leq0)},
\end{split}
\end{equation*}
which in turns implies that $$\mathbb E_{0Y}(G_{y,\mathsf v}(\mathsf Y)^2 ) \leq \1_{(y\leq-1)} (b_n/|y|)^{2(k-1)} + \1_{(-1< y \leq 0)}.$$
Using Lemma 19.36 of \cite{vaart_1998}, p. 268, jointly with the computations of the integrated bracketing entropy in Section \ref{sec:prboundB1B2},
$$ \mathbb E_{0Y}^n\left( \sup_{\mathsf v \in\mathbb S^{d-1}} \bar B_2(\mathsf v) \right) \lesssim \sqrt{\frac{\log n}{n}}.$$ 
This implies that 
$$  \sup_{\mathsf v \in\mathbb S^{d-1}} \| F_{\tilde \mu_{Yn,\mathsf v}} -  F_{\mu_{0Y, \mathsf v}}\|_1 \lesssim b_n,$$
which concludes the proof. 
\subsection{Proof of the bounds in \eqref{bound B1B2}}\label{sec:prboundB1B2}

We begin to prove the bound on $B_1(\mathsf v)$. 
\vspace{0.2cm}  

\noindent 
$\bullet$ \emph{Bound on $B_1(\mathsf v)$}\\[4pt]
For $\mathsf v\in\mathbb R^d$, let 
$a_{\mathsf v}(x):=(2\pi)^{-1}\int_{\mathbb R}e^{-\imath t x}\hat\mu_{0Y}(t\mathsf v) [1 - \hat K_{b_n}^{\otimes d}(t \mathsf v)]\,\di t$, $x\in\mathbb R$. Using the inequality
$\|a_{\mathsf v}\|_1^2 \leq \|\hat a_{\mathsf v}\|_2\times \|\hat a_{\mathsf v}^{(1)}\|_2$, see, {\em e.g.}, (4.4) in \cite{Bobkov:2016}, p. 1030, we have
\begin{equation*}
\begin{split}
[B_1(\mathsf v)]^2 &\leq \|\hat\mu_{0Y}( \cdot \mathsf v) [1 - \hat K_{b_n}^{\otimes d}(\cdot \mathsf v)]\|_2\times  \left\|\frac{ \di}{\di t}\left(\hat\mu_{0Y}(\cdot \mathsf v) [1 - \hat K_{b_n}^{\otimes d}(\cdot\mathsf v) ]\right)\right\|_2.
\end{split}
\end{equation*}
Note that
\begin{equation*}
\begin{split} 
|1 - \hat K_{b_n}^{\otimes d}(t \mathsf v)| &\leq \sum_{j\in J_d^{*}(\mathsf v)} |1 - \hat K(b_n v_j t)|\leq d \1_{\left(|t|\geq (b_n \|\mathsf v\|_\infty\right)^{-1})}
 \end{split}
 \end{equation*}
 and 
\begin{equation*}
\begin{split}
\left|\frac{\di}{\di t}\left(\hat\mu_{0Y}(t \mathsf v) [1 - \hat K_{b_n}^{\otimes d}(t\mathsf v)]\right)\right| & \lesssim\frac{ \1_{(|t|\geq (b_n \|\mathsf v\|_\infty)^{-1})}}{  \prod_{j=1}^d (1 + v_j^2t^2)}  \\
&\qquad\quad\,\,\,\,\,\times
  \left[ \left|\frac{\di}{\di t} \hat\mu_{0X}(t\mathsf v) \right|  + 2 |\hat\mu_{0X}(t\mathsf v)|\sum_{j \in J_d^{*}(\mathsf v)}\frac{v_j^2|t|}{1+v_j^2 t^2 }\right]\\
  & \lesssim \frac{ \1_{(|t|\geq (b_n \|\mathsf v\|_\infty)^{-1})} }{\prod_{j=1}^d [1 + v_j^2/(b_n\|\mathsf v\|_\infty )^2]}  
  \left( \left|\frac{\di}{\di t}\hat\mu_{0X}(t\mathsf v)\right|  +  |\hat\mu_{0X}(t\mathsf v)| \right).
\end{split}
 \end{equation*}
By assumption \eqref{eq:exptails}, recalling inequality \eqref{eq:momentsineq}, we have
 $$\frac{1}{2\pi}\int_{\mathbb R} \left|\frac{\di}{\di t} \hat\mu_{0X}(t\mathsf v)  \right|^2\,\di t=\int_{\mathbb R}x^2|\mu_{0X,\mathsf v}(x)|^2\,\di x\leq \int_{\mathbb R^d}
 |\mathsf x|^2 f_{0X}(\mathsf x)\,\di \mathsf x=M_2(\mu_{0X})< \infty.$$ 
Combining previous bounds and recalling that $I_{b_n}^\ast(\mathsf v)=\{j\in[d]:\,|v_j|>b_n\}$, we obtain that
\begin{equation*}
\begin{split}
B_1(\mathsf v) &\lesssim  \frac{[M_2(\mu_{0X})]^{1/2}+\|\hat\mu_{0X}( \cdot \mathsf v)\|_2}{  \prod_{j=1}^d [1 + v_j^2/(b_n\|\mathsf v\|_\infty )^2]} \lesssim b_n^{2 |I_{b_n}^\ast(\mathsf v) |}\prod_{j \in I^\ast_{b_n}(\mathsf v) } v_j^{-2}.
\end{split}
\end{equation*}

\vspace{0.2cm}  

\noindent 
$\bullet$ \emph{Bound on $B_2(\mathsf v)$}\\[4pt]
Set
\begin{equation}\label{B2}
g_{x, \mathsf v}(\mathsf Y):= \int_{\mathbb R}e^{-\imath t (x-\mathsf v \cdot \mathsf Y)} \hat K_{b_n}^{\otimes d}(t \mathsf v) \,\di t,\quad x\in\mathbb R,
\end{equation}
we have 
\[B_2(\mathsf v)= \frac{1}{2\pi\sqrt{n} } \int_{\mathbb R} |\mathbb G_n ( g_{x, \mathsf v}(\mathsf Y)) |\,\di x.\]
We now control $\mathbb E_{0Y}^{n} [ \sup_{\mathsf v \in \mathbb S^{d-1}} |\mathbb G_n ( g_{x, \mathsf v}(\mathsf Y))|]$.
Let $\mathcal G_n(x):=\{ g_{x, \mathsf v}(\mathsf Y):\, \mathsf v \in \mathbb S^{d-1}\}$. We use Lemma 19.36 of \cite{vaart_1998}, p. 288. 
Since $\|\mathsf v\|_\infty \geq 1/d$ for all $\mathsf v \in \mathbb S^{d-1}$, we have $|g_{x, \mathsf v}(\mathsf Y)|<2d/b_n$. We now bound $\mathbb E_{0Y}[g_{x, \mathsf v}(\mathsf Y)^2 ]$. We have 
\begin{equation*}
\begin{split}
\mathbb E_{0Y}[g_{x, \mathsf v}(\mathsf Y)^2]& <\frac{ 2d }{ b_n }  \int_{\mathbb R}  \Conv_{j\in J_d^*(\mathsf v)}|K_{b_n v_j}(x-y)|f_{0Y,\mathsf v}(y)\,\di y\\
& \leq \frac{ 2 d}{ b_n} \|f_{0Y, \mathsf v}\|_\infty\|K\|_1^{|J_d^*(\mathsf v)|}\leq \frac{ 2 d}{ b_n}\|K\|_1\sup_{\mathsf v \in \mathbb S^{d-1}}  \|f_{0Y, \mathsf v}\|_\infty=:\delta_n^2.
\end{split}
\end{equation*}
For $|\mathsf v_1 - \mathsf v_2|\leq \tau b_n^2\epsilon$, with $\tau \in (0,\,1)$, we have
\begin{equation*}
\begin{split}
 |g_{x,\mathsf v_1}(\mathsf Y) - g_{x,\mathsf v_2}(\mathsf Y)| & \leq \tau b_n^2 \epsilon \int_{\mathbb R} |t| |\mathsf Y| \1_{|t| \leq d/b_n}\,\di t \leq \tau \epsilon |\mathsf Y|d^2 .
\end{split}
\end{equation*}
Using that $\mathbb E_{0Y}[ |\mathsf Y|^2] <\infty$, 
a $\delta_n$-bracket covering of $\mathcal G_n$ is obtained by an $(\epsilon\tau b_n^2)$-covering of $\mathbb S^{d-1}$ choosing $\tau $ accordingly. Hence 
\begin{equation*}
J_{[\,]}(\delta_n, \mathcal G_n(x) , L^2(P_{0Y})) \lesssim  \int_0^{\delta_n} \sqrt{ \log ( 1/b_n) + \log (1/\epsilon)_+} \,\di \epsilon \lesssim \sqrt{ \log n} \delta_n \lesssim \sqrt{ \frac{\log n}{b_n}},
\end{equation*}
which in turns implies  that 
\begin{equation*}\label{stochastic:comte}
\frac{1}{\sqrt{n}} \int_{|x| \leq R_n} \mathbb E_{0Y}^{n} \left[\sup_{\mathsf v \in\mathbb S^{d-1}}|\mathbb G_n ( g_{x, \mathsf v}(\mathsf Y)) | \right]\, \di x\lesssim \frac{ R_n \sqrt{\log n}  }{ \sqrt{nb_n}}.
\end{equation*}
We now study $\int_{|x| > R_n} |\mathbb G_n ( g_{x, \mathsf v}(\mathsf Y)) |\,\di x$. 
Consider the event $\Omega_n = \{|\mathsf Y_i| \leq R_n/2,\,i\in[n] \}$. Then, $|x- \mathsf v\cdot \mathsf Y_i| \geq |x|/2$ when $|x|\geq R_n$, and, for all $k\geq 1$, 
\begin{equation*}
g_{x, \mathsf v}(\mathsf Y_i) = \int_{\mathbb R} e^{ -\imath t  ( x-\mathsf v \cdot \mathsf Y_i) } \hat K_{b_n}^{\otimes d}(t \mathsf v)\,\di t = \frac{ 1 }{2\pi [\imath  ( x-\mathsf v \cdot \mathsf Y_i) ]^k } \int_{\mathbb R} e^{ -\imath t ( x-\mathsf v \cdot \mathsf Y_i) } \frac{\di^k}{\di t^k } \hat K_{b_n}^{\otimes d}(t \mathsf v)\,\di t
\end{equation*}
so that, since $\hat K$ is $k$-times continuously differentiable and each derivative is equal to 0 on the boundary of its support, 
$$ |g_{x, \mathsf v}(\mathsf Y_i)| \lesssim \frac{  1 }{ b_n |x|^k }.$$ 
Also on $\Omega_n$,
$$\mathbb G_n ( g_{x, \mathsf v}(\mathsf Y)) = \mathbb G_n ( g_{x, \mathsf v}(\mathsf Y)\1_{(|\mathsf Y|\leq |x|/2)} )  - \sqrt{n} \mathbb E_{0Y}[g_{x, \mathsf v}(\mathsf Y)\1_{(|\mathsf Y|>|x|/2)}]$$
and, for $|x|> R_n$, with the abuse of notation $c_2 := (c_2 \wedge 1)$, 
$$ \sqrt{n} \mathbb E_{0Y}[|g_{x, \mathsf v}(\mathsf Y)|\1_{(|\mathsf Y|>|x|/2)}] \lesssim \frac{ \sqrt{n}}{b_n} [P_{0X} (|\mathsf X| >|x|/4) + e^{-|x|/4}]\lesssim \frac{ \sqrt{n}}{ b_n} e^{-c_2 |x|/4}.$$
Therefore on $\Omega_n$, 
\begin{equation*}\label{boundgx}
\begin{split}
\sup_{\mathsf v \in\mathbb S^{d-1}} \int_{|x|>R_n} |\mathbb G_n ( g_{x, \mathsf v}(\mathsf Y)) |\,\di x &\lesssim  \sup_{\mathsf v\in\mathbb S^{d-1}} \int_{|x|>R_n} |\mathbb G_n ( g_{x, \mathsf v}(\mathsf Y)\1_{|\mathsf Y|\leq R_n/2})|\,\di x+\frac{ \sqrt{n} }{ b_n}   e^{-c_2R_n/4}.
 \end{split}
\end{equation*} 
Using the above construction of a covering of $\mathcal G_n(x)$ with the upper bounds, for $|x|>R_n$, 
$$ \|g_{x, \mathsf v}(\mathsf Y) \1_{|\mathsf Y|\leq R_n/2}\|_\infty \lesssim \frac{ 1 }{ b_n |x|^k  } \quad \mbox{ and }\quad \|g_{x, \mathsf v}(\mathsf Y) \1_{|\mathsf Y|\leq R_n/2}\|_2\lesssim \frac{ 1 }{ b_n |x|^k  },$$
we obtain 
\begin{equation*}
J_{[\,]}(2d/b_n, \mathcal G_n(x) , L^2(P_{0Y})) \lesssim  \int_0^{2/(b_n|x|^k) } \sqrt{ \log ( 1/b_n) + \log (1/\epsilon)_+}\, \di \epsilon \lesssim \frac{ \sqrt{ \log n}   }{b_n |x|^k },
\end{equation*}
so that 
\begin{equation*}
\begin{split}
\int_{|x|>R_n}\mathbb E_{0Y}^{n}\left[  \sup_{\mathsf v \in \mathbb S^{d-1}} |\mathbb G_n ( g_{x, \mathsf v}(\mathsf Y) \1_{|\mathsf Y|\leq R_n/2})| \right]\,\di x \lesssim \frac{ \sqrt{ \log n}}{\sqrt{n}b_nR_n^{k-1}},
 \end{split}
\end{equation*} 
which implies that
\begin{equation*}
 \frac{ 1 }{ \sqrt{n} }  \int_{|x|>R_n}\mathbb E_{0Y}^{n}\left[ \1_{\Omega_n} \sup_{\mathsf v \in \mathbb S^{d-1}} |\mathbb G_n ( g_{x, \mathsf v}(\mathsf Y)) | \right]\,\di x \lesssim \frac{ \sqrt{ \log n}   }{\sqrt{n}b_nR_n^{k-1}}  +\frac{ \sqrt{n} }{ b_n}   e^{-c_2R_n/4}.
 \end{equation*}
We also bound 
\begin{equation*}
\begin{split}
 \frac{ 1 }{ \sqrt{n} } &  \int_{|x|>R_n}\mathbb E_{0Y}^{n}\left[ \1_{\Omega_n^c} \sup_{\mathsf v \in \mathbb S^{d-1}} |\mathbb G_n ( g_{x, \mathsf v}(\mathsf Y)) | \right]\, \di x \\
&\qquad \qquad\leq 2\mathbb E_{0Y}^{n} \left[\1_{\Omega_n^c} \sup_{\mathsf v \in\mathbb S^{d-1}} \int_{|x|>R_n} \left| \int_{\mathbb R} e^{-\imath t (x-\mathsf v \cdot \mathsf Y)}\hat K_{b_n}^{\otimes d}(t \mathsf v) \,\di t\right|\,\di x \right].
 \end{split}
 \end{equation*} 
By symmetry of $K$, that is, $K(x) = K(-x)$, if $v_j\neq 0$, we have
$$\int e^{-\imath t  b_n v_j  x} \hat K(t)\,\di t =\frac{1}{(b_n |v_j|)} K( x/(b_n |v_j|))=: K_{b_n v_j}(x).$$
Therefore,
$$\left| \int_{\mathbb R} e^{-\imath t (x-\mathsf v \cdot \mathsf Y)}\hat K_{b_n}^{\otimes d}(t \mathsf v) \,\di t\right|  =  \left| \Conv_{j \in J^*_d(\mathsf v)} K_{b_n v_j}( x-\mathsf v \cdot \mathsf Y)\right| \leq  \Conv_{j \in J_d^*(\mathsf v)} |K_{b_n v_j}( x-\mathsf v \cdot \mathsf Y)|.$$ 
We thus obtain
\begin{equation*}
\begin{split}
 \frac{ 1 }{ \sqrt{n} } &  \int_{|x|>R_n}\mathbb E_{0Y}^{n}\left[ \1_{\Omega_n^c} \sup_{\mathsf v \in \mathbb S^{d-1}} |\mathbb G_n ( g_{x, \mathsf v}(\mathsf Y) )| \right]\, \di x \\
 & \qquad\qquad\qquad\qquad\qquad \leq 2 \mathbb E_{0Y}^{n} \left[\1_{\Omega_n^c} \sup_{\mathsf v \in \mathbb S^{d-1}} \big\|\Conv_{j=1}^d K_{b_n v_j}\big \|_1 \right] \leq 2 P_{0Y}^n(\Omega_n^c) \|K\|_1^d.
 \end{split} 
 \end{equation*} 
Since $P_{0Y}^n(\Omega_n^c) \leq e^{-R_n/4} +P_{0X} (|\mathsf X| >|x|/4)\leq e^{- c_2R_n/4}$ (assuming without loss of generality that $c_2\leq 1$), by choosing $R_n = R_0 \log n$, with $R_0$ large enough, we get that
\begin{equation*}
\begin{split}
\sup_{\mathsf v \in\mathbb S^{d-1}}B_2(\mathsf v) & \lesssim \frac{ R_n \sqrt{\log n } }{ \sqrt{n b_n} } + \frac{ 2\sqrt{n} }{ b_n}   e^{-c_2R_n/4} \lesssim \frac{  (\log n )^{3/2} }{ \sqrt{nb_n} }.
\end{split}
\end{equation*}
This concludes the proof. 
\qed

\bibliographystyle{imsart-number}
\bibliography{biblio}


\end{document}